\newcommand{\blind}{1}
\newcommand{\TPR}{\textcolor{black}}
\newcommand{\TPM}{\textcolor{black}}
\numberwithin{equation}{section}
\newtheorem{thm}{Theorem}[section]
\newtheorem{prop}{Proposition}[section]
\newtheorem{lem}{Lemma}[section]
\newtheorem{define}{Definition}[section]
\newtheorem{eg}{Example}[section]
\newtheorem{remark}{Remark}[section]
\renewenvironment{proof}[1][\proofname]{\par
  \pushQED{\qed}%
  \normalfont \topsep6\p@\@plus6\p@\relax
  \trivlist
  \item[\hskip\labelsep
        \itshape
    #1\@addpunct{}]%
}{%
  \popQED\endtrivlist\@endpefalse
}
\renewcommand{\proofname}{}
\newenvironment{breakablealgorithm}
{
	\begin{center}
		\refstepcounter{algorithm}
		\hrule height0.8pt depth0pt \kern0pt
		\renewcommand{\caption}[2][\relax]{
			{\raggedright\textbf{\ALG@name~\thealgorithm} ##2\par}%
			\ifx\relax##1\relax 
			\addcontentsline{loa}{algorithm}{\protect\numberline{\thealgorithm}##2}%
			\else 
			\addcontentsline{loa}{algorithm}{\protect\numberline{\thealgorithm}##1}%
			\fi
			\kern2pt\hrule\kern2pt
		}
	}{
		\kern0pt\hrule\relax
	\end{center}
}
\newcommand{\mf}{\mathbf}
\begin{document}

\def\spacingset#1{\renewcommand{\baselinestretch}%
{#1}\small\normalsize} \spacingset{1}

\newcommand{\setword}[2]{%
  \phantomsection
  #1\def\@currentlabel{\unexpanded{#1}}\label{#2}%
}


\if1\blind
{
  \title{\bf  Simultaneous  inference for  monotone and smoothly time-varying functions under complex temporal dynamics}
  \author{Tianpai Luo\footnote{E-mail addresses: ltp21@mails.tsinghua.edu.cn(T.Luo), wuweichi@mail.tsinghua.edu.cn(W.Wu). Weichi Wu is the corresponding author and acknowledges NSFC 12271287.} \qquad Weichi Wu\footnotemark[1]\vspace{0.2cm}\\ \footnotemark[1] Department of Statistics and Data Science,\\ Tsinghua University, Beijing 100084, China}
    \date{}
  \maketitle
} \fi

\if0\blind
{
  \bigskip
  \bigskip
  \bigskip
  \begin{center}
    {\LARGE\bf  Simultaneous  inference for  monotone and smoothly time-varying functions under complex temporal dynamics} 
\end{center}
  \medskip
} \fi

\bigskip
\begin{abstract}

We propose a new framework for the simultaneous inference of monotone and smoothly time-varying functions under complex temporal dynamics. This will be done utilizing the monotone rearrangement and the nonparametric estimation. We capitalize the Gaussian approximation for the nonparametric monotone estimator and construct the asymptotically correct simultaneous confidence bands (SCBs) using designed bootstrap methods. We investigate two general and practical scenarios. The first is the simultaneous inference of monotone smooth trends from moderately high-dimensional time series. The proposed algorithm has been employed for the joint inference of temperature curves from multiple areas. Specifically, most existing methods are designed for a single monotone smooth trend. In such cases, our proposed SCB empirically exhibits the narrowest width among existing approaches while maintaining confidence levels. It has also been used for testing several hypotheses tailored to global warming. The second scenario involves simultaneous inference of monotone and smoothly time-varying regression coefficients in time-varying coefficient linear models. The proposed algorithm has been utilized for testing the impact of sunshine duration on temperature which is believed to be increasing due to severe greenhouse effect. The validity of the proposed methods has been justified in theory as well as by extensive simulations. 
\end{abstract}  



\noindent%
{\it Keywords:} piecewise locally stationary, nonparametric, monotone rearrangement, bootstrap, moderately high dimension
\vfill

\newpage
\spacingset{1.5} 

\section{Introduction}
\label{Introduction}
Estimating functions under a monotone constraint is fundamental within the sphere of shape constraint problems, offering diverse applications in fields including global warming (e.g., \cite{woodward1993global}), environmental investigation (e.g., \cite{hussian2005monotonic}), biostatistics (e.g., \cite{Mario2002bio}), and financial analysis (e.g., \cite{ait2003nonparametric}). Over the past two decades, research on monotone regression has attracted increasing research interest, developing from traditional isotonic regression (e.g., \cite{brunk1969estimation}), to refined smooth estimators (e.g., \cite{mukerjee1988monotone}; \cite{mammen1991estimating}; \cite{dette2006simple}); and from univariate function as in the above mentioned works to multivariate functions (e.g., \cite{chatterjee2018matrix}; \cite{deng2021confidence}). Despite these advancements, the majority of existing approaches rely heavily on the assumptions of independence or stationarity,  
which often falls short in long-span time series datasets as discussed in \cite{dette2022prediction}.  Meanwhile, the smooth and increasing trends have been widely identified in climate change.   
A prominent example is \cite{nature2018}, which shows global temperature has a smooth and increasing trend using climate models. \cite{nature2018} further warned of accelerated global warming according to this trend.  In our real data analysis, we also identify the smooth and increasing trends as well as nonstationary patterns in temperature data, see Figure \ref{fig:monotone+nonstationary} for example. \TPM{Moreover, we utilize monotonicity tests proposed by \cite{chetverikov2018econometrics}, \cite{Bowman1998testmono}, and \cite{birke2007testing}  for the Heathrow’s temperature trend in Figure \ref{fig:monotone+nonstationary}. The tests from \cite{chetverikov2018econometrics} and \cite{Bowman1998testmono} are under the nondecreasing null hypothesis while \cite{birke2007testing} considers the strictly increasing hypothesis.
The tests yield the monotonicity hypothesis with $p$-values $0.35$, $0.20$ and $0.21$ respectively, 
which justifies the need for the estimation of smooth and monotone regression functions for real-world data in practice.}
\begin{figure}[htbp]
  \centering
  \begin{subfigure}[b]{0.4\textwidth}
    \centering
    \includegraphics[width=0.75\textwidth]{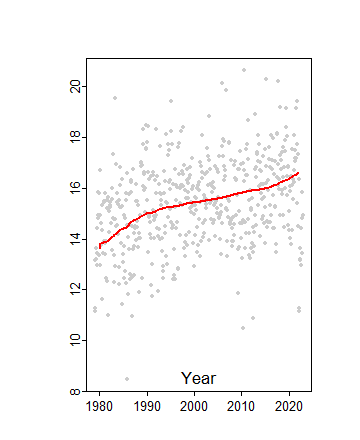} 
    \captionsetup{font={small, stretch=1}}
    \caption{Monotone trend }
    \label{fig:monotone trend}
  \end{subfigure}
  \hfill
  \begin{subfigure}[b]{0.4\textwidth}
    \centering
    \includegraphics[width=\textwidth]{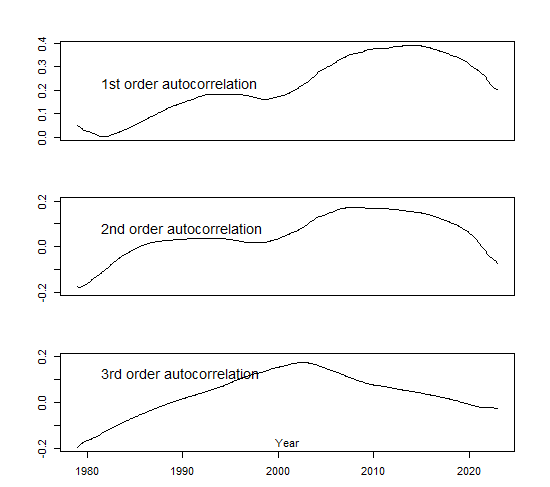} 
    \captionsetup{font={small, stretch=1}}
    \caption{Nonstationarity}
    \label{fig:nonstationary}
  \end{subfigure}
  \captionsetup{font={small, stretch=1}}
  \caption{\small (a) The maximum temperature (°C) data at Heathrow with a fitted smooth time-varying trend, see Section \ref{Empirical study}; (b) Estimated autocorrelation functions by the method in \cite{dette2022prediction} based on data in (a). The time-varying pattern implies the nonstationarity.}
  \label{fig:monotone+nonstationary}
\end{figure}

In this study, our goal is to estimate and make inferences on the monotone and smoothly time-varying functions under complex temporal dynamics. In particular, we consider two very important scenarios where our methodology demonstrates its effectiveness. One is the estimation and jointly simultaneous inference of monotone trends from moderately high-dimensional nonstationary processes. This capability proves especially valuable when dealing with multiple time series exhibiting monotone behavior. These include temperature data collected from various weather stations within a geographic region. The other is the estimation and inference of the monotone time-varying coefficients, or more generally, monotone linear combination of time-varying coefficients in time-varying coefficient linear models. This enables us to assess the monotonically changing relationship between the response variable and predictor variables as time evolves. The time-varying coefficient linear models with monotone condition is useful in climate science, see for example \cite{SubhraWu2023}. 

Our proposed simultaneous confidence band (SCB), which centers around the monotone estimator, is asymptotically correct. 
While some recent studies have considered dependent observation in the context of monotone regression (e.g., \cite{anevski2006general}, \cite{zhao2012estimating}), they primarily focus on pointwise limit distributions.  Recently, \cite{chernozhukov2009improving} and \cite{bagchi2016inference} proposed methods for constructing SCBs for monotone signals. However, both their resulting SCBs are conservative. \cite{bagchi2016inference} focuses on the inference under minimal smooth assumptions, hence their confidence band along with their estimates produces ``jump’’ spots, which appears suboptimal for the analysis of data sets with smoothly changing mechanisms. Moreover,
except \cite{chernozhukov2009improving}, the above-mentioned literature and most current inference methods for monotone regression functions applicable to time series require the data to be strictly stationary. In contrast, via improving an original SCB, \cite{chernozhukov2009improving} produces monotone SCBs. However, those SCBs are not necessarily centered around the monotone estimator of the regression function. 

 In fact, inference on the entire monotone regression function is a fundamental and challenging problem which has received considerable attention and effort in the literature. Besides the literature discussed in the last paragraph, the problem has been investigated under various specific settings and scenarios, including but not limited to the construction of SCBs in special monotone models (e.g., \cite{sampson2009simultaneous}; \cite{huang2017restoration}; \cite{gu2021smooth}) or only over a finite grid in the domain with certain stochastic equicontinuity condition (e.g., \cite{Ted2020correcting}).
 In contrast to the existing literature, our results significantly expand the application scope of inference methods for monotone regression functions 
to a broader range of real-world scenarios. This is done via allowing general time series nonstationarity as well as high-dimensional trend or multiple regression. We compare our methods with several mainstream methods for the statistical analysis of the monotone regression functions and summarize the corresponding results in Table \ref{tab:comparison}.

\begin{table}[h]
  \centering
  \setlength{\belowcaptionskip}{0cm}
  \captionsetup{font={small}, skip=3pt}
  \caption{Comparison between our methodology and previous major study} 
    \scalebox{0.75}{
    {\renewcommand{\arraystretch}{0.5}
    \begin{tabular}{|l|l|l|}
    \hline
    \multicolumn{1}{|c|}{Our methodology} & \multicolumn{1}{c|}{Previous major work} & \multicolumn{1}{c|}{Literature} \\
    \hline
    \makecell[l]{Monotone rearrangement-based:\\- smooth function\\- produce strictly monotone estimator}
    & \makecell[l]{Isotonic regression-based:  \\- constrained optimization \\- almost inevitable flat point}& \makecell[l]{\cite{brunk1969estimation}, \\ \cite{mukerjee1988monotone},\\ \cite{mammen1991estimating}. }\\
    \hline
    \makecell[l]{SCB under asymptotically correct\\ significance level} & \makecell[l]{Pointwise confidence interval \\ or conservative SCB} & \makecell[l]{\cite{deng2021confidence}, \\ \cite{bagchi2016inference},\\\cite{chernozhukov2009improving}. } \\
    \hline
    Nonstationary time series & \makecell[l]{Independent/Stationary observation} &  \makecell[l]{\cite{zhao2012estimating},\\ \cite{bagchi2016inference}.}\\
    \hline
    \makecell[l]{- High-dimensional model i.e. $\mathbf{m}(t)\in\mathbb{R}^p$, $p\rightarrow\infty$.\\ - Regression model i.e. $Y(t)=X^{\top}(t)\mathbf{m}(t)+e(t)$\\
    $\mathbf{C}\mathbf{m}(t)\in\mathbb{R}^s$ for known $\mathbf{C}\in\mathbb{R}^{s\times p}$ is monotone\\(monotone in each coordinate)}
     & \makecell[l]{Multivariate model: $\mathbf{t}\in[0,1]^p$ \\ but $m(\mathbf{t})\in \mathbb{R}$. \textit{The extension of}\\ \textit{our method adapting to this setting}\\ \textit{is interesting for future research.}} &
     \makecell[l]{\cite{chatterjee2018matrix},\\ \cite{deng2021confidence},\\ \cite{Ted2020correcting}}. \\
    \hline
    \end{tabular}%
    }}
  \label{tab:comparison}%
\end{table}%

Our simultaneous inference is based on monotone estimates for either high-dimensional monotone vector functions or for monotone regression coefficient functions, which combines the strength of nonparametric estimation and monotone rearrangement. These estimates belong to the class of two-step estimators that combines isotonization or rearrangement with smoothing. This class of estimators has been extensively discussed in the single trend with stationary noise setting; see, for example, \cite{zhao2012estimating}, \cite{bagchi2016inference}. Our chosen rearranged estimates can be obtained by unconstrained optimization algorithm, which enables us to derive the corresponding stochastic expansion. We then apply the state-of-the-art Gaussian approximation technique in \cite{Mies2022seq_high-dim} to approximate the distribution of maximum deviation of the monotone estimates by certain Gaussian random vectors. Upon this, we design 2 bootstrap algorithms to construct the (joint) SCBs for both scenarios of high-dimensional trends and time-varying coefficient linear models under smooth and monotone constraints. It is worth noting that our method is applicable to so-called piecewise locally stationary noise (see \cite{zhou2013hetero} for details) which allows both smooth changes and abrupt changes in the underlying data-generating mechanisms. The validity of our proposed methods is justified theoretically, to which the key is controlling the approximation error via Nazarov’s inequality (\cite{nazarov2003maximal}). 

The remainder of this paper is structured as follows. In Section \ref{Monotone estimator from monotone rearrangement}, we introduce the monotone rearrangement-based monotone estimator. Section \ref{High dimensional SCB} presents our main results for the high-dimensional model, including model assumptions,  the theoretic results of Gaussian approximation, and the bootstrap procedure mimicking the limiting distribution of the maximum deviation of the monotone and smooth estimator. Similarly, the monotone and smooth estimation for time-varying coefficient linear model is discussed thoroughly in Section \ref{Time-varying coefficient regression}. In Section \ref{Bandwidth selection}, we detail the selection scheme for tuning parameters. 
Furthermore, Section \ref{Simulation study} reports our simulation results, while Section \ref{Empirical study} presents the application of our method to the analysis of historical temperature data in the UK. Finally, in Section \ref{Discussion}, we discuss potential directions for future work. Detailed proofs
and additional simulation results can be found in the online supplement. In particular, additional simulations compare the width of SCB obtained by our method with alternative SCBs and find that our SCB enjoys the narrowest width while maintaining confidence levels.

\section{Notation}
\label{Notation}
Before stating our results formally, we list the notations that will be used throughout the paper below. For a vector $\mathbf{v}=\left(v_1, \cdots, v_p\right)^{\top} \in \mathbb{R}^p$, let $|\mathbf{v}|=(\sum_{j=1}^p v_j^2)^{1 / 2}$. For a random vector $\mathbf{V}$, let $\|\mathbf{V}\|_q=\left(\mathbb{E}|\mathbf{V}|^q\right)^{1 / q}$ ($q>0$) be the $\mathcal{L}^q$-norm of the random vector $\mathbf{V}$. The notion $\|\cdot\|$ refers to $\|\cdot\|_2$ if no extra clarification. 
Denote $\mathrm{diag}\{a_k\}_{k=1}^p$ as a diagonal matrix with diagonal elements $a_1,\dots,a_p$. Let $\lfloor x\rfloor$ represent the largest integer smaller or equal to $x$. For any two positive real sequences $a_n$ and $b_n$, write $a_n\asymp b_n$ if $\exists$ $ 0<c<C<\infty$ such that $c\leq \liminf _{n \rightarrow \infty} a_n / b_n\leq \limsup _{n \rightarrow \infty}a_n / b_n\leq C$. We use the notion $\hat{\mf m}^{\prime}(t)$ to  represent the local linear estimator of derivative $\mf m'(t)$ where $\mf m'(t)=(\frac{\partial}{\partial t}m_j(t))_{1\leq j\leq p}$.

\section{Monotone estimator via monotone rearrangement}
\label{Monotone estimator from monotone rearrangement}
For simplicity, we illustrate our methodology for univariate series in this section. \TPR{Suppose our observation $y_{i,n},i=1,\dots,n$ can be written as following nonparametric regression model with monotone constraint}
\begin{equation}
    y_{i,n} = m( i/n ) + e_{i,n}, \quad i=1,\dots,n.
    \label{monotone model_n}
\end{equation}
\TPR{The model \eqref{monotone model_n} is closely related to the ‘‘infill’’ modeling in spatial statistics in the sense that the rescaled time points are increasingly dense in $[0,1]$, and has been widely considered in nonstationary time-varying linear models, see \cite{zhou2010simultaneous} among others.} \TPR{The mean function $m(t)$ is smoothly monotone function of $t\in[0,1]$. As illustrated in \cite{fan2003nonlinear}, modelling the mean function of time series as $m(i/n)$ is a simple technical device for capturing the feature that the trend is much more slowly varying than the noise. It is also widely used in nonparametric time series regression; see \cite{hall1990nonparametric}, \cite{johnstone1997wavelet}.}  \TPR{$\{e_{i,n}\}_{i=1}^n$ is a zero-mean error process and can be nonstationary. The detailed assumption of $e_{i,n}$ is deferred to Section \ref{sec:Assumptions of high-dimensional time series}. In this paper, all our results established are under a strictly increasing context.} In other words, we focus on the mean function $m:[0,1]\rightarrow\mathbb{R}$ that is strictly increasing in $t$. 
For strictly decreasing cases our methods still apply. This is done by considering $\{-y_{i,n}\}_{i=1}^n$.

Numerous estimators have been derived for model \eqref{monotone model_n} to obtain constrained estimators of $m(t)$ that satisfy the continuity and monotonicity constraint. A prevalent method for the inference of monotone shape functions is the fundamental isotonic regression, which yields a discontinuous ``step’’ function with flat segments. To bridge the gap between step fitting and the continuous nature of the data, isotonization mixed with kernel smoothing procedure has been studied by several researchers, see for example \cite{mammen1991estimating}, \cite{van2003smooth}, \cite{durot2014Kiefer-Wolf}. However, such approaches 
almost inevitably produce a flat area in estimated curves even when the true function contains no flat part. 
\TPM{Additionally, the spline-based methods, for example \cite{tantiyaswasdikul1994isotonic}, \cite{ramsay1988monotone}, \cite{meyer2008inference}, can estimate smoothly increasing function. However, the above isotonization-based and spline-based methods rely on constrained optimization techniques.} As a comparison, \cite{dette2006simple} introduces a smooth and strictly monotone estimator via monotone rearrangement, which does not rest on constrained optimization.  Therefore, the statistical properties of estimator suggested by \cite{dette2006simple} are easy to analyze. The monotone rearrangement techniques have been applied widely in solving statistical problems with monotone constraints, e.g., \cite{chernozhukov2009improving}, \cite{Chernouzhukov2010noncrossing}, \cite{dette2008noncrossing}, \cite{SubhraWu2023}.

The key idea of monotone rearrangement is the use of the following fact. For any function $f$ defined on $[0,1]$, and a kernel density function $K_d(\cdot)$, define $g_{h_d}\circ f$ on $\mathbb R$ as 
$$
g_{h_d}\circ f:s\rightarrow \int_{0}^1\left\{\int_{-\infty}^s\frac{1}{h_d}K_d\left(\frac{f(x)-u}{h_d}\right)\mathrm{d}u\right\}\mathrm{d}x,
$$
which is a smooth and monotone approximation of  $f^{-1}$ when  $f$ is strictly increasing; see \cite{ryff1970measure}. Moreover, $g_{h_d}\circ f$ is always smooth and monotone, even if $f$ is non-monotone.
Thus, a natural smooth and monotone estimator of $m^{-1}$ can be defined through Riemann sum approximation to  $g_{h_d}\circ \tilde{m}$, that is,
\begin{equation}
    \hat m_I^{-1}(s)=:\int_{-\infty}^s \frac{1}{Nh_d}\sum_{i=1}^NK_d\left(\frac{\tilde m(i/N)-u}{h_d}\right)\mathrm{d}u,
    \label{eq:m_I}
\end{equation}
where $\tilde m(\cdot)$ is a jackknife bias-corrected local linear estimator using kernel function $K_r(\cdot)$ and bandwidth $h_r$. In specific, the $\tilde m(t)$ is defined as
\begin{equation}
    \label{eq: def for jackknife ll}
    \tilde{m}(t)=:2\hat{m}_{h_{r}/\sqrt{2}}(t)-\hat{m}_{h_{r}}(t),
\end{equation}
\begin{equation}
\left(\hat{m}_{h_{r}}(t), \hat{m}_{h_{r}}^{\prime}(t)\right)=:\underset{\eta_0, \eta_1 \in \mathbb{R}}{\arg \min }\left[\sum_{i=1}^n\left\{y_{i}-\eta_0-\eta_1\left(t_i-t\right)\right\}^2 K_{r}\left(\frac{t_i-t}{h_{r}}\right)\right].\label{eq:ll def}
\end{equation}
\TPR{
To handle the simultaneous inference rather than the pointwise problem in \cite{dette2006simple}, we apply monotone rearrangement on the jackknife bias-corrected local linear estimator $\tilde{m}(t)$, as defined in \eqref{eq: def for jackknife ll}. This is instead of the Naradaya-Watson estimator used in \cite{dette2006simple}. As discussed in \cite{zhou2010simultaneous}, introducing this jackknife bias correction in the construction of SCB reduces the bias of the local linear estimator to an asymptotically negligible rate compared with its stochastic variation. Moreover, $\tilde{m}(t)$ is also used in the cumulative long-run covariance estimation in \eqref{eq:cumulative long-run cov} due to the small bias.} The simultaneous inference target also necessitates the examination of a time span rather than a single time point in \cite{dette2006simple}. Therefore, we introduce the index set $\hat{\mathcal{T}}$ on which all our simultaneous results are built:
\begin{equation}
    \hat{\mathcal{T}}=:\left\{\hat{m}_I^{-1}(s):s\in\hat{\mathcal{T}}^{-1}\right\}\cap[h_d\log h_d^{-1},1-h_d\log h_d^{-1}],
    \label{index_I}
\end{equation}
where $\hat{\mathcal{T}}^{-1}=:\{s\in\mathbb{R}:\min_{t\in[h_r,1-h_r]}\tilde m(t)\leq s\leq \max_{t\in[h_r,1-h_r]}\tilde m(t)\}$. The interval $\hat{\mathcal T}$ enjoys several advantageous properties.  
First, on the set $\hat{\mathcal T}^{-1}$, $\hat m_I^{-1}(s)$ is smooth and strictly monotone, which enables us to naturally derive a smooth monotone estimator $\hat{m}_I(t)$ for $m(t),t\in\hat{\mathcal{T}}$ by directly inverting $\hat{m}_I^{-1}(s)$. Second, this interval circumvents boundary issues that arise in monotone rearrangement using the kernel density function $K_d(\cdot)$. Despite $\hat{\mathcal{T}}$ being a proper subset of $[0,1]$, it is sufficiently large for simultaneous inference since the length of $[0,1]$ not covered by $\hat{\mathcal{T}}$ converges to zero in probability, 
as supported by the Proposition \ref{Boundary} 
in the online supplement.

In the following Section \ref{High dimensional SCB} and Section \ref{Time-varying coefficient regression}, we further consider the high-dimensional version and time-varying regression extension of the monotone estimator $\hat m_I(t)$ and the corresponding simultaneous inference methods. The simultaneous inference of $m(t)$ for \eqref{monotone model_n} in this section could be performed via the methods in Section \ref{High dimensional SCB} and Section \ref{Time-varying coefficient regression}. 


\section{Joint SCBs for high-dimensional monotone trends}
\label{High dimensional SCB}
Historically, the increasing need to study contemporary time series with rapidly growing size and progressively complex structures has necessitated the construction of SCBs that cover many time-varying curves jointly. This is essential for the simultaneous inference in numerous applications involving processes that are $p$-dimensional. A prominent example is the investigation of global warming trends across multiple districts. While the temperatures recorded in each district may exhibit variations, they commonly adhere to the monotonicity condition. Constructing joint SCBs that simultaneously cover all the monotone temperature trends in those districts at a desired significance level can be useful for comprehensively understanding extreme climate change on a global scale. This motivates us to consider the high-dimensional extension of model \eqref{monotone model_n} in the form: 
\begin{equation}
    y_{i,k}=m_k(i/n)+e_{i,k},\quad i=1,\dots,n;k=1,\dots,p
    \label{high-monotone model}
\end{equation}
where $\mathbf{y}_{i,n}=(y_{i,1},\dots,y_{i,p})^{\top}$ is our $p$-dimensional observation and $\mathbf{e}_{i,n}=(e_{i,1},\dots,e_{i,p})^{\top}$ is a high-dimensional nonstationary error process with mean $\mf 0$. Moreover, each coordinate of the mean vector function $\mathbf{m}(t)=(m_{1}(t),\dots,m_p(t))^{\top}$ is assumed to be monotone, 
that is, $m_k(x)\leq m_k(y)$ if $x\leq y$ for all $1\leq k\leq p$. It is worth pointing out that our monotonicity is different from multivariate monotone regression model, for example \cite{deng2021confidence}, which considers a $\mathbb R$-valued function $m(\mf t)$ on $p$-dimensional index set $\mf t\in[0,1]^p$. In the remaining section, we discuss assumptions on the time series in Subsection \ref{sec:Assumptions of high-dimensional time series}, the Gaussian approximation in Subsection \ref{sec:Gaussian approximations} and the algorithm of generating SCBs in Subsection \ref{sec:construction of joint SCBs}.
\subsection{Assumptions of high-dimensional time series}
\label{sec:Assumptions of high-dimensional time series}
To begin with, we impose the following assumptions on the high-dimensional trends:
\begin{description}[itemsep=0pt,parsep=0pt,topsep=0pt,partopsep=0pt]
\setlength{\baselineskip}{0.7\baselineskip}
    \item[\setword{\textbf{(A1)}}{(A1)}] \textit{For the vector function $\mf m(t)=( m_1(t),\dots, m_p(t))^\top$ where $t\in[0,1]$, the second derivative $m_k^{\prime\prime}(t)$ of the function $m_k(t)$ exists and is Lipschitz continuous on $[0,1]$. The Lipschitz constants are bounded for all $k=1,\dots,p$.}
    \item[\setword{\textbf{(A2)}}{(A2)}] \textit{For the vector function $\mf m(t)=( m_1(t),\dots, m_p(t))^\top$, there exists a universal constant $M>0$ s.t. $\inf_{t\in [0,1]} m_k'(t)\geq M$, $\forall k=1,\dots,p$.}
\end{description}
Let $\varepsilon_i$, $i\in \mathbb{Z}$ be i.i.d. random elements. Denote $\Upsilon_i=(\dots, \varepsilon_{i-1}, \varepsilon_i)$ \TPR{and $\mathcal{S}^{\mathbb{Z}}$ as all the possible values of random elements in $\Upsilon_i$. We assume that the high-dimensional nonstationary error process $\mathbf{e}_i$ is generated by following causal representation
\begin{equation}
    \mathbf{e}_{i,n}=:\mathbf{G}(t_i,\Upsilon_i),\quad t_i=:i/n,\quad i=1,\dots,n,\label{eq: def for G}
\end{equation}
where $\mathbf{G}(u,\Upsilon_i)=:(G_1(u,\Upsilon_i),\dots,G_{p}(u,\Upsilon_i))^{\top}$ is a measurable vector function $\mf G:[0,1]\times \mathcal{S}^{\mathbb{Z}}\rightarrow \mathbb{R}^p$.}
We further introduce the physical dependence measure of \cite{wu2005nonlinear} for $p$-dimensional $\mathbf{G}(u,\Upsilon_i)$.
\begin{define}{(physical dependence measure)}\label{def:phd} Let $\{ \varepsilon_i'\}_{i\in \mathbb{Z}}$ be an i.i.d. copy of $\{ \varepsilon_i\}_{i\in \mathbb{Z}}$ and $\Upsilon_{i,k}=(\dots, \varepsilon_{k-1},\varepsilon_{k}',\varepsilon_{k+1},\dots, \varepsilon_i)$ for $k\leq i$. If for all $u\in[0,1]$, $\|\mf G(u,\Upsilon_i)\|_q<\infty$ ($\|\mf G(u,\Upsilon_i)\|_q$ might depend on dimension $p$), then we can define the physical dependence measure for the stochastic system $\mathbf{G}\left(u,\Upsilon_i\right)$ as
\begin{equation}
    \delta_q(\mathbf{G}, k)=:\sup_{u\in[0,1]}\left\{\left\|\mathbf{G}\left(u,\Upsilon_i\right)-\mathbf{G}\left(u,\Upsilon_{i,i-k}\right)\right\|_q\right\}.
    \label{phd}
\end{equation}
\end{define}
\TPR{To help understand the generality of generation mechanism $\mathbf{G}(u,\Upsilon_i)$ and the physical dependence measure \eqref{phd}, we list two typical examples as follows. We also provide some specific time series models such as ARMA and GARCH models in Section \ref{sec:specific examples} 
of the online supplement.
\begin{description}[itemsep=0pt,parsep=0pt,topsep=0pt,partopsep=0pt]
    \setlength{\baselineskip}{0.7\baselineskip}
    \item[(i)] High-dimensional linear process: Given time-varying $p\times p'$ ($p'$ is fixed) matrices $\mf B_j(t)$, $j\in \mathbb Z, t\in [0,1]$, write $\mf e_{i,n}=\mf G(t_i,\Upsilon_i)$ where $\mf G(t,\Upsilon_h)=:\sum_{j=0}^{\infty} \mf B_{j}(t) \boldsymbol{\varepsilon}_{h-j}$ and $\boldsymbol{\varepsilon}_i=(\varepsilon_{i,1},\dots,\varepsilon_{i,p'})^{\top}$, $\left(\varepsilon_{i,s}\right)_{i\in\mathbb{Z},1\leq s\leq p'}$ are i.i.d. random variables. If $\|\boldsymbol{\varepsilon}_0\|_q<\infty$, then straightforward calculations show $\delta_q(\mf G,k)=O\left(\sup_{t\in[0,1]} \sqrt{\sum_{v=1}^p\sum_{s=1}^{p'}b_{k,v,s}^2(t)}\right)$ where $b_{k,v,s}(t)$ is the element of $\mathbf{B}_k(t)$. 
    \item[(ii)] High-dimensional nonlinear process: Consider $\mf e_{i,n}=\mf G(t_i,\Upsilon_i)$ where
    \begin{equation}
        \mf G(t,\Upsilon_i)=\mf R\left(t,\mf G(t,\Upsilon_{i-1}),\varepsilon_i\right).\label{eq: eg nonlinear process}
    \end{equation}
    Similarly to \cite{zhou2013hetero}, when $\mf G(t,\cdot)$ and $\mf R(t,\cdot,\cdot)$ do not depend on $t$, a variety of $p$ dimensional stationary nonlinear time series models can be written in the form \eqref{eq: eg nonlinear process}. These include GARCH models (\cite{BOLLERSLEV1986GARCH}), threshold models (\cite{tong1990non}) and bilinear models.
\end{description}
}

The physical dependence measure we defined in \eqref{phd} shows an input-output-based dependence measure quantifying the influence of the input $\varepsilon_{i-k}$ on the output $\mathbf{G}(u,\Upsilon_i)$, which is different from the classic mixing conditions. 
Alternative definitions of dependence measure for high-dimensional time series can be found, for example in \cite{zhang2018GA}, where the dependence measure is specified to each dimension, that is,  $\delta_q(G_{j},k),j=1,\dots,p$ and requires a universal summable decay, that is, $\sum_{k=l}^{\infty}\sup_{1\leq j\leq p}\delta_q(G_{j},k)<\infty$. \TPR{Our assumptions on dependence measure are related to their framework in the sense that $\sum_{k=l}^{\infty}\delta_q(\mathbf{G}, k)\leq \sqrt{p}\sum_{k=l}^{\infty}\sup_{1\leq j\leq p}\delta_q(G_{j},k)$ by condition $q\geq 2$ and triangle inequality. 
Furthermore, we define the long-run covariance matrix function for the nonstationary process $\mathbf{e}_{i,n}=\mathbf{G}(t_i,\Upsilon_i)$.
\begin{define}
    For process $\mathbf{G}(t,\Upsilon_i)$, define the long-run covariance matrix function
    \begin{equation}
    \mathbf{\Sigma}_{\mathbf{G}}(t)=:\sum_{l\in\mathbb{Z}} \mathrm{cov}(\mathbf{G}(t,\Upsilon_0),\mathbf{G}(t,\Upsilon_l)).
    \label{eq:long-run var def}
    \end{equation}
\end{define}
The long-run covariance matrix is important for quantifying the stochastic variation of the sums of the nonstationary process. 
 We now introduce the assumptions on physical dependence measures, long-run covariance and other properties of the high-dimensional nonstationary process $\mathbf{G}(u,\Upsilon_i)$.} For some constants $\alpha,\chi\in(0,1)$, $q\geq 2$ and $C>0$: 
\begin{description}[itemsep=0pt,parsep=0pt,topsep=0pt,partopsep=0pt]
\setlength{\baselineskip}{0.7\baselineskip}
    \item[\setword{\textbf{(B1)}}{(B1)}] \textit{$\max\{p,\Theta(p)\}=O(n^\alpha)$, $\sup_{u\in[0,1]}\|\mathbf{G}(u,\Upsilon_0)\|_q\leq C\Theta(p)$.}
    \item[\setword{\textbf{(B2)}}{(B2)}] \textit{$\delta_q(\mathbf{G},k)\leq C \Theta(p) \chi^k$.}
    \item[\setword{\textbf{(B3)}}{(B3)}] \textit{$\sum_{i=2}^n\|\mathbf{G}(t_i,\Upsilon_0)-\mathbf{G}(t_{i-1},\Upsilon_0)\|\leq C\Theta(p)$.}
    \item[\setword{\textbf{(B4)}}{(B4)}] \textit{There exists constant $\Lambda\geq\underline{\Lambda}>0$ such that all the eigenvalues of long-run covariance matrix $\mathbf{\Sigma}_{\mathbf{G}}(t)$ are bounded between $\underline{\Lambda}$ and $\Lambda$ for all $t\in[0,1]$. }
    \item[\setword{\textbf{(B5)}}{(B5)}] \textit{
    For any $e_{i,k}$, $i=1,\dots,n$, $k=1,\dots,p$, there exists a universal constant $t>0$ such that $\mathbb{E}(\exp(t|e_{i,k}|))\leq C<\infty$.}
\end{description}
\begin{remark}
   $\Theta(p)$ reflects the dimension factor for both physical dependence measure and the moment conditions of the $p$-dimensional process. If $\delta_q(G_j,k)\leq C\chi^k$ for all $j=1,\dots,p$ with constant $C>0$, then $\Theta(p)$ would be a rate of $\sqrt{p}$.   Further suppose that all $p$ components are independent, then \hyperref[(B5)]{(B5)} implies  that $\Theta(p)$ in \hyperref[(B1)]{(B1)} will also be a rate of $\sqrt{p}$.
\end{remark}
Assumptions \hyperref[(B1)]{(B1)}, \hyperref[(B2)]{(B2)}, and \hyperref[(B4)]{(B4)} are in line with many kernel-based nonparametric analysis of nonstationary time series such as \cite{Zhao2015inference} and \cite{dette2022prediction}. While this paper adopts a geometric decay in \hyperref[(B2)]{(B2)}, it is worth noting that a polynomial decay can yield similar results. However, this requires more complicated conditions and substantially more intricate mathematical arguments. Therefore, for simplicity, we stick to the geometric decay assumption. Assumption \hyperref[(B3)]{(B3)} can be regarded as an extension of the locally stationary assumption. For example, it encompasses scenarios akin to the \textbf{piecewise locally stationary} concept, as discussed in \cite{zhou2013hetero}, where the time series can experience abrupt changes at several breakpoints over time. This renders it a more flexible and realistic representation for various practical applications. Assumption \hyperref[(B5)]{(B5)} allows sub-exponential tails, and is mild for high-dimensional models. It can be relaxed to allow polynomial tails for fixed dimension $p$. \TPR{For more detailed illustration, we verify these assumptions by a general class of high-dimensional error process in specific examples in Section \ref{sec:specific examples} 
in the online supplement.}

\subsection{Monotone estimation and Gaussian approximation}
\label{sec:Gaussian approximations}
To estimate the monotone high-dimensional trend $\mathbf{m}(t)$, we apply monotone rearrangement on each dimension. The monotone estimator is written as $\hat{\mathbf{m}}_I(t)=(\hat m_{I,1}(t),\dots,\hat m_{I,p}(t))^\top$ where $\hat m_{I,k}(t)$ is the inverse of 
\begin{equation}
\label{high-dim monotone estimator}
    \hat m_{I,k}^{-1}(s)=:\int_{-\infty}^s \frac{1}{Nh_{d,k}}\sum_{i=1}^NK_d\left(\frac{\tilde m_k(i/N)-u}{h_{d,k}}\right) \mathrm{d}u,
\end{equation}
where $\tilde{m}_k(t)=:2\hat{m}_{k,h_{r,k}/\sqrt{2}}(t)-\hat{m}_{k,h_{r,k}}(t)$ and
\begin{equation}
\left(\hat{m}_{k,h_{r,k}}(t), \hat{m}_{k,h_{r,k}}^{\prime}(t)\right)=:\underset{\eta_0, \eta_1 \in \mathbb{R}}{\arg \min }\left[\sum_{i=1}^n\left\{y_{i,k}-\eta_0-\eta_1\left(t_i-t\right)\right\}^2 K_{r}\left(\frac{t_i-t}{h_{r,k}}\right)\right].\label{eq:high-dim ll def}
\end{equation}

Before stating our main results, we make several assumptions and simplify our kernel methods. Suppose there exists $h_r,h_d$ s.t. $\min_{1\leq k\leq p}h_{r,k}\asymp h_r\asymp \max_{1\leq k\leq p}h_{r,k}$ and $\min_{1\leq k\leq p}h_{d,k}\asymp h_d\asymp \max_{1\leq k\leq p}h_{d,k}$. 
Then we assume:
\begin{description}[itemsep=0pt,parsep=0pt,topsep=0pt,partopsep=0pt]
\setlength{\baselineskip}{0.7\baselineskip}
    \item[\setword{\textbf{(C1)}}{(C1)}] \textit{The kernel functions $K_d,K_r$ are symmetric kernel density with compact support $[-1,1]$ and bounded second order derivatives.}
    \item[\setword{\textbf{(C2)}}{(C2)}] \textit{$n=O(N),nh_r^3\rightarrow\infty,nh_r^7\rightarrow0,h_d=o(h_r), Nh_d^3\rightarrow \infty$, $R_n/h_d=o(1)$ where $R_n=h_r^2+ \log^{3}n/\sqrt{nh_r}$}.
\end{description}
\begin{remark}
    We use the Epanechnikov kernel for both $K_d$ and $K_r$ throughout the simulation and data analysis. Other kernels satisfying \hyperref[(C1)]{(C1)} work similarly. 
    Our assumption $R_n/h_d=o(1)$ is also weaker than the condition $R_n^2/h_d^3=o(1)$ in \cite{dette2006simple}  
\end{remark}

Define the time span $\hat{\mathcal{T}}=\cap_{k=1}^p\hat{\mathcal{T}}_k$ where $\hat{\mathcal{T}}_k$ is defined in the same way as \eqref{index_I} with $\tilde m_{k}(t)$ so that $\hat{m}_{I,k}(t)$ is well defined for each $k=1,\dots,p$. To construct joint SCBs for $\hat{\mathbf{m}}_I(t)-\mathbf{m}(t)$ on the time span $\hat{\mathcal{T}}$, the key result is to learn the maximum deviation, that is, $\max_{1\leq k\leq p}\sup_{t\in\hat{\mathcal{T}}}|\hat{m}_{I,k}(t)-m_k(t)|$. The following proposition provides the Gaussian approximation of this maximum deviation and serves as the basis for our simultaneous inference and further bootstrap procedure.


\begin{prop}
    \label{prop:GA in high-dim}Suppose  \hyperref[(A1)]{(A1)},\hyperref[(A2)]{(A2)},\hyperref[(B1)]{(B1)}-\hyperref[(B5)]{(B5)} and \hyperref[(C1)]{(C1)}-\hyperref[(C2)]{(C2)} are satisfied. Then there exists independent $\mathbf{V}_j=(V_{j,1},\dots,V_{j,p})^{\top}\sim N_p(0,\mathbf{\Sigma}_{\mathbf{G}}(t_j)),j=1,\dots,n$,
    on a richer probability space s.t.
    \begin{equation}
        \max_{1\leq k\leq p}\sup_{t\in\hat{\mathcal{T}}}\sqrt{nh_r}\left|m_k(t)-\hat{m}_{I,k}(t)-\mathcal{V}_k(t)\right|=O_p\left\{\frac{(R_n^2+h_d^3)\sqrt{nh_r}}{h_d}+\frac{\Theta(p)\sqrt{n\log n}\left(\frac{p}{n}\right)^{\frac{q-2}{6q-4}}}{\sqrt{nh_r} } \right\},
        \label{eq:GA in prop:GA in high-dim}
    \end{equation}
    where the $p$-dimensional  Gaussian vector process $\mathcal{V}(t)=(\mathcal{V}_1(t),\dots,\mathcal{V}_p(t))^{\top}$ with 
    $$
    \mathcal{V}_k(t)=\frac{m_k'(t)}{Nh_{d,k}nh_{r,k}}\sum_{j=1}^n\sum_{i=1}^N K_{d}\left(\frac{m_k(i/N)-m_k(t)}{h_{d,k}}\right)\tilde K_r^*\left(\frac{j/n-i/N}{h_{r,k}},\frac{i}{N}\right)V_{j,k},
    $$
\begin{equation}
    \tilde K_r^*\left(\frac{j/n-t}{h_{r,k}},t\right)=\frac{\nu_{2,k}(t) \tilde K_r\left(\frac{j / n-t}{h_{r,k}}\right)-\nu_{1,k}(t)\tilde K_r\left(\frac{j / n-t}{h_{r,k}}\right)\left(\frac{j / n-t}{h_{r,k}}\right)}{\nu_{0,k}(t) \nu_{2,k}(t)-\nu_{1,k}^2(t)},\label{eq:K^*}
\end{equation}
and $\nu_{l,k}(t)=\int_{-t/h_{r,k}}^{(1-t)/h_{r,k}}x^lK_r(x)\mathrm{d}x$, $\tilde K_r(x)=:2\sqrt{2}K_r\left(\sqrt{2}x\right)-K_r\left(x\right)$.
\end{prop}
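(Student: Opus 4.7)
The plan is to first reduce the maximum deviation to a linear functional of the noise $\{e_{j,k}\}$ via a stochastic expansion, then couple those noises with the required Gaussian vectors via the high-dimensional approximation of \cite{Mies2022seq_high-dim}, and finally transfer that coupling from fixed time points to the uniform sup--max functional on the left of \eqref{eq:GA in prop:GA in high-dim} using Nazarov's anti-concentration inequality. Throughout, control must be uniform in both $t \in \hat{\mathcal{T}}$ and $k \leq p$.

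For the expansion, let $\bar\Phi_{K_d}$ denote the antiderivative of $K_d$, so $\hat m_{I,k}^{-1}(m_k(t)) = N^{-1}\sum_{i=1}^N \bar\Phi_{K_d}\big((m_k(t) - \tilde m_k(i/N))/h_{d,k}\big)$. A second-order Taylor expansion in $\tilde m_k(i/N) - m_k(i/N)$ splits this into (a) a deterministic Riemann sum that approximates $t$ to order $h_d^3$ — the $h_d^2$ term vanishes by symmetry of $K_d$ together with the boundary exclusion encoded in $\hat{\mathcal{T}}$; (b) a linear stochastic term in $\tilde m_k - m_k$; and (c) a quadratic remainder controlled by $\sup_i|\tilde m_k(i/N) - m_k(i/N)|^2/h_d^2 = O_p(R_n^2/h_d^2)$, where the uniform rate $\sup_{t,k}|\tilde m_k(t) - m_k(t)| = O_p(R_n)$ follows from standard chaining inequalities for local linear estimators under \hyperref[(B1)]{(B1)}--\hyperref[(B5)]{(B5)}. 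Inverting via $\hat m_{I,k}(t) - m_k(t) \approx m_k'(t)\,(t - \hat m_{I,k}^{-1}(m_k(t)))$, whose validity rests on \hyperref[(A2)]{(A2)}, and substituting the jackknife local linear Bahadur representation $\tilde m_k(i/N) - m_k(i/N) = (nh_{r,k})^{-1}\sum_j \tilde K_r^*((j/n-i/N)/h_{r,k}, i/N)\, e_{j,k} + O_p(h_r^4 + \log n/(nh_r))$ produces
\begin{equation*}
\hat m_{I,k}(t) - m_k(t) = \mathcal V_k^\star(t) + O_p\!\left(\frac{R_n^2 + h_d^3}{h_d}\right),
\end{equation*}
uniformly in $t$ and $k$, where $\mathcal V_k^\star(t)$ is $\mathcal V_k(t)$ with $e_{j,k}$ in place of $V_{j,k}$.

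It remains to replace $(e_{j,k})$ by the independent Gaussian vectors $V_j \sim N_p(0, \mathbf{\Sigma}_{\mathbf{G}}(t_j))$. Assumptions \hyperref[(B1)]{(B1)}--\hyperref[(B5)]{(B5)} enable the Gaussian approximation of \cite{Mies2022seq_high-dim} for the partial sums $\sum_{j=1}^n \mathbf{G}(t_j, \Upsilon_j)$, delivering coupling at the rate $\Theta(p)\sqrt{n\log n}(p/n)^{(q-2)/(6q-4)}$. Because the weights $w_{j,k}(t) := (Nh_{d,k} nh_{r,k})^{-1}\sum_i K_d((m_k(t) - m_k(i/N))/h_{d,k})\tilde K_r^*((j/n - i/N)/h_{r,k}, i/N)$ are uniformly bounded and Lipschitz in both $t$ and $k$, Abel summation transports the partial-sum coupling bound to the weighted sum $\mathcal V_k^\star(t) - \mathcal V_k(t)$ at a cost comparable to the coupling rate itself. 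The principal obstacle is that this coupling controls only $\ell^\infty$ distance of finite-dimensional marginals, whereas the statement requires a uniform sup--max bound. I would resolve this by discretizing $\hat{\mathcal T}$ on a grid of spacing $n^{-c}$ with $c$ large enough, handling the oscillation of $\mathcal V_k^\star - \mathcal V_k$ between grid points via Lipschitz bounds on $w_{j,k}$ together with Gaussian tail inequalities, and invoking Nazarov's anti-concentration inequality \cite{nazarov2003maximal} so that the discrete Kolmogorov-distance bound inflates to a full sup--max statement at additive cost absorbed into the second summand on the right-hand side of \eqref{eq:GA in prop:GA in high-dim}. Combining this Gaussian coupling error with the linearization error completes the argument.
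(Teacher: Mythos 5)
The high-level structure you adopt matches the paper: expand the rearranged estimator via Taylor, plug in the jackknife local-linear Bahadur representation, couple the noise with Gaussian vectors through the sequential coupling of \cite{Mies2022seq_high-dim}, and transfer the partial-sum coupling to the weighted sum by Abel summation. However, there are genuine gaps in two places.

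First, both of your intermediate rate claims in the Taylor expansion are wrong, and they do not add up to the rate you assert. (a) The deterministic Riemann sum $m_{N,k}^{-1}(m_k(t))$ approximates $t$ to order $h_d^2$, not $h_d^3$: the paper's Lemma~\ref{lem:m_N} shows the bias equals $\tfrac{1}{2}\kappa_2(K_d)\,h_d^2\,(m_k^{-1})''(\xi)$, which is driven by the curvature of $m_k^{-1}$ and does \emph{not} vanish by symmetry of $K_d$ or by the boundary exclusion in $\hat{\mathcal T}$. (c) The quadratic remainder is $O_p(R_n^2/h_d)$, not $O_p(R_n^2/h_d^2)$. Your bound is obtained by a pointwise estimate on $K_d''$, but it discards the fact that $K_d$ (hence $K_d''$) has compact support: under \hyperref[(A2)]{(A2)} only $O(Nh_d/M)$ of the $N$ indices $i$ have $|\tilde m_k(i/N)-u|\leq h_{d,k}$, so the average over $i$ gains back a factor of $h_d$. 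This distinction matters — with the paper's recommended tuning $h_d \asymp R_n^{2/3}$ one has $\sqrt{nh_r}\cdot R_n^2/h_d^2 \to \infty$ while $\sqrt{nh_r}\cdot R_n^2/h_d \to 0$, so your claimed $R_n^2/h_d^2$ remainder would defeat the proposition. Note also that the two claimed rates $h_d^3$ and $R_n^2/h_d^2$ do not combine to $(R_n^2 + h_d^3)/h_d$; you appear to have transcribed the statement's final rate rather than derived it from your own intermediate bounds.

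Second, the discretization of $\hat{\mathcal T}$ plus Nazarov's anti-concentration inequality is not needed here and is the wrong tool for this proposition. The \cite{Mies2022seq_high-dim} coupling already controls $\mathbb E\max_{m\le n}|\sum_{j\le m}(\mathbf e_j - \mathbf V_j)|^2$, i.e.\ the full partial-sum path rather than finite-dimensional marginals, and Abel summation then bounds $\sup_{t,k}|\mathcal V_k^\star(t)-\mathcal V_k(t)|$ directly by the product of $\max_m|\sum_{j\le m}(\mathbf e_j-\mathbf V_j)|$ and the total variation in $j$ of the weights, which is $O(1/(nh_r))$ uniformly over $t$ and $k$ by \hyperref[(C1)]{(C1)}. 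Nazarov's inequality enters only in Theorem~\ref{thm:high-dim bootstrap theorem}, where one must compare distribution functions of suprema; for the present coupling statement it is superfluous, and invoking it suggests you are conflating the Gaussian approximation (Proposition~\ref{prop:GA in high-dim}) with the bootstrap validity statement (Theorem~\ref{thm:high-dim bootstrap theorem}).
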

\begin{remark}
    \label{remark:GA in high-dim}
    $\sqrt{nh_r}\mathcal{V}(t)$ is a valid nondegenerate Gaussian process in the sense that there exists a constant $\underline{\sigma}>0$ such that $\min_{1\leq k\leq p}\operatorname{Var}(\sqrt{nh_r}\mathcal{V}_k(t))\geq \underline{\sigma}^2$,  which has been shown in the proof of Theorem \ref{thm:high-dim bootstrap theorem}. The rate in the right side of \eqref{eq:GA in prop:GA in high-dim} can be $o_p(1)$ if divergent $p = O(nh_r^3)^{1/4-\alpha}$ for some $\alpha\in(0,1/4)$ with typical scaling $\Theta(p)\asymp\sqrt{p}$, sufficiently large $q$.
\end{remark}


\subsection{Bootstrap algorithm of joint SCBs}   
\label{sec:construction of joint SCBs}
Following Proposition \ref{prop:GA in high-dim}, we can investigate  $\max_{1\leq k\leq p}\sup_{t\in \hat {\mathcal T}}\sqrt{nh_r}|m_k(t)-\hat m_{I,k}(t)|$ through studying $\max_{1\leq k\leq p}\sup_{t\in \hat{\mathcal{T}}}|\sqrt{nh_r}\mathcal V_k(t)|$. However, it is sophisticated to study the limiting distribution of $\max_{1\leq k\leq p}\sup_{t\in \hat{\mathcal{T}}}|\sqrt{nh_r}\mathcal V_k(t)|$ due to the high dimensionality and the complicated time-varying covariance structure of $\mathcal V_k(t)$.  One direct approach is to generate copies of the estimated $\mathcal V_k(t)$ and obtain empirical quantiles of their maxima, where the estimated $\mathcal V_k(t)$ is a Gaussian random variable having the form analogous to $\mathcal V_k(t)$, with the unknown quantities $m_k(t)$, $m_k'(t)$, and $\mf \Sigma_{\mf G}(t_j)$ replaced by appropriate estimators. However, it is widely recognized that accurately estimating $m_k'(t)$ can be challenging. 
Moreover, the estimation of $\mf\Sigma_{\mf G}(t_j)$ can also be difficult if $\mf G(t_i,\Upsilon_i)$ is piecewise locally stationary since the breakpoints are difficult to identify. This yields inconsistency around the abrupt changes of the long run covariance for usual nonparametric estimators as seen in works such as \cite{zhou2013hetero}, \cite{zhang2015time}, \cite{bai2024difference}.
\begin{breakablealgorithm}
	\caption{Bootstrap for joint SCBs of $\mathbf{m}(t)$}
	\label{alg:high-dim bootstrap}
    \begin{algorithmic}[0]
\setlength{\baselineskip}{0.7\baselineskip}
		\STATE{\textbf{Data:}} $\mathbf{y}_i\in\mathbb{R}^p,i=1,\dots,n$
        \STATE{\textbf{Initialization}}: Choose bandwidths $h_{r,k}$, $h_{d,k}$ and the window size $L$ by Section \ref{Bandwidth selection}.
        \
        \STATE{\textbf{Step 1:}} Obtain the jackknife estimator $\tilde m_k(t)=2\hat{m}_{k,h_{r,k}/\sqrt{2}}(t)-\hat{m}_{k,h_{r,k}}(t)$ with $\hat{m}_{k,h_{r,k}}(t)$ defined in \eqref{eq:high-dim ll def}.
        \STATE{\textbf{Step 2:}} Obtain the monotone estimator $\hat{\mathbf{m}}_I(t)$ defined above \eqref{high-dim monotone estimator}.
        \STATE{\textbf{Step 3:}} Obtain  the residual $\hat{\boldsymbol{\varepsilon}}_i=(\hat \varepsilon_{i,1},\dots,\hat \varepsilon_{i,p})^{\top}$ where $\hat \varepsilon_{i,k}=y_{i,k}-\tilde m_k(t_i)$ and derive the estimated cumulative long-run covariance $\hat{\mf Q}(j)$ defined in \eqref{eq:cumulative long-run cov}.
        \STATE{\textbf{Step 4:}} Given data, generate $\mathbf{V}_j^*|\Upsilon_n\sim N_p(0,\hat{\mathbf{Q}}(j)-\hat{\mathbf{Q}}(j-1))$, $j=1,\dots,n$, where $V_j^*s$ are independent of each other conditional on the data, and calculate
    \begin{equation}
        \mathcal{V}^*(t)=:\sum_{j=1}^n\sum_{i=1}^N \mathcal{W}(i/N,t)\mathcal{K}_r(j/n,i/N) \mathbf{V}_j^*,
        \label{GA-V-star}
    \end{equation}
where $\mathcal{W}(i/N,t)=\mathrm{diag}\left\{ \hat W_{i,k,I}^*(t) \right\}_{k=1}^p$, $\hat W_{i,k,I}^*(t)=\hat W_{i,k,I}(t)/\sum_{i=1}^N \hat W_{i,k,I}(t)$ and
        $$
            \hat W_{i,k,I}(t)=K_d\left(\frac{\tilde m_k(i/N)-\hat m_{I,k}(t)}{h_{d,k}}\right),\quad \mathcal{K}_r(j/n,i/N)=\mathrm{diag}\left\{ \frac{1}{nh_{r,k}}\tilde{K}^*_r\left(\frac{j/n-i/N}{h_{r,k}},\frac{i}{N}\right) \right\}_{k=1}^p. 
        $$       
      \STATE{\textbf{Step 5:}} Repeat step 4 for $B$ times and obtain the sample $\Big\{\max_{1\leq k\leq p}\sup_{t\in\hat{\mathcal{T}}} \Big| \mathcal{V}^{*(r)}_k(t) \Big|\Big\}_{r=1,...B}$. Let $\hat q_{1-\alpha}$ be the $(1-\alpha)_{th}$ sample quantile.
        
        \STATE{\textbf{Output:}} Level $(1-\alpha)$ joint SCBs $\hat{\mf m}_{I}(t)\pm \hat q_{1-\alpha}\mathbf{1}_p$, where $\mathbf{1}_p=(1,\dots,1)^\top$ is a $p$-dimensional vector.  
	\end{algorithmic}
\end{breakablealgorithm}

To overcome the above two difficulties we consider the bootstrapping $\mathcal V^*(t)$ in \eqref{GA-V-star} of Algorithm \ref{alg:high-dim bootstrap},  whose formula \eqref{GA-V-star}  does not involve $m_k'(t)$. Moreover, recent progress in Gaussian approximation (\cite{Mies2022seq_high-dim}) shows that the consistency of the {\it cumulative} long-run covariance estimator, instead of the uniform consistency of the time-varying long-run covariance estimator, is sufficient for the jointly simultaneous inference. In this paper, we estimate $\mf Q(k)=:\sum_{i=1}^k\boldsymbol \Sigma_{\mf G}(t_i)$ by
\begin{equation}
    \hat{\mathbf{Q}}(k)=\sum_{i=L}^k\frac{1}{L}\left(\sum_{j=i-L+1}^i \hat{\boldsymbol{\varepsilon}}_j\right)\left(\sum_{j=i-L+1}^i \hat{\boldsymbol{\varepsilon}}_j\right)^\top\label{eq:cumulative long-run cov},
\end{equation} 
 where $\hat{\boldsymbol{\varepsilon}}_i=(\hat \varepsilon_{i,1},\dots,\hat \varepsilon_{i,p})^{\top}$ are the nonparametric residuals, that is, $\hat \varepsilon_{i,k}=y_{i,k}-\tilde m_k(t_i)$. 
A similar estimator for the cumulative long-run covariance has been studied by \cite{Mies2022seq_high-dim}, where the original series, instead of the residuals, is used. This is because \cite{Mies2022seq_high-dim} assumes the data has zero mean when estimating $\mathbf{Q}(k)$. In the  online supplement, we show that the estimation error for cumulative long-run covariance  ${\mf Q}(k)$ using $\hat {\mf Q}(k)$ is relatively small w.r.t ${\mathbf Q}(k)$ by Lemma \ref{lem:high long-run var}.
With the use of $\hat {\mf Q}(k)$ in Algorithm \ref{alg:high-dim bootstrap}, we introduce the following theorem which shows that the distribution of our bootstrap samples given data uniformly approximates the distribution of $\max_{1\leq k\leq p}\sup_{t\in\hat{\mathcal{T}}} \left|\mathcal{V}_k(t) \right|$. 

\begin{thm}
    \label{thm:high-dim bootstrap theorem}
    Recall the Gaussian process $\mathcal V(t)$ defined in Proposition \ref{prop:GA in high-dim}. Suppose
    \begin{equation}
    \label{eq: def union rate}
    \rho_n=:  \frac{\Theta(p)\left(\frac{p}{n}\right)^{\frac{q-2}{6q-4}}\sqrt{n\log n} }{nh_r} \bigvee \frac{\sqrt{\log (n)(\sqrt{n \varphi_n p}+\varphi_n+p)} }{nh_r},
\end{equation} 
    where $\varphi_n=:\Theta^2(p)(\sqrt{npL}+nL^{-1})+p^{3/2}L\sqrt{n}\left(\Theta(p)R_n+\sqrt{n}h_r^5+\sqrt{n}\log^6(n)/(nh_r)\right)$.
    Assuming the conditions in Proposition \ref{prop:GA in high-dim} hold and 
        \begin{equation}
        \rho_n\sqrt{nh_r\log(Np)}=o(1),
        \quad\sqrt{nh_r\log(Np)\log(np)}=o(Nh_d),\label{eq:rate for high-dim boot + continu}
    \end{equation}
    then \eqref{eq:GA in prop:GA in high-dim} holds and for $\mathcal{V}^*(t)=(\mathcal{V}^*_1(t),\dots,\mathcal{V}^*_1(t))^\top$ defined in \eqref{GA-V-star}, we have
    \begin{equation}
        \sup_{x\in\mathbb{R}} \left|\mathbb{P}\left\{\max_{1\leq k\leq p}\sup_{t\in\hat{\mathcal{T}}} \left|\sqrt{nh_r}\mathcal{V}_k(t) \right|\leq x \right\} - \mathbb{P}\left\{\max_{1\leq k\leq p}\sup_{t\in\hat{\mathcal{T}}} \left| \sqrt{nh_r}\mathcal{V}^*_k(t) \right|\leq x \,\middle\vert\, \Upsilon_n \right\} \right|\rightarrow_p 0,
        \label{eq:boot in thm:high-dim bootstrap theorem}
    \end{equation}
\end{thm}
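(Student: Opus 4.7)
The strategy is to reduce the comparison between the suprema of the continuous-parameter centered Gaussian processes $\sqrt{nh_r}\mathcal{V}(t)$ and $\sqrt{nh_r}\mathcal{V}^*(t)\mid\Upsilon_n$ to a max-comparison of two high-dimensional Gaussian vectors, and then to apply a Gaussian comparison inequality of Chernozhukov--Chetverikov--Kato type together with Nazarov's anti-concentration inequality to convert a max-norm covariance discrepancy into a Kolmogorov-distance bound. Both processes are centered Gaussian by construction: the former has covariance determined by the true $\mathbf{\Sigma}_{\mathbf{G}}(t_j)$ and $m_k'(t)$, while the latter uses the estimated increments $\hat{\mathbf{Q}}(j)-\hat{\mathbf{Q}}(j-1)$ together with the data-driven weights $\hat W_{i,k,I}^*(t)$.

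First, I would discretize $\hat{\mathcal{T}}$ on a grid $\mathcal{T}_N$ of mesh $O(1/N)$, producing a combined index set of cardinality $O(Np)$. Since the $t$-derivative acts through the kernel $K_d$, the Lipschitz modulus of each process in $t$ is of order $1/h_d$; a standard Gaussian maximal inequality then bounds the gap between the continuous sup and the grid max by $O_p(\sqrt{nh_r\log(Np)\log(np)}/(Nh_d))$, which is $o_p(1)$ by the second rate condition in \eqref{eq:rate for high-dim boot + continu}. Next, I would control the max-norm covariance difference between the two resulting $O(Np)$-dimensional Gaussian vectors. This difference decomposes into (i) the error from replacing the true $m_k'(t) K_d((m_k(i/N)-m_k(t))/h_{d,k})/(Nh_d)$ by the data-driven $\hat W_{i,k,I}^*(t)$, via the identity that the derivative of $\hat m_{I,k}^{-1}$ is exactly the normalized kernel weight; this propagates the uniform error $R_n$ of the jackknife local-linear estimator $\tilde m_k$; and (ii) the error from replacing $\mathbf{\Sigma}_{\mathbf{G}}(t_j)$ by $\hat{\mathbf{Q}}(j)-\hat{\mathbf{Q}}(j-1)$, which via summation by parts through the smooth kernel weights $\tilde K_r^*$ reduces to $\max_{1\le j\le n}\|\hat{\mathbf{Q}}(j)-\mathbf{Q}(j)\|_{\max}$. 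Lemma \ref{lem:high long-run var} supplies the bound $\varphi_n$ for the latter, and aggregating (i) and (ii) yields a max-norm covariance discrepancy of order $\rho_n$. Applying the Gaussian comparison inequality together with Nazarov's anti-concentration, whose hypothesis is met by Remark \ref{remark:GA in high-dim} bounding the marginal variances below by $\underline{\sigma}^2$, yields a Kolmogorov-distance bound of order $\rho_n\sqrt{nh_r\log(Np)}$, which vanishes by the first condition in \eqref{eq:rate for high-dim boot + continu}. Combining this with \eqref{eq:GA in prop:GA in high-dim} from Proposition \ref{prop:GA in high-dim} completes the argument.

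The principal obstacle is establishing Lemma \ref{lem:high long-run var}, the cumulative long-run covariance consistency at rate $\varphi_n$; this underpins the entire argument. Three compounding difficulties arise. First, the piecewise local stationarity of $\mathbf{e}_{i,n}$ renders pointwise kernel estimators inconsistent near breakpoints, but the cumulative block-averaging in \eqref{eq:cumulative long-run cov} with window $L$ absorbs the abrupt jumps because only the cumulative sum, rather than a pointwise estimate, is required. Second, the high-dimensional setting demands a maximum over $O(p^2)$ entries, leveraging the geometric physical dependence decay \hyperref[(B2)]{(B2)} and the envelope $\Theta(p)$. Third, the substitution of the unobserved $\mathbf{e}_{i,n}$ by the nonparametric residuals $\hat{\boldsymbol{\varepsilon}}_i$ introduces a uniform contamination of order $R_n$ that enters quadratically via cross products in $\hat{\mathbf{Q}}$; this propagation is precisely what produces the $p^{3/2}L\sqrt{n}$ terms in $\varphi_n$. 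Balancing these three sources against the Gaussian-comparison loss $\sqrt{nh_r\log(Np)}$ yields the rate $\rho_n$ in \eqref{eq: def union rate}.
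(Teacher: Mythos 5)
Your high-level intuitions are mostly right: discretize $\hat{\mathcal{T}}$ on an $O(1/N)$ grid, use $Nh_d$ to control the continuous-to-grid gap, appeal to Lemma~\ref{lem:high long-run var} for the cumulative long-run covariance, bound the weight mismatch via $(\hat m_{I,k}^{-1})'\circ\hat m_{I,k}(t)=\frac{1}{Nh_d}\sum_iK_d(\cdot)$, and finish with Nazarov's inequality on the lower variance bound from Remark~\ref{remark:GA in high-dim}. However, the central step you propose --- a Chernozhukov--Chetverikov--Kato Gaussian-comparison inequality driven by a max-norm covariance discrepancy claimed to equal $\rho_n$ --- is not what the paper does, and the rate bookkeeping does not cohere with that route.

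The paper's proof is a \emph{coupling} argument, not a covariance-comparison argument. It introduces the intermediate process $\mathcal{V}^{(1)}(t)$, which uses the data-driven weights $\hat W^*_{i,k,I}(t)$ but the \emph{true} Gaussian innovations $\mathbf{V}_j$, then (a) constructs $\mathbf{V}_j^*\sim N_p(0,\hat{\mathbf{Q}}(j)-\hat{\mathbf{Q}}(j-1))$ on the \emph{same} enlarged probability space via Lemma~\ref{lem:gaussian cov structure} so that $\mathbb{E}\bigl(\max_k|\sum_1^k(\mathbf{V}_j-\mathbf{V}_j^*)|^2\mid\Upsilon_n\bigr)\lesssim\log(n)(\sqrt{n\varphi^\Delta\Phi}+\varphi^\Delta+\Phi)$, which after summation by parts through $\tilde K_r^*$ yields $\mathbb{E}(\sup|\mathcal{V}^{(1)}-\mathcal{V}^*|\mid\Upsilon_n)$ of order $\sqrt{\log(n)(\sqrt{n\varphi_n p}+\varphi_n+p)}/(nh_r)$; and (b) controls $\mathcal{V}^{(0)}-\mathcal{V}^{(1)}$ \emph{realization-wise} by showing that both are stochastic expansions, via $\mathcal{E}^{(0)}$ and $\mathcal{E}^{(1)}$, of the same quantity $m_{N,k}(t)-\hat m_{I,k}(t)$ up to $O_p(R_n^2/h_d)$. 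The Kolmogorov bound is then extracted by Lemma~\ref{lem:prob inequality} (Chebyshev) plus Nazarov's anti-concentration with grid $g_n=N$, giving a final rate $O_p\bigl(\sqrt{\rho_n\sqrt{nh_r\log(Np)}}+\sqrt{\sqrt{nh_r\log(Np)\log(np)}/(Nh_d)}\bigr)$, not $\rho_n\sqrt{nh_r\log(Np)}$ as you state (the condition~\eqref{eq:rate for high-dim boot + continu} then ensures the square roots vanish).

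The concrete gaps in your proposal: (1) $\rho_n$ is \emph{not} a covariance discrepancy. The factor $\sqrt{n\varphi_n p}$ inside its square root is the signature of Lemma~\ref{lem:gaussian cov structure}'s $L^2$-coupling of partial-sum maxima; if you compute the max-norm covariance discrepancy directly, you get a term of order $\varphi_n/(nh_r)$ after summation by parts, with no $\sqrt{np}$ inflation and no square root, so a CCK comparison would give $\bigl(\varphi_n/(nh_r)\bigr)^{1/3}\log^{2/3}(Np)$ --- a completely different expression that you would then need to show dominates or matches~\eqref{eq:rate for high-dim boot + continu}. (2) The first branch of $\rho_n$, the Gaussian-approximation error $\Theta(p)(p/n)^{(q-2)/(6q-4)}\sqrt{n\log n}/(nh_r)$ from Lemma~\ref{high-GA}, has no covariance-discrepancy interpretation at all; in the paper it enters through the dual-expansion comparison $\mathbb{E}|\mathcal{V}^{(0)}-\mathcal{V}^{(1)}|$, because both processes approximate the estimator difference through intermediate processes driven by $\mathbf{e}_j$ versus $\mathbf{V}_j$. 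Your proposal has no step that produces this term. (3) The bootstrap process $\mathcal{V}^*$ has a \emph{random, data-dependent} covariance; applying a Gaussian-comparison inequality with one random and one deterministic covariance requires either a high-probability bound on the discrepancy together with a conditional anti-concentration statement, or a realization-wise argument --- precisely the choice the paper makes by putting $\mathbf{V}_j$ and $\mathbf{V}_j^*$ on one probability space. Your invocation of Nazarov ``together with'' a Gaussian-comparison inequality is redundant (CCK-type comparison bounds already absorb anti-concentration) and obscures this issue.
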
\begin{remark}
\label{remark:high-dim boot}Under the typical scaling $\Theta(p)\asymp\sqrt{p}$, \eqref{eq:rate for high-dim boot + continu}  can be satisfied with $nh_r^6\rightarrow\infty$, $\lfloor L\rfloor\asymp (n/p)^{1/3}$, $p\asymp (nh_r^{6})^{1/7-\alpha}$ for a sufficiently small $\alpha>0$, 
 if $q$ and $N$ are sufficiently large. Simulation results in Section \ref{Simulation study} reveal that our method works well with $p$ diverging at a rate faster than $(nh_r^6)^{1/7-\alpha}$. We leave the refinement on the theory to allow a faster diverging rate of $p$ using sharper inequalities in future studies. 
 \TPM{Constant $M$ in Assumption \hyperref[(A2)]{(A2)} is allowed to approach 0 as the sample size increases, which has been further discussed in the online supplement. Moreover, to deal with the scenario of $M=0$, we propose a penalized SCB for simultaneous inference under monotonicity. Specifically, we consider the pseudo data 
$\mathbf{y}_i^*=\mathbf{y}_i+\lambda_n(t_i)\mathbf{1}_p=\mathbf{f}(t_i)+\mathbf{e}_i$ where $\mathbf{f}(t)=:(f_1(t),\dots,f_p(t))^\top$, and $f_k(t)=:m_k(t)+\lambda_n(t)$ with a pre-determined penalization $\lambda_n(t)=C_1(n)t+C_2(n)t^2+C_3(n)t^3$, where  $C_i(n)s$ are appropriate decaying positive coefficients. 
The mean of the pseudo data is strictly monotone, thus we could perform simultaneous inference for $\mathbf{f}(t)$ using Algorithm \ref{alg:high-dim bootstrap}. We then obtained the penalized estimator and the corresponding SCBs by subtracting  $\lambda(t)-C_3(n)g_{n,1}t$ from the estimator of $\mf f(t)$ and the associate lower bound and upper bound of the joint SCBs, where a suitable choice of $g_{n,1}\to 0$ introduces negligible bias while guarantees the monotonicity. A thorough discussion of this method, including the rigorous proof of its validity, and the simulation evidence is presented in Section \ref{sec:penalization}, \ref{sec:penalization simulation} 
of the online supplement.}
\end{remark}
\TPR{Together with Gaussian approximations in Proposition \ref{prop:GA in high-dim}, we show in the online supplement  that \eqref{eq:boot in thm:high-dim bootstrap theorem} enjoys a rate of $O_p\left(\sqrt{\rho_n\sqrt{nh_r\log(Np)}}+\sqrt{\sqrt{nh_r\log(Np)\log(np)}/Nh_d}\right)$. This ensures the validity of our bootstrap Algorithm \ref{alg:high-dim bootstrap} for the construction of joint SCBs by Remark \ref{remark:high-dim boot} and the rate condition \eqref{eq:rate for high-dim boot + continu}.} \TPR{Based on the non-asymptotic Gaussian approximation in Proposition \ref{prop:GA in high-dim}, we proposed a bootstrap algorithm to mimic the finite sample behavior of the maximal deviation of the proposed monotone estimator with diminishing approximation error as $n\rightarrow \infty.$ We show that the yielding SCBs are asymptotically correct with shrinking width in simulation.  We thus reconcile the theoretical asymptotic where $n\rightarrow \infty$ yet the time interval remains bounded. }

\section{Time-varying coefficient linear regression}
\label{Time-varying coefficient regression}
Besides monotone trends, monotonicity also arises in time-varying relationships between variables. For example, greenhouse gases such as carbon dioxide, methane, and water vapor, can help to regulate Earth's temperature by trapping heat from the sun that would otherwise be radiated back into space, creating so-called natural greenhouse effect; see \cite{anderson2016CO2} for details. 
Based on the growth of greenhouse gases, we can identify that there exists a smoothly increasing relationship between the response variable (temperature) and the predictor (sunshine duration). 
This motivates us to 
consider the following time-varying coefficient linear model
\begin{equation}
    y_{i}=\mathbf{x}_{i}^{\top}\mathbf{m}(t_i)+e_{i},\label{Time-varying_n}
\end{equation}
where $y_{i},\mathbf{x}_{i},e_{i}$ represent the response (e.g., temperature), $p$-dimensional covariate process (e.g., sunshine duration, rain falls, etc.), and the zero-mean error process, respectively, while $\mathbf{m}(t)=(m_1(t),\dots,m_p(t))^{\top}$ denotes the $p$-dimensional time-varying coefficients. \TPM{The covariate process $\mf x_i$ can also include intercept and lagged terms such as $\mf x_i=(1,x_i,x_{i-1})^\top$.} In this section, we consider model \eqref{Time-varying_n} with fixed dimension $p$; the scenario of diverging $p$ is much more complicated, and potential future work might explore different methods for high-dimensional covariates. To reflect the increasing relationship, in model \eqref{Time-varying_n} we further assume the following monotonicity constraint: there exists a known $\mathbf{C}\in \mathbb{R}^{s\times p}$ s.t. each coordinate function of the $\mathbf{m}_{\mathbf{C}}(t)=:\mathbf{C}\mathbf{m}(t)$ is smoothly increasing. An example is \eqref{eq:tmax-SD} in the data analysis where the choice of $\mathbf{C} = (0, 1)$ reflects a smoothly increasing relationship between sunshine duration and temperature. Our inference for model \eqref{Time-varying_n} is based on the jackknife local linear estimator $\tilde{\mf m}(t)=2\hat{\mf m}_{h_r/\sqrt{2}}(t)-\hat{\mf m}_{h_r}(t)$ where
\begin{equation}
    \left(\hat{\mathbf{m}}_{h_r}(t), \hat{\mathbf{m}}^{\prime}_{h_r}(t)\right)=:\underset{\eta_0, \eta_1 \in \mathbb{R}^p}{\arg \min }\left[\sum_{i=1}^n\left\{y_i-\mathbf{x}_i^{\top} \eta_0-\mathbf{x}_i^{\top} \eta_1\left(t_i-t\right)\right\}^2 K_{r}\left(\frac{t_i-t}{h_r}\right)\right].
    \label{local linear}
\end{equation} 
Similar to \eqref{eq:m_I}, we define our monotone estimator through monotone rearrangement
\begin{equation}
    \hat m_{\mathbf{C},I,k}^{-1}(v) = \int_{-\infty}^v \frac{1}{Nh_d}\sum_{i=1}^NK_d\left(\frac{\tilde m_{\mathbf{C},k}(i/N)-u}{h_d}\right)\mathrm{d}u\label{eq:m_I.inv multi-dim}
\end{equation}
where $\tilde{m}_{\mathbf{C},k}(t)$ is the $k_{th}$ coordinate of $\tilde{\mathbf{m}}_\mathbf{C}(t)=:\mathbf{C}\tilde{\mathbf{m}}(t)$. Define $\hat{\mathbf{m}}_{\mathbf{C},I}(t)=:(\hat m_{\mathbf{C},I,1}(t),\dots,\hat m_{\mathbf{C},I,s}(t))^{\top}$ where $\hat m_{\mathbf{C},I,k}(t)$ is the inverse of $\hat m_{\mathbf{C},I,k}^{-1}(v)$ defined in \eqref{eq:m_I.inv multi-dim}. Consequently $\hat{\mathbf{m}}_{\mathbf{C},I}(t)$ serves as our monotone estimator of ${\mathbf{m}}_{\mathbf{C}}(t)$, and the time span $\hat{\mathcal{T}}=\cap_{k=1}^p \hat{\mathcal{T}}_k$ where $\hat{\mathcal{T}}_k$ is defined in the same way as \eqref{index_I} with $\hat{\mathcal{T}}_k^{-1}=\{u\in\mathbb{R}:\min_{1\leq i\leq N}\tilde{m}_{\mf C,k}(i/N)\leq u\leq \max_{1\leq i\leq N}\tilde{m}_{\mf C,k}(i/N)\}$ so that $\hat{\mf m}_{\mf C,I}(t)$ is well defined on $\hat{\mathcal{T}}$. We consider $(\mathbf{x}_i)$ and $(e_i)$  generated from \begin{equation*}
    \mathbf{x}_i=\mathbf{H}(t_i,\Upsilon_i)\quad \text{and}\quad  e_i=G(t_i,\Upsilon_i),\quad i=1,\dots,n,
\end{equation*}
where $\mathbf{H}=:(H_{1},\dots,H_{p})^\top$ and $G$ are measurable functions. The quantities $H_{i,1},i=1,\dots,n,$ are fixed to be $1$, representing the intercept of the regression coefficients. Furthermore, we make the following assumptions on both $(\mathbf{x}_i)$ and $(e_i)$:
\begin{description}[itemsep=0pt,parsep=0pt,topsep=0pt,partopsep=0pt]
\setlength{\baselineskip}{0.7\baselineskip}
    \item[\setword{\textbf{(B1')}}{(B1')}] \textit{There exists a Lipschitz continuous matrix function $\mf M(t)$ s.t. $\mathbf{M}(t)=:\mathbb{E}\left\{\mathbf{H}\left(t,\Upsilon_0\right) \mathbf{H}\left(t,\Upsilon_0\right)^{\top}\right\}$ and the smallest eigenvalue of $\mf M(t)$ is bounded away from zero for all $t\in[0,1]$. }
    \item[\setword{\textbf{(B2')}}{(B2')}] \textit{$\sup _{u\in[0,1]}\left\{\left\|\mathbf{H}\left(u,\Upsilon_0\right)\right\|_4\right\}<\infty$ and $\|\mathbf{H}\left(u,\Upsilon_0\right)-\mathbf{H}\left(v,\Upsilon_0\right)\|\leq L|u-v|,\forall u,v\in[0,1]$ for some constant $L>0$. }
    \item[\setword{\textbf{(B3')}}{(B3')}] For \textit{$\mathbf{U}\left(t,\Upsilon_i\right)=:\mathbf{H}\left(t,\Upsilon_i\right) G\left(t,\Upsilon_i\right)$, $\sum_{i=2}^n\|\mathbf{U}(t_i,\Upsilon_0)-\mathbf{U}(t_{i-1},\Upsilon_0)\|<\infty$.}
    \item[\setword{\textbf{(B4')}}{(B4')}] \textit{$\delta_4(\mathbf{U}, k)=O(\chi^k)$ for some $\chi\in(0,1)$ and $\sup_{t\in[0,1]}\left\{\left\|\mathbf{U}\left(t,\Upsilon_0\right)\right\|_4\right\}<\infty$}
    \item[\setword{\textbf{(B5')}}{(B5')}] \textit{The smallest eigenvalue of the long-run covariance matrix function $\mf \Lambda(t)=:\mathbf{\Sigma}_{\mathbf{U}}(t)$ is bounded away from 0 for all $t\in[0,1]$. }
\end{description}

Assumptions \hyperref[(B1')]{(B1')} and \hyperref[(B2')]{(B2')} are standard in the literature for analyzing time-varying coefficient linear models and have also been used in \cite{zhou2010simultaneous}. Assumptions \hyperref[(B3')]{(B3')}-\hyperref[(B5')]{(B5')} coincide with \hyperref[(B1)]{(B1)}-\hyperref[(B4)]{(B4)} with fixed dimension,  allowing the error process $(e_i)$ to be piecewise locally stationary. It is noteworthy that \hyperref[(B2')]{(B2')} requires $(\mf x_i)$ to be locally stationary, since if  $(\mf x_i)$ is piecewise locally stationary the local linear estimator \eqref{local linear} is no longer consistent. 
With the above assumptions, the following Theorem \ref{thm:multi-SCB} presents Gaussian approximation of the maximum deviation $\max_{1\leq k\leq s}\sup_{t\in\hat{\mathcal{T}}}|\hat{m}_{\mf C,I,k}(t)-m_{\mf C,k}(t)|$ and provides a valid bootstrap method for mimicking the behavior of the Gaussian process which approximates the maximum deviation. 
\begin{thm}
    \label{thm:multi-SCB}
    Let conditions \hyperref[(B1')]{(B1')}-\hyperref[(B5')]{(B5')}, \hyperref[(C1)]{(C1)}-\hyperref[(C2)]{(C2)} hold 
    and the $\mathbb{R}^s$-valued function $\mathbf{m}_{\mf C}(t)$ satisfy conditions \hyperref[(A1)]{(A1)}-\hyperref[(A2)]{(A2)}. Recall $\tilde K_r^*$ defined in \eqref{eq:K^*}, we have following results:\\
    (i). There exists independent Gaussian random vector $\mathbf{V}_j=:(V_{j,1},\dots,V_{j,s})^{\top}\sim N_s(0,\mathbf{\Sigma}_{\mathbf{C}}(j))$, $j=1,\dots,n$ such that
    \begin{equation}
        \max_{1\leq k\leq s}\sup_{t\in\hat{\mathcal{T}}}\left|\sqrt{nh_r}(\hat{m}_{\mathbf{C},I,k}(t)-m_{\mathbf{C},k}(t))-\sqrt{nh_r}\mathcal{V}_{\mathbf{C},k}(t)\right|=O_p(\rho_n\sqrt{nh_r}),\label{eq:GA in prop:GA in multi-dim}
    \end{equation}
    where $\rho_n=:h_d^2+R_n^2/h_d+n^{1/4}\log^2 (n)/(nh_r)$, $\mathbf{\Sigma}_{\mathbf{C}}(j)=:\mathbf{C}\mathbf{M}^{-1}(t_j)\mf\Lambda(t_j)\mathbf{M}^{-1}(t_j)\mathbf{C}^{\top}$ and 
    \begin{equation*}
        \mathcal{V}_{\mathbf{C},k}(t)=:\frac{m_{\mathbf{C},k}'(t)}{Nh_d}\sum_{i=1}^N K_d\left(\frac{m_{\mathbf{C},k}(i/N)-m_{\mathbf{C},k}(t)}{h_{d}}\right)\frac{1}{nh_r}\sum_{j=1}^n\tilde K_r^*\left(\frac{j/n-i/N}{h_r},\frac{i}{N}\right)V_{j,k}.
    \end{equation*}
    (ii). Suppose $\hat{\mf\Sigma}_{\mf C}(t_j)=:\mathbf{C}\hat{\mathbf{M}}^{-1}(t_j)\hat{\mf\Lambda}(t_j)\hat{\mathbf{M}}^{-1}(t_j)\mathbf{C}^{\top}$ where 
    $$
    \hat{\mathbf{M}}(t_j)=:\frac{1}{nh_r}\sum_{i=1}^n\mathbf{x}_i\mathbf{x}_i^{\top}K_{r}\left(\frac{t_i-t_j^*}{h_r}\right), t_j^*=\max\{h_r,\min(t_j,1-h_r)\},
    $$ 
    \begin{equation}
        \hat{\mf \Lambda}(t_j)=:\frac{1}{L}\left(\sum_{i=j-L+1}^j\mf{x}_i\hat{\varepsilon}_i\right)\left(\sum_{i=j-L+1}^j\mf{x}_i\hat{\varepsilon}_i\right)^\top,\quad \hat{\varepsilon}_i=:y_i-\mf x_i^\top\tilde{\mf m}(t_i).\label{eq:long-run estimation multi-dim}
    \end{equation}    
    Let $\bar{\varphi}_n=:\sqrt{n L}+n L^{-1}+\sqrt{n}L\left(h_r^2+\log(n)/\sqrt{nh_r}\right)+nL\left(h_r^5+\log^2n/(nh_r)\right)$. If 
    \begin{equation*}
        \frac{\log(N)\log^2(n)(\sqrt{n\bar{\varphi}_n}+\bar{\varphi}_n)}{nh_r}=o(1),\quad\sqrt{nh_r\log(N)\log(n)}=o(Nh_d),
    \end{equation*}
    and $\rho_n\sqrt{nh_r}=o(1)$, then there exists random vectors $\mf V_j^*=:(V_{j,1}^*,\dots,V_{j,s}^*)^\top,j=1,\dots,n,$ which are conditionally independent given data and follow $\mf{V}_j^*|\Upsilon_n\sim N_s(0,\hat{\mf \Sigma}_{\mf C}(t_j))$ such that             
    \begin{equation}
        \sup_{x\in\mathbb{R}} \left|\mathbb{P}\left\{\max_{1\leq k\leq s}\sup_{t\in\hat{\mathcal{T}}} \left|\sqrt{nh_r}\mathcal{V}_{\mf C,k}(t) \right|\leq x \right\} - \mathbb{P}\left\{\max_{1\leq k\leq s}\sup_{t\in\hat{\mathcal{T}}} \left| \sqrt{nh_r}\mathcal{V}^*_{\mf C,k}(t) \right|\leq x \,\middle\vert\, \Upsilon_n \right\} \right|\rightarrow_p 0,
        \label{eq:boot in thm:multi-dim bootstrap theorem}
    \end{equation}
    where
    \begin{equation}
        \mathcal{V}_{\mf C,k}^{*}(t)=:\frac{1}{nh_{r}}\sum_{j=1}^n\sum_{i=1}^N \hat W_{i,k,I}^*(t)\tilde K_r^*\left(\frac{j/n-i/N}{h_{r}},\frac{i}{N}\right) V_{j,k}^*\label{eq:V star in multi-dim} , 
    \end{equation}
    $$
    \hat W_{i,k,I}(t)=K_d\left(\frac{\tilde m_{C,k}(i/N)-\hat m_{C,I,k}(t)}{h_{d}}\right),\quad \hat W_{i,k,I}^*(t)=\hat W_{i,k,I}(t)/\sum_{i=1}^N \hat W_{i,k,I}(t).
    $$
\end{thm}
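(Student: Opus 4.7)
The plan is to prove Theorem \ref{thm:multi-SCB} by mirroring the structure of Proposition \ref{prop:GA in high-dim} and Theorem \ref{thm:high-dim bootstrap theorem}, adapted to the regression setting with fixed $p$ (so that $\Theta(p)$ is constant and the rate $\rho_n$ simplifies accordingly to what is stated).

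For part (i), I would first develop a stochastic expansion of the monotone rearrangement estimator. Since $\hat{m}_{\mathbf{C},I,k}$ is the inverse of $\hat{m}_{\mathbf{C},I,k}^{-1}$, a first-order expansion together with the definition \eqref{eq:m_I.inv multi-dim} and a Taylor expansion of $K_d$ about $(m_{\mathbf{C},k}(i/N)-m_{\mathbf{C},k}(t))/h_d$ yields
\begin{equation*}
\hat{m}_{\mathbf{C},I,k}(t) - m_{\mathbf{C},k}(t) \approx \frac{m_{\mathbf{C},k}'(t)}{Nh_d}\sum_{i=1}^N K_d\Bigl(\tfrac{m_{\mathbf{C},k}(i/N)-m_{\mathbf{C},k}(t)}{h_d}\Bigr)(\tilde{m}_{\mathbf{C},k}(i/N) - m_{\mathbf{C},k}(i/N)),
\end{equation*}
with remainder of order $O_p(h_d^2 + R_n^2/h_d)$ uniformly in $t \in \hat{\mathcal{T}}$ and $k \leq s$, using \hyperref[(A1)]{(A1)}, \hyperref[(A2)]{(A2)} to justify the monotone inversion and the Taylor expansion. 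A standard stochastic expansion of the jackknife local linear estimator (as in \cite{zhou2010simultaneous}) then gives
\begin{equation*}
\tilde{\mathbf{m}}(t) - \mathbf{m}(t) = \mathbf{M}^{-1}(t)\frac{1}{nh_r}\sum_{j=1}^n \tilde{K}_r^*\bigl((t_j-t)/h_r, t\bigr)\, \mathbf{x}_j e_j + O_p(R_n^2),
\end{equation*}
where jackknifing removes the leading $h_r^2$ bias. Substituting this back, multiplying by the $k$th row of $\mathbf{C}$, and replacing $\mathbf{M}^{-1}(i/N)$ by $\mathbf{M}^{-1}(t_j)$ up to $O(h_r)$ error (permissible since $|i/N - t_j|\lesssim h_r$ on the support of $\tilde K_r^*$), the resulting object is a kernel-weighted partial sum of $\mathbf{U}(t_j,\Upsilon_j)=\mathbf{H}(t_j,\Upsilon_j) G(t_j,\Upsilon_j)$. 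Applying the Gaussian approximation of \cite{Mies2022seq_high-dim} under \hyperref[(B3')]{(B3')}-\hyperref[(B5')]{(B5')} to cumulative sums of $\mathbf{U}(t_j,\Upsilon_j)$, combined with Abel summation to transfer the approximation through the weights $\tilde K_r^*$, replaces $\mathbf{U}(t_j,\Upsilon_j)$ by independent $\mathbf{W}_j\sim N_p(0,\mathbf{\Lambda}(t_j))$. Setting $\mathbf{V}_j = \mathbf{C}\mathbf{M}^{-1}(t_j)\mathbf{W}_j \sim N_s(0,\mathbf{\Sigma}_{\mathbf{C}}(j))$ and taking suprema over $t\in\hat{\mathcal{T}}$ and $k\leq s$ yields \eqref{eq:GA in prop:GA in multi-dim}, with the three rate contributions $h_d^2$, $R_n^2/h_d$, and $n^{1/4}\log^2 n/(nh_r)$ arising respectively from the rearrangement Taylor error, the amplified nonparametric remainder, and the Gaussian approximation step under fixed dimension.

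For part (ii), conditional on the data, $\mathcal{V}^*_{\mathbf{C},k}(t)$ is a centered Gaussian process in $t$ whose covariance structure depends on $\hat W^*_{i,k,I}(t)$, $\tilde K_r^*$, and $\hat{\mathbf{\Sigma}}_{\mathbf{C}}(t_j)$. I would establish three ingredients: uniform consistency of $\hat{\mathbf{M}}(t_j)$ for $\mathbf{M}(t_j)$ via standard local-linear arguments under \hyperref[(B1')]{(B1')}-\hyperref[(B2')]{(B2')}; consistency of $\hat{\mathbf{\Lambda}}(t_j)$ for $\mathbf{\Lambda}(t_j)$ in the cumulative sense at rate $\bar{\varphi}_n$, where the residual effect $\hat\varepsilon_i - e_i = \mathbf{x}_i^\top(\mathbf{m}(t_i) - \tilde{\mathbf{m}}(t_i))$ is absorbed via Cauchy-Schwarz and the local-linear rate $h_r^2+\log(n)/\sqrt{nh_r}$; and uniform closeness of $\hat W^*_{i,k,I}(t)$ to the population weight built from $m_{\mathbf{C},k}$, using the consistency of $\hat m_{\mathbf{C},I,k}$ established in (i). Together these give cumulative covariance closeness of $\mathcal{V}^*$ to $\mathcal{V}$. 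Passing to distributional closeness of the sup-max functional then proceeds by discretizing $\hat{\mathcal{T}}$ at a grid of size $O(1/(Nh_d))$ (with discretization error absorbed by the continuity rate $\sqrt{nh_r\log(N)\log(n)}=o(Nh_d)$), followed by Gaussian comparison and Nazarov's anti-concentration inequality \citep{nazarov2003maximal}, yielding \eqref{eq:boot in thm:multi-dim bootstrap theorem}.

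The main obstacle is the last step: ensuring that the cumulative covariance closeness is strong enough, and Nazarov's anti-concentration sharp enough, to upgrade to Kolmogorov-distance convergence of $\sup_{t,k}|\sqrt{nh_r}\mathcal{V}^*_{\mathbf{C},k}(t)|$ to $\sup_{t,k}|\sqrt{nh_r}\mathcal{V}_{\mathbf{C},k}(t)|$ under the stated rate conditions. The delicate interplay among the discretization scale, the bandwidth $h_d$, and the block length $L$ in $\hat{\mathbf{\Lambda}}$ - complicated by the fact that $\hat{\mathbf{\Lambda}}$ is built from nonparametric residuals and that the Gaussian process has variance of order $1/(nh_r)$ - is what dictates the precise rate conditions imposed in the theorem, and handling the propagation of these errors uniformly over the continuum $\hat{\mathcal{T}}$ is the technical heart of the argument.
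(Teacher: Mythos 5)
Your framework for part (i) is structurally correct (rearrangement linearization $\to$ local-linear stochastic expansion $\to$ Gaussian approximation $\to$ Abel summation through kernel weights), and your plan for part (ii) (cumulative covariance consistency, discretization, Nazarov anti-concentration) matches the paper in spirit. However, there is a concrete gap in part (i): the Gaussian approximation tool you propose does not deliver the rate $n^{1/4}\log^2 n$ that the theorem claims.

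You state that $\rho_n$ "simplifies accordingly to what is stated" when the high-dimensional GA of \cite{Mies2022seq_high-dim} is applied with fixed $p$. But that result (Lemma \ref{high-GA}) bounds $\mathbb{E}\max_{k\leq n}|\sum_{i\leq k}(\tilde X_i - \tilde Y_i)|$ by $C\Theta(p)\sqrt{n\log n}\,(p/n)^{\chi(q)}$ with $\chi(q)=(q-2)/(6q-4)$. Even as $q\to\infty$, $\chi(q)\to 1/6$, so with $p$ fixed this yields at best a rate of order $n^{1/3}\sqrt{\log n}$; and under the actual moment condition \hyperref[(B4')]{(B4')} (which imposes $q=4$), $\chi(4)=1/10$ and you only get $n^{2/5}\sqrt{\log n}$. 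Both are strictly worse than the $n^{1/4}\log^2 n$ appearing in $\rho_n$. The paper instead invokes Corollary 1 of \cite{Wu2011gaussian}, the classical strong invariance principle for fixed-dimensional processes under finite fourth moments, which gives exactly $o_p(n^{1/4}\log^2 n)$ for $\max_{i\leq n}|\sum_{j\leq i}\mathbf{C}\mathbf{M}^{-1}(t_j)\mathbf{x}_j e_j - \sum_{j\leq i}\mathbf{V}_j|$. The rest of your chain (Abel summation dividing by $nh_r$, combining with $h_d^2$ from the $m_N$-vs-$m$ bias and $R_n^2/h_d$ from the rearrangement expansion) then goes through, but only once you swap in the correct invariance principle. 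This distinction is not cosmetic: the Mies-type GA is used in this paper precisely because it scales to diverging $p$, whereas the fixed-$p$ regression setting allows — and here requires — the sharper Wu rate. Your part (ii) plan, by contrast, does align with the paper, which indeed routes the covariance estimation through a Mies-type cumulative lemma (Lemma \ref{lem:multi-dim long-run var}) and closes with Nazarov.
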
 

Result (i) provides the Gaussian approximation towards the maximum deviation but leaves challenges in estimating $\mf{m}_{\mf C}^\prime(t)$ and the long-run covariance related quantity $\mathbf{\Sigma}_{\mathbf{C}}(t_j)$. Result (ii) avoids estimating $\mf m_{\mf C}'(t)$ and provides an appropriate estimator $\hat{\mf \Sigma}_{\mf C}(t_j)$ for $\mathbf{\Sigma}_{\mathbf{C}}(t_j)$ with negligible cumulative errors. Combining (i) and (ii) in Theorem \ref{thm:multi-SCB}, we propose Algorithm \ref{alg:multi-dim} for constructing the joint SCBs of $\mf{m}_{\mf C}(t)$ in practice, of which the validity is guaranteed by Theorem \ref{thm:multi-SCB}.

\begin{breakablealgorithm}
	\caption{Bootstrap for joint SCBs of $\mathbf{m}_{\mathbf{C}}(t)$}
	\label{alg:multi-dim}
    \begin{algorithmic}[0]
\setlength{\baselineskip}{0.7\baselineskip}
		\STATE{\textbf{Data:}} $(y_i,\mf x_i)$, $i=1,\dots,n$ and known matrix $\mathbf{C}\in\mathbb{R}^{s\times p}$.
        \STATE{\textbf{Initialization}}: Choose bandwidth $h_{r}$, $h_{d}$ and window size $L$ by Section \ref{Bandwidth selection}.
        \
        \STATE{\textbf{Step 1:}} Obtain the estimator $\tilde{\mf m}_{\mf C}(t)=2\hat{\mf m}_{h_r/\sqrt{2}}(t)-\hat{\mf m}_{h_r}(t)$ with $\hat{\mf m}_{h_r}(t)$ defined in \eqref{local linear}.
        \STATE{\textbf{Step 2:}} Obtain the monotone estimator $\hat{\mathbf{m}}_{\mf C, I}(t)$ defined below \eqref{eq:m_I.inv multi-dim}.
        \STATE{\textbf{Step 3:}} 
        Calculate
        $\hat{\mf \Sigma}_{\mf C}(j)=\mf C\hat{\mf M}^{-1}(t_j)\hat{\mf \Lambda}(t_j)\hat{\mf M}^{-1}(t_j)\mf C^\top$ by plugging in 
       $\hat{\mathbf{M}}(t_j)$ and $\hat{\mf \Lambda}(t_j)$ in \eqref{eq:long-run estimation multi-dim} with bandwidth $h_r$ and window size $L$, respectively.
        \STATE{\textbf{Step 4:}} Given data, generate the Gaussian vector process $\mathcal{V}^*_{\mf C}(t)=:(\mathcal{V}^*_{\mf C,1}(t),\dots,\mathcal{V}^*_{\mf C,s}(t))^\top$ where $\mathcal{V}_{\mf C,k}^*(t)$ is defined in \eqref{eq:V star in multi-dim}.
         \STATE{\textbf{Step 5:}} Repeat step 4 for $B$ times and obtain the samples $\Big\{\max_{1\leq k\leq s}\sup_{t\in\hat{\mathcal{T}}} \Big| \mathcal{V}^{*(r)}_{\mf C,k}(t) \Big|\Big\}_{1\leq r\leq B}$. Let $\hat q_{1-\alpha}$ be the   $(1-\alpha)_{th}$ sample quantile. 
        \STATE{\textbf{Output:}} Level $(1-\alpha)$ joint SCBs $\hat{\mf m}_{\mf C,I}(t)\pm \hat q_{1-\alpha}\mathbf{1}_s$ where $\mathbf{1}_s=(1,\dots,1)^\top\in \mathbb{R}^s$. 
	\end{algorithmic}
\end{breakablealgorithm}

\section{Tuning parameter selection}
\label{Bandwidth selection}
To implement our method it is necessary to select bandwidths $h_r$, $h_d$, and window size $L$ in \eqref{eq:ll def}, \eqref{eq:m_I} and \eqref{eq:long-run var def} respectively. The first bandwidth $h_r$ is introduced to apply local linear estimation, 
 which we select via the General Cross Validation (GCV) selector proposed by \cite{craven1978smoothing}. This method has been applied in various fields of nonstationary time series regressions (see for example \cite{wu2017nonparametric,bai2023detecting}) and also works well in our simulation and data analysis. To be precise, for each dimension $k$ in model \eqref{high-monotone model}, we can write $\hat{\mathbf{Y}}_k=\mf Q(h_k) \mathbf{Y}_k$ for some square matrix $\mf Q$, where $\mathbf{Y}_k=(y_{1,k},\dots,y_{n,k})^\top$ and $\hat{\mathbf{Y}}_k=(\hat{y}_{1,k},\dots,\hat{y}_{n,k})^\top$ denote the vector of observed values and estimated values respectively, and $h_k$ is the candidate bandwidth. Then we choose bandwidth $\hat h_{r,k}$ where
\begin{equation}
    \hat{h}_{r,k}=\underset{h_k}{\arg \min }\{\operatorname{GCV}(h_k)\}, \quad \operatorname{GCV}(h_k)=\frac{n^{-1}|\mathbf{Y}_k-\hat{\mathbf{Y}}_k|^2}{[1-\operatorname{tr}\{\mf Q(h_k)\} / n]^2} .\notag
\end{equation}
We can apply the above procedure to each dimension and then select an average bandwidth $\hat{h}_r=\sum_{k=1}^p\hat{h}_{r,k}/p$ for high-dimensional model. \TPR{For the range of candidate $h_k$s, we recommend a rule of thumb strategy in \cite{bai2024difference} which is wrapped into the ``rule\_of\_thumb’’ function in R package ``mlrv’’.}
The GCV for model \eqref{Time-varying_n} follows the same formula with $\mf Y=(y_1,...,y_n)^\top$ and $\hat{\mf Y}=(\hat y_1,...,\hat y_n)^\top$. 

The second bandwidth $h_d$ is introduced for the monotone rearrangement based on the local linear estimates, which in practice is usually chosen to be small as long as the assumption \hyperref[(C2)]{(C2)} on $h_r$ and $h_d$ is satisfied. 
Following \cite{dette2006simple}, choosing $h_d$ that is much smaller than $h_r$ can mitigate bias caused by monotone rearrangement and boundary effects (see also \cite{dette2019detecting}) while the analysis results are also insensitive to $h_d$ as long as it is sufficiently small. In our simulation and data analysis practice, we recommend choosing $h_d=R_n^{2/3}$ while $h_d$ is much smaller than $h_r$.

As for window size $L$ in \eqref{eq:cumulative long-run cov}, we recommend the minimum volatility (MV) method advocated by \cite{politis1999subsampling}. Let $\hat{\mf Q}(k)^{(L)}$ denote the quantity $\hat{\mf Q}(k)$ in \eqref{eq:cumulative long-run cov} using window size $L$, then the MV method suggests that the estimator $\hat{\mf\Sigma}_k^{(L)}=:\hat{\mf Q}(k)^{(L)}-\hat{\mf Q}(k-1)^{(L)}$ becomes stable when $L$ is in an appropriate range. Specifically, one could firstly set a series of candidate window sizes $L_1<L_2,\dots< L_r$. Compute $\bar{\hat{\mf\Sigma}}_k=\sum_{j=1}^r \hat{\mf\Sigma}_k^{(L_j)}/M$ and
\begin{equation}
    \mf{MV}(j)=:\max_{L_r\leq k\leq n}\text{se}\left( \{\hat{\mf\Sigma}_k^{(L_{i})}\}_{i=1\vee(j-3)}^{r\wedge(j+3)} \right),\quad \text{se}\left( \{\hat{\mf\Sigma}_k^{(L_{j})}\}_{j=1}^r \right)=:\operatorname{tr}\left[\frac{1}{r-1}\sum_{j=1}^r[\hat{\mf\Sigma}_k^{(L_{j})}-\bar{\hat{\mf\Sigma}}_k]^2 \right]^{1/2}.\notag
\end{equation}
Then we finally choose $L\in\{L_j:j=1,\dots,r\}$ which minimizes $\mf{MV}(j)$.  
\section{Simulation study} 
\label{Simulation study}
In this section, we investigate the empirical coverage probabilities of our joint SCBs for high-dimensional monotone trend and smoothly monotone time-varying coefficient in Section \ref{High dimensional SCB} and Section \ref{Time-varying coefficient regression}, respectively, via extensive simulation studies. \TPR{Noting that the width of an SCB is important for verifying SCB's validity, we also document the widths of our joint SCBs and compare them with the SCB method without monotone constraints for nonstationary data (e.g.,\cite{zhou2010simultaneous}) in the Section \ref{sec: widths of SCB in simulation} 
of the online supplement.
All simulation results are based on 400 simulation runs with sample size $n=300,500,1000$ and bootstrap sample size $B=2000$.} 
We also set $N=4000$ which maximizes the effectiveness of the rearrangement step under our computational ability constraints.
We consider three smooth monotone functions for both model \eqref{high-monotone model} and \eqref{Time-varying_n}, which are $m_1(t)=0.5t^2+t$, $m_2(t)=\exp(t)$, $m_3(t)=2\ln(t+1)$.

In the high-dimensional model \eqref{high-monotone model} with $p\geq 3$, we define our $p$-dimensional regression function as $\mathbf{m}(t)=\left(\mathbf{A}_1 ^\top m_1(t),\mathbf{A}_2^\top m_2(t),\mathbf{A}_3^\top m_3(t)\right)^\top$ where $\mathbf{A}=(a_1,\dots,a_p)^{\top}$ is a $p\times 1$ matrix with $a_i=1+0.2(i/p)^{0.5}$ and $\mathbf{A}_1, \mathbf{A}_2, \mathbf{A}_3$ are the sub-matrices of $\mathbf{A}$ consisting of the first $\lfloor p/3 \rfloor_{th}$ rows, the $(\lfloor p/3 \rfloor+1)_{th}$ to $\lfloor 2p/3 \rfloor_{th}$ and the $(\lfloor 2p/3 \rfloor+1)_{th}$ to $p_{th}$ rows of $\mathbf{A}$, respectively. The error terms are generated by the following two models:
\begin{description}
[itemsep=0pt,parsep=0pt,topsep=0pt,partopsep=0pt]
\setlength{\baselineskip}{0.7\baselineskip}
    \item[(a)] Locally stationary: $\mathbf{e}_i=\mathbf{G}(t_i,\Upsilon_i) = b(i/n)\mathbf{G}_{i}(\Upsilon_{i-1})+\boldsymbol{\xi}_i$, $b(t)=0.15(0.9+0.1\sin(2\pi t))$,
    \item[(b)] Piecewise stationary: $\mathbf{e}_i=\mathbf{G}^{(0)}(t_i,\Upsilon_i)$ for $i/n\leq 1/3$ and $\mathbf{e}_i=\mathbf{G}^{(1)}(t_i,\Upsilon_i)$ for $i/n>1/3$ where $\mathbf{G}^{(0)}(t_i,\Upsilon_i)=0.5\mathbf{G}^{(0)}(t_i,\Upsilon_{i-1})+\boldsymbol{\xi}_i$, $\mathbf{G}^{(1)}(t_i,\Upsilon_i)=-0.5\mathbf{G}^{(1)}(t_i,\Upsilon_{i-1})+\boldsymbol{\xi}_i,$
\end{description}
where $\boldsymbol{\xi}_i$ for both models follows i.i.d. $p$-dimensional Gaussian distribution $N_p(0,\mathbf{\Sigma}_e)$ with a block-diagonal covariance matrix $\boldsymbol{\Sigma}_e=$ $\left(\Sigma_{j, l}\right)_{j, l=1}^p$, where $\Sigma_{j, j}=1$ for $j=1, \ldots, p$, and $\Sigma_{j, l}=(-0.95)^{|j-l|}$ for $j$ and $l$ lower than $\lfloor p/2\rfloor$, and zero elsewhere. 


Table \ref{tab:SCB coverage high main} presents the simulated coverage probabilities for model \eqref{high-monotone model} with both (a) and (b). For simplicity we use the same bandwidth for each dimension, that is, $h_{r,k}=h_r$. 
We apply the simultaneous inference procedure presented in Algorithm \ref{alg:high-dim bootstrap} to obtain 90\% and 95\% joint SCBs on the support set $\hat{\mathcal{T}}$. \TPR{We examine various bandwidths over a wide range and find that the performance of our methods improves and becomes more robust with respect to the choices of bandwidth when the sample size increases. Importantly, the results yielded by GCV methods are all reasonably good at all sample sizes. The empirical coverage probabilities tend to be inaccurate with the smallest bandwidth $h_r=0.05$. This may be due to the small sample size, as predicted by our theorem and supported by the fact that $h_r=0.05$ is excluded by the range of our GCV selection result; please refer to Section \ref{sec:GCV selection} 
in the online supplement for detailed discussion.}

\begin{table}[htb]
  \centering
  \setlength{\belowcaptionskip}{-0.5cm}
  \captionsetup{font={small}, skip=5pt}
  \caption{Simulated coverage probabilities of joint SCBs for high-dimensional $\mathbf{m}$}
  \scalebox{0.8}{
  {\renewcommand{\arraystretch}{0.5}
     \begin{tabular}{ccccccc c cccccc}
    \hline
    $(n=300)$ & \multicolumn{6}{c}{\textit{Model (a)}}& & \multicolumn{6}{c}{\textit{Model (b)}} \\
    \cline{2-7}\cline{9-14}
          & \multicolumn{2}{c}{$p=9$} & \multicolumn{2}{c}{$p=18$} & \multicolumn{2}{c}{$p=27$} & & \multicolumn{2}{c}{$p=9$} & \multicolumn{2}{c}{$p=18$} & \multicolumn{2}{c}{$p=27$} \\
    $h_r$ & 90\%  & 95\%  & 90\%  & 95\%  & 90\%  & 95\% & & 90\%  & 95\%  & 90\%  & 95\%  & 90\%  & 95\% \\
    \hline
     GCV   & 0.890 & 0.945 & 0.855 & 0.935 & 0.813 & 0.923 &       & 0.915 & 0.968 & 0.860 & 0.943 & 0.870 & 0.948 \\
    0.05  & 0.998 & 1.000 & 1.000 & 1.000 & 1.000 & 1.000 &       & 0.998 & 1.000 & 1.000 & 1.000 & 1.000 & 1.000 \\
    0.1   & 0.945 & 0.985 & 0.958 & 0.993 & 0.980 & 0.995 &       & 0.958 & 0.988 & 0.985 & 1.000 & 0.988 & 0.998 \\
    0.15  & 0.935 & 0.985 & 0.960 & 0.983 & 0.965 & 0.990 &       & 0.938 & 0.980 & 0.965 & 0.993 & 0.985 & 1.000 \\
    0.2   & 0.910 & 0.973 & 0.938 & 0.978 & 0.935 & 0.980 &       & 0.920 & 0.965 & 0.953 & 0.980 & 0.963 & 0.993 \\
    0.25  & 0.893 & 0.968 & 0.923 & 0.980 & 0.903 & 0.970 &       & 0.890 & 0.965 & 0.943 & 0.980 & 0.940 & 0.980 \\
    0.3   & 0.913 & 0.958 & 0.923 & 0.973 & 0.898 & 0.960 &       & 0.880 & 0.968 & 0.940 & 0.978 & 0.915 & 0.963 \\
    0.35  & 0.895 & 0.960 & 0.920 & 0.973 & 0.888 & 0.955 &       & 0.878 & 0.943 & 0.923 & 0.975 & 0.893 & 0.950 \\
    \hline
     $(n=500)$ & \multicolumn{6}{c}{\textit{Model (a)}}& & \multicolumn{6}{c}{\textit{Model (b)}} \\
    \cline{2-7}\cline{9-14}
          & \multicolumn{2}{c}{$p=9$} & \multicolumn{2}{c}{$p=18$} & \multicolumn{2}{c}{$p=27$} & & \multicolumn{2}{c}{$p=9$} & \multicolumn{2}{c}{$p=18$} & \multicolumn{2}{c}{$p=27$} \\
    $h_r$ & 90\%  & 95\%  & 90\%  & 95\%  & 90\%  & 95\% & & 90\%  & 95\%  & 90\%  & 95\%  & 90\%  & 95\% \\
    \hline
    GCV   & 0.843 & 0.930 & 0.878 & 0.960 & 0.883 & 0.958 &       & 0.870 & 0.948 & 0.915 & 0.970 & 0.913 & 0.960 \\
    0.05  & 0.260 & 0.428 & 0.265 & 0.463 & 0.218 & 0.455 &       & 0.315 & 0.523 & 0.278 & 0.518 & 0.280 & 0.483 \\
    0.1   & 0.795 & 0.890 & 0.823 & 0.910 & 0.850 & 0.958 &       & 0.828 & 0.910 & 0.858 & 0.930 & 0.873 & 0.950 \\
    0.15  & 0.888 & 0.948 & 0.920 & 0.980 & 0.948 & 0.983 &       & 0.890 & 0.935 & 0.935 & 0.980 & 0.938 & 0.978 \\
    0.2   & 0.875 & 0.955 & 0.895 & 0.973 & 0.905 & 0.973 &       & 0.898 & 0.955 & 0.925 & 0.978 & 0.930 & 0.980 \\
    0.25  & 0.858 & 0.945 & 0.893 & 0.978 & 0.890 & 0.960 &       & 0.888 & 0.955 & 0.918 & 0.975 & 0.923 & 0.975 \\
    0.3   & 0.848 & 0.925 & 0.890 & 0.955 & 0.875 & 0.948 &       & 0.883 & 0.945 & 0.895 & 0.960 & 0.903 & 0.965 \\
    0.35  & 0.845 & 0.925 & 0.880 & 0.953 & 0.868 & 0.930 &       & 0.880 & 0.950 & 0.893 & 0.955 & 0.893 & 0.955 \\
    \hline
    $(n=1000)$ & \multicolumn{6}{c}{\textit{Model (a)}}& & \multicolumn{6}{c}{\textit{Model (b)}} \\
    \cline{2-7}\cline{9-14}
          & \multicolumn{2}{c}{$p=9$} & \multicolumn{2}{c}{$p=18$} & \multicolumn{2}{c}{$p=27$} & & \multicolumn{2}{c}{$p=9$} & \multicolumn{2}{c}{$p=18$} & \multicolumn{2}{c}{$p=27$} \\
    $h_r$ & 90\%  & 95\%  & 90\%  & 95\%  & 90\%  & 95\% & & 90\%  & 95\%  & 90\%  & 95\%  & 90\%  & 95\% \\
    \hline
    GCV   & 0.880 & 0.950 & 0.878 & 0.950 & 0.895 & 0.953 &       & 0.873 & 0.943 & 0.885 & 0.953 & 0.930 & 0.970 \\
    0.05  & 0.488 & 0.683 & 0.243 & 0.440 & 0.153 & 0.340 &       & 0.238 & 0.443 & 0.200 & 0.390 & 0.225 & 0.440 \\
    0.1   & 0.935 & 0.975 & 0.900 & 0.975 & 0.863 & 0.950 &       & 0.830 & 0.925 & 0.865 & 0.955 & 0.903 & 0.965 \\
    0.15  & 0.903 & 0.958 & 0.910 & 0.973 & 0.918 & 0.965 &       & 0.890 & 0.953 & 0.913 & 0.985 & 0.940 & 0.980 \\
    0.2   & 0.870 & 0.945 & 0.875 & 0.963 & 0.913 & 0.970 &       & 0.880 & 0.945 & 0.905 & 0.978 & 0.925 & 0.970 \\
    0.25  & 0.855 & 0.930 & 0.850 & 0.950 & 0.900 & 0.970 &       & 0.868 & 0.938 & 0.890 & 0.955 & 0.908 & 0.960 \\
    0.3   & 0.878 & 0.933 & 0.848 & 0.928 & 0.890 & 0.958 &       & 0.865 & 0.938 & 0.880 & 0.955 & 0.898 & 0.955 \\
    0.35  & 0.860 & 0.925 & 0.845 & 0.930 & 0.895 & 0.955 &       & 0.865 & 0.938 & 0.898 & 0.958 & 0.895 & 0.950 \\
        \hline
    \end{tabular}
  }}\label{tab:SCB coverage high main}%
\end{table}%

We also conduct a simulation scenario for the time-varying coefficient linear model in Section \ref{Time-varying coefficient regression} and the results yielded by GCV methods are all reasonably good at all sample sizes. The results also show that the performance of joint SCBs generated by Algorithm \ref{alg:multi-dim} is reasonably accurate in most $h_r$ and becomes more robust with respect to $h_r$ as the sample size grows. Due to the page limit, we have placed the detailed simulation set-ups and results in the Section \ref{sec: simulation for time-varying linear regression} 
of the online supplement.

\section{Climate data analysis}
\label{Empirical study}
In this section, we study the historical monthly highest temperature collected from various weather stations in the United Kingdom. This dataset can be obtained from the website (https://www.metoffice.gov.uk/research/climate/maps-and-data/historic-station-data). The maximum temperature serves as an index for discerning temperature anomalies and extremes within the context of global warming, and we study the joint SCBs for this data set. \TPR{Although preliminary studies show that the maximum temperature is slightly skewed, our theorem framework allows asymmetric innovation or error process. We also add simulation with skewed data to justify this as displayed in the Section \ref{sec: additional simulation} 
of the online supplement.}
We investigate the monthly highest temperature series over 27 stations from 1979-2023; in other words, we consider model \eqref{high-monotone model} with $p=27$ and $n=540$. The names of the stations are listed in the online supplement. It should be noted that there are only a few missing points, which we interpolate using ``approx’’ in R via observations from the same months in nearby years. \TPM{In the online supplement, we also display analysis results based on other interpolation methods including spline interpolation and Kalman filtering. The results remain robust across different interpolations, demonstrating that our statistical findings are not sensitive to the choice of methods for preprocessing missing values.}
Our goal is to conduct inference for all the deseasonalized temperature trends simultaneously, where we implement  ``stl’’ in R for the deseasonalization. \TPR{We checked the error induced by the ``stl’’ procedure is of rate $O_p(\Theta(p)n^{-1/2})$, which is negligible in contrast with the nonparametric estimation under conditions in Proposition \ref{prop:GA in high-dim}. Therefore, our simultaneous theoretical framework can be applied to the deseasonalized time series.}  
Taking into account global warming, the inference is performed under the constraint that all the temperature trends are monotone. 

We apply Algorithm \ref{alg:high-dim bootstrap} to generate the joint SCBs for the 27 monotone trends via $B=2000$ bootstraps. The tuning parameters $h_r,h_d$, and $L$ are selected according to the methods in Section \ref{Bandwidth selection}. 
In Figure \ref{fig:joint SCBs} we present our joint SCBs 
for testing quadratic trend. \TPR{Specifically, we test the quadratic null hypothesis: $m_k(t)\equiv a_kt^2+b_kt+c_k$ for all $k=1,\dots,p$, with parameters $a_k,b_k,c_k$ estimated under constraints $a_k,b_k>0$ by the constrained least square method. We impose the constraints to fit an accelerating increasing quadratic function, testing whether an accelerated global warming as warned in \cite{nature2018} exists. The least square fitting procedure is commonly used in testing parametric assumptions of trends in time series such as \cite{brown1975techniques}, \cite{nyblom1989testing}, and \cite{wu2011testparam}. Following \cite{Zhou2015constrain}, it is not difficult to show the $O_p(\Theta(p)n^{-1/2})$ consistent rate of our constrained least square estimate with nonnegative constraint under the null hypothesis. This means the parametric fitting error is asymptotically negligible compared with the approximation error of the nonparametric estimation under conditions in Proposition \ref{prop:GA in high-dim}. Therefore the estimated $a_k, b_k, c_k$ can be utilized to test the existence of the accelerating trend, combining with our joint SCBs.}

    The test result rejects the null hypothesis under significance level $5\%$/$10\%$ because the fitted quadratic trends offend the joint SCBs in multiple stations. \TPR{Both our monotone estimator and local linear estimator find the temperature curves increasing fastest at the beginning years. Simultaneously, our accelerating quadratic trends under the null hypothesis increase most slowly at the beginning time points. Therefore, we can see the beginning time points are the most likely offending area for the SCB testing.} This also reveals that the quadratic trend on regional scale may not be universally present even though previous studies supported the quadratic trend on a global scale (e.g., \cite{wu2007inference}).

\begin{figure}[htb]
    \centering
    \includegraphics[width=\linewidth]{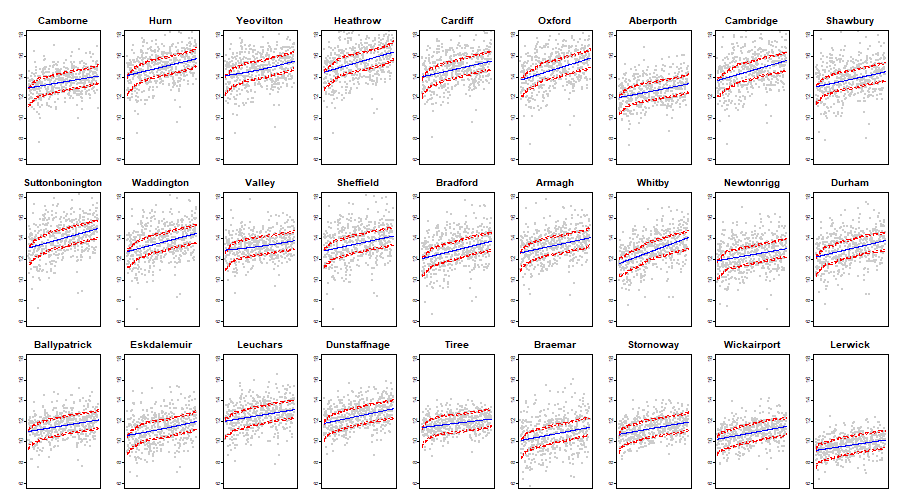}
    \captionsetup{font={small, stretch=1},skip=0cm}
    \caption{Joint SCBs for monthly highest temperature (°C, y-axis) in 27 stations from 1979 to 2023 (x-axis). The points represent the time series adjusted by removing seasonal effects. The solid lines represent quadratic trend \TPR{fitted by constrained LSE with function ``nls()’’ in R}. The dotted and dashed lines represent the 90\% and 95\% joint SCBs respectively. Notice that the solid line is not covered by the 95\% SCB at stations like Cardiff, Durham, Hurn and Valley. }
    \label{fig:joint SCBs}
\end{figure}

\TPR{
Our joint SCBs can be further used to test whether an increase of value $C$ occurred over the entire time span $[t_0,t_1]$. In specific, the null hypothesis should be
\begin{equation}\label{eq:fundement test}
    H_0:\sup_{t,t'\in[t_0,t_1]} |m_k(t)-m_k(t')|\leq C,\quad \forall k=1,\dots,p, ~\text{$m_k(t)s$ are monotone increasing.}
\end{equation}
Due to the monotone increasing, 
$m_k(t_1)-m_k(t_0)=\sup_{t,t'\in[t_0,t_1]} |m_k(t)-m_k(t')|$. 
However, this 
does not enable us to construct a test straightforwardly using the local linear estimator $\hat m_k(t_0)$ and $\hat m_k(t_1)$  or the jackknife estimator $\tilde m_k(t_0)$ and $\tilde m_k(t_1)$, since those estimators cannot capture the monotone shape of the curve. 
Based on our level (1-$\alpha$) joint SCBs $\hat{\mf m}_I\pm \hat{q}_{1-\alpha}\mathcal{B}_p$ obtained from Algorithm \ref{alg:high-dim bootstrap}, we shall reject the null hypothesis in \eqref{eq:fundement test} at significance $\alpha$ if $\mf m(t)$ cannot be covered by $\hat{\mf m}_I\pm \hat{q}_{1-\alpha}\mathcal{B}_p$, that is
\begin{equation}
    \label{eq:rejection criteria}
    \max_{1\leq k\leq p}(\hat{m}_{I,k}(t_1)-\hat{q}_{1-\alpha})-(\hat{m}_{I,k}(t_0)+\hat{q}_{1-\alpha})>C.
\end{equation}
The value of $C$ can be determined based on field knowledge. In the context of global warming, a report from \cite{ipcc2022} has identified approximately $0.5$°C difference in temperature extremes over the time spans. To investigate whether the finding in \cite{ipcc2022} appears significant in the UK data, we set $C=0.5$, and make the time $t_0$ and $t_1$ correspond to the beginning (1979) and end (2023) of our data set. We find the highest confidence level at which \eqref{eq:rejection criteria} holds is $1-\alpha=99.5\%$, thus we reject the null hypothesis in \eqref{eq:fundement test} with a small $p$-value of $0.005$, which supports the conclusion of the IPCC report.
}

\TPR{More generally, we can also test whether there exists a time span $[t_0,t_1]$ whose length is less than $\Delta$ ($\Delta\leq 1$) such that the maximum temperature has significantly increased over a given amount, through testing the null hypothesis 
\begin{equation}
    H_0:\quad \max_{1\leq k\leq p}\sup_{0<t_1-t_0\leq \Delta}m_k(t_1)-m_k(t_0)\leq C,  \text{$m_k(t)s$ are monotone increasing.}\label{eq:null delta}
\end{equation}
For example, we set $C = 0.5$, and a length of $\Delta=3/4$ (about 3-decade long). Based on similar arguments of \eqref{eq:rejection criteria}, we shall reject the null hypothesis in \eqref{eq:null delta} at significance $\alpha$ if 
\begin{equation}
    \label{eq:rejection criteria delta}
    \max_{1\leq k\leq p}\sup_{0<t_1-t_0\leq \Delta}(\hat{m}_{I,k}(t_1)-\hat{q}_{1-\alpha})-(\hat{m}_{I,k}(t_0)+\hat{q}_{1-\alpha})>C,
\end{equation}
The highest confidence level at which \eqref{eq:rejection criteria delta} holds is 1-$\alpha$ = 96\%. Thus we reject \eqref{eq:null delta} with a small $p$-value of 0.04. We also test \eqref{eq:null delta} with ``$\min_{1\leq k\leq p}$",``$>$" instead of ``$\max_{1\leq k\leq p}$",``$\leq$" respectively, and the $p$-value is very close to $1$. The results support the IPCC's report.} 

Simultaneous inference for the relationship between atmospheric temperature and other environmental factors can also be studied by our joint SCBs with time-varying coefficient linear model as introduced in Section \ref{Time-varying coefficient regression}. Prior research \cite{van2015relationship} has indicated that the maximum temperature exhibits the highest degree of correlation to sunshine duration when compared with minimum and mean temperatures. Therefore, we can investigate the following time-varying coefficient linear model.
\begin{equation}
    T_{max,i} = m_0(t_i)+m_1(t_i)SD_i+e_i,\quad i=1,\dots,n,\label{eq:tmax-SD}
\end{equation}
where $(T_{max,i})_{i=1}^n$ is the series of monthly maximum temperature and $SD_i$ represents the series of monthly sunshine duration. 
\TPM{Notice that a lagged sunshine duration such as $SD_{i-1}$ can also be added into model \eqref{eq:tmax-SD} and Theorem \ref{thm:multi-SCB} is still valid. The associated analysis of regression with lagged terms and the comparison with \eqref{eq:tmax-SD} are included in Section \ref{regressionsection} 
of the online supplement. The comparison shows both models draw the same conclusions. 
Furthermore, based on the VIC variable selection criterion proposed by \cite{zhang2012inference}, we recommend using \eqref{eq:tmax-SD} without lagged terms.  }

In model \eqref{eq:tmax-SD}, we assume $m_1(t)$ is strictly increasing 
which has been supported by evidence from climatology research. \cite{matuszko2015relationship} finds more significant and higher correlation coefficients between sunshine duration and air temperature in 1954-2012 than 1884-1953. 
Using the methodology proposed in Section \ref{Time-varying coefficient regression}, we can obtain the monotone estimates as well as the corresponding SCB for the coefficient $m_1(t)$. These can be used to test whether the sunshine duration always has a positive effect on temperature including monthly maximum temperature. For example, the data at Heathrow suggests a significant positive relationship between sunshine duration and temperature (at $p$-value 0.046) while the data at Lerwick doesn't. \TPM{We additionally apply unconstrained SCBs \citep{zhou2010simultaneous} and improved monotone SCBs in \cite{chernozhukov2009improving} with isotonic regression \citep{brunk1969estimation} and monotone spline \citep{tantiyaswasdikul1994isotonic}. The comparison shows our monotone SCBs are narrowest with the lowest $p$-values.}
Due to the page limitation, we present detailed inference analysis for \eqref{eq:tmax-SD} using the historical data from Heathrow and Lerwick as illustrative examples in Section \ref{regressionsection} 
of the online supplement.

\section{Discussion and future work}
\label{Discussion}
In this study, we develop a simultaneous inference framework for monotone and smoothly time-varying functions based on the nonparametric and monotone rearrangement estimator, allowing complex temporal dynamics.  We study the inference problem in detail for two important and practical scenarios, namely, the high-dimensional trends model and the time-varying coefficient linear model. In these contexts, we have developed Gaussian approximations and the associated bootstrap algorithms for constructing joint SCBs. Importantly, the proposed joint SCBs are asymptotically correct. 
In the numerical study, our proposed joint SCBs also show a narrower width compared with the existing method. 

Skewness and optimal bandwidth choice are new issues that naturally arise in our theoretical application. Additional simulations in Section \ref{sec:skew simulation} 
of the online supplement confirm our framework's ability to handle asymmetric innovation. Future improvements to our Gaussian approximation could include skewness corrections, as suggested by \citet{hall2013bootstrap}. \TPR{Unlike kernel-based nonparametric estimation, the optimal bandwidth for our SCBs isn't determined by the bias-variance trade-off. We use the jackknife method, making the bias of the local linear estimator asymptotically negligible compared with its stochastic variation. This results in undersmoothing, a common practice in nonparametric inference (see \cite{bjerve1985uniform}, \cite{hall1992bootstrap}, \cite{wasserman2006all}). An alternative approach to bandwidth selection in this context is to minimize type 2 error while keeping type 1 error within $(\alpha-c_{min}, \alpha+c_{min})$ for sufficiently small $c_{min}$, see \cite{gao2008bandwidth}. We propose exploring this approach for optimal SCB widths as a promising future research direction.}
\bigskip
\begin{center}
{\large\bf SUPPLEMENTARY MATERIAL}
\end{center}
\begin{description}
[itemsep=0pt,parsep=0pt,topsep=0pt,partopsep=0pt]
\setlength{\baselineskip}{1\baselineskip}
\item[A Proofs.] Providing proofs of main results in this paper with essential lemmas.
\item[B Appendix.] Giving specific examples of nonstationary process, discussion on theoretical results, additional simulations, and extra analysis of the real data in Section \ref{Empirical study}.
\end{description}

\noindent \textbf{Acknowledgments}: The authors gratefully acknowledge the editor, associate editor, and the referees
for their helpful comments and suggestions.


\clearpage

  \bigskip
  \bigskip
  \bigskip
  \begin{center}
    {\LARGE\bf  Supplement to ``Simultaneous inference for monotone and smoothly time-varying functions under complex temporal dynamics"} 
\end{center}
  \medskip







\renewcommand{\thesection}{\Alph{section}}
\renewcommand{\thetable}{\thesection.\arabic{table}}
\renewcommand{\thefigure}{\thesection.\arabic{figure}}
\setcounter{section}{0} 
This supplementary material will provide detailed proofs in Section \ref{sec:Proofs} and a further appendix in Section \ref{sec:Appendix}. Section \ref{sec:Proofs} contains proofs of the theorems and propositions in main paper with essential lemmas and auxiliary results. Section \ref{sec:Appendix} displays specific examples, further discussion, additional simulation and extra analysis of the data used in main paper. 

\TPR{In particular, Section \ref{sec:specific examples} introduces a series of time series models including high-dimensional linear process, AR, MA, and GARCH in the formulation of $\mf G(u,\Upsilon_i)$ and verifies their dependence structure.}

Section \ref{sec:discussion supp} presents discussion on 4 aspects. Subsection \ref{sec:reconcile discussion} discusses how we reconcile the asymptotic results with the bounded time interval setting. Subsection \ref{sec:context discussion} discusses the difference between our theorem framework and works in \cite{dette2006simple}. Subsection \ref{sec:width dicussion} discusses the difficulty in minimal volume of SCB under the monotone condition. \TPM{Subsection \ref{sec:interpolation} discusses how the missing values affect our results if different interpolation methods are used.}

\TPR{Section \ref{sec: additional simulation} includes a series of additional simulations. Subsection \ref{sec: widths of SCB in simulation} contains the width comparisons between our monotone SCB and other SCBs without monotone constraint. Subsection \ref{sec:skew simulation} shows the simulation scenarios for skewed high-dimensional time series. Our GCV selections for the simulation set-ups in the main paper can be found in Subsection \ref{sec:GCV selection}. \TPM{Subsection \ref{sec:penalization simulation} validates our penalized SCB proposed in Section \ref{sec:penalization} through simulations of weakly monotone trends}
Subsection \ref{sec: simulation for time-varying linear regression} presents the simulation for time-varying linear models.}

Section \ref{regressionsection} applies the time-varying linear model to investigate the relationship between sunshine duration and atmospheric temperature. Location information for stations investigated in the main paper is also provided.

\section{Proofs}
\label{sec:Proofs}This supplementary material will provide detailed proofs in Section \ref{sec:Proofs} and a further appendix in Section \ref{sec:Appendix}. Section \ref{sec:Proofs} contains proofs of the theorems and propositions in main paper with essential lemmas and auxiliary results. Section \ref{sec:Appendix} displays specific examples, further discussion, additional simulation and extra analysis of the data used in main paper. 

\TPR{In particular, Section \ref{sec:specific examples} introduces a series of time series models including high-dimensional linear process, AR, MA, and GARCH in the formulation of $\mf G(u,\Upsilon_i)$ and verifies their dependence structure.}

Section \ref{sec:discussion supp} presents discussion on 4 aspects. Subsection \ref{sec:reconcile discussion} discusses how we reconcile the asymptotic results with the bounded time interval setting. Subsection \ref{sec:context discussion} discusses the difference between our theorem framework and works in \cite{dette2006simple}. Subsection \ref{sec:width dicussion} discusses the difficulty in minimal volume of SCB under the monotone condition. \TPM{Subsection \ref{sec:interpolation} discusses how the missing values affect our results if different interpolation methods are used.}

\TPR{Section \ref{sec: additional simulation} includes a series of additional simulations. Subsection \ref{sec: widths of SCB in simulation} contains the width comparisons between our monotone SCB and other SCBs without monotone constraint. Subsection \ref{sec:skew simulation} shows the simulation scenarios for skewed high-dimensional time series. Our GCV selections for the simulation set-ups in the main paper can be found in Subsection \ref{sec:GCV selection}. \TPM{Subsection \ref{sec:penalization simulation} validates our penalized SCB proposed in Section \ref{sec:penalization} through simulations of weakly monotone trends}
Subsection \ref{sec: simulation for time-varying linear regression} presents the simulation for time-varying linear models.}

Section \ref{regressionsection} applies the time-varying linear model to investigate the relationship between sunshine duration and atmospheric temperature. Location information for stations investigated in the main paper is also provided.
For $k \in \mathbb{Z}$, define the projection operator $\mathcal{P}_k \cdot=\mathbb{E}\left(\cdot \mid \Upsilon_k\right)-\mathbb{E}\left(\cdot \mid \Upsilon_{k-1}\right)$. We write $a_n\lesssim b_n$($a_n\gtrsim b_n$) to mean that there exists a universal constant $C>0$ such that $a_n\leq Cb_n$($Ca_n\geq b_n$) for all $n$. For set $A$, denote $\# A$ as the number of elements in $A$. For matrix $A, \operatorname{tr}(A)$ denotes the trace of $A$, and $\|A\|_{\operatorname{tr},p}=:\operatorname{tr}\left(\left(A^{\top} A\right)^{p/2}\right)^{1/p}$ is the Schatten norm of order $p$. $\|\cdot \|_{\operatorname{tr}}$ refers to $\|\cdot \|_{\operatorname{tr},1}$ if no extra clarification, which is known as the trace norm. 

Introduce $m_N(t)$ as auxiliary function towards $m(t)$, which is defined by the inverse of function below:
\begin{equation}\label{eq:m_N}
    m_N^{-1}(s)=: \int_{-\infty}^s \frac{1}{N h_d} \sum_{i=1}^N K_d\left(\frac{m(i / N)-u}{h_d}\right) \mathrm{d} u.
\end{equation}
The error between $m_N^{-1}$ and $m^{-1}$ can be viewed as the error of monotone rearrangement step which is nonrandom and totally depends on $N$ and $h_d$. \cite{dette2006simple} has proposed Lemmas to show $m_N(t)$ is pretty close to true $m(t)$ pointwise and we give a generalized global version Lemma \ref{lem:m_N} in auxiliary.

If $m$ is decreasing, we shall firstly reverse the observation data and follow the same procedure in increasing situations to obtain $\hat m_I$. It should be mentioned that $\hat m_I$ is defined from its inverse
$$
\hat m_I^{-1}(s)=:\int_{-\infty}^s\frac{1}{Nh_d}\sum_{i=1}^N K_d\left(\frac{\widehat{m_-}(i/N)-u}{h_d}\right),
$$
where $\widehat{m_-}$ is the local linear estimator from the origin sample $-y_i,i=1,\dots,n$. Therefore, $|\hat m_I- (-m)|$ can follow all the theorems in increasing situations. In this way, we take $-\hat m_I$ as the decreasing estimator towards true function $m$, then we can find that simultaneous results for $|-\hat m_I-m|$ are actually equivalent to the increasing situation $|\hat m_I-(-m)|$.

For $p$-dimensional vector function $\mathbf{m}(t)=(m_1(t),\dots,m_p(t))^{\top}$, we define $\mathbf{m}_{N}(t)=:(m_{N,1}(t),\dots,m_{N,p}(t))^{\top}$ where $m_{N,k}$ is the inverse of $m_{N,k}^{-1}$ defined by replacing $m(i/N),h_d$ in \eqref{eq:m_N} as $m_k(i/N),h_{d,k}$ respectively.

\subsection{Proofs of main results}

\begin{lem}
\label{m_I}
Suppose Assumptions \hyperref[(A1)]{(A1)}-\hyperref[(A2)]{(A2)}, \hyperref[(C1)]{(C1)}-\hyperref[(C2)]{(C2)} and $\max_{1\leq k\leq p}\sup_{t\in[0,1]}|\tilde{m}_k(t)-m_k(t)|=O_p(R_n)$ are satisfied, recalling the $M>0$ defined in Assumption \hyperref[(A2)]{(A2)}, we have
\begin{equation}
    \max_{1\leq k\leq p}\sup_{t\in\hat{\mathcal{T}}} |m_{N,k}(t)-\hat m_{I,k}(t)-\frac{\hat{m}_{I,k}^{-1}-m_{N,k}^{-1}}{\left(m_k^{-1}\right)^{\prime}} \circ m_k(t)|=O_p\left(R_n^2/h_d  \right).
    \label{e7}
\end{equation}
\end{lem}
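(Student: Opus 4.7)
The plan is to treat the lemma as a first-order expansion of inverse functions and control the remainder via the mean value theorem applied to $\hat m_{I,k}^{-1}$ and $m_{N,k}^{-1}$. Write $\hat f:=\hat m_{I,k}^{-1}$ and $\bar f:=m_{N,k}^{-1}$ for brevity. The defining relations $\hat f(\hat m_{I,k}(t))=t=\bar f(m_{N,k}(t))$ give, after adding and subtracting $\bar f(\hat m_{I,k}(t))$ and applying the MVT to $\bar f$, the exact identity
\begin{equation*}
m_{N,k}(t)-\hat m_{I,k}(t)=\frac{(\hat f-\bar f)(\hat m_{I,k}(t))}{\bar f'(\xi_{t,k})},
\end{equation*}
for some $\xi_{t,k}$ between $\hat m_{I,k}(t)$ and $m_{N,k}(t)$. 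The target expression differs only by replacing $\hat m_{I,k}(t)$ with $m_k(t)$ in the numerator and $\bar f'(\xi_{t,k})$ with $(m_k^{-1})'(m_k(t))$ in the denominator, so I would show each replacement costs at most $O_p(R_n^2/h_d)$ uniformly in $k$ and $t\in\hat{\mathcal T}$.

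For the numerator, differentiating the explicit definition of $\hat f-\bar f$ and applying a first-order Taylor expansion of $K_d$ together with $\|K_d'\|_\infty<\infty$ and the hypothesis $\max_k\sup_{t}|\tilde m_k(t)-m_k(t)|=O_p(R_n)$ should yield $\sup_s|(\hat f-\bar f)'(s)|=O_p(R_n/h_d)$. Lemma \ref{lem:m_N} together with the same hypothesis gives $|\hat m_{I,k}(t)-m_k(t)|=O_p(R_n)$ uniformly, so by the MVT $|(\hat f-\bar f)(\hat m_{I,k}(t))-(\hat f-\bar f)(m_k(t))|=O_p(R_n^2/h_d)$. For the denominator, $\bar f'$ is a deterministic kernel density-type approximation of $(m_k^{-1})'$; a standard Riemann-sum-plus-bias argument gives $\sup_s|\bar f'(s)-(m_k^{-1})'(s)|=O(h_d^2)$, while \hyperref[(A1)]{(A1)} makes $(m_k^{-1})'$ Lipschitz. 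Hence $|\bar f'(\xi_{t,k})-(m_k^{-1})'(m_k(t))|=O(h_d^2)+O_p(R_n)$, and \hyperref[(A2)]{(A2)} provides a uniform positive lower bound on both $\bar f'(\xi_{t,k})$ and $(m_k^{-1})'(m_k(t))$.

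Combining via $\frac{A}{C}-\frac{B}{D}=\frac{(A-B)D-B(C-D)}{CD}$ with $A=(\hat f-\bar f)(\hat m_{I,k}(t))$, $B=(\hat f-\bar f)(m_k(t))=O_p(R_n)$, $C=\bar f'(\xi_{t,k})$, $D=(m_k^{-1})'(m_k(t))$ produces an error of order $O_p(R_n^2/h_d+R_n\cdot h_d^2+R_n^2)$, and all three terms reduce to $O_p(R_n^2/h_d)$ under \hyperref[(C2)]{(C2)} using $h_d^3\le h_r\cdot h_r^2\le h_rR_n=o(R_n)$ and $h_d\le 1$. The main obstacle will be verifying these pointwise bounds uniformly in $k$ and $t\in\hat{\mathcal T}$; this relies on the boundary buffer $[h_d\log h_d^{-1},1-h_d\log h_d^{-1}]$ built into $\hat{\mathcal T}$, which keeps $\hat m_{I,k}(t)$, $m_{N,k}(t)$, and hence $\xi_{t,k}$ away from the edges where the kernel-smoothed quantities $\bar f'$ and $(\hat f-\bar f)'$ degrade.
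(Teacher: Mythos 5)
Your argument is correct, and it takes a genuinely different route from the paper's. The paper parameterizes the problem by the interpolation $Q_t(\lambda)=\Phi_t\bigl(m_{N,k}^{-1}+\lambda(\hat m_{I,k}^{-1}-m_{N,k}^{-1})\bigr)$ and performs a second-order Taylor expansion in $\lambda$ at $\lambda=0$: the linear term $Q_t'(0)$ already has the shape of the target (with argument $m_{N,k}(t)$ and denominator $(m_{N,k}^{-1})'$, which are swapped to $m_k(t)$ and $(m_k^{-1})'$), while the quadratic remainder $\tfrac12 Q_t''(\lambda_t^*)$ is bounded by $O_p(R_n^2/h_d)$. That route requires controlling the \emph{second} derivative $(\hat m_{I,k}^{-1}-m_{N,k}^{-1})''=O_p(R_n/h_d^2)$ and $(m_{N,k}^{-1})''=O(1/h_d)$, i.e.\ the full content of Lemma~\ref{lem:(m_I.inv-m_N.inv)'}. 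Your approach instead starts from an \emph{exact} identity obtained by applying the mean-value theorem directly to $\bar f=m_{N,k}^{-1}$ in the argument variable, yielding $m_{N,k}(t)-\hat m_{I,k}(t)=(\hat f-\bar f)(\hat m_{I,k}(t))/\bar f'(\xi_{t,k})$ with no remainder, and then converts that to the target by swapping $\hat m_{I,k}(t)\to m_k(t)$ in the numerator and $\bar f'(\xi_{t,k})\to(m_k^{-1})'(m_k(t))$ in the denominator via the difference-of-quotients identity. This needs only the zeroth- and first-order uniform bounds $\sup|\hat f-\bar f|=O_p(R_n)$, $\sup|(\hat f-\bar f)'|=O_p(R_n/h_d)$, plus the deterministic bias $\sup|\bar f'-(m_k^{-1})'|=O(h_d^2)$ and Lipschitzness of $(m_k^{-1})'$, so it avoids the second-derivative estimate altogether; that is a cleaner path to the same $O_p(R_n^2/h_d)$ bound.

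Two things you should make explicit when fleshing this out. First, the step "$|\hat m_{I,k}(t)-m_k(t)|=O_p(R_n)$ uniformly" is not directly a consequence of Lemma~\ref{lem:m_N} (which controls $m_{N,k}-m_k$); you must first bootstrap $|\hat m_{I,k}(t)-m_{N,k}(t)|=O_p(R_n)$ from your exact identity together with $\sup|\hat f-\bar f|=O_p(R_n)$ and the uniform lower bound on $\bar f'$ away from the boundary, and then combine with $|m_{N,k}(t)-m_k(t)|=O(h_d^2)$. Second, the one-line derivation of $\sup_s|(\hat f-\bar f)'(s)|=O_p(R_n/h_d)$ hides the crucial counting argument: a crude application of $\|K_d'\|_\infty<\infty$ to all $N$ summands would give only $O_p(R_n/h_d^2)$; the $R_n/h_d$ rate relies on showing that only $O_p(Nh_d)$ of the intermediate points $\eta_{i,N,k}$ fall within $h_d$ of the evaluation point, which is exactly what makes Lemma~\ref{lem:(m_I.inv-m_N.inv)'} in the paper nontrivial. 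With those two points spelled out, your plan closes.
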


\begin{proof}[Proof of Lemma \ref{m_I}.]
We firstly introduce an operator which maps an increasing function $m$ to its ``quantile'' $m^{-1}(t)$. Let $\mathcal{M}$ be the function class such that all $H\in \mathcal{M}$ are strictly increasing and exist second order derivative. For a fixed $t$ which lies in the interior of $H$'s image, consider the functional
    $$
    \Phi_t:\left\{\begin{array}{l}
    \mathcal{M} \mapsto\mathbb{R} \\
    H \mapsto H^{-1}(t)
    \end{array}\right.
    $$
and define for $H_1, H_2 \in \mathcal{M}$ the function
\begin{equation*}
    Q_t:\left\{\begin{array}{l}
{[0,1] \mapsto \mathbb{R}} \\
\lambda \mapsto \Phi_t\left(H_1+\lambda\left(H_2-H_1\right)\right).
\end{array}\right.
\end{equation*}

Let $F(x,y)=(H_1+x(H_2-H_1))(y)-t$, then $Q_t(\lambda)$ satisfies $F(\lambda,Q_t(\lambda))=0$. Then
\begin{equation}
   Q_t^{\prime}(\lambda)=-\frac{H_2-H_1}{h_1+\lambda\left(h_2-h_1\right)} \circ\left(H_1+\lambda\left(H_2-H_1\right)\right)^{-1}(t),
    \label{e15}
\end{equation}
where $h_1,h_2$ are the derivatives of $H_1,H_2$. Moreover, we have
\begin{equation}
Q_t^{\prime \prime}(\lambda)=Q_t^{\prime}(\lambda)\cdot\left\{\frac{-2\left(h_2-h_1\right)}{h_1+\lambda\left(h_2-h_1\right)}+\frac{\left(H_2-H_1\right)\left(h_1^{\prime}+\lambda\left(h_2^{\prime}-h_1^{\prime}\right)\right)}{\left\{h_1+\lambda\left(h_2-h_1\right)\right\}^2}\right\} \circ Q_t(\lambda),
\label{e15.1}
\end{equation}
Let $H_2=\hat m_{I,k}^{-1}$ and $H_1=m_{N,k}^{-1}$ in \eqref{e15} and \eqref{e15.1} and use the fact
$Q_t(1) = \Phi_t(H_2),Q_t(0) = \Phi_t(H_1)$, by Lagrange’s
\begin{equation}
    \hat m_{I,k}(t)-m_{N,k}(t)=Q_t(1)-Q_t(0)=Q_t^\prime(0)+\frac{1}{2}Q_t^{\prime\prime}(\lambda_t^*)=A_{N,k}(t)+\frac{1}{2}B_{N,k}(t),
    \label{e16}
\end{equation}
where $\lambda_t^*\in[0,1]$ and
$$
\begin{aligned}
A_{N,k}(t)= & -\frac{\hat{m}_{I,k}^{-1}-m_{N,k}^{-1}}{\left(m_{N,k}^{-1}\right)^{\prime}} \circ m_{N,k}(t), \\
B_{N,k}(t)= & \frac{2\left(\hat{m}_{I,k}^{-1}-m_{N,k}^{-1}\right)\left(\hat{m}_{I,k}^{-1}-m_{N,k}^{-1}\right)^{\prime}}{\left\{\left(m_{N,k}^{-1}+\lambda_t^*\left(\hat{m}_{I,k}^{-1}-m_{N,k}^{-1}\right)\right)^{\prime}\right\}^2} \circ\left(m_{N,k}^{-1}+\lambda_t^*\left(\hat{m}_{I,k}^{-1}-m_{N,k}^{-1}\right)\right)^{-1}(t) \\
& -\frac{\left(\hat{m}_{I,k}^{-1}-m_{N,k}^{-1}\right)^2\left(m_{N,k}^{-1}+\lambda_t^*\left(\hat{m}_{I,k}^{-1}-m_{N,k}^{-1}\right)\right)^{\prime \prime}}{\left\{\left(\hat{m}_I^{-1}+\lambda_t^*\left(\hat{m}_{I,k}^{-1}-m_{N,k}^{-1}\right)\right)^{\prime}\right\}^3} \circ\left(m_{N,k}^{-1}+\lambda_t^*\left(\hat{m}_{I,k}^{-1}-m_{N,k}^{-1}\right)\right)^{-1}(t) .
\end{aligned}
$$
Then we prove following two assertions
\begin{equation}
    \max_{1\leq k\leq p}\sup_{t\in\hat{\mathcal{T}}} |A_{N,k}(t)+\frac{\hat{m}_{I,k}^{-1}-m_{N,k}^{-1}}{\left(m_k^{-1}\right)^{\prime}} \circ m_k(t)|=O_p\left(\TPR{R_nh_d}\right),
    \label{e17}
\end{equation}
\begin{equation}
    \max_{1\leq k\leq p}\sup_{t\in\hat{\mathcal{T}}}|B_{N,k}(t)|=O_p\left(\TPR{\frac{R_n^2}{h_d}}\right).
    \label{e18}
\end{equation}
Firstly, we prove \eqref{e17}. By Lagrange's mean value
\begin{equation}
    (\hat m_{I,k}^{-1}-m_{N,k}^{-1})\circ m_{N,k}(t)=(\hat m_{I,k}^{-1}-m_{N,k}^{-1})\circ m_k(t)+\left[(\hat m_{I,k}^{-1}-m_{N,k}^{-1})^\prime(\xi_{t,N,k})\right]\cdot\left(m_{N,k}(t)-m_k(t)\right),
    \label{e19}
\end{equation}
where $\xi_{t,N,k}$ is between $m_k(t)$ and $m_{N,k}(t)$. 
By \eqref{eq:rate of (m_I.inv-m_N.inv)'} in Lemma \ref{lem:(m_I.inv-m_N.inv)'}
\begin{equation}
    \max_{1\leq k\leq p}\sup_{t\in\hat{\mathcal{T}}}|(\hat m_{I,k}^{-1}-m_{N,k}^{-1})^\prime(\xi_{t,N,k})|\leq\max_{1\leq k\leq p}\sup_{t\in\mathbb{R}}|(\hat m_{I,k}^{-1}-m_{N,k}^{-1})^\prime(t)| =O_p\left( \TPR{\frac{R_n}{h_d}}\right).\label{e22}
\end{equation}
Combine \eqref{e22} and \eqref{eq:m_N-m} in Lemma \ref{lem:m_N}, the \eqref{e19} can be written as
\begin{equation}
    \max_{1\leq k\leq p}\sup_{t\in\hat{\mathcal{T}}}|(\hat m_{I,k}^{-1}-m_{N,k}^{-1})\circ m_{N,k}(t)-(\hat m_{I,k}^{-1}-m_{N,k}^{-1})\circ m_k(t)|=O_p\left( \TPR{R_nh_d} \right).\label{e23}
\end{equation}

For $B_{N,k}$, we decompose $B_{N,k}=2B_{N1,k}-B_{N2,k}$
$$
\begin{aligned}
B_{N 1,k}(t) & =\frac{\left(\hat{m}_{I,k}^{-1}-m_{N,k}^{-1}\right)\left(\hat{m}_{I,k}^{-1}-m_{N,k}^{-1}\right)^{\prime}}{\left\{\left[m_{N,k}^{-1}+\lambda_t^*\left(\hat{m}_{I,k}^{-1}-m_{N,k}^{-1}\right)\right]^{\prime}\right\}^2}\circ t_n, \\
B_{N 2,k}(t) & =\frac{\left(\hat{m}_{I,k}^{-1}-m_{N,k}^{-1}\right)^2\left(m_N^{-1}+\lambda_t^*\left(\hat{m}_{I,k}^{-1}-m_{N,k}^{-1}\right)\right)^{\prime \prime}}{\left\{\left[\hat{m}_{I,k}^{-1}+\lambda_t^*\left(\hat{m}_{I,k}^{-1}-m_{N,k}^{-1}\right)\right]^{\prime}\right\}^3}\circ t_n,
\end{aligned}
$$
where $t_n=(m_{N,k}^{-1}+\lambda_t^*(\hat m_{I,k}^{-1}-m_{N,k}^{-1}))^{-1}(t)$. Note that $t_n\rightarrow_pm_{N,k}(t)$ and 
$$
\max_{1\leq k\leq p}\sup_{t\in\mathbb{R}}|(m_{N,k}^{-1})''(t)|=\max_{1\leq k\leq p}\frac{1}{h_{d,k}}\sup_{t\in\mathbb{R}}|\frac{1}{Nh_{d,k}}\sum_{i=1}^NK_d'\left(\frac{m_k(i/N)-t}{h_{d,k}}\right)|=O(\TPR{\frac{1}{h_d}}          ),
$$
together with \eqref{eq:rate of m_I.inv-m_N.inv}, \eqref{eq:rate of (m_I.inv-m_N.inv)'}, \eqref{eq:rate of (m_I.inv-m_N.inv)''} in Lemma \ref{lem:(m_I.inv-m_N.inv)'} and Lemma \ref{lem:m_I.inv' inf>0},
\begin{equation}
    \max_{1\leq k\leq p}\sup_{t\in\hat{\mathcal{T}}} |B_{N1,k}(t)|=O_p\left(\TPR{\frac{R_n^2}{h_d}}\right),\quad\max_{1\leq k\leq p}\sup_{t\in\hat{\mathcal{T}}} |B_{N2,k}(t)|=O_p\left(\TPR{\frac{R_n^2}{h_d}}\right).
    \label{e25}
\end{equation}
Combining \eqref{e23} and \eqref{e25}, we can obtain the desired rate in \eqref{e17} and \eqref{e18}. Finally, combining \eqref{e16}, \eqref{e17} and \eqref{e18}, it immediately yields
\begin{equation}
    \max_{1\leq k\leq p}\sup_{t\in\hat{\mathcal{T}}} |m_{N,k}(t)-\hat m_{I,k}(t)-\frac{\hat{m}_{I,k}^{-1}-m_{N,k}^{-1}}{\left(m_{k}^{-1}\right)^{\prime}} \circ m_k(t)|=O_p\left(\TPR{\frac{R_n^2}{h_d}} \right).
    \label{eq:inverse}
\end{equation}
\end{proof}

\subsubsection{Proof of Proposition \ref{prop:GA in high-dim}}

\begin{proof}
For $k=1,2,\dots,p$, denote 
    \begin{equation*}
        \mathcal{S}_k^{(0)}(t)=:\frac{\hat{m}_{I,k}^{-1}-m_{N,k}^{-1}}{(m_{k}^{-1})'}\circ m_{k}(t).
    \end{equation*}
By Lemma \ref{lem:High-dim ll}, we have $\max_{1\leq k\leq p}\sup_{t\in[0,1]}|\tilde{m}_k(t)-m_k(t)|=O_p(R_n)$, then using Lemma \ref{m_I}, we shall have
\begin{equation}
    \label{eq:recovery in eq:GA in thm:high-dim bootstrap theorem}
        \max_{1\leq k\leq p}\sup_{t\in\hat{\mathcal{T}}}|m_{N,k}(t)-\hat{{m}}_{I,k}(t)-\mathcal{S}_k^{(0)}(t)|=O_p\left(\TPR{\frac{R_n^2}{h_d}} \right).
\end{equation}
By Taylor's expansion
\begin{align}
    \hat m_{I,k}^{-1}(t)-m_{N,k}^{-1}(t) &=\frac{1}{N h_{d,k}^2} \int_{-\infty}^t\sum_{i=1}^{N}(\tilde{m}_k(i / N)-m_k(i / N)) K_{d}'\left(\frac{m_k(i/N)-u}{h_{d,k}}\right)du\notag\\
   &+\frac{1}{2Nh_{d,k}^3}\int_{-\infty}^t \sum_{i=1}^{N}(\tilde{m}_k(i / N)-m_k(i / N))^2 K_{d}''\left(\frac{\eta_{i,N,k}-u}{h_{d,k}}\right)du, \label{e24}
\end{align}
for some $\eta_{i,N,k}$ between $\hat m_k(i/N )$ and $m_k(i/N )$. 
 By Lemma \ref{lem:High-dim ll}, uniformly $t\in(m(0),m(1))$, the second line in \eqref{e24} is of rate $O_p(\TPR{R_n^2/Mh_d})$. Then based on \eqref{e24}, we have
\begin{equation}
    \max_{1\leq k\leq p}\sup_{t\in\mathbb{R}}|\hat m_{I,k}^{-1}(t)-m_{N,k}^{-1}(t)+\frac{1}{N h_{d,k}} \sum_{i=1}^{N}(\tilde{m}_k(i / N)-m_k(i / N)) K_{d}\left(\frac{m_k(i/N)-t}{h_{d,k}}\right)|=O_p(\TPR{R_n^2/h_d}).
    \label{e26}
\end{equation}
 Combining definition of $\mathcal{S}_k^{(0)}(t)$ and \eqref{e26}, we have
\begin{align}
        &\max_{1\leq k\leq p}\sup_{t\in\hat{\mathcal{T}}}|\mathcal{S}_k^{(0)}(t)-\frac{m_k^{\prime}(t)}{N h_{d,k}} \sum_{i=1}^{N}(\tilde{m}_k(i / N)-m_k(i / N)) K_{d}\left(\frac{m_k(i/N)-m_k(t)}{h_{d,k}}\right)|\notag\\
        &=O_p\left(\TPR{\frac{R_n^2}{h_d}} \right).\label{eq:stochastic in high GA}
\end{align}

By Lemma \ref{high-GA}, on a richer space, there exists independent $\mathbf{V}_j\sim N_p(0,\mf\Sigma_{\mf G}(t_j))$, $j=1,\dots,n$, such that for $\mathbf{e}_l$ defined in \eqref{high-monotone model}
\begin{equation}
    \left( \mathbb{E} \max_{k\leq n}\left|\sum_{l=1}^k(\mathbf{e}_l-\mathbf{V}_l)\right|^2 \right)^{\frac{1}{2}}=O\left(\Theta(p)\sqrt{ n\log n}\left(\frac{p}{n}\right)^{\frac{q-2}{6q-4}}\right)
    \label{high-dim GA application}.
\end{equation}
Denote vectors $\mathcal{E}^{(0)}(t)=(\mathcal{E}_1^{(0)}(t),\dots,\mathcal{E}_p^{(0)}(t))^\top$ and $\mathcal{V}^{(0)}(t)=(\mathcal{V}_1^{(0)}(t),\dots,\mathcal{V}_p^{(0)}(t))^\top$,
$$
\mathcal{E}^{(0)}(t)=:\mathcal{M}(t)\sum_{j=1}^n\sum_{i=1}^N  \mathcal{K}_d(t,i/N)\mathcal{K}_r(j/n,i/N) \mathbf{e}_j,
$$
\begin{equation}
    \mathcal{V}^{(0)}(t)=:\mathcal{M}(t)\sum_{j=1}^n\sum_{i=1}^N  \mathcal{K}_d(t,i/N)\mathcal{K}_r(j/n,i/N) \mathbf{V}_j,
    \label{GA-V-0}
\end{equation}
where $\mathcal{M}(t)=\operatorname{diag}\left\{m_{k}^{\prime}(t)\right\}_{k=1}^p$ and
$$
\mathcal{K}_r(j/n,i/N)=\operatorname{diag}\left\{\frac{1}{nh_{r,k}}\tilde{K}^*_r\left(\frac{j/n-i/N}{h_{r, k}},\frac{i}{N}\right)\right\}_{k=1}^p,
$$
$$
\mathcal{K}_d(t,i/N)=\operatorname{diag}\left\{\frac{1}{Nh_{d,k}}K_d\left(\frac{m_k(i/N)-m_{k}(t)}{h_{d, k}}\right)\right\}_{k=1}^p.
$$
Combining \eqref{eq:stochastic in high GA} and Lemma \ref{local linear stochastic expansion} with jackknife bias correction, it can yield that
\begin{equation}
    \label{eq:stochastic expansion in eq:GA in thm:high-dim bootstrap theorem}
    \max_{1\leq k\leq p}\sup_{t\in\hat{\mathcal{T}}} |\mathcal{S}_k^{(0)}(t)-\mathcal{E}_k^{(0)}(t)|=O_p\left(\TPR{\frac{R_n^2}{h_d}} \right).
\end{equation}
Recall $\nu_{l,k}(t)=\int_{-t/h_{r,k}}^{(1-t)/h_{r,k}}x^lK_r(x)\mathrm{d}x$, for $\kappa_l=:\int |x|^lK_r(x)\mathrm{d}x$, using the fact $\kappa_2-\kappa_1^2>0$, we have
\begin{align}
    \inf_{t\in[0,1]}|\nu_{0,k}(t)\nu_{2,k}(t)-\nu^2_{1,k}(t)|&\geq \frac{1}{2}\cdot\frac{1}{2}\int_{-1}^1 x^2K_r(x)\mathrm{d}x-\left(\frac{1}{2}\int_{-1}^1|x|K_r(x)\mathrm{d}x\right)^2,\notag\\
    &\geq \frac{1}{4}(\kappa_2-\kappa_1^2).\label{eq:lower c(t)}
\end{align}
For $c_k(t)=:\nu_{0,k}(t)\nu_{2,k}(t)-\nu^2_{1,k}(t)$, we have $\inf_{t\in[0,1]}c_k(t)\geq (\kappa_2-\kappa_1^2)/4$, thus $\tilde{K}^*$ defined in \eqref{eq:K^*} is bounded. 
Using summation by parts, we have
\begin{align}
    &\mathbb{E}\left(\max_{1\leq k\leq p}\sup_{t\in\hat{\mathcal{T}}}|\mathcal{V}_k^{(0)}(t)-\mathcal{E}_k^{(0)}(t)|\right)\notag\\
    &\leq \max_{1\leq k\leq p}\sup_{t\in\hat{\mathcal{T}}}\left|\frac{m_{k}'(t)}{Nh_{d,k}}\sum_{i=1}^NK_d\left(\frac{m_k(i/N)-m_{k}(t)}{h_{d, k}}\right)\frac{1}{nh_{r,k}}\sum_{j=1}^{n-1}\Delta_{j}(\tilde K_r^*) \right|\cdot\mathbb{E}\left(\max_{j\leq n}\left|\sum_{l=1}^j(\mathbf{e}_l-\mathbf{V}_l)\right|\right)\notag\\
    &+\max_{1\leq k\leq p}\sup_{t\in\hat{\mathcal{T}}}\left|\frac{m_{k}'(t)}{Nh_{d,k}}\sum_{i=1}^NK_d\left(\frac{m_k(i/N)-m_{k}(t)}{h_{d, k}}\right)\frac{1}{nh_{r,k}}\tilde{K}^*\left(\frac{1-i/N}{h_{r,k}},\frac{i}{N}\right)\right|\cdot\mathbb{E}\left(\max_{j\leq n}\left|\sum_{l=1}^j(\mathbf{e}_l-\mathbf{V}_l)\right|\right)\notag\\
    &=O_p\left\{ \frac{\Theta(p)\sqrt{n\log n}\left(\frac{p}{n}\right)^{\frac{q-2}{6q-4}}}{nh_r}  \right\},\label{eq:GA in eq:GA in thm:high-dim bootstrap theorem}
\end{align}
where 
$$
\Delta_j(\tilde{K}_r^*)=:\left[\tilde K_r^*\left(\frac{j/n-i/N}{h_{r,k}},\frac{i}{N}\right)-\tilde K_r^*\left(\frac{(j+1)/n-i/N}{h_{r,k}},\frac{i}{N}\right)\right]
$$
and the last line in \eqref{eq:GA in eq:GA in thm:high-dim bootstrap theorem} is obtained in view of \eqref{high-dim GA application}.

Combining \eqref{eq:recovery in eq:GA in thm:high-dim bootstrap theorem}, \eqref{eq:stochastic expansion in eq:GA in thm:high-dim bootstrap theorem} and \eqref{eq:GA in eq:GA in thm:high-dim bootstrap theorem}, we have
\begin{equation}
    \label{eq:GA in thm:high-dim bootstrap theorem}
    \max_{1\leq k\leq p}\sup_{t\in\hat{\mathcal{T}}}|m_{N,k}(t)-\hat{m}_{I,k}(t)-\mathcal{V}_k^{(0)}(t)|=O_p\left\{\TPR{\frac{R_n^2}{h_d}}+\frac{\Theta(p)\sqrt{n\log n}\left(\frac{p}{n}\right)^{\frac{q-2}{6q-4}}}{nh_r}\right\}.
\end{equation}
Then by Lemma \ref{lem:m_N}, $\mathcal{V}^{(0)}(t)$ satisfies \eqref{eq:GA in prop:GA in high-dim}.
\end{proof}

\subsubsection{Proof of Theorem \ref{prop:GA in high-dim}}
\begin{proof}
Recall the union rate we define in \eqref{eq: def union rate}
\begin{equation*}
    \rho_n=:  \frac{\Theta(p)\left(\frac{p}{n}\right)^{\frac{q-2}{6q-4}}\sqrt{n\log n} }{nh_r} \bigvee \frac{\sqrt{\log (n)(\sqrt{n \varphi_n p}+\varphi_n+p)} }{nh_r},
\end{equation*}
where $\varphi_n=\Theta^2(p)(\sqrt{npL}+nL^{-1})+p^{3/2}L\sqrt{n}\left(\Theta(p)R_n+\sqrt{n}\left(h_r^5+\frac{\log^6(n)}{nh_r}\right)\right)$.

Denote
    \begin{equation*}
        \mathcal{V}^{(1)}(t)=:\sum_{j=1}^n\sum_{i=1}^N \mathcal{W}(i/N,t)\mathcal{K}_r(j/n,i/N) \mathbf{V}_j,
    \end{equation*}
where $\mathbf{V}_j,j=1,\dots,n$ is the same as \eqref{GA-V-0}. Note that $\hat{Q}(k)-\hat{Q}(k-1)$ is positive semidefinite, given data, Lemma \ref{lem:gaussian cov structure} indicates that there exists independent $\mathbf{V}_j^*\sim N_p(0,\hat{\mathbf{Q}}(j)-\hat{\mathbf{Q}}(j-1)),j=1,\dots,n$ on a richer space s.t.
\begin{equation}
    \label{eq:diff V-V*}
    \mathbb{E} \left(\max _{k=1, \ldots, n}\left|\sum_{j=1}^k \mathbf{V}_j-\sum_{j=1}^k \mathbf{V}^*_j\right|^2 \,\middle\vert\, \Upsilon_n\right) \leq C \log (n)(\sqrt{n\varphi^\Delta\Phi}+\varphi^\Delta+\Phi)
\end{equation}
where
$$
\varphi^\Delta=\max_{k=1,\dots,n}\left\| \sum_{j=1}^k\mf\Sigma_{\mf G}(t_j)-\hat{\mathbf{Q}}(k)   \right\|_{\operatorname{tr}},\quad \Phi=\max_{t=1,\dots,n}\|\mf\Sigma_{\mf G}(t_j)\|_{\operatorname{tr}}.
$$
Thus with summation by parts formula, given the data, 
\begin{align}
        \sup_{t\in\hat{\mathcal{T}}}\left| \mathcal{V}^{(1)}(t)-\mathcal{V}^{*}(t)\right|&\lesssim \sup_{t\in\hat{\mathcal{T}}}\left|\sum_{i=1}^N\mathcal{W}(i/N,t)\sum_{j=1}^{n-1}[\mathcal{K}_r(j/n,i/N)-\mathcal{K}_r((j+1)/n,i/N)]\right| \notag\\
        &\cdot \max _{j=1, \ldots, n}\left|\sum_{t=1}^j \mathbf{V}_t-\sum_{t=1}^j \mathbf{V}^*_t\right|.\notag\\
\end{align}
Then by \eqref{eq:diff V-V*} and the fact $\sum_{i=1}^N\mathcal{W}(i/N,t)=I_p$, 
\begin{equation}
    \mathbb{E} \left( \sup_{t\in\hat{\mathcal{T}}}\left| \mathcal{V}^{(1)}(t)-\mathcal{V}^{*}(t)\right|^2 \,\middle\vert\, \Upsilon_n\right) \leq \frac{C{\log (n)(\sqrt{n\varphi^\Delta\Phi}+\varphi^\Delta+\Phi)}}{n^2h_r^2}.\label{eq:Mies high-dim given data}
\end{equation}

By Assumption \hyperref[(B4)]{(B4)}, $p\underline{\Lambda}\leq \|\Sigma(t)\|_{\operatorname{tr}}\leq p\Lambda$ for any $t\in[0,1]$, thus $\Phi\asymp p$. Note that $\varphi^\Delta=O_p(\varphi_n)$ by Lemma \ref{lem:high long-run var},  then we can have 
\begin{equation}
    \label{eq:Mies high-dim no given data}
     \mathbb{E} \left( \sup_{t\in\hat{\mathcal{T}}}\left| \mathcal{V}^{(1)}(t)-\mathcal{V}^{*}(t)\right|\,\middle\vert\, \Upsilon_n\right)=O_p\left( \frac{\sqrt{\log (n)(\sqrt{n \varphi_n p}+\varphi_n+p)}}{nh_r}\right).
\end{equation}

Recall the proof of Proposition \ref{prop:GA in high-dim}, it has shown that $\mathcal{V}^{(0)}(t)$ defined in \eqref{GA-V-0} satisfies \eqref{eq:GA in prop:GA in high-dim}. Therefore, Theorem \ref{prop:GA in high-dim} can hold if
\begin{align*}
    &\sup_{x\in\mathbb{R}}\left|\mathbb{P}\left\{\max_{1\leq k\leq p}\sup_{t\in\hat{\mathcal{T}}} \sqrt{nh_r}\left| \mathcal{V}^{(0)}_k(t) \right|\leq x \right\} - \mathbb{P}\left\{\max_{1\leq k\leq p}\sup_{t\in\hat{\mathcal{T}}} \sqrt{nh_r}\left| \mathcal{V}^*_k(t) \right|\leq x \,\middle\vert\, \Upsilon_n \right\} \right|\rightarrow_p 0.
\end{align*}
By Lemma \ref{lem:prob inequality} and \TPR{Chebyshev's inequality, for any $\rho_n^*>0$,}
\begin{align}
    &\sup_{x\in\mathbb{R}}\left|\mathbb{P}\left\{\max_{1\leq k\leq p}\sup_{t\in\hat{\mathcal{T}}} \left|\sqrt{nh_r} \mathcal{V}^{(0)}_k(t) \right|\leq x \right\} - \mathbb{P}\left\{\max_{1\leq k\leq p}\sup_{t\in\hat{\mathcal{T}}} \left| \sqrt{nh_r}\mathcal{V}^*_k(t) \right|\leq x \,\middle\vert\, \Upsilon_n \right\} \right|,\notag\\
    &\leq\mathbb{P}\left\{ \max_{1\leq k\leq p}\sup_{t\in\hat{\mathcal{T}}}\sqrt{nh_r}\left| \mathcal{V}_k^{(0)}(t)-\mathcal{V}_k^{*}(t)\right|>\rho_n^*\sqrt{nh_r}\,\middle\vert\,\Upsilon_n \right\} \notag\\
    &\quad +\sup_{x\in\mathbb{R}}\mathbb{P}\left\{\left| \max_{1\leq k\leq p}\sup_{t\in\hat{\mathcal{T}}}|\sqrt{nh_r}\mathcal{V}^{(0)}_k(t)|-x\right|\leq \rho_n^*\sqrt{nh_r} \right\},\notag\\
    &\leq \TPR{\frac{1}{\rho_n^*}\mathbb{E}\left(  \max_{1\leq k\leq p}\sup_{t\in\hat{\mathcal{T}}}\left| \mathcal{V}_k^{(0)}(t)-\mathcal{V}_k^{*}(t)\right|\,\middle\vert\,\Upsilon_n \right)+\sup_{x\in\mathbb{R}}\mathbb{P}\left\{\left| \max_{1\leq k\leq p}\sup_{t\in\hat{\mathcal{T}}}|\sqrt{nh_r}\mathcal{V}^{(0)}_k(t)|-x\right|\leq \rho_n^*\sqrt{nh_r} \right\},\notag}\\
    &\leq \mathcal{I}_1+\mathcal{I}_2+\mathcal{I}_3,\notag
\end{align}
where
\TPR{\begin{align}
    \mathcal{I}_1&=\frac{1}{\rho_n^*}\mathbb{E}\left(  \max_{1\leq k\leq p}\sup_{t\in\hat{\mathcal{T}}}\left| \mathcal{V}_k^{(0)}(t)-\mathcal{V}_k^{(1)}(t)\right|\,\middle\vert\,\Upsilon_n \right),\notag \\
    \mathcal{I}_2&=\frac{1}{\rho_n^*}\mathbb{E}\left(  \max_{1\leq k\leq p}\sup_{t\in\hat{\mathcal{T}}}\left| \mathcal{V}_k^{(1)}(t)-\mathcal{V}_k^{*}(t)\right|\,\middle\vert\,\Upsilon_n \right)\label{eq:Mies high-dim},\\
    \mathcal{I}_3&=\sup_{x\in\mathbb{R}}\mathbb{P}\left\{\left| \max_{1\leq k\leq p}\sup_{t\in\hat{\mathcal{T}}}|\sqrt{nh_r}\mathcal{V}_k^{(0)}(t)|-x\right|\leq \rho_n^*\sqrt{nh_r} \right\}\label{eq:target boot in thm:high-dim bootstrap theorem}.
\end{align}
Recall the definition of $\rho_n$ in \eqref{eq: def union rate}, by \eqref{eq:Mies high-dim no given data},  
\begin{equation}
    \mathcal{I}_2=O_p\left( \frac{\sqrt{\log (n)(\sqrt{n \varphi_n p}+\varphi_n+p)}}{\rho_n^* nh_r}\right)=O_p(\rho_n/\rho_n^*).
\end{equation}
Therefore, we only need to bound $\mathcal{I}_1$ and $\mathcal{I}_3$ in separately two steps.
\begin{itemize}
    \item Step 1  
    \begin{equation}
    \mathcal{I}_1=O_p\left( \frac{\Theta(p)\left(\frac{p}{n}\right)^{\frac{q-2}{6q-4}}\sqrt{n\log n} }{ \rho_n^*nh_r}\right)=O_p(\rho_n/\rho_n^*).
    \label{eq:eq1:target boot in thm:high-dim bootstrap theorem}
    \end{equation}
    \item Step 2
    \begin{align}
        &\sup_{x\in\mathbb{R}}\mathbb{P}\left\{x-\rho_n^*\sqrt{nh_r} \leq \max_{1\leq k\leq n}\sup_{t\in\hat{\mathcal{T}}}|\sqrt{nh_r}\mathcal{V}^{(0)}_k(t)|\leq x+\rho_n^*\sqrt{nh_r} \right\}\notag\\
        &=O\left( \rho_n^*\sqrt{nh_r\log(Np)}+\sqrt{\frac{\sqrt{(nh_r)\log(Np)\log(np)}}{Nh_d}} \right).
    \label{eq:eq2:target boot in thm:high-dim bootstrap theorem}
    \end{align}
\end{itemize}
}
\paragraph{Step 1.} 
Firstly, we consider another form of stochastic expansion on $m_{I,k}-m_{N,k}$. As discussed in Remark \ref{remark:GA in high-dim}, our proof can also allow the $M$ in Assumption \hyperref[(A2)]{(A2)} appropriately decaying to 0. For simplicity we only consider fixed $M$ in our following proof, which has been specified in Assumption \hyperref[(A2)]{(A2)}. Use the same technique in \eqref{e16} but swap $\hat m_{I,k}^{-1}$ and $m_{N,k}^{-1}$ i.e. $H_2=m_{N,k}^{-1}$ and $H_1=\hat m_{I,k}^{-1}$, then by \eqref{e15} and \eqref{e15.1}, for some $\lambda_t^{**}\in[0,1]$,
\begin{equation}
    m_{N,k}(t) - \hat m_{I,k}(t) = \tilde{A}_{N,k}(t)+\frac{1}{2}\tilde B_{N,k}(t)
    \label{e29}
\end{equation}
$$
\begin{aligned}
\tilde{A}_{N,k}(t)&=-\frac{m_{N,k}^{-1}-\hat m_{I,k}^{-1}}{\left(\hat m_{I,k}^{-1}\right)^{\prime}} \circ \hat m_{I,k}(t),\\
\tilde B_{N,k}(t)&= \frac{2\left(m_{N,k}^{-1}-\hat{m}_{I,k}^{-1}\right)\left(m_{N,k}^{-1}-\hat{m}_{I,k}^{-1}\right)^{\prime}}{\left\{\left(m_{I,k}^{-1}+\lambda_t^{**}\left(\hat{m}_{N,k}^{-1}-m_{I,k}^{-1}\right)\right)^{\prime}\right\}^{2}} \circ\left(m_{I,k}^{-1}+\lambda_t^{**}\left(\hat{m}_{N,k}^{-1}-m_{I,k}^{-1}\right)\right)^{-1}(t) \\
&-\frac{\left(\hat{m}_{N,k}^{-1}-m_{I,k}^{-1}\right)^{2}\left(m_{I,k}^{-1}+\lambda_t^{**}\left(\hat{m}_{N,k}^{-1}-m_{I,k}^{-1}\right)\right)^{\prime \prime}}{\left\{\left(\hat{m}_{N,k}^{-1}+\lambda_t^{**}\left(\hat{m}_{N,k}^{-1}-m_{I,k}^{-1}\right)\right)^{\prime}\right\}^{3}} \circ\left(m_{I,k}^{-1}+\lambda_t^{**}\left(\hat{m}_{N,k}^{-1}-m_{I,k}^{-1}\right)\right)^{-1}(t) .
\end{aligned}
$$
By similar arguments in the proof of Lemma \ref{m_I} and Proposition \ref{prop:GA in high-dim}, we can show that 
\begin{equation}
    \max_{1\leq k\leq p}\sup_{t\in\hat{\mathcal{T}}}|\tilde{B}_{N,k}(t)|=O_p\left(\frac{R_n^2}{h_d}\right).\label{eq:tilde Bn}
\end{equation}
Note that
$$
(\hat m_{I,k}^{-1})'(t) = \frac{d}{dt}\left\{\frac{1}{Nh_{d,k}}\sum_{i=1}^N\int_{-\infty}^t K_d\left(\frac{\tilde m_k(i/N)-u}{h_d}\right)du\right\}=\frac{1}{Nh_{d,k}}\sum_{i=1}^NK_d\left(\frac{\tilde m_k(i/N)-t}{h_{d,k}}\right),
$$
thus
\begin{equation}
    (\hat m_{I,k}^{-1})'\circ \hat m_{I,k}(t) = \frac{1}{Nh_{d,k}}\sum_{i=1}^NK_d\left(\frac{\tilde m_k(i/N)-\hat m_{I,k}(t)}{h_{d,k}}\right).
    \label{e30}
\end{equation}
By \eqref{e24}, for some $\eta_{i,N,k}$ between $\tilde{m}_k(i/N)$ and $m_k(i/N)$,  
\begin{align*}
    (m_{N,k}^{-1}-\hat m_{I,k}^{-1}) \circ \hat m_{I,k}(t)
    &=\frac{1}{N h_{d,k}^2} \sum_{i=1}^{N}(m_k(i / N)-\tilde m_k(i / N))\int_{-\infty}^{\hat m_{I,k}(t)} K_{d}'\left(\frac{\tilde m_k(i/N)-u}{h_{d,k}}\right)du,\\
    &+\frac{1}{2Nh_{d,k}^3}\sum_{i=1}^{N}({m}_k(i / N)-\tilde m_k(i / N))^2\int_{-\infty}^{\hat m_{I,k}(t)}K_{d}''\left(\frac{\eta_{i,N,k}-u}{h_{d,k}}\right)du,
\end{align*}
together with \eqref{e30}, Lemma \ref{lem:m_I.inv' inf>0} and Lemma \ref{lem:High-dim ll}, we have
\begin{equation}
\label{eq:An stochastic expansion}
\max_{1\leq k\leq p}\sup_{t\in\hat{\mathcal{T}}}\left|\tilde A_{N,k}(t) - \frac{\sum_{i=1}^N K_d\left(\frac{\tilde m_k(i/N)-\hat m_{I,k}(t)}{h_{d,k}}\right)[ \tilde m_k(i/N)- m_k(i/N)]}{\sum_{i=1}^N K_d\left(\frac{\tilde m_k(i/N)-\hat m_{I,k}(t)}{h_{d,k}}\right)}\right|=O_p\left(\frac{R_n^2}{h_d}\right).    
\end{equation}
By definition of $\hat{W}^*_{i,k,I}(t)$ in the algorithm, \eqref{eq:An stochastic expansion} can be written as
\begin{equation}
    \max_{1\leq k\leq p}\sup_{t\in\hat{\mathcal{T}}}\left|\tilde A_{N,k}(t) - \sum_{i=1}^N\hat{W}_{i,k,I}^*(t)[\tilde{m}_k(i/N)- m_k(i/N)]\right|=O_p\left(\frac{R_n^2}{h_d}\right).\label{eq:An W stochastic expansion}
\end{equation}
Combining \eqref{e29},\eqref{eq:tilde Bn} and \eqref{eq:An W stochastic expansion}, we have
\begin{equation}
        \max_{1\leq k\leq p}\sup_{t\in\hat{\mathcal{T}}}|m_{N,k}(t)-\hat{{m}}_{I,k}(t)-\sum_{i=1}^N\hat{W}_{i,k,I}^*(t)[\tilde{m}_k(i/N)- m_k(i/N)]|=O_p\left(\frac{R_n^2}{h_d}\right).\label{eq:rearrange rate in step1 firstly}
\end{equation}

Secondly, denote vectors $\mathcal{S}^{(1)}(t)=(\mathcal{S}_1^{(1)}(t),\dots,\mathcal{S}_p^{(1)}(t))^{\top}$ and $\mathcal{E}^{(1)}(t)=(\mathcal{E}_1^{(1)}(t),\dots,\mathcal{E}_p^{(1)}(t))^{\top}$
    \begin{equation*}
        \mathcal{S}^{(1)}(t)=:\sum_{i=1}^N \mathcal{W}(i/N,t)[\tilde{\mathbf{m}}(i/N)-\mathbf{m}(i/N)],
    \end{equation*}
$$
\mathcal{E}^{(1)}(t)=:\sum_{j=1}^n\sum_{i=1}^N \mathcal{W}(i/N,t)\mathcal{K}_r(j/n,i/N) \mathbf{e}_j,
$$
where $\tilde{\mf m}(t)=(\tilde{m}_1(t),\dots,\tilde{m}_p(t))^\top$ and $\mathcal{W}(i/N,t),\mathcal{K}_r(j/n,i//N)$ are defined in \eqref{GA-V-star}. 
Then \eqref{eq:rearrange rate in step1 firstly} can be written as
\begin{equation}
        \max_{1\leq k\leq p}\sup_{t\in\hat{\mathcal{T}}}|m_{N,k}(t)-\hat{{m}}_{I,k}(t)-\mathcal{S}_k^{(1)}(t)|=O_p\left(\frac{R_n^2}{h_d}\right).\label{eq:rearrange rate in step1}
\end{equation}
Similar to \eqref{eq:stochastic expansion in eq:GA in thm:high-dim bootstrap theorem}, combining \eqref{eq:rearrange rate in step1} and Lemma \ref{local linear stochastic expansion} with jackknife bias correction, it can yield that
\begin{equation}
    \max_{1\leq k\leq p}\sup_{t\in\hat{\mathcal{T}} } |\mathcal{S}_k^{(1)}(t)-\mathcal{E}_k^{(1)}(t)|=O_p\left(\frac{R_n^2}{h_d}\right).\label{eq:stochastic expansion rate in step1}
\end{equation}
Note that $\mathcal{W}(i/N,t)>0$ and $\sum_{i=1}^N\mathcal{W}(i/N,t)=I_p$, using summation by parts and \eqref{high-dim GA application}, on given data, we have 
\begin{align}
    &\mathbb{E}\left(\max_{1\leq k\leq p}\sup_{t\in\hat{\mathcal{T}}}|\mathcal{V}_k^{(1)}(t)-\mathcal{E}_k^{(1)}(t)| \,\middle\vert\,\Upsilon_n \right)\notag\\
    &\lesssim \sup_{t\in\hat{\mathcal{T}}}\left|\sum_{i=1}^N\mathcal{W}(i/N,t)\sum_{j=1}^{n-1}\left[\mathcal{K}_r(j/n,i/N)-\mathcal{K}_r((j+1)/n,i/N)\right] \right|\cdot \mathbb{E}\left(\max_{j\leq n}\left|\sum_{l=1}^k(\mathbf{e}_l-V_l)\right|\,\middle\vert\,\Upsilon_n \right)\notag\\
    &=  O_p\left\{ \frac{\Theta(p)\sqrt{n\log n}\left(\frac{p}{n}\right)^{\frac{q-2}{6q-4}}}{nh_r}  \right\}.\label{eq:GA rate in step1}
\end{align}
Combining \eqref{eq:GA rate in step1}, \eqref{eq:stochastic expansion rate in step1}, \eqref{eq:rearrange rate in step1}, \eqref{eq:GA in eq:GA in thm:high-dim bootstrap theorem}, \eqref{eq:stochastic expansion in eq:GA in thm:high-dim bootstrap theorem} and  \eqref{eq:recovery in eq:GA in thm:high-dim bootstrap theorem}, we have
\begin{align*}
     &\mathbb{E}\left(\max_{1\leq k\leq p}\sup_{t\in\hat{\mathcal{T}}}\left| \mathcal{V}_k^{(0)}(t)-\mathcal{V}_k^{(1)}(t)\right| \,\middle\vert\,\Upsilon_n \right)\\
     &\leq \mathbb{E}\left(\max_{1\leq k\leq p}\sup_{t\in\hat{\mathcal{T}}}\left| m_{N,k}(t)-\hat{m}_{I,k}(t)-\mathcal{V}_k^{(0)}(t)\right| \,\middle\vert\,\Upsilon_n \right)+\mathbb{E}\left(\max_{1\leq k\leq p}\sup_{t\in\hat{\mathcal{T}}}\left| m_{N,k}(t)-\hat{m}_{I,k}(t)-\mathcal{V}_k^{(1)}(t)\right| \,\middle\vert\,\Upsilon_n \right).\notag\\
     &\leq \max_{1\leq k\leq p}\sup_{t\in\hat{\mathcal{T}}}\left| m_{N,k}(t)-\hat{m}_{I,k}(t)-\mathcal{E}_k^{(0)}(t)\right|+\max_{1\leq k\leq p}\sup_{t\in\hat{\mathcal{T}}}\left| m_{N,k}(t)-\hat{m}_{I,k}(t)-\mathcal{E}_k^{(1)}(t)\right|\notag\\
     &\quad+\mathbb{E}\left(\max_{1\leq k\leq p}\sup_{t\in\hat{\mathcal{T}}}|\mathcal{V}_k^{(0)}(t)-\mathcal{E}_k^{(0)}(t)| \,\middle\vert\,\Upsilon_n \right)+\mathbb{E}\left(\max_{1\leq k\leq p}\sup_{t\in\hat{\mathcal{T}}}|\mathcal{V}_k^{(1)}(t)-\mathcal{E}_k^{(1)}(t)| \,\middle\vert\,\Upsilon_n \right)\notag\\
     &=O_p\left\{ \frac{\Theta(p)\sqrt{n\log n}\left(\frac{p}{n}\right)^{\frac{q-2}{6q-4}}}{nh_r}\right\},
\end{align*}
which yields \eqref{eq:eq1:target boot in thm:high-dim bootstrap theorem}.
\paragraph{Step 2.}
Recall the definition of $\mathcal{V}^{(0)}(t)=(\mathcal{V}^{0}_1(t),\dots,\mathcal{V}^{(0)}_p(t))^{\top}$ in \eqref{GA-V-0},
$$
\mathcal{V}_k^{(0)}(t)=\frac{m_k'(t)}{Nh_{d,k}}\sum_{i=1}^NK_d\left(\frac{m_k(i/N)-m_{k}(t)}{h_{d,k}}\right)\frac{1}{nh_r}\sum_{j=1}^n\tilde K_r^*\left(\frac{j/n-i/N}{h_{r,k}},\frac{i}{N}\right)V_{j,k}
$$
where $V_{j,k}$ is the $k$-th coordinate of independent Gaussian random vector $\mathbf{V}_j\sim N_p(0,\mf\Sigma_{\mf G}(t_j)),j=1,\dots,n$. By Assumption \hyperref[(B4)]{(B4)}, $\underline{\Lambda}>0$ is smaller than any eigenvalues of $\mf\Sigma_{\mf G}(t_j)$ for any $j=1,2,\dots,n$, thus we have $\operatorname{Var}(V_{j,k})\geq \underline{\Lambda}$ for any $j,k$.

Firstly, we divide the Gaussian process $\sqrt{nh_r}\mathcal{V}^{(0)}(t)$ with discrete grids $s_l\in \Delta_l$ where $g_n=\lfloor{1/\Delta}\rfloor$ and $\Delta_l=[(l-1)\Delta,l\Delta]\cap\hat{\mathcal{T}}$. And we consider following centered Gaussian random vector in $\mathbb{R}^{(p\times g_n)}$
$$ 
(\sqrt{nh_{r}}\mathcal{V}^{(0)}_1(s_1),\dots,\sqrt{nh_{r}}\mathcal{V}^{(0)}_1(s_{g_n}),\dots,\sqrt{nh_{r}}\mathcal{V}^{(0)}_p(s_1),\dots,\sqrt{nh_{r}}\mathcal{V}^{(0)}_p(s_{g_n}))^{\top}.
$$
For any $k,l$, note that $V_{1,k},V_{2,k},\dots,V_{n,k}$ are independent,
\begin{align}
    &\operatorname{Var}[\sqrt{nh_{r}}\mathcal{V}_k^{(0)}(s_l)]\notag\\
    &\geq nh_r\underline{\Lambda}(m_k'(s_l))^2\sum_{j=1}^n\left[\sum_{i=1}^N\frac{1}{nh_{r,k}}\frac{1}{Nh_{d,k}}K_d(\frac{ m_k(i/N)-m_{k}(s_l)}{h_{d,k}})\tilde K_r^*\left(\frac{j/n-i/N}{h_{r,k}},\frac{i}{N}\right)\right]^2.\notag
\end{align}
Using $\min_{1\leq k\leq p}\inf_{t\in[0,1]}m_{k}'(t)\geq M>0$ in Assumption \hyperref[(A2)]{(A2)} 
then we can have for some constant $C>0$,
\begin{align}
    &\operatorname{Var}[\sqrt{nh_{r}}\mathcal{V}_k^{(0)}(s_l)]\notag\\
    &\geq C\underline{\Lambda}M^2\frac{h_r}{h_{r,k}}\iint_{[0,1]^2}K_d(\frac{m_k(u)-m_{k}(s_l)}{h_{d,k}})K_d(\frac{ m_k(v)-m_{k}(s_l)}{h_{d,k}})I(u,v)\frac{\mathrm{d}u}{h_{d,k}}\frac{\mathrm{d}v}{h_{d,k}}+o(1),\label{eq:min var}
\end{align}
where
$$
    I(u,v)=:\int_{0}^1\frac{1}{h_{r,k}}\tilde K_r^*\left(\frac{x-u}{h_{r,k}},u\right)\tilde K_r^*\left(\frac{x-v}{h_{r,k}},v\right)\mathrm{d}x.
$$
Note that for $\forall u,v\in[0,1]$ s.t. 
\begin{equation}
    |m_k(u)-m_{N,k}(s_l)|\leq h_{d,k} \text{ and } |m_k(v)-m_{N,k}(s_l)|\leq h_{d,k},\label{eq:Kd>0}
\end{equation}
we have $M|u-v|\leq |m_k(u)-m_k(v)|\leq |m_k(u)-m_{N,k}(s_l)|+|m_k(v)-m_{N,k}(s_l)|\leq 2h_{d,k}$, thus $|\frac{u-v}{h_{r,k}}|\leq \frac{2h_{d,k}}{Mh_{r,k}}$. For all $u,v\in[0,1]$ satifying \eqref{eq:Kd>0}, by the definition of $\tilde{K}_r^*$ in \eqref{eq:K^*} and conclusion in \eqref{eq:lower c(t)} $\inf_{t\in[0,1]}c_k(t)\geq \frac{1}{4}(\kappa_2-\kappa_1^2)$ where $c_k(t)=:\nu_{0,k}(t)\nu_{2,k}(t)-\nu^2_{1,k}(t)$,
\begin{align}
    \left|\tilde K_r^*\left(\frac{x-u}{h_{r,k}},u\right)-\tilde K_r^*\left(\frac{x-v}{h_{r,k}},v\right)\right|&\leq  \frac{1}{c_k(u)}\left|\nu_{2,k}(u)\tilde K_r(\frac{x-u}{h_{r,k}})-\nu_{2,k}(v)\tilde K_r(\frac{x-v}{h_{r,k}})\right|\notag\\
    &+\frac{1}{c_k(u)}\left|\nu_{1,k}(u)\tilde K_r(\frac{x-u}{h_{r,k}})(\frac{x-u}{h_{r,k}})-\nu_{1,k}(v)\tilde K_r(\frac{x-v}{h_{r,k}})(\frac{x-v}{h_{r,k}})\right|  \notag\\
    &+\frac{|c_k(u)-c_k(v)|}{c_k(u)c_k(v)}\left|\nu_{2,k}(v)\tilde K_r(\frac{x-v}{h_{r,k}})+\nu_{1,k}(v)\tilde K_r(\frac{x-v}{h_{r,k}})(\frac{x-v}{h_{r,k}})\right|\notag\\
    &\leq C\left|\frac{u-v}{h_{r,k}}\right|\leq \frac{2Ch_{d,k}}{Mh_{r,k}},\notag
\end{align}
where $C>0$ is a universal constant and does not depend on $x,u,v$. 

In the following we show $I(u,v)$ has strictly positive lower bound for all $u,v\in[0,1]$ satisfying \eqref{eq:Kd>0}. Note that for any $k=1,\dots,p$, $c_k(t)$ is bounded and $\min_{1\leq k\leq p} h_{r,k}\asymp h_r\asymp \max_{1\leq k\leq p} h_{r,k},\min_{1\leq k\leq p} h_{d,k}\asymp h_d\asymp \max_{1\leq k\leq p} h_{d,k}$ with $h_d/h_r\rightarrow0$, we have
\begin{align}
    I(u,v)&=I(u,u)+O(\frac{h_{d,k}}{h_{r,k}})\notag\\
    &=\frac{1}{c_k(u)^2}\int_{0}^1 \frac{1}{h_{r,k}}\left[\nu_{2,k}(u)\tilde K_r(\frac{x-u}{h_{r,k}})-\nu_{1,k}(u)\tilde K_r(\frac{x-u}{h_{r,k}})(\frac{x-u}{h_{r,k}})\right]^2\mathrm{d}x+O(\frac{h_{d,k}}{h_{r,k}})\notag\\
    &\geq \frac{1}{\max_{t\in[0,1]}c_k(t)^2}\int_{\frac{0-u}{h_{r,k}}}^{\frac{1-u}{h_{r,k}}} [\nu_{2,k}(u)-\nu_{1,k}(u)z]^2\tilde K_r^2(z)\mathrm{d}z+o(1),\label{eq:I(u,u)}
\end{align}
with uniformly $u,v\in[0,1]$ satisfying \eqref{eq:Kd>0}.
\begin{description}
    \item[Case 1.] For any $u\in[h_{r,k},1-h_{r,k}]$ and $\kappa_l=\int_{-1}^1 |z|^lK_r(z)\mathrm{d}z$, $\phi_l=\int_{-1}^1|z|^l\tilde K_r^2(z)\mathrm{d}z$, 
\begin{equation}
    \int_{\frac{0-u}{h_{r,k}}}^{\frac{1-u}{h_{r,k}}} [\nu_{2,k}(u)-\nu_{1,k}(u)z]^2\tilde K_r^2(z)\mathrm{d}z=\kappa_2^2\phi_0+o(1).\label{eq:case1 u}
\end{equation}
    \item[Case 2.] For any $u\in[0,h_{r,k})$, 
    \begin{equation}
    \int_{\frac{0-u}{h_{r,k}}}^{\frac{1-u}{h_{r,k}}} [\nu_{2,k}(u)-\nu_{1,k}(u)z]^2\tilde K_r^2(z)\mathrm{d}z\geq \int_0^1 [\nu_{2,k}(u)-\nu_{1,k}(u)z]^2\tilde K_r^2(z)\mathrm{d}z.\label{eq:case2 u}
    \end{equation}
    Denote the right side of \eqref{eq:case2 u} as $G(u)$, then
    \begin{equation*}
        G(u)\geq \min\{G(0),G(h_{r,k}),G(u^*)\}
    \end{equation*}
    where $0=G'(u^*).$ By calculation, we have
    \begin{equation}
        G(u)\geq \min\left\{ \frac{1}{4}\int_0^1(\kappa_2-\kappa_1z)^2\tilde K_r^2(z)\mathrm{d}z, \frac{1}{2}\kappa_2^2\phi_0 ,\frac{\kappa_2^2(\phi_2\phi_0-\phi_1^2)}{8(\phi_1+\phi_0)} \right\}.\label{eq:case2 G(u)}
    \end{equation}
    \item[Case 3.] For any $u\in(1-h_{r,k},1]$, by similar arguments in Case 2, we have
    \begin{align}
        &\int_{\frac{0-u}{h_{r,k}}}^{\frac{1-u}{h_{r,k}}} [\nu_{2,k}(u)-\nu_{1,k}(u)z]^2\tilde K_r^2(z)\mathrm{d}z\geq \int_{-1}^0 [\nu_{2,k}(u)-\nu_{1,k}(u)z]^2\tilde K_r^2(z)\mathrm{d}z\notag\\
        &\geq\min\left\{ \frac{1}{4}\int_{-1}^0(\kappa_2-\kappa_1z)^2\tilde K_r^2(z)\mathrm{d}z, \frac{1}{2}\kappa_2^2\phi_0 ,\frac{\kappa_2^2(\phi_2\phi_0-\phi_1^2)}{8(\phi_1+\phi_0)} \right\}.\label{eq:case3 u}
    \end{align}
\end{description}
Combining \eqref{eq:I(u,u)}, \eqref{eq:case1 u}, \eqref{eq:case2 u}, \eqref{eq:case2 G(u)}, \eqref{eq:case3 u} and the fact $\phi_2\phi_0-\phi_1^2>0$, 
\begin{equation}
    I(u,v)\geq  \frac{1}{\max_{t\in[0,1]}c^2(t)}\min\left\{ \frac{1}{4}\int_{-1}^0(\kappa_2-\kappa_1z)^2\tilde K_r^2(z)\mathrm{d}z, \frac{1}{2}\kappa_2^2\phi_0 ,\frac{\kappa_2^2(\phi_2\phi_0-\phi_1^2)}{8(\phi_1+\phi_0)} \right\}>0,\label{eq:min I(u,v)}
\end{equation}
uniformly with $u,v\in[0,1]$ satisfying \eqref{eq:Kd>0}. Combining \eqref{eq:min I(u,v)} and \eqref{eq:min var}, there exists constant $\underline{\sigma}>0$ s.t.
\begin{equation}
    \min_{1\leq k\le p}\min_{1\leq l\leq g_n} \operatorname{Var}[\sqrt{nh_{r}}\mathcal{V}_k^{(0)}(s_l)]\geq \underline{\sigma}^2.\label{eq:min Var sigma}
\end{equation}
Using the fact \eqref{eq:min Var sigma}, by Nazarov's inequality in Lemma \ref{nazarov}, for every $\rho>0$,
\begin{equation}
    \sup_{x\in\mathbb{R}}\mathbb{P}\left\{x-\rho\leq \max_{1\leq k\leq p}\max_{1\leq l\leq g_n} \sqrt{nh_{r}}\left| \mathcal{V}^{(0)}_k(s_l) \right| \leq  x+\rho\right\}\leq \frac{2\rho}{\underline{\sigma}}(\sqrt{2\log g_n+2\log 2p}+2).\label{eq:anti con for disc V}
\end{equation}

Secondly, we consider the difference between $\mathcal{V}^{(0)}(t)$ and $\mathcal{V}^{(0)}(s_l)$. Denote
$$
\mathcal{D}_{n,k,l,i}(t)=:m_{k}'(t)K_d(\frac{m_k(i/N)-m_{k}(t)}{h_{d,k}})-m_{k}'(s_l)K_d(\frac{m_k(i/N)-m_{k}(s_{l})}{h_{d,k}}),
$$
then we have 
\begin{align}
    0&\leq \max_{1\leq k\leq p}\sup_{t\in\hat{\mathcal{T}}}|\mathcal{V}_k^{(0)}(t)|-\max_{1\leq k\leq p}\max_{1\leq l\leq g_n}|\mathcal{V}_k^{(0)}(s_l)| \notag\\
    &\leq \max_{1\leq k\leq p}\max_{1\leq l\leq g_n}\left\{\sup_{t\in\Delta_l}|\mathcal{V}_k^{(0)}(t)|-|\mathcal{V}_k^{(0)}(s_l)|\right\}\notag\leq \max_{1\leq k\leq p}\max_{1\leq l\leq g_n}\sup_{t\in\Delta_l}\left|\mathcal{V}_k^{(0)}(t)-\mathcal{V}_k^{(0)}(s_l)\right|\notag\\
    &\leq \max_{1\leq k\leq p}\max_{1\leq l\leq g_n}\sup_{t\in\Delta_l}\frac{1}{Nh_{d,k}}\sum_{i=1}^N|\mathcal{D}_{n,k,l,i}(t)|\sum_{j=1}^n\frac{1}{nh_{r,k}}\tilde K_r^*\left(\frac{j/n-i/N}{h_{r,k}},\frac{i}{N}\right)|V_{j,k}|\notag\\
    &\leq \left(\max_{1\leq k\leq p}\max_{1\leq j\leq n}|V_{j,k}|\right)\cdot \left( \max_{1\leq k\leq p}\max_{1\leq l\leq g_n}\sup_{t\in\Delta_l}\frac{1}{Nh_{d,k}}\sum_{i=1}^N|\mathcal{D}_{n,k,l,i}(t)| \right)\notag\\
    & \cdot \left( \max_{1\leq k\leq p}\max_{1\leq i\leq N}\sum_{j=1}^n\frac{1}{nh_{r,k}}\tilde K_r^*\left(\frac{j/n-i/N}{h_{r,k}},\frac{i}{N}\right) \right) \notag\\
    &\lesssim   \left(\max_{1\leq k\leq p}\max_{1\leq j\leq n}|V_{j,k}|\right)\cdot \left( \max_{1\leq k\leq p}\max_{1\leq l\leq g_n}\sup_{t\in\Delta_l}\frac{1}{Nh_{d,k}}\sum_{i=1}^N|\mathcal{D}_{n,k,l,i}(t)| \right).\label{eq:max V and max D}
\end{align}
For any $k,l$ and $t\in\Delta_l$, by Lagrange's mean value, for some $\eta_{k,t}$ between $m_{k}(s_l)$ and $m_{k}(t)$,
\begin{align}
    \frac{1}{Nh_{d,k}}\sum_{i=1}^N|\mathcal{D}_{n,k,l,i}(t)|&\leq \frac{1}{Nh_{d,k}}\sum_{i=1}^NK_d(\frac{m_{k}(i/N)-m_{k}(t)}{h_{d,k}})|m_{k}'(t)-m_{k}'(s_l)|,\notag\\
    &+ m_{k}'(t)\frac{1}{Nh_{d,k}}\sum_{i=1}^NK_d'(\frac{m_{k}(i/N)-\eta_{k,t}}{h_{d,k}})|\frac{m_{k}(t)-m_{k}(s_l)}{h_{d,k}}|,\notag\\
    &= O(\frac{1}{g_nh_{d,k}}),\notag
\end{align}
combining universal Lipschitz condition in Assumption \hyperref[(A1)]{(A1)}, we have
\begin{equation}
    \max_{1\leq k\leq p}\max_{1\leq l\leq g_n}\sup_{t\in\Delta_l}\frac{1}{Nh_{d,k}}\sum_{i=1}^N|\mathcal{D}_{m,k,l,i}(t)| =O(\frac{1}{g_nh_d})\label{eq:max D}.
\end{equation}
By Assumption \hyperref[(B4)]{(B4)}, $\Lambda$ is larger than any eigenvalues of $\mathbf{\Sigma}_i,i=1,\dots,n$, thus 
$$
\max_{1\leq k\leq p}\max_{1\leq j\leq n}\operatorname{Var}(V_{j,k})\leq \Lambda.$$
Using Lemma 2.3.4 in \cite{gine2021mathematical}, it yields
\begin{equation}
    \mathbb{E}(\max_{1\leq k\leq p}\max_{1\leq j\leq n}|V_{j,k}|)\leq \Lambda^{1/2}\sqrt{2\log(np)}.\label{eq:max V}
\end{equation}
Combining \eqref{eq:max V}, \eqref{eq:max D} and \eqref{eq:max V and max D}, for every $\delta^*>0$
\begin{align}
    &\mathbb{P}\left\{ \max_{1\leq k\leq p}\sup_{t\in\hat{\mathcal{T}}}|\mathcal{V}_k^{(0)}(t)|-\max_{1\leq k\leq p}\max_{1\leq l\leq g_n}|\mathcal{V}_k^{(0)}(s_l)| >\delta^*\right\}\notag\\
    &\leq \frac{1}{\delta^*}\mathbb{E}\left\{\max_{1\leq k\leq p}\sup_{t\in\hat{\mathcal{T}}}|\mathcal{V}_k^{(0)}(t)|-\max_{1\leq k\leq p}\max_{1\leq l\leq g_n}|\mathcal{V}_k^{(0)}(s_l)|\right\}\notag\\
    &\lesssim \frac{1}{\delta^*g_nh_d}\mathbb{E}\left(\max_{1\leq k\leq p}\max_{1\leq j\leq n}|V_{j,k}|\right)\lesssim\frac{\sqrt{\log(np)}}{\delta^*g_nh_d}.\label{eq:max V V continu}
\end{align}

Finally, combining \eqref{eq:max V V continu} and \eqref{eq:anti con for disc V}, we can have
\begin{align}
    &\sup_{x\in\mathbb{R}}\mathbb{P}\left\{x-\rho_n^*\sqrt{nh_r}\leq \max_{1\leq k\leq p}\sup_{t\in\hat{\mathcal{T}}}\sqrt{nh_r}|\mathcal{V}_k^{(0)}(t)| \leq x+\rho_n^*\sqrt{nh_r} \right\}\notag\\
    &\leq \sup_{x\in\mathbb{R}}\mathbb{P}\left\{x-(\rho_n^*+\delta^*)\sqrt{nh_r}\leq \max_{1\leq k\leq p}\max_{1\leq l\leq g_n}\sqrt{nh_r}|\mathcal{V}_k^{(0)}(s_l)| \leq x+(\rho_n^*+\delta^*)\sqrt{nh_r} \right\}\notag\\
    &+\mathbb{P}\left\{ \max_{1\leq k\leq p}\sup_{t\in\hat{\mathcal{T}}}|\mathcal{V}_k^{(0)}(t)|-\max_{1\leq k\leq p}\max_{1\leq l\leq g_n}|\mathcal{V}_k^{(0)}(s_l)| >\delta^*\right\}\notag\\
    &\lesssim (\rho_n^*+\delta^*)\sqrt{nh_r}\sqrt{\log(g_np)}+\frac{\sqrt{\log (np)}}{\delta^*g_nh_d}.\label{eq:rate for eq2:target boot in thm:high-dim bootstrap theorem}
\end{align}
Let $g_n=N$, then we can minimize \eqref{eq:rate for eq2:target boot in thm:high-dim bootstrap theorem} by setting 
$$
\delta^*=\sqrt{\frac{\log^{1/2}(np)}{Nh_d\sqrt{nh_r\log(Np)}}},
$$
which yields the desired rate in \eqref{eq:eq2:target boot in thm:high-dim bootstrap theorem}. 

\TPR{Finally, by assertions in Step 1 and Step 2, setting $\rho_n^*=\sqrt{\rho_n/\sqrt{nh_r\log(Np)}}$, we have
\begin{align*}
    &\sup_{x\in\mathbb{R}}\left|\mathbb{P}\left\{\max_{1\leq k\leq p}\sup_{t\in\hat{\mathcal{T}}} \sqrt{nh_r}\left| \mathcal{V}^{(0)}_k(t) \right|\leq x \right\} - \mathbb{P}\left\{\max_{1\leq k\leq p}\sup_{t\in\hat{\mathcal{T}}} \sqrt{nh_r}\left| \mathcal{V}^*_k(t) \right|\leq x \,\middle\vert\, \Upsilon_n \right\} \right|\notag\\
    &=O_p\left(\sqrt{\rho_n\sqrt{nh_r\log(Np)}}+\sqrt{\sqrt{nh_r\log(Np)\log(np)}/Nh_d}\right),
\end{align*}
which proves Theorem \ref{thm:high-dim bootstrap theorem} by condition \eqref{eq:rate for high-dim boot + continu}.}
\end{proof}

\subsubsection{Proof of Theorem \ref{thm:multi-SCB}}
\begin{proof}
Proof of (i). By Lemma \ref{lem:multi-dim ll}, the new jackknife corrected local linear estimator $\tilde{\mf m}(t)=2\hat{\mf m}_{h_r/\sqrt{2}}(t)-\hat{\mf m}_{h_r}(t)$ defined from \eqref{local linear} also follows 
$$
\max_{1\leq k\leq p}\sup_{t\in[0,1]}|\tilde{m}_k(t)-m_k(t)|=O_p(R_n),
$$
thus all the conditions in Lemma \ref{m_I} also hold in Theorem \ref{thm:multi-SCB}.
Denote
$$
m_{\mathbf{C},N,k}^{-1}(v)=:\frac{1}{Nh_d}\sum_{i=1}^N\int_{-\infty}^vK_d\left(\frac{m_{\mathbf{C},k}(i/N)-u}{h_d}\right)\mathrm{d}u,
$$
then by similar arguments in \eqref{eq:recovery in eq:GA in thm:high-dim bootstrap theorem} and \eqref{e26},
\begin{equation}
    \max_{1\leq k\leq s}\sup_{t\in\hat{\mathcal{T}}} |m_{\mf C,N,k}(t)-\hat{m}_{\mf C,I,k}(t)-\frac{\hat{m}_{\mf C,I,k}^{-1}-m_{\mf C,N,k}^{-1}}{(m_{\mf C,N,k}^{-1})^{\prime}}\circ m_{\mf C,N,k}(t)|=O_p(\TPR{\frac{R_n^2}{h_d}}),\label{eq:key0 to thm:multi-SCB}
\end{equation}
\begin{equation}
    \max_{1\leq k \leq s}\sup_{v\in\hat{\mathcal{T}}^{-1}}\left|\hat{m }_{\mathbf{C},I,k}^{-1}(v)-m_{\mathbf{C},N,k}^{-1}(v)-\tilde{A}_{\mathbf{C},k}(v)\right|=O_p(\TPR{\frac{R_n^2}{h_d}}),\label{eq:key1 to thm:multi-SCB}
\end{equation}
where 
$$
\tilde{A}_{\mathbf{C},k}(v)=:\frac{1}{Nh_d}\sum_{i=1}^NK_d(\frac{m_{C,k}(i/N)-v}{h_{d}})(\tilde{m}_{C,k}(i/N)-m_{C,k}(i/N) ),
$$
and $\tilde{\mf m}_{\mf C}=\mf C\tilde{\mf m}(t)$ is obtained by jackknife corrected local linear estimator $\tilde{\mathbf{m}}(t)$. By \eqref{eq:multi-dim stochastic expansion of ll} in Lemma \ref{lem:multi-dim ll} with jackknife correction,
\begin{equation*}
\max_{1\leq k\leq s}\sup_{v\in\hat{\mathcal{T}}^{-1}}\left|\tilde{A}_{\mathbf{C},k}(v)-
\frac{1}{Nh_d}\sum_{i=1}^NK_d\left(\frac{m_{C,k}(i/N)-v}{h_{d}}\right)\Omega_{\mf C,k}(i/N)
\right|=O_p(h_r^3+\frac{1}{nh_r}),
\end{equation*}
where $\mf{\Omega}_{\mf C}(t)=(\Omega_{\mf C,1}(t),\dots,\Omega_{\mf C,s}(t))^\top$ and
$$
    \mf \Omega_{\mf C}(t)=\frac{1}{nh_r}\sum_{j=1}^n\tilde K_r^*\left(\frac{t_j-t}{h_r},t\right)\mf C\mf M^{-1}(t_j)\mathbf{x}_{j}e_j.
$$
By Corollary 1 in \cite{Wu2011gaussian}, there exists independent $\mathbf{V}_j\sim N(0,\mathbf{\Sigma}_{\mf C}(t_j)),j=1,\dots,n$ on a richer space s.t.
$$
\max_{i\leq n}\left| \sum_{j=1}^i\mf C\mf M^{-1}(t_j)\mathbf{x}_je_j-\sum_{j=1}^i\mathbf{V}_j  \right|=o_p(n^{1/4}\log^2n).
$$
Summation by parts yields that
\begin{align}
    &\max_{1\leq k\leq s}\sup_{v\in\hat{\mathcal{T}}^{-1}}\left|\frac{1}{Nh_d}\sum_{i=1}^NK_d(\frac{m_{C,k}(i/N)-v}{h_{d}})\left(\Omega_{\mf C,k}(i/N)-\frac{1}{nh_r}\sum_{j=1}^n\tilde K_r^*\left(\frac{j/n-i/N}{h_r},\frac{i}{N}\right)\mathbf{V}_{j,k}\right) \right|\notag\\
    &\lesssim \max_{1\leq k\leq s}\sup_{v\in\hat{\mathcal{T}}^{-1}}\left|\frac{1}{Nh_d}\sum_{i=1}^NK_d(\frac{m_{C,k}(i/N)-v}{h_{d}})\frac{1}{nh_r}\sum_{k=1}^{n-1}\left[\tilde K_r^*\left(\frac{\frac{k}{n}-\frac{i}{N}}{h_r},\frac{i}{N}\right)-\tilde K_r^*\left(\frac{\frac{k+1}{n}-\frac{i}{N}}{h_r},\frac{i}{N}\right)\right]\right|\notag\\
    &\cdot \max_{i\leq n}\left| \sum_{j=1}^i\mf C\mf M^{-1}(t_j)\mathbf{x}_je_j-\sum_{j=1}^i\mathbf{V}_j  \right|=o_p(\frac{n^{1/4}\log^2(n)}{nh_r}).
    \label{eq:key3 to thm:multi-SCB}
\end{align}
Therefore, combining \eqref{eq:key0 to thm:multi-SCB}, \eqref{eq:key1 to thm:multi-SCB} and \eqref{eq:key3 to thm:multi-SCB}, there exists independent $\mathbf{V}_j=(V_{j,1},\dots,V_{j,s})^{\top}\sim N_s(0,\mathbf{\Sigma}_{\mf C}(t_j)),j=1,\dots,n$ on a richer space s.t.
\begin{equation}
    \max_{1\leq k\leq s}\sup_{t\in \hat{\mathcal{T}} }\left| \hat{m }_{\mathbf{C},I,k}(t)-m_{\mathbf{C},N,k}(t)-\mathcal{V}_{\mathbf{C},k}^{(0)}(t) \right|=O_p(\rho_n)
    \label{eq:key to thm:multi-SCB}
\end{equation}
where $\rho_n=R_n^2/h_d+\frac{n^{1/4}\log^2(n)}{nh_r}$ and $\mathcal{V}_{\mathbf{C}}^{(0)}(t)=(\mathcal{V}^{(0)}_{\mathbf{C},1}(t),\dots,\mathcal{V}^{(0)}_{\mathbf{C},s}(t))^{\top}$
$$
\mathcal{V}^{(0)}_{\mathbf{C},k}(t)=:\frac{m_{\mf C,k}^{\prime}(t)}{Nh_d}\sum_{i=1}^NK_d\left(\frac{m_k(i/N)-t}{h_d}\right)\frac{1}{nh_r}\sum_{j=1}^n\tilde K_r^*\left(\frac{j/n-i/N}{h_r},\frac{i}{N}\right)V_{j,k}.
$$
Combining \eqref{eq:key to thm:multi-SCB} and Lemma \ref{lem:m_N}, \eqref{eq:GA in prop:GA in multi-dim} holds.

Proof of (ii). Denote $\mathcal{V}^{(1)}_{\mf C}(t)=:(\mathcal{V}^{(1)}_{\mf C,1},\dots,\mathcal{V}^{(1)}_{\mf C,s}(t))^\top$ where
\begin{equation*}
    \mathcal{V}_{\mf C,k}^{(1)}(t)=:\frac{1}{nh_{r}}\sum_{j=1}^n\sum_{i=1}^N \hat W_{i,k,I}^*(t)\tilde K_r^*\left(\frac{j/n-i/N}{h_{r}},\frac{i}{N}\right) V_{j,k}.
\end{equation*}
By similar arguments \eqref{eq:Mies high-dim given data} and \eqref{eq:Mies high-dim no given data} with Lemma \ref{lem:multi-dim long-run var}, we can have
\begin{equation}
    \max_{1\leq k\leq s}\sup_{t\in\mathcal{T}}|\mathcal{V}_{\mf C,k}^{(1)}(t)-\mathcal{V}_{\mf C,k}^{*}(t)|=O_p\left(\frac{\sqrt{\log(n)(\sqrt{n\bar{\varphi}_n}+\bar{\varphi}_n)}}{nh_r}\right)\label{eq:I3 multi-dim},
\end{equation}
where $\bar{\varphi}_n=\sqrt{n L}+n L^{-1}+\sqrt{n}L\left(h_r^2+\frac{\log n}{\sqrt{nh_r}}\right)+nL\left(h_r^5+\frac{\log^2n}{nh_r}\right)$. Denote new union rate
$$
\rho_n^*=\frac{n^{1/4}\log^2 n }{nh_r} \bigvee \frac{\sqrt{\sqrt{n \bar{\varphi}_n}+\bar{\varphi}_n}\log (n)}{nh_r}.
$$
By similar arguments in Step 1 and Step 2 of Theorem \ref{thm:high-dim bootstrap theorem}, we can have
\begin{equation}
    \max_{1\leq k\leq s}\sup_{t\in\hat{\mathcal{T}}} \left|\mathcal{V}_{\mf C,k}^{(0)}(t)-\mathcal{V}_{\mf C,k}^{(1)}(t)\right|=o_p(\rho_n^*)\label{eq:I1 multi-dim}
\end{equation}
\begin{equation}
        \sup_{x\in\mathbb{R}}\mathbb{P}\left\{x-\rho_n^* \leq \max_{1\leq k\leq n}\sup_{t\in\hat{\mathcal{T}}}|\mathcal{V}^{(0)}_{\mf C,k}(t)|\leq x+\rho_n^* \right\}=o(1).
    \label{eq:I2 multi-dim}
\end{equation}
By Lemma \ref{lem:prob inequality},
\begin{align}
    &\sup_{x\in\mathbb{R}} \left|\mathbb{P}\left\{\max_{1\leq k\leq s}\sup_{t\in\hat{\mathcal{T}}} \left|\mathcal{V}_{\mf C,k}(t) \right|\leq x \right\} - \mathbb{P}\left\{\max_{1\leq k\leq s}\sup_{t\in\hat{\mathcal{T}}} \left| \mathcal{V}^*_{\mf C,k}(t) \right|\leq x \,\middle\vert\, \Upsilon_n \right\} \right|\notag\\
    &\lesssim \mathbb{P}\left\{\max_{1\leq k\leq s}\sup_{t\in\mathcal{T}}|\mathcal{V}_{\mf C,k}^{(1)}(t)-\mathcal{V}_{\mf C,k}^{*}(t)|>\rho_n^*\,\middle\vert\, \Upsilon_n\right\}+\mathbb{P}\left\{\max_{1\leq k\leq s}\sup_{t\in\hat{\mathcal{T}}} \left|\mathcal{V}_{\mf C,k}^{(0)}(t)-\mathcal{V}_{\mf C,k}^{(1)}(t)\right|>\rho_n^*\right\}\notag\\
    &+\sup_{x\in\mathbb{R}}\mathbb{P}\left\{x-\rho_n^* \leq \max_{1\leq k\leq n}\sup_{t\in\hat{\mathcal{T}}}|\mathcal{V}^{(0)}_{\mf C,k}(t)|\leq x+\rho_n^* \right\}\label{eq:I1+I2+I3 multi-dim}.
\end{align}
Combining \eqref{eq:I3 multi-dim},\eqref{eq:I1 multi-dim},\eqref{eq:I2 multi-dim} and \eqref{eq:I1+I2+I3 multi-dim}, \eqref{eq:boot in thm:multi-dim bootstrap theorem} holds.
\end{proof}

\TPM{
\subsection{penalized SCB for weakly monotone assumption}
\label{sec:penalization}
}

\TPM{To include the case when the trend stays flat for a period, we propose a penalized SCB allowing $M=0$. Specifically, we estimate $\mathbf{f}(t)=:(f_1(t),\dots,f_p(t))^\top$ from pseudo data
$$
\mathbf{y}_i^*=\mathbf{y}_i+\lambda_n(t_i)\mathbf{1}_p=\mathbf{f}(t_i)+\mathbf{e}_i,
$$
where $f_k(t)=:m_k(t)+\lambda_n(t)$ with a pre-determined penalization 
$$
\lambda_n(t)=C_1(n)t+C_2(n)t^2+C_3(n)t^3.
$$ 
$C_1(n)$, $C_2(n)$, $C_3(n)$ are strictly positive coefficients, satisfying $\max\{C_1(n),C_2(n),C_3(n)\}\to 0$, $C_1(n)/C_2(n)\to \infty$, and $C_2(n)\asymp C_3(n)$ as $n\rightarrow\infty$. 
We denote $\tilde{\mathbf{f}}(t)=(\tilde{f}_1(t),\dots,\tilde{f}_p(t))^\top$ as the jackknife bias-corrected local linear estimator obtained by replacing $\mathbf{y}_i$ with $\mathbf{y}_i^*$ in \eqref{eq:high-dim ll def}.
Observing that $f_k(t)$ has a strictly positive derivative, therefore we could apply the increasing rearrangement in Section \ref{sec:Gaussian approximations} to $\tilde{\mathbf{f}}(t)$. Then the monotone estimator of $\mf f(t)$ can be expressed as  $\hat{\mathbf{f}}_I(t)=(\hat{f}_{I,1}(t),\dots,\hat{f}_{I,p}(t))^\top$ where $\hat{f}_{I,k}(t)$ is defined by the inverse of
$$
\hat{f}_{I,k}^{-1}(t)=\frac{1}{Nh_d}\sum_{i=1}^N \int_{-\infty}^tK_d\left(\frac{\tilde f_k(i/N)-u}{h_d}\right)\mathrm{d} u,
$$
and our final estimator is $\hat {\mf m}_{correct}(t)=(\hat {m}_{correct,1}(t),\dots,\hat { m}_{correct,p}(t))^\top$ where
\begin{equation}
 \hat {\mf m}_{correct}(t)=:  \hat{\mf f}_I(t)-(\lambda_n(t)-C_3(n)g_{n,1}t)\mathbf{1}_p,\quad g_{n,1}=\frac{\log^4 n}{h_d\sqrt{nh_r}}+\frac{h_d^2\log n}{C_1^{5/2}(n)}.\label{eq:corrected penal estimator}
\end{equation}
The purpose of substracting $\lambda_n(t)-C_3(n)g_{n,1}t$ from $\hat{\mathbf f}_I(t)$ instead of $\lambda_n(t)$ is to maintain the monotonicty of $\hat{\mf m}_{correct}(t)$. The confidence bands are derived by subtracting $\lambda_n(t)-C_3(n)g_{n,1}t$ from the lower bound and upper bound of SCBs for $\mathbf{f}(t)$ derived from Algorithm \ref{alg:high-dim bootstrap} with new inputs $\mathbf{y}_i^*,i=1,\dots,n$.}

\TPM{The penalized SCB extends the application scope to allow for flat curves while retaining several key advantages: the monotonicity of the estimates and confidence bands, asymptotic correctness, and the property that the estimates lie in the middle of the bands. The cost of the penalization lies in that if $m'(t)>0$, the penalization introduces additional bias in the estimation. Additionally, in simulations, we observe that larger values of $C_1(n)$ and $C_2(n)$ tend to result in wider SCBs.}

\TPM{We show in Proposition \ref{prop:penalization} that subtracting $\lambda_n(t)-C_3(n)g_{n,1}t$  from the estimate and the confidence bands obtained by Algorithm \ref{alg:high-dim bootstrap} will yield asymptotically correct SCBs for $\mf m(t)$, while $\hat{\mathbf{m}}_{correct}(t)$ and the confidence bands both maintain monotonicity with high probability even when $m_k(t)$ exists flat part for some $1\leq k\leq p$. In practice, we recommend to start applying penalization with an initial $C_1(n)=C_{1,0}$ which is not too small, and $C_2(n)=C_3(n)=C_1^{19/8}(n)$. If the mononicity is violated, we could slightly lower $C_1(n)$; say, calculate $C_1(n)=aC_{1,0}$ for $a=0.9, 0.8,..$ and use the biggest $a$ such that  $\hat{\mathbf{m}}_{correct}(t)$ is monotone.
Notice that by construction, the monotonicity of $\hat{\mathbf{m}}_{correct}(t)$ implies the monotonicity of both the lower and upper bound of the simultaneous confidence bands. }

\TPM{
\begin{description}[itemsep=0pt,parsep=0pt,topsep=0pt,partopsep=0pt]
    \item[\setword{\textbf{(A1')}}{(A1')}] \textit{For the vector function $\mathbf{m}(t) = (m_1(t), . . . ,m_p(t))^\top$ where $t \in[0, 1]$, the third derivative $m_k^{'''}(t)$ exists and is Lipschitz continuous on $[0,1]$ with Lipschitz constants  bounded for all $k = 1,\dots,p$.}
    \item[\setword{\textbf{(A2')}}{(A2')}] \textit{$\min_{1\leq k\leq p}\min_{t\in[0,1]}m_k^\prime(t)\geq 0$. If equality holds, then all roots $s$ of $m_k^\prime(\cdot)$ satisfy either of the following 2 conditions for some universal constant $\eta>0$,\\
(1) $m_k^{(\alpha)}(s)=0,\text{ for all integer $\alpha\geq 1$}$ where $m_k^{(\alpha)}(\cdot)$ is the $\alpha_{th}$ derivative;\\
(2) $s$ is an interior point and $|m_k^{'''}(s)|\geq \eta>0$.}
\end{description}
}

\TPM{The condition (A2') (2) is a technical condition and can be possibly generalized to any integer $\alpha\geq 3$ with a substantially more involved mathematical argument.}

\TPM{
\begin{prop}
    \label{prop:penalization}
    Under Assumption \hyperref[(B1)]{(B1)}-\hyperref[(B5)]{(B5)}, and \hyperref[(C1)]{(C1)}-\hyperref[(C2)]{(C2)}, suppose \eqref{eq:rate for high-dim boot + continu} holds and
     $\tilde{q}_{1-\alpha}$ is the output quantile of  Algorithm \ref{alg:high-dim bootstrap} with input $\mathbf{y}_i+\lambda_n(i/n)\mathbf{1}_p,i=1,\dots,n$ and $B$ is the bootstrap sample size. If $\mathbf{m}(t)$ satisfies Assumption \hyperref[(A1')]{(A1')} and \hyperref[(A2')]{(A2')}, and $C_1(n)$, $C_2(n)$, $C_3(n)$ are strictly positive coefficients, satisfying $\max\{C_1(n),C_2(n),C_3(n)\}\to 0$, $C_1(n)/C_2(n)\to \infty$, and $C_2(n)\asymp C_3(n)>0$ as $n\rightarrow\infty$, we then have following assertions:\\
    (i). If $C_3(n)g_{n,1}=o(1/\sqrt{nh_r})$, then
    \begin{equation}
        \lim_{n\rightarrow\infty}\lim_{B\rightarrow\infty}\mathbb P\left( \max_{1\leq k\leq p}\sup_{t\in\hat{\mathcal{T}} }|\hat{m}_{correct,k}(t)-m_k(t)|\leq \tilde{q}_{1-\alpha}\right)= 1-\alpha,\label{eq:penalized correct SCB}
    \end{equation}
    where $\hat{\mathcal{T}}=\cap_{k=1}^p \hat{\mathcal{T}}_k$ and $\hat{\mathcal{T}}_k$ is defined in the same way as \eqref{index_I} with $\tilde{m}_k(t)$ replaced by $\tilde{f}_k(t)$.\\
    (ii). Further suppose $ C_1^3(n)/C_3(n)=o(h_d^3\sqrt{nh_r}/\log^3 n)$ and $C_3(n)\asymp C_1^{s}(n)$ for $s\in(1,5/2)$, then we have
    \begin{equation}
    \lim_{n\rightarrow\infty}\mathbb{P}\left\{\min_{1\leq k\leq p}\inf_{t\in\hat{\mathcal{T} }} \hat{m}_{correct,k}^\prime(t)\geq 0\right\}=1.\label{eq:monotone shape}    
    \end{equation}
\end{prop}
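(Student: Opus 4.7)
The key observation is that the pseudo-mean $\mathbf{f}(t) = \mathbf{m}(t) + \lambda_n(t)\mathbf{1}_p$ satisfies $f_k'(t) = m_k'(t) + C_1(n) + 2C_2(n)t + 3C_3(n)t^2 \geq C_1(n) > 0$, so $\mathbf{f}(t)$ inherits Assumption \hyperref[(A1)]{(A1)} from \hyperref[(A1')]{(A1')} and satisfies a version of Assumption \hyperref[(A2)]{(A2)} with $M$ replaced by $C_1(n)\to 0$. Thus $\mathbf{f}(t)$ is a legitimate strictly monotone target for Algorithm \ref{alg:high-dim bootstrap} applied to the pseudo-data $\mathbf{y}_i^*$, and $\hat{\mathbf{f}}_I(t)$ together with $\tilde{q}_{1-\alpha}$ play the role of $\hat{\mathbf{m}}_I(t)$ and $\hat{q}_{1-\alpha}$ from Section \ref{sec:construction of joint SCBs}.

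For part (i), I would invoke Theorem \ref{thm:high-dim bootstrap theorem} on the pseudo-data, using Remark \ref{remark:high-dim boot} to check that the rate conditions \eqref{eq:rate for high-dim boot + continu} remain valid with the decaying $M=C_1(n)$. This yields $\mathbb{P}(\max_k \sup_{t\in\hat{\mathcal{T}}}|\hat{f}_{I,k}(t)-f_k(t)|\leq \tilde{q}_{1-\alpha})\to 1-\alpha$. Writing $\hat{m}_{correct,k}(t)-m_k(t) = [\hat{f}_{I,k}(t)-f_k(t)] + C_3(n)g_{n,1}t$, the deterministic shift is uniformly bounded by $C_3(n)g_{n,1}=o(1/\sqrt{nh_r})$, while $\tilde{q}_{1-\alpha}$ is of the much larger order $\sqrt{\log(Np)/(nh_r)}/C_1(n)$; applying an anti-concentration bound (Nazarov's inequality, exactly as in Step~2 of the proof of Theorem \ref{thm:high-dim bootstrap theorem}) to absorb the shift into the probability shows the limiting coverage is unchanged.

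For part (ii), the strategy is to prove the uniform derivative bound $|\hat{f}_{I,k}'(t)-f_k'(t)|<C_3(n)g_{n,1}$ with probability tending to one, because then $\hat{m}_{correct,k}'(t) = [\hat{f}_{I,k}'(t)-f_k'(t)] + m_k'(t) + C_3(n)g_{n,1}\geq 0$ follows directly from $m_k'(t)\geq 0$. Using $\hat{f}_{I,k}'(t) = 1/(\hat{f}_{I,k}^{-1})'(\hat{f}_{I,k}(t))$ and the analogous identity for $f_k'$, I would decompose the difference into (a) a stochastic fluctuation inherited from $\tilde{f}_k-f_k$ via the kernel sum, of uniform order $\log^4 n/(h_d\sqrt{nh_r})$; (b) a deterministic rearrangement bias obtained by replacing the Riemann sum with an integral and Taylor-expanding in $h_d$, of order $h_d^2\log n/C_1^{5/2}(n)$; and (c) a lower-order propagation error from $\hat{f}_{I,k}(t)-f_k(t)$ passed through $(f_k^{-1})''$. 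The sum of (a) and (b) equals $g_{n,1}$ up to constants, and the stated conditions $C_3(n)\asymp C_1^s(n)$ with $s\in(1,5/2)$ and $C_1^3(n)/C_3(n)=o(h_d^3\sqrt{nh_r}/\log^3 n)$ ensure $C_3(n)g_{n,1}$ strictly dominates this error uniformly. The main obstacle is controlling (b) near interior zeros of $m_k'$, where the naive pointwise bound $\|(f_k^{-1})'''\|_\infty\lesssim 1/C_1^5(n)$ would be too crude; here Assumption \hyperref[(A2')]{(A2')}(2) is essential, yielding $m_k'(t)\gtrsim\eta(t-t_0)^2$ near any interior root $t_0$, so $f_k'(t)\gtrsim C_1(n)+\eta(t-t_0)^2$; integrating against $K_d$ then tempers the $1/f_k'$ singularity and produces the improved $C_1^{5/2}$ exponent. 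The flat case \hyperref[(A2')]{(A2')}(1) is simpler: there $f_k'(t)\equiv\lambda_n'(t)\gtrsim C_1(n)$ on a full neighborhood, so direct kernel-integral bounds suffice.
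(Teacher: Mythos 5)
Your part (i) follows the paper's own route: apply Proposition \ref{prop:GA in high-dim} and Theorem \ref{thm:high-dim bootstrap theorem} to the pseudo-data $\mathbf y_i^*$ (whose mean $\mathbf f$ is strictly increasing with slope bounded below by $C_1(n)$) and then absorb the deterministic shift $C_3(n)g_{n,1}t$ by anti-concentration; this is exactly what the paper does, so (i) is fine.

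Part (ii) has a genuine gap. You aim for the two-sided uniform bound $\sup_t|\hat{f}_{I,k}'(t)-f_k'(t)|<C_3(n)g_{n,1}$, but this cannot hold: you state the contributions (a)+(b) to this error are of order $g_{n,1}$, yet $C_3(n)g_{n,1}=o(g_{n,1})$ since $C_3(n)\to 0$, so no constant multiple can realize the claimed dominance. More concretely, away from roots of $m_k'$ (where $|f_k''|\asymp 1$), writing $\hat f_{I,k}'(t)=f_k'(\hat s_k)/\bigl(1+f_k'(\hat s_k)r_k(\hat s_k)\bigr)$ with $\tilde f_k(\hat s_k(t))=\hat f_{I,k}(t)$, Lemma \ref{lem:s-t} gives $|\hat s_k(t)-t|=O_p(g_{n,1})$, so already $|f_k'(\hat s_k)-f_k'(t)|\asymp g_{n,1}\gg C_3(n)g_{n,1}$; the estimate is rescued there only because $m_k'(t)\gtrsim 1$ supplies a large buffer of its own, not because the derivative error is small. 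The paper never attempts your two-sided bound. It proves the weaker one-sided inequality $\hat f_{I,k}'(t)\geq\lambda_n'(t)-C_3(n)g_{n,1}$ by a four-way case split on $m_k'(\hat s_k)$. In the delicate cases where $m_k'(\hat s_k)$ is zero or very small (Cases 2.1 and 2.3), the key is a \emph{sign} argument, not a magnitude bound: Assumption \hyperref[(A2')]{(A2')} together with the cubic structure of $\lambda_n$ forces the rearrangement bias $B_k(\hat s_k)\lesssim -h_d^2C_3/C_1^4<0$, and the hypothesis $C_1^3/C_3=o(h_d^3\sqrt{nh_r}/\log^3 n)$ makes the variance $V_k$ (Lemma \ref{lem:derivative variance}) be $o(|B_k|)$, so $r_k<0$ with probability tending to one, whence the denominator is below one and $\hat f_{I,k}'(t)\geq f_k'(\hat s_k)\geq\lambda_n'(\hat s_k)$ directly. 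In the remaining cases $m_k'(\hat s_k)\gtrsim C_1$ itself provides positivity. The buffer $C_3(n)g_{n,1}$ enters only at the very end to absorb $|\lambda_n'(t)-\lambda_n'(\hat s_k(t))|=O(C_2 g_{n,1})$. You correctly observe that \hyperref[(A2')]{(A2')}(2) tempers the $1/f_k'$ singularity near interior roots, giving the $C_1^{-5/2}$ exponent (this appears in the bound for $(f_k^{-1})''$ inside Lemma \ref{lem:s-t}); but you deploy it toward a uniform magnitude bound where the proof in fact hinges on knowing the bias has a definite sign.
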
}

\TPM{\begin{remark}
\label{remark:penalization}
    For brevity, we simply write $C_1,C_2,C_3$, $\lambda(t)$ instead of $C_1(n),C_2(n),C_3(n)$, $\lambda_n(t)$ if no confusion is made.We now discuss the feasible $C_1,C_2, C_3$ satifying conditions in both (i) and (ii). Suppose $h_d=h_r^{1+k}$ with $k\in(0,1)$ satisfying \hyperref[(C2)]{(C2)} and $C_3=C_1^s$, $s\in(1,5/2]$ then $C_3g_{n,1}=o(\log (n)/\sqrt{nh_r})$ and $ C_1^3/C_3=o(h_d^3\sqrt{nh_r}/\log^3 n)$ can hold if $nh_r^{4s(k+1)-6k-5}=o(1)$. For example, one can choose $s=19/8$ and $C_1\asymp(nh_r^{(16+13k)/3})^{12/5}$  with $k\in(5/7,1)$.
\end{remark}
}

\begin{proof}
\TPM{\textit{Proof of (i).}  
By Remark \ref{remark:GA in high-dim}, $\sqrt{nh_r}\mathcal{V}(t)$ is a valid nondegenerate Gaussian process. Thus conditions $C_3(n)g_{n,1}=o_p(1/\sqrt{nh_r})$ ensure that $C_3g_{n,1}t$ are negligible in contrast with our SCB quantile $\tilde{q}_{1-\alpha}$. By applying Proposition \ref{prop:GA in high-dim} and Theorem \ref{thm:high-dim bootstrap theorem}, \eqref{eq:penalized correct SCB} holds.}

\TPM{\textit{Proof of (ii).} 
For any $t\in\hat{\mathcal{T}}$, there exists random vectors  $\hat{s}_k(t)\in[h_r,1-h_r],k=1,2,\dots,p$, s.t. $\tilde{f}_k(\hat{s}_k(t))=\hat{f}_{I,k}(t)$. For brevity, we write $\hat{s}_k$ instead of $\hat{s}_k(t)$ if no confusion is caused. By the definition of $\hat{f}_{I,k}^{-1}$, we have
\begin{align}
        \frac{\mathrm d}{\mathrm d t}\hat{f}_{I,k}(t)&=\frac{1}{(\hat{f}_{I,k}^{-1})^\prime( \hat f_{I,k}(t) )}=\frac{1}{\frac{1}{Nh_d}\sum_{i=1}^N K_d\left(\frac{\tilde f_k(i/N)-\tilde f_k(\hat{s}_k)}{h_d}\right)}.\notag\\
\end{align}
Observing the following decomposition for uniformly $s\in[h_r,1-h_r]$,
\begin{align}
    &\frac{1}{Nh_d}\sum_{i=1}^N K_d\left( \frac{\tilde{f}_k(i/N)-\tilde{f}_k(s) }{h_d}\right)\notag\\
    =&\frac{1}{Nh_d}\sum_{i=1}^N K_d\left( \frac{f_k(i/N)-f_k(s) }{h_d}\right)+V_k(s),  \notag\\
    =&\int_0^1 K_d\left(\frac{f_k(u)-f_k(s)}{h_d}\right)\frac{\mathrm{d}u}{h_d}+V_k(s)+I_1,\notag\\
    =&(f_k^{-1})^\prime(f_k(s))+B_k(s)+V_k(s)+I_2+I_1,\notag
\end{align}
where
\begin{align}
    B_k(s)&=h_d^2(f_k^{-1})^{'''}(f_k(s) )\int K_d(x)x^2\mathrm{d}x,\notag\\
    V_k(s)&=\frac{1}{Nh_d}\sum_{i=1}^N \left[K_d\left( \frac{\tilde{f}_k(i/N)-\tilde{f}_k(s) }{h_d}\right)-K_d\left( \frac{f_k(i/N)-f_k(s) }{h_d}\right)\right],\notag\\
    I_1&=O(\frac{1}{N}),\quad I_2=o\left(B_k(s)\right).\label{eq:decompose of 1/f_I'(t)}
\end{align}
Denote $r_k(s)=B_k(s)+V_k(s)+I_2+I_1$, we can write
\begin{align}
        \frac{\mathrm d}{\mathrm d t}\hat{f}_{I,k}(t)&=\frac{1}{(\hat{f}_{I,k}^{-1})^\prime( \hat f_{I,k}(t) )}=\frac{1}{\frac{1}{Nh_d}\sum_{i=1}^N K_d\left(\frac{\tilde f_k(i/N)-\tilde f_k(\hat{s}_k)}{h_d}\right)},\notag\\
        &=\frac{1}{(f_k^{-1})^\prime (f_k(\hat s_k))+r_k(\hat s_k) },\notag\\
        &=\frac{f_k^\prime(\hat{s}_k)}{1+f_k^\prime(\hat{s}_k)r_k(\hat s_k) }.\label{eq:derivative compute}
\end{align}
}

\TPM{
We divide the proof into two cases based on the behavior of $m_k^\prime(\hat{s}_k)$. 
\begin{itemize}
    \item \textbf{Case 1:} $\hat{s}_k(t)\in \mathcal{I}_{k,n}$ where $\mathcal{I}_{k,n}=:\{s: m_k^\prime(s)\geq C_1 \}\cap[h_r,1-h_r]$. 
    \item \textbf{Case 2:} $\hat{s}_k(t)\in \mathcal{I}_{k,n}^c$ where $\mathcal{I}_{k,n}^c=:\{s:  m_k^\prime(s)<  C_1 \}\cap[h_r,1-h_r]$. 
    \begin{itemize}
        \item \textbf{Case 2.1:} $\hat{s}_k(t)\in \mathcal{I}_{k,n}^c(1)$ where $\mathcal{I}_{k,n}^c(1)=:\mathcal{I}_{k,n}^c\cap\{s:m_k^\prime(s)=0\}$;
        \item \textbf{Case 2.2:} $\hat{s}_k(t)\in \mathcal{I}_{k,n}^c(2)$ where $\mathcal{I}_{k,n}^c(2)=:\mathcal{I}_{k,n}^c\cap\{s: m_k^\prime(s)\geq  C_1/\delta_n\}$ where positive $\delta_n=1\sqrt{C_1r_n}$ diverges and $r_n=h_d^2C_3/C_1^4+\log^{3}(n)/(C_1h_d\sqrt{nh_r})$.
        \item \textbf{Case 2.3:} $\hat{s}_k(t)\in \mathcal{I}_{k,n}^c(3)$ and $\mathcal{I}_{k,n}(3)=:\mathcal{I}_{k,n}^c\cap\{s:0<m_k^\prime(s)< C_1/\delta_n\}$.
    \end{itemize}
\end{itemize}
Notice that \textbf{Case 2}, or \textbf{Case 2.1, 2.2, 2.3} might not always happen in the sense that the corresponding interval is empty; this is not an issue since at least one of \textbf{Case 1}  and subcases of \textbf{Case 2} will happen. The behavior of \eqref{eq:derivative compute} can be decomposed as
\begin{align}
    &\mathbb{P}\left(\min_{1\leq k\leq p}\inf_{t\in\hat{\mathcal{T}}} \left[\frac{f_k^\prime(\hat{s}_k(t))}{1+f_k^\prime(\hat{s}_k(t))r_k(\hat{s}_k(t))}-\lambda^\prime(\hat{s}_k(t))\right]\geq 0\right)\notag\\
 &\geq \mathbb{P}\left(\min_{1\leq k\leq p}\inf_{s_k\in\mathcal{I}_{k,n}} \left[\frac{f_k^\prime(s_k)}{1+f_k^\prime(s_k)r_k(s_k)}-\lambda^\prime(s_k)\right]\geq 0\right)\notag\\
 &+\mathbb{P}\left(\min_{1\leq k\leq p}\inf_{s_k\in\mathcal{I}_{k,n}^c(1)} \left[\frac{f_k^\prime(s_k)}{1+f_k^\prime(s_k)r_k(s_k)}-\lambda^\prime(s_k)\right]\geq 0\right)\notag\\
 &+\mathbb{P}\left(\min_{1\leq k\leq p}\inf_{s_k\in\mathcal{I}_{k,n}^c(2)} \left[\frac{f_k^\prime(s_k)}{1+f_k^\prime(s_k)r_k(s_k)}-\lambda^\prime(s_k)\right]\geq 0\right)\notag\\
 &+\mathbb{P}\left(\min_{1\leq k\leq p}\inf_{s_k\in\mathcal{I}_{k,n}^c(3)} \left[\frac{f_k^\prime(s_k)}{1+f_k^\prime(s_k)r_k(s_k)}-\lambda^\prime(s_k)\right]\geq 0\right)-3,\label{eq:Case derivative decomp}
\end{align}
where we adopt the convention that infimum $\inf\{A\}=0$ if $A$ is an empty set $A$.
}

\TPM{\paragraph{Case 1:}
For any $s_k\in\mathcal{I}_{k,n}$, by \eqref{eq:decompose of 1/f_I'(t)} and Lemma \ref{lem:derivative variance},
\begin{align}
    \frac{f_k^\prime(s_k)}{1+f_k^\prime(s_k)r_k(s_k) }-\lambda^\prime(s_k)&=\frac{m_k^\prime(s_k)+O_p(f_k^\prime(s_k)r_k( s_k)\lambda^\prime(s_k))
    }{1+f_k^\prime(s_k)r_k( s_k) },\notag\\
    &\geq\frac{m_k^\prime(s_k)+o_p(C_1)
    }{2},
\end{align}
where $o_p(\cdot)$ is uniformly with $k=1,\dots,p$. Note that $m_k^\prime(s_k)\geq C_1$ under \textbf{Case 1}, thus we have 
\begin{equation}
    \mathbb{P}\left(\min_{1\leq k\leq p}\inf_{s_k\in\mathcal{I}_{k,n}} \left[\frac{f_k^\prime(s_k)}{1+f_k^\prime(s_k)r_k(s_k)}-\lambda^\prime(s_k)\right]\geq \frac{C_1}{4}\right)=1-o(1).\label{eq:lower derivative for case 1}
\end{equation}
}

\TPM{
\paragraph{Case 2.1:}
For any $s_k^*\in\mathcal{I}_{k,n}^c(1)$, we have $m_k^\prime(s_k^*)=0$. By Assumption \hyperref[(A2')]{(A2')} and Lemma \ref{lem:0 derivative}, $m_k^\prime(s_k^*)=0=m_k^{''}(s_k^*)$ and $m_k^{'''}(s_k^*)\geq 0$. Note that 
\begin{align}
    (f_k^{-1})^{'''}(f_k(s_k^*) )&=-\frac{f_k^{'''}(s_k^*)f_k^{'}(s_k^*)-3(f_k^{''}(s_k^*))^2}{(f_k^\prime(s_k^*))^5},\notag\\
    &\leq -\frac{\lambda^{'''}(s_k^*)\lambda^{'}(s_k^*)-3(\lambda^{''}(s_k^*))^2}{(\lambda^\prime(s_k^*))^5}.\label{eq:negative bias}
\end{align}
Recall $\lambda(t)=C_1t+C_2t^2+C_3t^3$, $\min\{C_1,C_2,C_3\}>0$, $\max\{C_1,C_2,C_3\}\rightarrow0$, and $C_1/ C_2\to \infty$, $C_2\asymp C_3$, we then have
\begin{align}
    \lambda^{'''}(s_k^*)\lambda^{'}(s_k^*)-3(\lambda^{''}(s_k^*))^2&= 6C_3(C_1+2C_2s_k^*+3C_3(s_k^*)^2)-3(2C_2+6C_3s_k^*)^2 \notag\\
    &\gtrsim C_1C_3>0.\label{eq:negative bias lambda}
\end{align}
Therefore, we have $-B_k(s_k^*)\gtrsim h_d^2C_3/C_1^4>0$ for any $s_k^*,1\leq k\leq p$, s.t. $m_k^\prime(s_k^*)=0$.  By the condition $ C_1^3/C_3=o(h_d^3\sqrt{nh_r}/\log^3 n)$ and Lemma \ref{lem:derivative variance}, we have 
\begin{equation}
      \max_{1\leq k\leq p}\sup_{s_k^*\in\mathcal{I}_{k,n}^c(2)}V_k(s_k^*)=O_p\left(\frac{\log^3 n}{C_1h_d\sqrt{nh_r}}\right)=o_p(h_d^2C_3/C_1^4)\label{eq:penal condition}.
\end{equation}
Combining \eqref{eq:penal condition} and \eqref{eq:decompose of 1/f_I'(t)}, we have $r_k(s_k^*)<0$ with probability tending to $1$ for any $s_k^*\in\mathcal{I}_{k,n}^c(1)$ for all $1\leq k\leq p$, which yields
\begin{equation}
    \mathbb{P}\left(\min_{1\leq k\leq p}\inf_{s_k^*\in\mathcal{I}_{k,n}^c(1)} \left[\frac{f_k^\prime(s_k^*)}{1+f_k^\prime(s_k^*)r_k(s_k^*)}-\lambda^\prime(s_k^*)\right]\geq 0\right)=1-o(1).\label{eq:lower derivative for case 2.1}
\end{equation}
}

\TPM{\paragraph{Case 2.2:} For any $s_k\in\mathcal{I}_{k,n}^c(2)$, we have $m_k^\prime(s_k)\geq C_1/\delta_n$ and $m_k^\prime(s_k)\leq  C_1$. By Lemma \ref{lem:derivative variance} and \eqref{eq:decompose of 1/f_I'(t)}, we have $\max_{1\leq k\leq p}\sup_{s_k\in\mathcal{I}_{k,n}^c(2)}|f_k^\prime(s_k)r_k(s_k)|=O_p(C_1r_n)=o_p(1)$.
Recalling \eqref{eq:derivative compute}, we have for sufficiently large $n$,
\begin{equation}
    \frac{f_k^\prime(s_k)}{1+f_k^\prime(s_k)r_k(s_k)}-\lambda^\prime(s_k)=\frac{m_k^\prime(s_k)-\lambda^\prime(s_k)f_k^\prime(s_k)r_k(s_k)}{1+o_p(1)}\geq \frac{C_1(1/\delta_n-f_k^\prime(s_k)r_k(s_k)) )}{2}.\label{eq:lower derivative for case 2.2 delta}
\end{equation}
 Note that $\delta_n=1/\sqrt{C_1r_n}$, then \eqref{eq:lower derivative for case 2.2 delta} yields
\begin{equation}
\mathbb{P}\left(\min_{1\leq k\leq p}\inf_{s_k\in\mathcal{I}_{k,n}^c(2)} \left[\frac{f_k^\prime(s_k)}{1+f_k^\prime(s_k)r_k(s_k)}-\lambda^\prime(s_k)\right]\geq 0\right)=1-o(1).\label{eq:lower derivative for case 2.2}
\end{equation}
}  


\TPM{\paragraph{Case 2.3} For all $s_k\in\mathcal{I}_{k,n}^c(3)$, we have $0<m_k^\prime(s_k)=o(  C_1)$. By Taylor's expansion and condition (2) of \hyperref[(A2')]{(A2')}, there exists $s_k^*$ staisfying s.t. $|s_k-s_k^*|=o(1)$ and the $3$-th derivative  $m_k^{'''}(s_{0,k})\geq \eta$. By Lemma \ref{lem:0 derivative} and recall $\kappa_2(K_d)=\int x^2 K_d(x)\mathrm{d}x$, we can decompose $B_k(s_k)-B_k(s_k^*)$ as
\begin{align}
    B_k(s_k)-B_k(s_k^*)&=h_d^2\kappa_2(K_d) \left[ (f_k^{-1})^{'''}(s_k)-(f_k^{-1})^{'''}(s_k^*) \right],\notag\\
    &=h_d^2\kappa_2(K_d)\left[ \frac{f_k^{'''}(s_k^*)f_k^{'}(s_k^*)-3(f_k^{''}(s_k^*))^2}{(f_k^\prime(s_k^*))^5} -\frac{f_k^{'''}(s_k)f_k^{'}(s_k)-3(f_k^{''}(s_k))^2}{(f_k^\prime(s_k))^5} \right],\notag\\
    &=h_d^2\kappa_2(K_d)\left( II_1+II_2 \right),\label{eq:decomp B(s)}
\end{align}
where
\begin{align}
    II_1&=\frac{f^{'''}(s_k^*)f^{'}(s_k^*)-3(f^{''}(s_k^*))^2}{(f^\prime(s_k^*))^5}-\frac{f^{'''}(s_k^*)f^{'}(s_k^*)-3(f^{''}(s_k^*))^2}{(f^\prime(s_k))^5},\notag\\
    II_2&= \frac{f^{'''}(s_k^*)f^{'}(s_k^*)-3(f^{''}(s_k^*))^2}{(f^\prime(s_k))^5}-\frac{f^{'''}( s_k)f^{'}(s_k)-3(f^{''}(s_k))^2}{(f^\prime(s_k))^5}.
\end{align}
Using the fact $m_k^\prime(s_k^*)=0$,  $m_k^\prime(s_k)=o(C_1)$ in Case 2.3, we have
\begin{equation}
    \frac{f^\prime(s_k^*)}{f^\prime(s_k)}-1=\frac{\lambda^\prime(s_k^*)}{m_k^\prime(s_k)+ \lambda^\prime(s_k)}-1=o_p(1),\label{eq:rate of II_1}
\end{equation}
which yields $h_d^2\kappa_2(K_d)II_1=B_k(s_k^*) \left[1-(f^\prime(s_k^*)/f^\prime(s_k))^5 \right]=o\left(B_k(s_k^*)\right).$
}

\TPM{For $II_2$, by the condition (2) of \hyperref[(A2')]{(A2')}, i.e., $m_k^{'''}(s_k^*)\geq \eta>0$ then 
by the fact $m_k^{^\prime}(s_k)-m_k^\prime(s_k^*)=o(C_1)$ and the Taylor's expansion, we have $(s_k-s_k^*)^2=o(C_1)$. Therefore,
\begin{align}
    |f^{'''}(s_k^*)f^\prime(s_k^*)-f^{'''}(s_k)f^\prime(s_k)|&\leq|f_k^{'''}(s_k^*)[f_k^\prime(s_k^*)-f_k^\prime(s_k)]|+|f_k^{'''}(s_k^*)-f_k^{'''}(s_k)|f_k^\prime( s_k)  \notag\\
    &=o(C_1 );\label{eq:rate of II_2 1'}\\
    |(f^{''}(s_k^*))^2-(f^{''}(s_k))^2|&\leq|f^{''}(s_k^*)-f^{''}(s_k)|\cdot|f^{''}(s_k^*)+f^{''}(s_k)|\notag\\
    &=|f^{''}(s_k^*)-f^{''}(s_k)|^2+2|f^{''}(s_k^*)-f^{''}(s_k)||f_k^{''}(s_k^*)|\notag\\
    &=o(C_1 ).\label{eq:rate of II_2 2'}
\end{align}
Note that
\begin{align}II_2h_d^2\kappa_2(K_d)=B_k(s_k^*)\left(\frac{f_k^\prime(s_k^*)}{f_k^\prime(s_k)}\right)^5\left\{\frac{[f^{'''}(s_k^*)f^{'}(s_k^*)-3(f^{''}(s_k^*))^2]-[f^{'''}(\hat s_k)f^{'}(s_k)-3(f^{''}(s_k))^2]}{f_k^{'''}(s_k^*)f_k^\prime(s_k^*)-3(f^{''}(s_k^*))^2}\right\},\label{eq:II_2hdk}
\end{align}
then combining \eqref{eq:rate of II_1}, \eqref{eq:rate of II_2 1'} and \eqref{eq:rate of II_2 2'}, and using the fact $f_k^{'''}(s_k^*)f_k^\prime(s_k^*)- (f^{''}(s_k^*))^2\asymp C_1 m_k^{'''}(s_k^*)$ if $m_k^{'''}(s_k^*)\geq \eta$, straight forward calculations show that $h_d^2\kappa_2(K_d)II_2=B_k(s_k^*)\frac{o(C_1)}{C_1m_k^{'''}(s_k^*)}=o(B_k(s_k^*))$. Therefore $B_k(s_k)=B_k(s_k^*)+o(B_k(s_k^*))$ and using the conclusion $-B_k(s_k^*)\gtrsim h_d^2 C_3/C_1^4>0$ under Case 2.1, $-B_k(s_k)\gtrsim h_d^2 C_3/C_1^4>0$.
}

\TPM{By similar arguments in \eqref{eq:penal condition}, with condition $ C_1^3/C_3=o(h_d^3\sqrt{nh_r}/\log^3 n)$ and Lemma \ref{lem:derivative variance}, we can also have 
\begin{equation}
      \max_{1\leq k\leq p}\sup_{s_k\in\mathcal{I}{k,n}^c(3)}V_k(s_k)=O_p\left(\frac{\log^3 n}{C_1h_d\sqrt{nh_r}}\right)=o_p(h_d^2C_3/C_1^4)\label{eq:penal condition case 2.3}.
\end{equation}
Combining \eqref{eq:penal condition case 2.3} and \eqref{eq:decompose of 1/f_I'(t)}, we have $r_k(s_k)<0$ with probability tending to $1$ for any $s_k\in\mathcal{I}_{k,n}^c(3)$ for all $1\leq k\leq p$, which yields
\begin{equation}
    \mathbb{P}\left(\min_{1\leq k\leq p}\inf_{s_k\in\mathcal{I}_{k,n}^c(3)} \left[\frac{f_k^\prime(s_k)}{1+f_k^\prime(s_k)r_k(s_k)}-\lambda^\prime(s_k)\right]\geq 0\right)=1-o(1).\label{eq:lower derivative for case 2.3}
\end{equation} 
}

\TPM{    
To sum up the discussions under \textbf{Case 1} and \textbf{Case 2}, combining \eqref{eq:Case derivative decomp}, \eqref{eq:lower derivative for case 1}, \eqref{eq:lower derivative for case 2.1},\eqref{eq:lower derivative for case 2.2}, and \eqref{eq:lower derivative for case 2.3}, we have
\begin{align}
 &\mathbb{P}\left(\min_{1\leq k\leq p}\inf_{t\in\hat{\mathcal{T}}} \left[\frac{f_k^\prime(\hat{s}_k(t))}{1+f_k^\prime(\hat{s}_k(t))r_k(\hat{s}_k(t))}-\lambda^\prime(\hat{s}_k(t))\right]\geq 0\right)=1-o(1).\notag
\end{align}
Then by Lemma \ref{lem:s-t}, we finally have for some constant $M_0\in(0,1/2)$,
\begin{align}
    &\mathbb{P}\left(\min_{1\leq k\leq p}\inf_{t\in\hat{\mathcal{T}}} \left[\frac{\mathrm{d}}{dt}\hat{f}_{I,k}(t)-\lambda^\prime(t)+g_{n,1}C_2\right]\geq 0  \right) ,\notag\\ 
    \geq &\mathbb{P}\left(\min_{1\leq k\leq p}\inf_{t\in\hat{\mathcal{T}}} \left[\frac{f_k^\prime(\hat{s}_k(t))}{1+f_k^\prime(\hat{s}_k(t))r_k(\hat{s}_k(t))}-\lambda^\prime(\hat{s}_k(t))\right]\geq \max_{1\leq k\leq p}\sup_{t\in\hat{\mathcal{T}}}|\lambda^\prime(t)-\lambda^\prime(\hat{s}_k(t))|-g_{n,1}C_2  \right),\notag\\
    \geq&  \mathbb{P}\left(\min_{1\leq k\leq p}\inf_{t\in\hat{\mathcal{T}}} \left[\frac{f_k^\prime(\hat{s}_k(t))}{1+f_k^\prime(\hat{s}_k(t))r_k(\hat{s}_k(t))}-\lambda^\prime(\hat{s}_k(t))\right]\geq 0,\quad \max_{1\leq k\leq p}\sup_{t\in\hat{\mathcal{T}}}|t-\hat{s}_k(t)|\leq M_0g_{n,1} \right),\notag\\
    \geq&  \mathbb{P}\left(\min_{1\leq k\leq p}\inf_{t\in\hat{\mathcal{T}}} \left[\frac{f_k^\prime(\hat{s}_k(t))}{1+f_k^\prime(\hat{s}_k(t))r_k(\hat{s}_k(t))}-\lambda^\prime(\hat{s}_k(t))\right]\geq 0\right)+\mathbb{P}\left( \max_{1\leq k\leq p}\sup_{t\in\hat{\mathcal{T}}}|t-\hat{s}_k(t)|\leq M_0 g_{n,1} \right)-1\notag\\
    =&1-o(1).
\end{align}
Notice that $\hat{m}_{correct,k}^\prime(t)=\frac{\mathrm{d}}{dt}\hat{f}_{I,k}(t)-(\lambda^\prime(t)-g_{n,1}C_2)$, \eqref{eq:monotone shape} holds.
}


\end{proof}
\TPM{
\begin{lem}   
\label{lem:s-t}
    Under same conditions in (i) and (ii) of Proposition \ref{prop:penalization}, for any $t\in\hat{\mathcal{T}}$, there exists $\hat{s}_k(t)\in[h_r,1-h_r],1\leq k\leq p$ s.t. $\tilde{f}_k(\hat{s}_k(t))=\hat{f}_{I,k}(t)$ and
    \begin{equation}
        \max_{1\leq k\leq p}\sup_{t\in\hat{\mathcal{T}}}|\hat s_k(t)-t|=O_p\left(\frac{\log^3 n}{h_d\sqrt{nh_r}}+\frac{h_d^2}{C_1^{5/2}}\right).\label{eq:rate of s-t}
    \end{equation}
\end{lem}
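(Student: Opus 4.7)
The plan is to convert closeness of function values into closeness of arguments, exploiting the uniform lower bound $f_k^\prime \geq C_1$ delivered by the cubic penalization $\lambda_n$.

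I would first establish existence of $\hat s_k(t)$ by the intermediate value theorem: for $t \in \hat{\mathcal{T}} \subseteq \hat{\mathcal{T}}_k$, the value $\hat f_{I,k}(t)$ lies in $[\min_{[h_r, 1-h_r]} \tilde f_k, \max_{[h_r, 1-h_r]} \tilde f_k]$ by construction of $\hat{\mathcal{T}}_k$, and continuity of $\tilde f_k$ on $[h_r, 1-h_r]$ produces some $\hat s_k(t) \in [h_r, 1-h_r]$ with $\tilde f_k(\hat s_k(t)) = \hat f_{I,k}(t)$. (When the preimage is not unique I pick the $\hat s_k(t)$ closest to $t$.)

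To obtain the rate, use the defining identity to write
\begin{equation*}
f_k(\hat s_k(t)) - f_k(t) = \bigl[\hat f_{I,k}(t) - f_k(t)\bigr] - \bigl[\tilde f_k(\hat s_k(t)) - f_k(\hat s_k(t))\bigr].
\end{equation*}
Since $f_k^\prime = m_k^\prime + \lambda_n^\prime \geq C_1$ on $[0,1]$, the mean value theorem yields $|\hat s_k(t) - t| \leq C_1^{-1}|f_k(\hat s_k(t)) - f_k(t)|$, so it suffices to bound the right-hand side uniformly in $k$ and $t$. The second bracket is $O_p(R_n)$ uniformly by Lemma \ref{lem:High-dim ll} applied to $\tilde f_k - f_k = \tilde m_k - m_k$. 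For the first bracket, I would transport the stochastic expansion of Lemma \ref{m_I} and Proposition \ref{prop:GA in high-dim} to $f_k$ (with $C_1$ in the role of the constant $M$), decomposing $\hat f_{I,k}(t) - f_k(t)$ into the rearrangement bias $f_{N,k}(t) - f_k(t)$ and a stochastic remainder. The bias is handled by Taylor expansion of $f_{N,k}^{-1}$ around $f_k^{-1}$, whose leading term involves $(f_k^{-1})^{''} = -f_k^{''}/(f_k^\prime)^3$; extracting an extra $\sqrt{C_1}$ from the assumption-\hyperref[(A2')]{(A2')}-enforced smallness of $f_k^{''} = m_k^{''} + 2C_2 + 6C_3 t$ in the critical region where $f_k^\prime \approx C_1$ yields the required bound of order $h_d^2/C_1^{3/2}$. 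The stochastic remainder is controlled at rate $O_p(C_1 \log^3 n /(h_d \sqrt{nh_r}))$ via the derivative-variance estimate $\max_k \sup_s |V_k(s)| = O_p(\log^3 n/(C_1 h_d \sqrt{nh_r}))$ already used in the proof of Proposition \ref{prop:penalization}(ii).

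Dividing both bounds by $C_1$ and taking the maximum produces the stated rate. The main obstacle will be the sharp tracking of the $C_1$-dependence. A naive application of Lemma \ref{lem:m_N} with $|f_k^{''}| = O(1)$ would yield a rearrangement bias of order $h_d^2/C_1^{3}$, insufficient after the division by $C_1$; the argument must therefore exploit the cubic structure of $\lambda_n$ together with condition (2) of \hyperref[(A2')]{(A2')} (the high-order vanishing of $m_k^{''}$ at each simple root of $m_k^\prime$) to recover the missing $\sqrt{C_1}$ factor. Parallel care is needed for the stochastic piece, whose sharp rate $1/(h_d\sqrt{nh_r})$ is sharper than the naive $1/(C_1\sqrt{nh_r})$ obtained by dividing the standard local-linear rate by $C_1$; this reflects the density-like role of $(\hat f_{I,k}^{-1})^\prime$ and must be extracted by working directly with the monotone-rearrangement expansion rather than through a crude triangle inequality.
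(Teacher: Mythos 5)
Your approach is genuinely different from the paper's: you work in function space, passing through the mean-value inequality $|\hat{s}_k(t)-t|\leq C_1^{-1}|f_k(\hat{s}_k(t))-f_k(t)|$ and then decomposing $f_k(\hat{s}_k)-f_k(t)$, whereas the paper decomposes $t-\hat s_k$ directly in \emph{inverse} space as a sum of three terms
\begin{align*}
\bigl[\hat f_{I,k}^{-1}-f_{N,k}^{-1}\bigr](\tilde f_k(\hat s_k)) + \bigl[f_{N,k}^{-1}(\tilde f_k(\hat s_k))-f_{N,k}^{-1}(f_k(\hat s_k))\bigr] + \bigl[f_{N,k}^{-1}-f_k^{-1}\bigr](f_k(\hat s_k)),
\end{align*}
exploiting throughout that $(f_{N,k}^{-1})^\prime(u)=\frac{1}{Nh_d}\sum_i K_d\bigl(\frac{f_k(i/N)-u}{h_d}\bigr)$ is a kernel-smoothed density and therefore bounded uniformly by $\|K_d\|_\infty/h_d$, regardless of how small $C_1$ is.

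This distinction is not cosmetic — it is where your argument has a genuine gap. After MVT, your second bracket contributes a term of order $|\tilde f_k(\hat s_k)-f_k(\hat s_k)|/C_1=O_p(R_n/C_1)$, because there is no $f_k^\prime$ factor in the numerator to offset the $C_1^{-1}$ you paid (in contrast, the $[\hat f_{I,k}(t)-f_k(t)]$ bracket carries an $f_k^\prime(t)$ prefactor which cancels a refined MVT denominator $f_k^\prime(\xi)\approx f_k^\prime(t)$). The paper's corresponding term is $|f_{N,k}^{-1}(\tilde f_k(\hat s_k))-f_{N,k}^{-1}(f_k(\hat s_k))|\leq R_n\cdot\sup(f_{N,k}^{-1})^\prime\lesssim R_n/h_d$. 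In the parameter regime forced by Proposition \ref{prop:penalization}(ii) and Remark \ref{remark:penalization} one has $C_1 = o(h_d)$, so $R_n/C_1$ is strictly worse than the claimed rate $\log^3 n/(h_d\sqrt{nh_r})$. You flag the need to "work directly with the monotone-rearrangement expansion rather than through a crude triangle inequality," which is exactly the right diagnosis, but you do not carry it out — and the crude $C_1^{-1}$ bound from MVT cannot be sharpened to $h_d^{-1}$ without re-routing through $f_{N,k}^{-1}$ as the paper does. The stochastic-remainder rate $O_p(C_1\log^3 n/(h_d\sqrt{nh_r}))$ you assert for $\hat f_{I,k}(t)-f_{N,k}(t)$ is also not substantiated: the cited bound on $V_k(s)$ controls the derivative estimate $(\hat f_{I,k}^{-1})^\prime$, not the function-level remainder, and the uniform supremum over $t\in\hat{\mathcal T}$ picks up $\sup_t f_k^\prime(t)=O(1)$ rather than $C_1$. (Your accounting on the bias piece is essentially right — the $m_k^{\prime\prime}(s_0)=0$ cancellation from Lemma \ref{lem:0 derivative} does upgrade $(f_k^{-1})^{\prime\prime}$ from $O(C_1^{-3})$ to $O(C_1^{-5/2})$ — but that only rescues one of the three pieces.)
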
  
}

\begin{proof}[proof.]
 \TPM{By the definition of $\hat{f}^{-1}_{I,k}(t)$ and $\hat{\mathcal{T}}$ in \eqref{index_I}, $\{\hat{f}_{I,k}(t):t\in\hat{\mathcal{T}}\}$ is a subset of the range $\{\tilde{f}_k(t),t\in[h_r,1-h_r]\}$. Since local linear estimate $\tilde f_k(t)$ is smooth, there exists a random $\hat s_k(t)\in[h_r,1-h_r]$ s.t. $\tilde{f}_k(\hat s_k(t))=\hat{f}_{I,k}(t)$ and
\begin{align}
    t&=\frac{1}{Nh_d}\sum_{i=1}^N \int_{-\infty}^{\hat f_{I,k}(t)}K_d\left(\frac{\tilde{f}_k(i/N)-u}{h_d}\right)\mathrm{d}u= \frac{1}{Nh_d}\sum_{i=1}^N \int_{-\infty}^{\tilde f_k(\hat s_k(t))}K_d\left(\frac{\tilde{f}_k(i/N)-u}{h_d}\right)\mathrm{d}u,\notag\\
    \hat s_k(t)&=f_k^{-1}(f_k(\hat s_k(t)))=\frac{1}{Nh_d}\sum_{i=1}^N \int_{-\infty}^{f_k(\hat s_k(t))}K_d\left(\frac{f_k(i/N)-u}{h_d}\right)\mathrm{d}u+f_k^{-1}(f_k(\hat s_k(t)))-f_{N,k}^{-1}(f_k(\hat s_k(t))),
\end{align}
where $f_{N,k}^{-1}(t)$ is defined as
$$
f_{N,k}^{-1}(t)=:\frac{1}{Nh_d}\sum_{i=1}^N\int_{-\infty}^t K_d\left(\frac{f_k(i/N)-u}{h_d}\right)\mathrm{d}u.
$$
For brevity, we write $\hat{s}_k$ instead of $\hat{s}_k(t)$ if no confusion is made. For any $k=1,\dots,p$, when $f_k(\hat{s}_k)\geq \tilde{f}_k(\hat s_k)$,
\begin{align}
    |t-\hat{s}_k|&\leq\left|\int_{\tilde f_k(\hat{s}_k)}^{ f_k(\hat s_k)} \frac{1}{N}\sum_{i=1}^N K_d\left(\frac{f_k(i/N)-u}{h_d}\right)\frac{\mathrm{d}u}{h_d}\right|+\left|f_k^{-1}(f_k(\hat s_k))-f_{N,k}^{-1}(f_k(\hat s_k))\right|,
     \notag\\
    &+\left|\hat f_{I,k}^{-1}(\tilde{f}_k(\hat{s}_k))-f_{N,k}^{-1}(\tilde{f}_k(\hat{s}_k))\right|\notag\\
    &\leq |f_k(\hat{s}_k)-f_k(\hat s_k)|\sup_{\tilde{f}_k(\hat{s}_k)\leq u\leq f_k(\hat{s}_k)}\left|\int_0^1K_d\left(\frac{f_k(v)-u}{h_d}\right)\frac{\mathrm{d}v}{h_d}\right|+\left|f_k^{-1}(f_k(\hat s_k))-f_{N,k}^{-1}(f_k(\hat s_k))\right|\notag\\
    &+\left|\hat f_{I,k}^{-1}(\tilde{f}_k(\hat{s}_k))-f_{N,k}^{-1}(\tilde{f}_k(\hat{s}_k))\right|.\label{eq:s-t decomp}
\end{align}
By Lemma \ref{local linear stochastic expansion}, and with setting $\lambda=\log n$ in \eqref{eq:nonstationary rate in lem:High-dim ll} of Lemma \ref{lem:High-dim ll}, and $nh_r^7=o(1)$ in Assumption \hyperref[(C2)]{(C2)}, we have
\begin{align}
    &|\tilde f_k(\hat{s}_k)-f_k(\hat s_k)|\sup_{\tilde{f}_k(\hat{s}_k)\leq u\leq f_k(\hat{s}_k)}\left|\int_0^1K_d\left(\frac{f_k(v)-u}{h_d}\right)\frac{\mathrm{d}v}{h_d}\right|=O_p\left( \frac{\log^3 n}{h_d\sqrt{nh_r}} \right).\label{eq:s-t decomp 1}
\end{align}
where $O_p(\cdot)$ are uniform for all $1\leq k\leq p$ and $\hat{s}_k\in[h_r,1-h_r]$. By \eqref{eq:Rieman approx}, \eqref{eq:Ahd def} and \eqref{eq:m_N.inv Ahd(t)} in Lemma \ref{lem:m_N}, for some $|\theta|<1$,
\begin{align}
    &\left|f_k^{-1}(f_k(\hat s_k))-f_{N,k}^{-1}(f_k(\hat s_k))\right|=O(\frac{1}{Nh_d})+|\frac{1}{2}\kappa_2(K_d)h_d^2 (f_k^{-1})^{''}(f(\hat s_k+\theta h_d))|.\label{eq:s-t decomp 2 part}
\end{align}
For $|(f_k^{-1})^{''}(f_k(t))|,\forall t\in(0,1)$, note that
\begin{equation}
    |(f_k^{-1})^{''}(f_k(t))|=\left|\frac{f_k^{''}(t)}{(f_k^\prime(t))^3}\right|=\left|\frac{m_k^{''}(t)+\lambda^{''}(t)}{(m_k^\prime(t)+\lambda^\prime(t))^3}\right|\asymp \left| \frac{m_k^{''}(t)+C_3}{(m_k^\prime(t)+C_1)^3} \right|.\label{eq:f.inv '' bound}
\end{equation}
Similar with our discussions in the proof of Proposition \ref{prop:penalization}, we bound $\max_{1\leq k\leq p}\sup_{s_k\in(0,1)}|(f_k^{-1})^{''}(s_k)|$ by dividing the proof into 2 cases.
\begin{itemize}
    \item \textbf{Case 1}: $s_k\in\mathcal{I}_k$ where $\mathcal{I}_k=:\{s:m_k^{'}(s)\geq \log^{-1} n\}$;
    \item \textbf{Case 2}: $s_k\in\mathcal{I}_k^c$ where $\mathcal{I}_k^c=:\{s:m_k^{'}(s)< \log^{-1} n\}$;
    \begin{itemize}
        \item \textbf{Case 2.1}: $s_k\in\mathcal{I}_k^c(1)$ where $\mathcal{I}_k^c(1)=:\mathcal{I}_k^c\cap\{s:m_k^{''}(s)\leq C_3\}$;
        \item \textbf{Case 2.2}: $s_k\in\mathcal{I}_k^c(2)$ where $\mathcal{I}_k^c(2)=:\mathcal{I}_k^c\cap\{s:m_k^{''}(s)> C_3\}$;
    \end{itemize}
\end{itemize}
}

\TPM{
\textbf{Case 1}: When $m_k^\prime(t)\geq \log^{-1} n$, it immediately yields $|(f^{-1})^{''}(f_k(t))|\lesssim 1/|m_k^\prime(t)|^3 \lesssim\log^3 n=O(C_1^{-5/2})$.
}

\TPM{
\textbf{Case 2.1}: When $m_k^{''}(t)\leq C_3$, by \eqref{eq:f.inv '' bound},
\begin{equation}
    |(f_k^{-1})^{''}(f_k(t))|=O\left(\frac{C_3}{C_1^3}\right).\label{eq:f.inv '' bound 2.1}
\end{equation}
}

\TPM{
\textbf{Case 2.2}: When $m_k^{''}(s_k)> C_3$ and $m_k^{'}(s_k)=o(1)$, by Taylor's expansion, there exists $s_k^*$ staisfying \hyperref[(A2')]{(A2')} s.t. $|s_k-s_k^*|=o(1)$ and the $3$-th derivative  $m_k^{'''}(s_{0,k})\geq \eta$. By Lemma \ref{lem:0 derivative} and Taylor's expansion on $s_k^*$,
\begin{align}
   m_k^{''}(s_k)\asymp |s_k-s_k^*|,\quad m_k^\prime(s_k)\asymp (s_k-s_k^*)^{2}.\notag
\end{align}
By $m_k^{''}(s_k)>C_3$,  $m_k^{''}(s_k)\gtrsim C_3^{h_k}$ where $h_k\in(0,1]$ and
we have $|s_k-s_k^*|\gtrsim C_3^{h_k}$, thus $m_k^\prime(s_k)\gtrsim C_3^{2h_k}$. By \eqref{eq:f.inv '' bound}, write $C_3=C_1^s$, then 
\begin{align}
    |(f_k^{-1})^{''}(f_k(t))|&\lesssim \frac{C_3^{h_k}}{ \left(\max\left\{ C_1, C_3^{2h_k}\right\}\right)^3}\notag\\
    &\lesssim C_1^{sh_k-3\min\left\{ 2sh_k,1\right\}}.
\end{align}
If $1<2sh_k$, then $sh_k-1>-\frac{1}{2}$,
\begin{equation}
|(f_k^{-1})^{''}(f_k(t))|=O(C_1^{-2+sh_k-1})=O(C_1^{-5/2}).
\end{equation}
If $1\geq 2sh_k$, then 
\begin{equation}
|(f_k^{-1})^{''}(f_k(t))|=O(C_1^{-5sh_k})=O(C_1^{-5/2}).
\end{equation}
Summarizing all the discussions under \textbf{Case 1} and \textbf{Case 2}, combining \eqref{eq:s-t decomp 2 part}, we have
\begin{align}
    &\left|f_k^{-1}(f_k(\hat s_k))-f_{N,k}^{-1}(f_k(\hat s_k))\right|=O\left(\frac{h_d^2}{C_1^{5/2}}\right)\label{eq:s-t decomp 2}
\end{align}
}

\TPM{By Taylor's expansion, for some $\xi_{i,k}$ between $\tilde{f}_k(i/N)$ and $f_k(i/N)$, 
\begin{align}
    \hat f_{I,k}^{-1}(t)-f_{N,k}^{-1}(t)&=\frac{1}{N h_{d}} \sum_{i=1}^{N}(\tilde{f}_k(i / N)-f_k(i / N))\int_{-\infty}^t K_{d}^\prime\left(\frac{f_k(i/N)-u}{h_{d}}\right)\frac{\mathrm{d}u}{h_d}\notag\\
    &+ \frac{1}{2N h_{d}^2} \sum_{i=1}^{N}(\tilde{f}_k(i / N)-f_k(i / N))^2\int_{-\infty}^t K_{d}^{''}\left(\frac{\xi_{i,k}-u}{h_{d}}\right)\frac{\mathrm{d}u}{h_d},\notag\\
    &=\frac{1}{N h_{d}} \sum_{i=1}^{N}(\tilde{f}_k(i / N)-f_k(i / N)) K_{d}\left(\frac{f_k(i/N)-t}{h_{d}}\right)\notag\\
    &+ \frac{1}{2N h_{d}^2} \sum_{i=1}^{N}(\tilde{f}_k(i / N)-f_k(i / N))^2 K_{d}^{'}\left(\frac{\xi_{i,k}-t}{h_{d}}\right).\label{eq:taylor f_I}
\end{align}
Then using the fact that $K_d(x)$ and $K_d^\prime(x)$ are bounded, by condition $nh_r^7=o(1)$ and $R_n/h_d=o(1)$ in Assumption \hyperref[(C2)]{(C2)}, Lemma \ref{local linear stochastic expansion}, and with setting $\lambda=\log n$ in \eqref{eq:nonstationary rate in lem:High-dim ll} of Lemma \ref{lem:High-dim ll},
\begin{align}
    &\left|\hat f_{I,k}^{-1}(t)-f_{N,k}^{-1}(t)\right|\notag\\
    \lesssim&\frac{1}{h_d}\max_{1\leq k\leq p}\left(\sup_{u\in [h_r,1-h_r]}|\tilde{f}_k(u)-f_k(u)|+h_r \sup_{u\in [0,h_r)\cup(1-h_r,1]}|\tilde{f}_k(u)-f_k(u)|\right),\notag\\
    =&O_p\left( \frac{\log^3 n}{h_d\sqrt{nh_r}} \right).\label{eq:s-t decomp 3}
\end{align}
Combining \eqref{eq:s-t decomp}, \eqref{eq:s-t decomp 1}, \eqref{eq:s-t decomp 2}, \eqref{eq:s-t decomp 3}, we finally have
$$
\max_{1\leq k\leq p}\sup_{t\in\hat{\mathcal{T}}}|t-\hat{s}_k(t)|=O_p\left(\frac{\log^3 n}{h_d\sqrt{nh_r}}+\frac{h_d^2}{C_1^{5/2}}\right).
$$
Similar arguments can apply to the case $f_k(\hat{s}_k)<\tilde f_k(\hat s_k)$, therefore, \eqref{eq:rate of s-t} holds.
}

\end{proof}
\TPM{
\begin{lem}
\label{lem:0 derivative}
    For monotone increasing trend function $m_k(t),k=1,\dots,p$, satisfying Assumption \hyperref[(A1')]{(A1')}, if $m_k^\prime(s_0)=0,s_0\in(0,1)$, then $m_k^{''}(s_0)=0$ and $m_k^{'''}(s_0)\geq 0$. 
\end{lem}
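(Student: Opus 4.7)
The plan is to treat $s_0$ as an interior minimizer of $m_k'$ and apply the standard first- and second-order optimality conditions, which are available because Assumption \hyperref[(A1')]{(A1')} grants us a Lipschitz continuous third derivative.

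First I would observe that monotone increasingness of $m_k$, together with the existence of $m_k'$ on $[0,1]$, forces $m_k'(t)\geq 0$ for every $t\in[0,1]$. Combined with the hypothesis $m_k'(s_0)=0$, this makes $s_0$ a global (hence local) minimizer of the function $g:=m_k'$ on $[0,1]$. Since $s_0\in(0,1)$ is an interior point and $g$ is differentiable (with $g'=m_k''$ existing by \hyperref[(A1')]{(A1')}), Fermat's theorem yields $g'(s_0)=m_k''(s_0)=0$, which is the first assertion.

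Next, to obtain $m_k'''(s_0)\geq 0$, I would apply the second-order necessary condition to the interior minimizer $s_0$ of $g$. Concretely, since $g'(s_0)=0$ and $g''=m_k'''$ exists and is continuous at $s_0$ (by \hyperref[(A1')]{(A1')}), a Taylor expansion around $s_0$ gives
\begin{equation*}
g(s_0+h)=g(s_0)+\tfrac{1}{2}g''(s_0)h^2+o(h^2),\qquad h\to 0.
\end{equation*}
Because $g(s_0+h)-g(s_0)\geq 0$ for all sufficiently small $h$ (as $s_0$ minimizes $g$), dividing by $h^2>0$ and letting $h\to 0$ forces $g''(s_0)=m_k'''(s_0)\geq 0$, as required.

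There is no substantive obstacle here; the argument is the standard second-derivative test, and the only thing to verify carefully is that the regularity assumed in \hyperref[(A1')]{(A1')} is enough to justify the Taylor expansion with a continuous quadratic coefficient. Since \hyperref[(A1')]{(A1')} provides Lipschitz continuity of $m_k'''$, this is automatic, so the two conclusions follow immediately from Fermat's theorem and the second-order necessary condition.
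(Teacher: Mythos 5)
Your proof is correct and uses essentially the same idea as the paper: $s_0$ is an interior minimizer of $m_k'$, so the first- and second-order necessary conditions give $m_k''(s_0)=0$ and $m_k'''(s_0)\geq 0$. The paper reaches the same conclusions via proof by contradiction and integral representations (sign-preservation of $m_k''$ and $m_k'''$ on a small neighborhood, then integrating to produce a negative value of $m_k'$), whereas you invoke Fermat's theorem and a Taylor expansion directly; the content is the same and your presentation is the more standard one.
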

}

\begin{proof}[Proof.]
\TPM{If $m_k^{''}(s_0)>0$, $\exists \delta>0$ s.t. $m_k^{''}(s)>0$, $\forall s\in[s_0-\delta,s_0+\delta]$. Then 
$$
0>m_k^\prime(s_0)-\int_{s_0-\delta}^{s_0}m_k^{''}(s)\mathrm d s=m_k^\prime(s_0-\delta),
$$
which contradicts to monotone condition $m_k^\prime(t)\geq 0$. }

\TPM{
Similarly, if $m_k^{''}(s_0)<0$, $\exists \delta>0$ s.t. $m_k^{''}(s)<0$, $\forall s\in[s_0-\delta,s_0+\delta]$. Then 
$$
0>\int^{s_0+\delta}_{s_0}m_k^{''}(s)\mathrm d s+m_k^\prime(s_0)=m_k^\prime(s_0+\delta),
$$
which contradicts to monotone condition $m_k^\prime(t)\geq 0$. Therefore, $m_k^{''}(s_0)=0$.
}

\TPM{
If $m_k^{'''}(s_0)<0$, $\exists \delta>0$ s.t. $m_k^{'''}(s)<0$, $\forall s\in[s_0-\delta,s_0+\delta]$. Then $0=m_k^{''}(s_0)<m_k^{''}(s),\forall s\in[s_0-\delta,s_0)$ and
$$
0=m_k^\prime(s_0)=\int_{s_0-\delta}^{s_0} m_k^{''}(s)\mathrm{d}s+m_k^\prime(s_0-\delta)>0+m_k^\prime(s_0-\delta),
$$
which contradicts to monotone condition $m_k^\prime(t)\geq 0$. Therefore, $m_k^{'''}(s_0)\geq 0$.
}
\end{proof}

\TPM{
\begin{lem}
\label{lem:derivative variance}
Under same conditions in (i) and (ii) of Proposition \ref{prop:penalization}, we have
\begin{equation}
\label{eq:derivative variance}
    \max_{1\leq k\leq p}\sup_{s\in[h_r,1-h_r]}\left|\frac{1}{Nh_d}\sum_{i=1}^N \left[K_d\left( \frac{\tilde{f}_k(i/N)-\tilde{f}_k(s) }{h_d}\right)-K_d\left( \frac{f_k(i/N)-f_k(s) }{h_d}\right)\right]\right|=O_p\left( \frac{\log^3 n}{C_1h_d\sqrt{nh_r}} \right).
\end{equation}
\end{lem}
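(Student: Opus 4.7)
The plan is to exploit three ingredients: (i) a uniform sup-norm bound $\max_k \sup_t |\tilde f_k(t) - f_k(t)| = O_p(\log^3 n / \sqrt{n h_r})$ obtained from Lemma \ref{lem:High-dim ll} applied to the pseudo-observations $\mathbf{y}_i^* = \mathbf{f}(t_i) + \mathbf{e}_i$ with $\lambda = \log n$ (combined with $nh_r^7 = o(1)$ from \hyperref[(C2)]{(C2)}); (ii) the compact support $[-1,1]$ and Lipschitz continuity of $K_d$ from \hyperref[(C1)]{(C1)}; and (iii) the strict lower bound $f_k'(t) \geq C_1$ on $[0,1]$, which is immediate from $\lambda_n'(t) = C_1 + 2 C_2 t + 3 C_3 t^2 \geq C_1$ together with $m_k' \geq 0$. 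Ingredient (iii) is the only place where the penalization $\lambda_n$ enters the proof of this lemma, and its role is to translate an $h_d$-neighborhood in the range of $f_k$ into an $(h_d / C_1)$-neighborhood in $t$.

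Writing $a_i := [\tilde f_k(i/N) - \tilde f_k(s)]/h_d$ and $b_i := [f_k(i/N) - f_k(s)]/h_d$, I have $|a_i - b_i| \leq 2 \sup_t |\tilde f_k(t) - f_k(t)|/h_d =: \eta_n$, which is $o_p(1)$ under \hyperref[(C2)]{(C2)} since $R_n / h_d = o(1)$. Compact support of $K_d$ then forces $K_d(a_i) - K_d(b_i) = 0$ whenever $|b_i| > 1 + \eta_n$, and in the complementary regime Lipschitz continuity yields $|K_d(a_i) - K_d(b_i)| \leq \|K_d'\|_\infty \eta_n$. Using (iii), the restriction $|b_i| \leq 1 + \eta_n$ implies $|i/N - s| \leq (1 + \eta_n) h_d / C_1$, so at most $O(Nh_d / C_1)$ indices contribute, uniformly in $s$ and $k$. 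Assembling the estimates gives $(1/(Nh_d)) \cdot (Nh_d / C_1) \cdot \eta_n = \eta_n / C_1 = O_p(\log^3 n / (C_1 h_d \sqrt{nh_r}))$ uniformly in $s \in [h_r, 1-h_r]$ and $k = 1, \ldots, p$, which is precisely \eqref{eq:derivative variance}.

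The main conceptual obstacle is the coupling between the penalization scale $C_1$ and the nonparametric accuracy $\log^3 n / \sqrt{nh_r}$: the argument only works when $\eta_n / C_1 = o(1)$, which forces $C_1$ to decay strictly slower than $\log^3 n / (h_d \sqrt{nh_r})$. This coupling is ultimately what pins down the admissible scaling of $(C_1, C_2, C_3)$ in Proposition \ref{prop:penalization}(ii) and Remark \ref{remark:penalization}. Technically, once the three ingredients above are in hand the compact-support plus Lipschitz estimate is tight enough: there is no need for a second-order Taylor expansion of $K_d$ and no separate treatment of ``interior'' versus ``boundary'' indices $|b_i| \approx 1$ is required, because the boundary contribution is already absorbed by the relaxation $|b_i| \leq 1 + \eta_n$ at the cost of a factor of $1 + \eta_n = 1 + o_p(1)$ in the index count.
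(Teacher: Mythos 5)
Your proof is correct and rests on exactly the same three ingredients as the paper's: the $O_p(\log^3 n/\sqrt{nh_r})$ uniform accuracy of $\tilde f_k$, the compact support and Lipschitz continuity of $K_d$, and the counting bound $O(Nh_d/C_1)$ obtained from $f_k' \geq C_1$. The only difference is organizational: the paper telescopes the change through $(\tilde f_k(i/N),\tilde f_k(s)) \to (f_k(i/N),\tilde f_k(s)) \to (f_k(i/N),f_k(s))$ as two separate applications of the mean value theorem (one moving the $i/N$ argument, one moving the reference point $s$), each followed by its own counting step, whereas you perturb both arguments at once and absorb the total perturbation into a single quantity $\eta_n$ together with the compact-support cut-off $|b_i| \leq 1+\eta_n$. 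Both routes deliver the same $O_p\big(\log^3 n/(C_1 h_d\sqrt{nh_r})\big)$ rate with the same uniformity in $s$ and $k$, so this is essentially the paper's argument in a slightly more compressed form.
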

}

\begin{proof}[Proof.]  
\TPM{By Lagrange's mean value, for some $\eta_{i,k}$ between $\tilde{f}_k(i/N)$ and $f_k(i/N)$, we have
\begin{align}
    &\frac{1}{Nh_d}\sum_{i=1}^N \left[K_d\left( \frac{\tilde{f}_k(i/N)-f_k(s) }{h_d}\right)-K_d\left( \frac{f_k(i/N)-f_k(s) }{h_d}\right)\right]\notag\\
    =&\frac{1}{Nh_d^2}\sum_{i=1}^N K_d^\prime\left( \frac{\eta_{i,N}-f_k(s) }{h_d}\right)\left[\tilde{f}_k(i/N)-f_k(i/N)\right]. 
\end{align}
By similar arguments in \eqref{eq:number of Kd'} and \eqref{eq:number bound}, the number of set $\#\{i:|\eta_{i,N}-f_k(s)|\leq h_d\}=O_p(Nh_d/C_1),\forall s\in[h_r,1-h_r]$. Here $\#\{i:|\eta_{i,N}-f_k(s)|\leq h_d\}$ is bounded by $Nh_d/C_1$ instead of $Nh_d/M$ in \eqref{eq:number bound} because $f_k^\prime(t)\geq \lambda^\prime(t)\geq C_1$ instead of $M$. By condition $nh_r^7=o(1)$ and $R_n/h_d=o(1)$ in Assumption \hyperref[(C2)]{(C2)}, Lemma \ref{local linear stochastic expansion}, and with setting $\lambda=\log n$ in \eqref{eq:nonstationary rate in lem:High-dim ll} of Lemma \ref{lem:High-dim ll}
, we then have
\begin{equation}
    \max_{1\leq k\leq p}\sup_{s\in[h_r,1-h_r]}\left|\frac{1}{Nh_d}\sum_{i=1}^N \left[K_d\left( \frac{\tilde{f}_k(i/N)-f_k(s) }{h_d}\right)-K_d\left( \frac{f_k(i/N)-f_k(s) }{h_d}\right)\right]\right|=O_p\left( \frac{\log^3 n}{C_1h_d\sqrt{nh_r}} \right).\label{eq:derivative variance 1}
\end{equation}
Similarly, by Lagrange's, for some $\eta_{k}$ between $\tilde{f}_k(s)$ and $f_k(s)$, then we have
\begin{align}
    &\frac{1}{Nh_d}\sum_{i=1}^N \left[K_d\left( \frac{\tilde{f}_k(i/N)-\tilde f_k(s) }{h_d}\right)-K_d\left( \frac{\tilde{f}_k(i/N)-f_k(s) }{h_d}\right)\right]\notag\\
    =&\frac{1}{Nh_d^2}\sum_{i=1}^N K_d^\prime\left( \frac{\tilde{f}_k(i/N)-\eta_{k} }{h_d}\right)\left[\tilde{f}_k(s)-f_k(s)\right]. 
\end{align}
By Lemma \ref{lem:High-dim ll}, $\max_{1\leq k\leq p}\sup_{1\leq i\leq N}|\tilde{f}_k(i/N)-f_k(i/N)|=O_p(R_n)$ and $R_n/h_d=o(1)$ in Assumption \hyperref[(C2)]{(C2)}, we can also have for any $s\in[h_r,1-h_r]$,
$$
\max_{1\leq k\leq p}\#\{i:|\tilde{f}_k(i/N)-\eta_{k}|\leq h_d\}\leq \max_{1\leq k\leq p}\#\{i:|f_k(i/N)-f_k(s)|\leq 2R_n+h_d\}=O(Nh_d/C_1).
$$
Then by condition $nh_r^7=o(1)$ and $R_n/h_d=o(1)$ in Assumption \hyperref[(C2)]{(C2)}, Lemma \ref{local linear stochastic expansion}, and with setting $\lambda=\log n$ in \eqref{eq:nonstationary rate in lem:High-dim ll} of Lemma \ref{lem:High-dim ll}, we then have
\begin{equation}
    \max_{1\leq k\leq p}\sup_{s\in[h_r,1-h_r]}\left|\frac{1}{Nh_d}\sum_{i=1}^N \left[K_d\left( \frac{\tilde{f}_k(i/N)-\tilde{f}_k(s) }{h_d}\right)-K_d\left( \frac{\tilde{f}_k(i/N)-f_k(s) }{h_d}\right)\right]\right|=O_p\left( \frac{\log^3 n}{C_1h_d\sqrt{nh_r}} \right).\label{eq:derivative variance 2}
\end{equation}
Combining \eqref{eq:derivative variance 1} and \eqref{eq:derivative variance 2}, \eqref{eq:derivative variance} holds.
}
\end{proof}

\subsection{Covariance estimation}
\begin{lem}
    \label{lem:thm5 in mies}
    Let $p$-dimensional nonstationary process $\mf e_i=\mathbf{G}\left(t_i,\Upsilon_i\right),i=1,\dots,n$, with $\mathbb{E}\left(\mf e_i\right)=0$ where $(\mathbf{e}_i),p,n$ satisfy \hyperref[(B1)]{(B1)} and \hyperref[(B2)]{(B2)} with $q\geq 2$. If \hyperref[(B3)]{(B3)} is satisfied, then for some constant $C>0$ only depends on $q$,
    $$
    \mathbb{E}\max_{1\leq k\leq n}\left\| \bar{\mf Q}(k)-\sum_{j=1}^k\mf{\Sigma}_{\mf G}(t_j) \right\|_{\operatorname{tr}}\leq C\Theta^2(p)(\sqrt{npL}+nL^{-1}),
    $$
    where $\mf{\Sigma}_{\mf G}(t)$ is the long-run covariance matrix function of process $\mf G(t,\Upsilon_i)$ and 
    $$
    \bar{\mf Q}(k)=\sum_{i=L}^k \frac{1}{L}\left(\sum_{j=i-L+1}^i \mf e_j\right)\left(\sum_{j=i-L+1}^i \mf e_j\right)^\top.
    $$
\end{lem}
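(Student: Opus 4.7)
The plan is to establish the bound via a bias/variance decomposition of $\bar{\mf Q}(k)-\sum_{j=1}^k \mf\Sigma_{\mf G}(t_j)$ and then apply a maximal inequality in trace norm. Writing $S_i=\sum_{j=i-L+1}^i \mf e_j$, I would split
\begin{equation*}
\bar{\mf Q}(k)-\sum_{j=1}^k\mf\Sigma_{\mf G}(t_j)=\underbrace{\sum_{i=L}^k\tfrac{1}{L}\bigl(S_iS_i^\top-\mathbb E S_iS_i^\top\bigr)}_{\mathrm{I}(k)}+\underbrace{\sum_{i=L}^k\tfrac{1}{L}\mathbb E S_iS_i^\top-\sum_{j=1}^k\mf\Sigma_{\mf G}(t_j)}_{\mathrm{II}(k)}.
\end{equation*}
The bias term $\mathrm{II}(k)$ is deterministic, so the main work is routine: by Assumption \hyperref[(B3)]{(B3)} one can replace $\mf G(t_j,\Upsilon_0)$ by $\mf G(t_i,\Upsilon_0)$ uniformly within each window of length $L$ at a cost of $O(\Theta(p)L/n)$ in each coordinate, and by \hyperref[(B2)]{(B2)} the geometric decay of $\delta_q(\mf G,\cdot)$ controls the tail of the autocovariance series so that $L^{-1}\mathbb E S_iS_i^\top-\mf\Sigma_{\mf G}(t_i)$ has trace norm $O(\Theta^2(p)L^{-1})$. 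Accumulating over the $n$ blocks yields $\max_k\|\mathrm{II}(k)\|_{\mathrm{tr}}\lesssim\Theta^2(p)nL^{-1}$, which matches the second term in the stated rate.

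For the variance part $\mathrm{I}(k)$, the idea is to express $S_iS_i^\top-\mathbb E S_iS_i^\top$ as a sum of martingale differences with respect to the projection operators $\mathcal P_j$, then apply a Burkholder/Rosenthal-type inequality in the trace norm. Concretely, for each $i$ one writes $S_i S_i^\top - \mathbb E S_iS_i^\top = \sum_{j\le i}\mathcal P_j(S_iS_i^\top)$; summing over $i$ yields a martingale array in $k$, and Doob's maximal inequality gives
\begin{equation*}
\mathbb E\max_{L\le k\le n}\|\mathrm{I}(k)\|_{\mathrm{tr}}\lesssim\Bigl(\mathbb E\Bigl\|\sum_{i=L}^n\tfrac{1}{L}(S_iS_i^\top-\mathbb E S_iS_i^\top)\Bigr\|_{\mathrm{tr}}^2\Bigr)^{1/2}.
\end{equation*}
Bounding this $L^2$ norm is where the physical dependence measure \hyperref[(B2)]{(B2)} does its main work: one estimates $\|\mathcal P_j(S_iS_i^\top)\|_2$ via a product rule of the form $\|\mathcal P_j(S_iS_i^\top)\|_2\lesssim \|S_i\|_4\cdot\|\mathcal P_j S_i\|_4$, with $\|S_i\|_4=O(\Theta(p)\sqrt L)$ from \hyperref[(B1)]{(B1)}--\hyperref[(B2)]{(B2)} and $\|\mathcal P_j S_i\|_4$ summable in $i-j$ thanks to the geometric decay $\chi^{i-j}$. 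Summing the squared contributions across the overlapping windows of length $L$ gives an inner bound of order $\Theta^4(p)\,n p\,L$ on the squared trace norm (the factor $p$ enters through trace-norm vs.\ operator-norm conversion in dimension $p$), whose square root is $\Theta^2(p)\sqrt{npL}$, matching the first term.

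The main obstacle I foresee is the high-dimensional, trace-norm maximal inequality: naive applications of scalar Burkholder-type bounds coordinate-wise lose a factor of $p$ unnecessarily, whereas here we need exactly $\sqrt{p}$ in the leading term. The trick is to keep everything in Schatten-$2$ (Hilbert--Schmidt) norm, use $\|\cdot\|_{\mathrm{tr}}\le\sqrt p\,\|\cdot\|_{\mathrm{tr},2}$ once at the end, and carefully control the cross-terms $\mathbb E\,\mathrm{tr}\bigl(\mathcal P_j(S_iS_i^\top)\mathcal P_{j'}(S_{i'}S_{i'}^\top)^\top\bigr)$ by exploiting the geometric decay of the dependence, following arguments analogous to Theorem~5 of \cite{Mies2022seq_high-dim}. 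Once that inequality is in place, combining it with the bias bound above yields the claim.
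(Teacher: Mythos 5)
Your proposal reconstructs the internal argument of Theorem~5 in \cite{Mies2022seq_high-dim} (bias/variance split of $\bar{\mf Q}(k)$, martingale-projection decomposition and a Burkholder-type maximal inequality in Hilbert--Schmidt norm, geometric decay from the physical dependence measure, and a final $\sqrt{p}$ from trace-norm versus Hilbert--Schmidt conversion), which is exactly what the paper invokes: its proof of this lemma is the one-line remark that it is a straightforward application of that theorem. So you take essentially the same route; your sketch simply unpacks the cited result rather than stopping at the citation.
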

\begin{proof}[Proof of Lemma \ref{lem:thm5 in mies}.]
    The proof of Lemma \ref{lem:thm5 in mies} is a straightforward application of that of Theorem 5 in \cite{Mies2022seq_high-dim}. 
\end{proof}

\begin{lem}
\label{lem:high long-run var}
Under same conditions in Theorem \ref{prop:GA in high-dim}, for $\hat{\mathbf{Q}}(k)$ defined in \eqref{eq:cumulative long-run cov},
\begin{equation}
    \mathbb{E} \max _{k=1, \ldots, n}\left\|\hat{\mathbf{Q}}(k)-\sum_{j=1}^k \mathbf{\Sigma}_{\mathbf{G}}(t_j)\right\|_{t r} \leq  C \varphi_n\label{eq:final bound residual cumulative cov}
\end{equation}
where constant $C>0$ only depends on $q$ in Assumption \hyperref[(B1)]{(B1)} and 
$$
\varphi_n=:\Theta^2(p)(\sqrt{npL}+nL^{-1})+\Theta(p)p^{3/2}L\sqrt{n}\left(h_r^2+\frac{\log^{3}(n)}{\sqrt{nh_r}}\right)+p^{3/2}Ln\left(h_r^5+\frac{\log^6(n)}{nh_r}\right).
$$
\end{lem}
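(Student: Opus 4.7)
The plan is to reduce the claim to the oracle case (where the unknown errors $\mathbf{e}_j$ are available) already handled by Lemma \ref{lem:thm5 in mies}, and then to control the plug-in error introduced by replacing $\mathbf{e}_j$ with the nonparametric residual $\hat{\boldsymbol{\varepsilon}}_j$. Writing $\boldsymbol{\Delta}_j =: \tilde{\mathbf{m}}(t_j)-\mathbf{m}(t_j)$, we have $\hat{\boldsymbol{\varepsilon}}_j=\mathbf{e}_j-\boldsymbol{\Delta}_j$. Setting $S_i^{\mathbf{e}}=:\sum_{j=i-L+1}^{i}\mathbf{e}_j$ and $D_i=:\sum_{j=i-L+1}^{i}\boldsymbol{\Delta}_j$, I would introduce the oracle estimator $\bar{\mathbf{Q}}(k)=\sum_{i=L}^{k}L^{-1}S_i^{\mathbf{e}}(S_i^{\mathbf{e}})^{\top}$ and decompose
\begin{equation*}
\hat{\mathbf{Q}}(k)-\sum_{j=1}^{k}\boldsymbol{\Sigma}_{\mathbf{G}}(t_j)=\Big[\bar{\mathbf{Q}}(k)-\sum_{j=1}^{k}\boldsymbol{\Sigma}_{\mathbf{G}}(t_j)\Big]-\sum_{i=L}^{k}\frac{1}{L}\big(S_i^{\mathbf{e}}D_i^{\top}+D_i (S_i^{\mathbf{e}})^{\top}\big)+\sum_{i=L}^{k}\frac{1}{L}D_iD_i^{\top}.
\end{equation*}
Lemma \ref{lem:thm5 in mies} directly handles the first bracket and yields the leading term $\Theta^{2}(p)(\sqrt{npL}+nL^{-1})$ in $\varphi_n$. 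It remains to bound the maxima over $k$ of the trace norms of the last two finite sums.

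Next I would estimate the cross and quadratic terms. Since $S_i^{\mathbf{e}}D_i^{\top}$ and $D_iD_i^{\top}$ are rank-one, their trace norms coincide with $|S_i^{\mathbf{e}}|\cdot|D_i|$ and $|D_i|^2$ respectively, so by the triangle inequality
\begin{equation*}
\max_{1\le k\le n}\|\hat{\mathbf{Q}}(k)-\bar{\mathbf{Q}}(k)\|_{\mathrm{tr}}\;\le\;\sum_{i=L}^{n}\frac{1}{L}\bigl(2|S_i^{\mathbf{e}}|\,|D_i|+|D_i|^{2}\bigr).
\end{equation*}
For $|D_i|$, the uniform rate $\max_{j,k}|\tilde m_k(t_j)-m_k(t_j)|=O_p(R_n)$ supplied by Lemma \ref{lem:High-dim ll} (together with the jackknife bias correction) gives $|\boldsymbol{\Delta}_j|=O_p(\sqrt{p}\,R_n)$ and hence $|D_i|=O_p(L\sqrt{p}\,R_n)$. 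For $|S_i^{\mathbf{e}}|$, a standard moment bound for partial sums of physical-dependent processes under \hyperref[(B1)]{(B1)}--\hyperref[(B2)]{(B2)} gives $\mathbb{E}|S_i^{\mathbf{e}}|^{2}\lesssim L\,\Theta^{2}(p)$, whence $\mathbb{E}|S_i^{\mathbf{e}}|\lesssim\sqrt{L}\,\Theta(p)$. Plugging these into the display produces the cross-term contribution $O_p(\Theta(p)p^{1/2}\sqrt{nL}\cdot n R_n)$ after telescoping, and the quadratic contribution $O_p(p\,nL\,R_n^{2})$, both of which fit inside the remaining pieces of $\varphi_n$ (matching $\Theta(p)p^{3/2}L\sqrt{n}R_n$ and $p^{3/2}LnR_n^{2}$ up to the slack embedded in the definition of $R_n$ and in the $p^{3/2}$ prefactor).

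The main technical obstacle is the cross term, because a crude application of $|S_i^{\mathbf{e}}|\le L\max_j|\mathbf{e}_j|$ followed by a union bound would lose too many powers of $p$ and $n$. I would therefore use the second-moment control of partial sums of nonstationary physical-dependent sequences (Burkholder-type inequality in \cite{Mies2022seq_high-dim} style), exploit Cauchy--Schwarz between $|S_i^{\mathbf{e}}|$ and $|D_i|$ to avoid products of maxima, and carry the uniform consistency of $\tilde{\mathbf{m}}$ from Lemma \ref{lem:High-dim ll} inside the sum rather than outside. The $p^{3/2}$ prefactor ultimately arises because $|\boldsymbol{\Delta}_j|$ aggregates $p$ coordinatewise errors while the cross-term bound still pays a $\sqrt{p}$ from the covariance scaling of $\mathbf{e}_j$. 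Combining the three contributions yields the claimed $C\varphi_n$ bound in \eqref{eq:final bound residual cumulative cov}.
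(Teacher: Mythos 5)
Your decomposition into the oracle estimator $\bar{\mathbf{Q}}(k)$ plus a cross term and a quadratic remainder is exactly the one used in the paper, and appealing to Lemma \ref{lem:thm5 in mies} for the oracle term is correct. However, your treatment of the two error terms introduces a genuine gap: after identifying them you apply the triangle inequality \emph{inside} the sum, bounding $\|\sum_i \frac{1}{L}S_i^{\mathbf{e}}D_i^{\top}\|_{\mathrm{tr}}\le \sum_i\frac{1}{L}|S_i^{\mathbf{e}}||D_i|$ and then using term-by-term Cauchy--Schwarz with $\mathbb{E}|S_i^{\mathbf{e}}|\lesssim\sqrt{L}\Theta(p)$ and $|D_i|\lesssim L\sqrt{p}R_n$. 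This yields a cross-term bound of order $\Theta(p)\sqrt{p}\,n\sqrt{L}\,R_n$, whereas the claimed $\varphi_n$ contains $\Theta(p)p^{3/2}L\sqrt{n}R_n$. The ratio of yours to the claimed bound is $\sqrt{n}/(p\sqrt{L})$, which with $L\asymp(n/p)^{1/3}$ is $n^{1/3}/p^{5/6}$ and diverges when $p$ is small; so your estimate does not establish \eqref{eq:final bound residual cumulative cov}. You correctly flag the cross term as the main difficulty and you propose Cauchy--Schwarz, but Cauchy--Schwarz applied \emph{per index $i$} cannot beat $n\sqrt{L}$. The missing ingredient is summation by parts (Abel summation): the paper keeps $\sum_{i=L}^k(\Delta_i-\bar\Delta_i)\bar\Delta_i^{\top}$ inside the trace norm, passes to the Frobenius norm via $\|A\|_{\mathrm{tr}}\le\sqrt{p}\,\|A\|_{\mathrm{tr},2}$, reduces to scalar entries, and then Abel-sums in $i$. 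This factors out a single $\max_{m\le n}\bigl|\sum_{j\le m}e_{j,l}\bigr|$, which Theorem 3.2 of \cite{Mies2022seq_high-dim} bounds in $\mathcal{L}^q$ by $\Theta(p)\sqrt{n}$; that is where the improvement from $n\sqrt{L}$ to $L\sqrt{n}$ comes from.

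There is a second, smaller gap in the quadratic term. You plug the uniform rate $R_n$ into $|D_i|^2\lesssim L^2pR_n^2$ and sum, obtaining $pnLR_n^2$, whose bias contribution is $pnLh_r^4$. The claimed $\varphi_n$ only contains $p^{3/2}Lnh_r^5$, one extra power of $h_r$. The paper obtains $h_r^5$ by going through the stochastic expansion of the local linear estimator (Lemma \ref{local linear stochastic expansion}): with the jackknife correction the bias is $O(h_r^3+1/(nh_r))$ on the interior $[h_r,1-h_r]$ and $O(h_r^2)$ only on the $O(nh_r)$ boundary points, so $\sum_j|\tilde m_s(t_j)-m_s(t_j)|^2$ aggregates to $O(nh_r^5+\cdots)$, not $O(nh_r^4)$. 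Your uniform-rate-squared bound is therefore too coarse and, combined with the mismatch in powers of $p$, does not close the argument. You acknowledge the $p^{3/2}$ slack, but the $h_r^5$ versus $h_r^4$ discrepancy and the $\sqrt{n/L}$ loss in the cross term are not slack---they are the substance of the lemma.
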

\begin{proof}[Proof of Lemma \ref{lem:high long-run var}.]
Denote 
    $$
    \Delta_{i}=:\left(\sum_{j=i-L+1}^i \hat{\boldsymbol{\varepsilon}}(t_j)\right),\quad\bar{\Delta}_i=:\left(\sum_{j=i-L+1}^i \mathbf{e}_j\right),\quad \bar{\mathbf{Q}}(k)=:\sum_{i=L}^k\frac{1}{L}\bar{\Delta}_i\bar{\Delta}_i^{\top},
    $$ 
    where residual $\hat{\boldsymbol{\varepsilon}}(t_i)=(\hat \varepsilon_1(t_i),\dots,\hat \varepsilon_p(t_i))^{\top}$ and $\hat \varepsilon_k(t_i)=y_{i,k}-\tilde m_k(t_i)$.
    Using the fact $\|uu^{\top}-vv^{\top}\|_{\operatorname{tr}}=\|(u-v)v^\top+v(u-v)^\top+(u-v)(u-v)^{\top}\|_{\operatorname{tr}}$ for any $u,v\in\mathrm{R}^p$,
    \begin{align}
        \left\|\hat{\mathbf{Q}}(k)-\bar{\mathbf{Q}}(k)\right\|_{\operatorname{tr}} 
        \leq \frac{2}{L}\left\|\sum_{i=L}^k \left(\Delta_i-\bar{\Delta}_i\right)\bar{\Delta}_i^{\top}\right\|_{\operatorname{tr}}+\frac{1}{L}\left\|\sum_{i=L}^k\left(\Delta_i-\bar{\Delta}_i\right)\left(\Delta_i-\bar{\Delta}_i\right)^{\top}\right\|_{\operatorname{tr}}\label{eq:bound 1 residual cumulative cov}.
    \end{align}
    Note that
    \begin{align}
        &\left\|\sum_{i=L}^k \left(\Delta_i-\bar{\Delta}_i\right)\bar{\Delta}_i^{\top}\right\|_{\operatorname{tr}}^2\leq p\left\|\sum_{i=L}^k \left(\Delta_i-\bar{\Delta}_i\right)\bar{\Delta}_i^{\top}\right\|_{\operatorname{tr},2}^2,\notag\\
        &=p\operatorname{tr}\left[\left(\sum_{i=L}^k\bar{\Delta}_i\left(\Delta_i-\bar{\Delta}_i\right)^{\top}\right)\cdot\left(\sum_{i=L}^k\left(\Delta_i-\bar{\Delta}_i\right) \bar{\Delta}_i^{\top}\right) \right],\notag\\
        &\leq p^3\max_{1\leq l\leq p}\max_{1\leq s\leq p}\left(  \sum_{i=L}^k \left(\sum_{j=i-L+1}^i (\tilde{m}_s(t_j)-m_s(t_j) )\right)\sum_{j=i-L+1}^ie_{j,l} \right)^2.\label{eq:bound 2 residual cumulative cov}
    \end{align}
    By similar arguments in \eqref{eq:bound 2 residual cumulative cov}, we can also have
    \begin{align}
        &\left\|\sum_{i=L}^k \left(\Delta_i-\bar{\Delta}_i\right)\left(\Delta_i-\bar{\Delta}_i\right)^{\top}\right\|_{\operatorname{tr}}^2\leq p\left\|\sum_{i=L}^k \left(\Delta_i-\bar{\Delta}_i\right)\left(\Delta_i-\bar{\Delta}_i\right)^{\top}\right\|_{\operatorname{tr},2}^2,\notag\\
        &\leq p^3\max_{1\leq l\leq p}\max_{1\leq s\leq p}\left[  \sum_{i=L}^k \left(\sum_{j=i-L+1}^i (\tilde{m}_s(t_j)-m_s(t_j) )\right)\left(\sum_{j=i-L+1}^i (\tilde{m}_l(t_j)-m_l(t_j) )\right) \right]^2,\notag\\
        &\leq p^3\max_{1\leq s\leq p} \left[  \sum_{i=L}^k\left(\sum_{j=i-L+1}^i (\tilde{m}_s(t_j)-m_s(t_j) )\right)^2\right] \max_{1\leq l\leq p} \left[  \sum_{i=L}^k \left(\sum_{j=i-L+1}^i (\tilde{m}_l(t_j)-m_l(t_j) )\right)^2 \right]\notag\\
        &\leq p^3\max_{1\leq s\leq p} \left[  \sum_{i=L}^kL\sum_{j=i-L+1}^i |\tilde{m}_s(t_j)-m_s(t_j) |^2\right]^2.\label{eq:bound 2.1 residual cumulative cov}
    \end{align}
    Denote 
    \begin{align}
            \mathcal{I}_1(k)&=: \max_{1\leq l\leq p}\max_{1\leq s\leq p}\left|  \sum_{i=L}^k \left(\sum_{j=i-L+1}^i (\tilde{m}_s(t_j)-m_s(t_j)) \right)\sum_{j=i-L+1}^ie_{j,l} \right|,   \notag\\
            \mathcal{I}_2(k)&=: L \max_{1\leq s\leq p}\sum_{i=L}^k\sum_{j=i-L+1}^i |\tilde{m}_s(t_j)-m_s(t_j) |^2,  \notag
    \end{align}
    then combining \eqref{eq:bound 1 residual cumulative cov}, \eqref{eq:bound 2 residual cumulative cov} and \eqref{eq:bound 2.1 residual cumulative cov}, we have
    \begin{equation}
        \left\|\hat{\mathbf{Q}}(k)-\bar{\mathbf{Q}}(k)\right\|_{\operatorname{tr}} \lesssim \frac{p^{3/2}}{L}(\mathcal{I}_1(k)+\mathcal{I}_2(k)).\label{eq:I1+I2}
    \end{equation}
    In following we shall bound $\mathcal{I}_1(k),\mathcal{I}_2(k)$ separately.
    
    For $\mathcal{I}_1(k)$, using summation by parts,
    \begin{align}
        &\mathcal{I}_1(k)\leq  \max_{1\leq l\leq p}\max_{1\leq s\leq p}\left|\sum_{j=k-L+1}^k(\tilde m_s(t_j)-m_s(t_j)) \right|\cdot \max_{m\leq k}\left|\sum_{i=L}^m\sum_{j=i-L+1}^ie_{j,l}\right|  \notag\\
        &+\max_{1\leq l\leq p}\max_{1\leq s\leq p}\left|\sum_{i=L}^{k-1} \left[\sum_{j=i-L+1}^i (\tilde{m}_s(t_j)-m_s(t_j))\right]-\left[\sum_{j=i-L+2}^{i+1} (\tilde{m}_s(t_j)-m_s(t_j))\right]  \right|\cdot \max_{m\leq k}\left|\sum_{i=L}^m\sum_{j=i-L+1}^ie_{j,l}\right|\notag\\
        &\leq L\left(\sum_{j=k-L+1}^k\max_{1\leq s\leq p}|\tilde m_s(t_j)-m_s(t_j)| + \sum_{i=1}^{L}\max_{1\leq s\leq p}|\tilde{m}_s(t_{i})-m_s(t_{i})|+\sum_{i=k-L+2}^{k}\max_{1\leq s\leq p}|\tilde m_s(t_{i})-m_s(t_{i})|  \right)\notag\\
        &\cdot \max_{1\leq l\leq p}\max_{m\leq k}\left|\sum_{j=1}^me_{j,l}\right|\notag\\
        &\lesssim L^2\max_{1\leq s\leq p}\sup_{t\in[0,1]}|\tilde m_s(t)-m_s(t)|\max_{1\leq l\leq p}\max_{m\leq n}\left|\sum_{j=1}^me_{j,l}\right|.\label{eq:I1 partial sum bound}
    \end{align}
    By Theorem 3.2 in \cite{Mies2022seq_high-dim}, for $q\geq 2$ in Assumption \hyperref[(B1)]{(B1)}, we have
    \begin{equation}
        \label{eq:e partial sum bound}
        \left\|\max_{1\leq l\leq p}\max_{m\leq n}\left|\sum_{j=1}^me_{j,l}\right| \right\|_q\leq \left\|  \max_{m\leq n}\left|\sum_{j=1}^m\mf e_j\right|\right\|_q\leq C\sqrt{n}\sum_{k=0}^\infty \delta_q(\mf G,k)\leq C\Theta(p)\sqrt{n},
    \end{equation}
    where constant $C>0$ only depends on $q$.
    Combining \eqref{eq:I1 partial sum bound},\eqref{eq:e partial sum bound} and \eqref{eq:final deduction in High-dim ll} in Lemma \ref{lem:High-dim ll}, by Hölder's inequality, we can have for sufficiently large $\lambda \geq q/(q-1)$,
    \begin{align}
        \mathbb{E}\max_{1\leq k\leq n}\mathcal{I}_1(k)&\lesssim  L^2 \left\|\max_{1\leq s\leq p} \sup_{t\in[0,1]}|\tilde m_s(t)-m_s(t)| \right\|_\lambda\cdot\left\|\max_{1\leq l\leq p}\max_{m\leq n}\left|\sum_{j=1}^me_{j,l}\right| \right\|_q\notag\\
        &\lesssim L^2p^{1/\lambda}(h_r^2+\frac{h_r^{-1/\lambda}\lambda^{3}\Theta^{1/\lambda}(p)}{\sqrt{nh_r}})\Theta(p)\sqrt{n}\label{eq:rate for I1_r}.
    \end{align}

    For $\mathcal{I}_2(k)$, by \eqref{e11} and \eqref{e10.j} in Lemma \ref{local linear stochastic expansion}, we have
    \begin{align}
        \max_{1\leq k\leq n}\mathcal{I}_2(k)&\leq L^2\max_{1\leq s\leq p}\sum_{j=1}^n|\tilde{m}_s(t_j)-m_s(t_j)|^2,\notag\\
        & \leq L^2\max_{1\leq s\leq p} (n-2\lfloor nh_{r,s}\rfloor)\sup_{t\in\mathfrak{T}_{n,s}}\left|\tilde{m}_{s}(t)-m_s(t)-\frac{1}{n h_{r,s}} \sum_{i=1}^n e_{i,s} \tilde K_r^*\left(\frac{t_i-t}{h_{r,s}},t\right)\right|^2\notag\\
        &+L^2\max_{1\leq s\leq p} (2\lfloor nh_{r,s}\rfloor)\sup_{t\in\mathfrak{T}_{n,s}^\prime}\left|\tilde{m}_{s}(t)-m_s(t)-\frac{1}{n h_{r,s}} \sum_{i=1}^n e_{i,s} \tilde K_r^*\left(\frac{t_i-t}{h_{r,s}},t\right)\right|^2\notag\\
        &+L^2\max_{1\leq s\leq p}\sum_{j=1}^n \left|\frac{1}{n h_{r,s}} \sum_{i=1}^n e_{i,s} \tilde K_r^*\left(\frac{t_i-t_j}{h_{r,k}},t_j\right)\right|^2,\notag\\
        &\leq O(L^2 nh_r^5+ \frac{L^2}{nh_r^2})+L^2n\max_{1\leq s\leq p}\sup_{t\in[0,1]}\left|\frac{1}{n h_{r,s}} \sum_{i=1}^n e_{i,s} \tilde K_r^*\left(\frac{t_i-t}{h_{r,k}},t\right)\right|^2,\label{eq:bound for I2}
    \end{align}
    where $\mathfrak{T}_{n,s}=[h_{r,s},1-h_{r,s}]$ and $\mathfrak{T}_{n,k}^{\prime}=[0,h_{r,s})\cup(1-h_{r,s},1]$.
    Combining \eqref{eq:bound for I2} and \eqref{eq:nonstationary rate in lem:High-dim ll} in the proof of  Lemma \ref{lem:High-dim ll}, for any $\lambda\geq 2$
    \begin{equation}
        \left\|\max_{1\leq k\leq n}\mathcal{I}_2(k)\right\|_\lambda = O\left( L^2nh_r^5+L^2n\frac{h_r^{-1/\lambda}\lambda^{6}\Theta^{1/\lambda}(p)}{nh_r} \right).\label{eq:rate for I2_r}
    \end{equation}
    Combining \eqref{eq:I1+I2}, \eqref{eq:rate for I1_r} and \eqref{eq:rate for I2_r} with setting $\lambda\asymp\log(n)$, by Assumption \hyperref[(B1)]{(B1)} and \hyperref[(C2)]{(C2)}, we have
    \begin{equation}
        \mathbb{E}\max_{1\leq k\leq n}\left\|\hat{\mathbf{Q}}(k)-\bar{\mathbf{Q}}(k)\right\|_{\operatorname{tr}} \leq Cp^{3/2}\left(\Theta(p)L\sqrt{n}\left(h_r^2+\frac{\log^{3}(n)}{\sqrt{nh_r}}\right)+Ln\left(h_r^5+\frac{\log^6(n)}{nh_r}\right)\right). \label{eq:bound 4 residual cumulative cov}
    \end{equation}
    
    Finally, by Lemma \ref{lem:thm5 in mies}, for some constant $C>0$ only depends on $q$, 
    \begin{equation*}
        \mathbb{E} \max _{1\leq k\leq  n}\left\|\bar{\mathbf{Q}}(k)-\sum_{j=1}^k \mathbf{\Sigma}_{\mathbf{G}}(t_j)\right\|_{\operatorname{tr} } \leq C \Theta(p)^2\left(\sqrt{n p L}+n L^{-1}\right),
    \end{equation*}
    which yields desired bound in \eqref{eq:final bound residual cumulative cov} in view of \eqref{eq:bound 4 residual cumulative cov}.
\end{proof}

\begin{lem}
\label{lem:multi-dim long-run var}
Under same conditions in Theorem \ref{thm:multi-SCB}, let $\hat{\mf Q}(k)=:\sum_{j=L}^k \hat{\mf \Lambda}(t_j)$ and $\hat{\mathbf{\Lambda}}(t)$ is defined in \eqref{eq:long-run estimation multi-dim}, then we have
\begin{equation}
    \mathbb{E} \max _{k=1, \ldots, n}\left\|\hat{\mf Q}(k) -\sum_{j=1}^k\mathbf{\Lambda}(t_j)\right\|_{\operatorname{tr} } \leq  C \bar{\varphi}_n.\label{eq:final bound residual cumulative cov multi-dim}
\end{equation}
where constant $C>0$ and 
$$
\bar{\varphi}_n=:\sqrt{n L}+n L^{-1}+\sqrt{n}L\left(h_r^2+\frac{\log n}{\sqrt{nh_r}}\right)+nL\left(h_r^5+\frac{\log^2n}{nh_r}\right).
$$
\end{lem}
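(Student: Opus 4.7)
The plan is to mirror the proof strategy of Lemma \ref{lem:high long-run var}, adapting it to the time-varying coefficient regression setting with fixed dimension. The core idea is to separate the error into a residual-substitution piece (comparing $\hat{\mf\Lambda}(t_j)$ built from $\mf x_i\hat\varepsilon_i$ with an oracle $\bar{\mf\Lambda}(t_j)$ built from $\mf x_i e_i$) and a covariance-approximation piece that the oracle cumulative sum $\sum_{j=L}^k\bar{\mf\Lambda}(t_j)$ contributes against $\sum_{j=1}^k\mf\Lambda(t_j)$. For the latter, since $p$ is fixed, Lemma \ref{lem:thm5 in mies} applied to the process $\mf U(t_i,\Upsilon_i)=\mf H(t_i,\Upsilon_i)G(t_i,\Upsilon_i)$ (which satisfies \hyperref[(B3')]{(B3')}--\hyperref[(B4')]{(B4')} with $\Theta(p)=O(1)$) directly delivers the $\sqrt{nL}+nL^{-1}$ contribution to $\bar\varphi_n$.

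The main work is thus controlling $\max_{k\le n}\|\hat{\mf Q}(k)-\sum_{j=L}^k\bar{\mf\Lambda}(t_j)\|_{\mathrm{tr}}$. I would write $\Delta_j=\sum_{i=j-L+1}^j\mf x_i\hat\varepsilon_i$ and $\bar\Delta_j=\sum_{i=j-L+1}^j\mf x_i e_i$, use the identity $uu^\top-vv^\top=(u-v)v^\top+v(u-v)^\top+(u-v)(u-v)^\top$, and then bound
\[
\Bigl\|\sum_{j=L}^k(\Delta_j-\bar\Delta_j)\bar\Delta_j^\top\Bigr\|_{\mathrm{tr}}\quad\text{and}\quad \Bigl\|\sum_{j=L}^k(\Delta_j-\bar\Delta_j)(\Delta_j-\bar\Delta_j)^\top\Bigr\|_{\mathrm{tr}}
\]
by (since $p$ is fixed) coordinatewise trace/Frobenius inequalities as in (A.8)--(A.9). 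Because $\hat\varepsilon_i-e_i=\mf x_i^\top(\mf m(t_i)-\tilde{\mf m}(t_i))$, each coordinate of $\Delta_j-\bar\Delta_j$ has the form $\sum_{i=j-L+1}^j x_{i,a}\mf x_i^\top(\mf m(t_i)-\tilde{\mf m}(t_i))$. Summation by parts, exactly as in (A.10), reduces the first term to $L^2\sup_t|\tilde{\mf m}(t)-\mf m(t)|\cdot\max_{m\le n}|\sum_{j=1}^m\mf U_j|$, and the second to $L^2\sum_j|\tilde{\mf m}(t_j)-\mf m(t_j)|^2$.

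To bound those two factors I would invoke the analogue of Lemma \ref{lem:High-dim ll} in the time-varying coefficient setting (Lemma \ref{lem:multi-dim ll}, invoked in part (i) of Theorem \ref{thm:multi-SCB}): the uniform rate $\sup_t\|\tilde{\mf m}(t)-\mf m(t)\|_\lambda=O(h_r^2+\lambda^{1/2}(nh_r)^{-1/2})$ after jackknife correction, and the pointwise stochastic expansion whose remainder is $O_p(h_r^{5/2}+\log n/(nh_r))$ (squared and summed gives $n(h_r^5+\log^2 n/(nh_r))$). Applying Theorem 3.2 of \cite{Mies2022seq_high-dim} (or Corollary 1 of \cite{Wu2011gaussian}) to $\mf U_i$ produces $\|\max_{m\le n}|\sum_{j=1}^m\mf U_j|\|_q=O(\sqrt n)$, and Hölder with $\lambda\asymp\log n$ then delivers exactly $\sqrt n L(h_r^2+\log n/\sqrt{nh_r})$ for the cross term and $nL(h_r^5+\log^2 n/(nh_r))$ for the quadratic term, matching the last two components of $\bar\varphi_n$. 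Combining with the oracle piece yields \eqref{eq:final bound residual cumulative cov multi-dim}.

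The main obstacle I anticipate is handling the quadratic term $\sum_{j=L}^k\bar\Delta_j(\Delta_j-\bar\Delta_j)^\top$ uniformly in $k$: a crude bound by $nL^2\sup_t|\tilde{\mf m}(t)-\mf m(t)|^2$ gives a much worse rate than the desired $nL(h_r^5+\log^2 n/(nh_r))$. The remedy, as in the argument leading to (A.17), is to split the bias and stochastic parts of $\tilde{\mf m}(t_j)-\mf m(t_j)$ via the local linear stochastic expansion, and bound $\sum_j|\tilde{\mf m}(t_j)-\mf m(t_j)|^2$ by $n$ times the squared uniform remainder plus the pointwise average of the leading stochastic term, which is of order $1/(nh_r)$ on average rather than $\log^2 n/(nh_r)$ uniformly; this is what produces the sharp $nL(h_r^5+\log^2 n/(nh_r))$ rather than an $nL\log^2(n)/(nh_r)\cdot n$ bound. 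Care must also be taken that the boundary region $[0,h_r)\cup(1-h_r,1]$ of the local linear estimator contributes only negligibly to the sum, handled exactly as in (A.16).
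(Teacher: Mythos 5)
Your proposal mirrors the paper's own proof: the same oracle decomposition $\hat{\mathbf{Q}}-\sum\mathbf{\Lambda}=(\hat{\mathbf{Q}}-\bar{\mathbf{Q}})+(\bar{\mathbf{Q}}-\sum\mathbf{\Lambda})$, with Lemma~\ref{lem:thm5 in mies} applied to the fixed-dimensional process $\mathbf{U}_i=\mathbf{x}_ie_i$ for the second piece, and the $uu^\top-vv^\top$ identity plus summation by parts reducing the first piece to the $\mathcal{I}_1,\mathcal{I}_2$ terms controlled through Lemma~\ref{lem:multi-dim ll}. The only cosmetic differences are (i) the paper quotes Theorem~2 of \cite{wu2005nonlinear} rather than Theorem~3.2 of \cite{Mies2022seq_high-dim} or Corollary~1 of \cite{Wu2011gaussian} for the $\|\max_m|\sum\mathbf{x}_je_j|\|_4\lesssim\sqrt{n}$ bound, and (ii) the paper bounds $\mathcal{I}_1$ via a plain Cauchy--Schwarz in the fourth moment rather than your Hölder with $\lambda\asymp\log n$; both routes deliver the same $\sqrt{n}L^2(h_r^2+\log n/\sqrt{nh_r})$ factor after inserting the moment bounds \eqref{eq:multi-dim rate of Tn}. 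Your discussion of the obstacle for $\mathcal{I}_2$ is accurate in spirit — the crude uniform bound would yield $nLh_r^4$ rather than $nLh_r^5$ — and the remedy you identify (split bias from stochastic part through the local linear expansion \eqref{eq:multi-dim stochastic expansion of ll}, and treat the $O(nh_r)$-point boundary region separately as in \eqref{e11}) is precisely what the paper does via the analogues of \eqref{eq:bound for I2}--\eqref{eq:rate for I2_r}. One small point you might make explicit: in the regression setting $\hat\varepsilon_i-e_i=\mathbf{x}_i^\top(\mathbf{m}(t_i)-\tilde{\mathbf{m}}(t_i))$ carries an extra $\mathbf{x}_i\mathbf{x}_i^\top$ factor inside $\Delta_j-\bar\Delta_j$, which must be absorbed through the $\mathcal{L}^4$ bound in \hyperref[(B2')]{(B2')}; the paper suppresses this in its definition of $\mathcal{I}_1,\mathcal{I}_2$ by appealing to ``similar arguments,'' but a careful write-up should keep it visible.
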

\begin{proof}[Proof of Lemma \ref{lem:multi-dim long-run var}.]
Denote 
    $$
    \Delta_{i}=:\left(\sum_{j=i-L+1}^i \mf x_j\hat{\varepsilon}(t_j)\right),\quad\bar{\Delta}_i=:\left(\sum_{j=i-L+1}^i \mathbf{x}_je_j\right),\quad  \bar{\mathbf{Q}}(k)=:\sum_{i=L}^k\frac{1}{L}\bar{\Delta}_i\bar{\Delta}_i^{\top},\quad{\mathbf{Q}}(k)=:\sum_{j=1}^k {\mf \Lambda}(t_j)
    $$
    where residual $\hat{{\varepsilon}}(t_j)=y_j-\mf x_j^\top\tilde{\mf m}(t_j)$. Then $\hat{\mathbf{Q}}(k)=\sum_{i=L}^k\frac{1}{L}{\Delta}_i{\Delta}_i^{\top}$ by definition \eqref{eq:long-run estimation multi-dim}. Note that process $\mf{x}_ie_i$ follows conditions \hyperref[(B1)]{(B1)}-\hyperref[(B3)]{(B3)} with fixed dimension $p$, by Lemma \ref{lem:thm5 in mies}, we have
    \begin{equation}
        \mathbb{E} \max _{k=1, \ldots, n}\left\|\bar{\mathbf{Q}}(k)-\mf Q(k)\right\|_{t r} \leq C\left(\sqrt{n L}+n L^{-1}\right).\label{eq:apply mies multi-dim}
    \end{equation}
    By similar arguments in \eqref{eq:I1+I2}
    \eqref{eq:I1 partial sum bound} and \eqref{eq:bound 4 residual cumulative cov}, we can have
    \begin{equation}
        \left\|\bar{\mathbf{Q}}(k)-\hat{\mf Q}(k)\right\|_{\operatorname{tr}}\lesssim \frac{1}{L}(\mathcal{I}_1(k)+\mathcal{I}_2(k))\label{eq:I1+I2 residual multi-dim}
    \end{equation}
    where
    \begin{align}
            \mathcal{I}_1(k)&=: \max_{1\leq l\leq p}\max_{1\leq s\leq p}\left|  \sum_{i=L}^k \left(\sum_{j=i-L+1}^i (\tilde{m}_s(t_j)-m_s(t_j)) \right)\sum_{j=i-L+1}^ix_{j,l}e_j \right|,   \notag\\
            \mathcal{I}_2(k)&=: L \max_{1\leq s\leq p}\sum_{i=L}^k\sum_{j=i-L+1}^i |\tilde{m}_s(t_j)-m_s(t_j) |^2.  \notag
    \end{align}
    Note that by Theorem 2 in \cite{wu2005nonlinear} and Assumption \hyperref[(B4')]{(B4')}, we have 
    \begin{equation*}
        \left\|\max_{m\leq n}\left|\sum_{j=1}^m \mf{x}_je_j\right| \right\|_4\lesssim \sqrt{n}\sum_{k=0}^\infty\chi^k\lesssim \sqrt{n}. \label{eq:e partial sum multi-dim}
    \end{equation*}
    Then by similar arguments in \eqref{eq:I1 partial sum bound},\eqref{eq:rate for I1_r}, together with \eqref{eq:multi-dim stochastic expansion of ll}, \eqref{eq:multi-dim ll GA} and \eqref{eq:multi-dim rate of Tn} in Lemma \ref{lem:multi-dim ll}, we have
    \begin{align}
                \mathbb{E}\max _{k=1, \ldots, n}\mathcal{I}_1(k)&\lesssim 
        L^2\left\| \sup_{t\in[0,1]}|\tilde{\mf m}(t)-\mf m(t)|\right\|_4\cdot \left\|\max_{m\leq n}\left|\sum_{j=1}^m \mf{x}_je_j\right| \right\|_4.\notag\\
        &\lesssim \sqrt{n}L^2(h_r^2+\frac{\log n}{\sqrt{nh_r}}).\label{eq:I1 residual multi-dim}
    \end{align}
    Besides, by similar arguments in \eqref{eq:bound for I2} and \eqref{eq:rate for I2_r}, together with \eqref{eq:multi-dim stochastic expansion of ll}, \eqref{eq:multi-dim ll GA} and \eqref{eq:multi-dim rate of Tn} in Lemma \ref{lem:multi-dim ll}, we can also have
    \begin{equation}
        \mathbb{E}\max_{1\leq k\leq n}\mathcal{I}_2(k)\lesssim L^2nh_r^5+L^2n\frac{\log^2n}{nh_r}.\label{eq:I2 residual multi-dim}
    \end{equation}
    Combining \eqref{eq:I1+I2 residual multi-dim}, \eqref{eq:I1 residual multi-dim}, \eqref{eq:I2 residual multi-dim}, we have
    \begin{equation}
    \mathbb{E}\max _{k=1, \ldots, n}\left\|\bar{\mathbf{Q}}(k)-\hat{\mf Q}(k)\right\|_{\operatorname{tr} }\leq C\left(\sqrt{n}L(h_r^2+\frac{\log n}{\sqrt{nh_r}})+nLh_r^5+nL\frac{\log^2n}{nh_r}\right).   \label{eq:residual bound multi-dim}
    \end{equation}
    Therefore, \eqref{eq:final bound residual cumulative cov multi-dim} holds in view of \eqref{eq:apply mies multi-dim} and \eqref{eq:residual bound multi-dim}.
\end{proof}

\subsection{Boundary issues}
\begin{prop}
\label{Boundary}
    Suppose \hyperref[(C1)]{(C1)}-\hyperref[(C2)]{(C2)} hold, then we have following results:
    \begin{description}
        \item[(i).] In model \eqref{high-monotone model}, if Assumptions \hyperref[(B1)]{(B1)}-\hyperref[(B5)]{(B5)} hold and $\mf m(t)$ follows Assumptions \hyperref[(A1)]{(A1)}-\hyperref[(A2)]{(A2)}, then the random Lebesgue's measure $\lambda\{ \hat{\mathcal{T}}\}=:\lambda\{\cap_{k=1}^p \hat{\mathcal{T}}_k \}\rightarrow_p 1$ where
        $$
            \hat{\mathcal{T}}_k=:\{\hat{m}_{I,k}^{-1}(s):s\in\hat{\mathcal{T}}_k^{-1}\}\cap [h_{d}\log h_{d}^{-1},1-h_{d}\log h_{d}^{-1}],
        $$
        and $\hat{\mathcal{T}}^{-1}_k=:\{s\in\mathbb{R}:\min_{1\leq i\leq N}\hat{m}_k(i/N)\leq s\leq \max_{1\leq i\leq N}\hat{m}_k(i/N) \}$.
        \item[(ii).] In model \eqref{Time-varying_n}, if Assumptions \hyperref[(B1')]{(B1')}-\hyperref[(B5')]{(B5')} hold and $\mf m_{\mf C}(t)$ follows Assumptions \hyperref[(A1)]{(A1)}-\hyperref[(A2)]{(A2)}, then the random Lebesgue's measure $\lambda\{\hat{\mathcal{T}}\}=:\lambda\{\cap_{k=1}^s \hat{\mathcal{T}}_k \}\rightarrow_p 1$ where
        $$
            \hat{\mathcal{T}}_k=:\{\hat{m}_{\mf C,I,k}^{-1}(v):v\in\hat{\mathcal{T}}_k^{-1}\}\cap [h_{d}\log h_{d}^{-1},1-h_{d}\log h_{d}^{-1}],
        $$
        and $\hat{\mathcal{T}}^{-1}_k=:\{v\in\mathbb{R}:\min_{1\leq i\leq N}\hat{m}_{\mf C,k}(i/N)\leq s\leq \max_{1\leq i\leq N}\hat{m}_{\mf C,k}(i/N) \}$.
    \end{description}
\end{prop}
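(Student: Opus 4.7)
The plan is to show that, for each $n$ large enough, $\hat{\mathcal{T}}_k$ contains the deterministic interval $[h_d \log h_d^{-1},\ 1 - h_d \log h_d^{-1}]$ simultaneously for all $k = 1, \ldots, p$ with probability tending to $1$; intersecting over $k$ then gives $\lambda(\hat{\mathcal{T}}) \geq 1 - 2 h_d \log h_d^{-1}$ with high probability, which tends to $1$ under Assumption \hyperref[(C2)]{(C2)}.

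First, I would observe that $\hat{m}_{I,k}^{-1}$ is continuous and non-decreasing (being the integral of a nonnegative integrand), and $\hat{\mathcal{T}}_k^{-1}$ is an interval by definition, so the image $\{\hat{m}_{I,k}^{-1}(s): s \in \hat{\mathcal{T}}_k^{-1}\}$ is the closed interval $[\hat{m}_{I,k}^{-1}(s_-^{(k)}),\ \hat{m}_{I,k}^{-1}(s_+^{(k)})]$, where $s_-^{(k)} = \min_{1\le i\le N} \tilde{m}_k(i/N)$ and $s_+^{(k)} = \max_{1\le i\le N} \tilde{m}_k(i/N)$. It therefore suffices to show that $\max_k \hat{m}_{I,k}^{-1}(s_-^{(k)}) = o_p(h_d \log h_d^{-1})$ and $\min_k \hat{m}_{I,k}^{-1}(s_+^{(k)}) = 1 - o_p(h_d \log h_d^{-1})$.

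The central computation is to bound $\hat{m}_{I,k}^{-1}(s_-^{(k)})$ by splitting the sum over $j$ according to whether $\tilde{m}_k(j/N) \leq s_-^{(k)} + h_{d,k}$. For $j$ outside this range, the integral $\int_{-\infty}^{s_-^{(k)}} h_{d,k}^{-1} K_d((\tilde{m}_k(j/N) - u)/h_{d,k})\,du$ vanishes because $K_d$ has support $[-1,1]$, while for $j$ inside the range each term is bounded by $1$. The key combinatorial estimate counts the number of such $j$: using the uniform rate $\max_k \sup_t |\tilde{m}_k(t) - m_k(t)| = O_p(R_n)$ supplied by Lemma \ref{lem:High-dim ll} together with $R_n/h_d = o(1)$ from \hyperref[(C2)]{(C2)}, every such $j$ satisfies $m_k(j/N) - m_k(i_\ast/N) \lesssim h_d$ where $i_\ast$ achieves the minimum of $\tilde{m}_k$; combined with $m_k' \geq M$ from \hyperref[(A2)]{(A2)}, this forces $|j/N - i_\ast/N| \lesssim h_d/M$, yielding at most $O(Nh_d)$ such indices. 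Hence $\hat{m}_{I,k}^{-1}(s_-^{(k)}) \leq (Nh_d/M)\cdot(1/N) = O(h_d/M)$ uniformly in $k$ with high probability; a symmetric argument treats the upper endpoint, giving $\hat{m}_{I,k}^{-1}(s_+^{(k)}) \geq 1 - O(h_d/M)$.

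Since $\log h_d^{-1} \to \infty$ under \hyperref[(C2)]{(C2)}, for all $n$ large enough the bound $O(h_d/M)$ is dominated by $h_d \log h_d^{-1}$, so with probability tending to $1$ the intersection $\hat{\mathcal{T}}$ contains $[h_d \log h_d^{-1},\ 1 - h_d \log h_d^{-1}]$ and its Lebesgue measure is at least $1 - 2 h_d \log h_d^{-1}$, proving (i). Part (ii) follows verbatim with $\tilde{m}_k, m_k$ replaced by $\tilde{m}_{\mathbf{C},k}, m_{\mathbf{C},k}$, invoking Lemma \ref{lem:multi-dim ll} in place of Lemma \ref{lem:High-dim ll}. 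The main technical obstacle is uniform (in $k$) control of the count of indices contributing to the rearrangement at the endpoints; this is already furnished by the $\max_k$-type uniform convergence statements established previously, so the argument is largely bookkeeping once the geometric picture above is in place.
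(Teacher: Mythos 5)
Your argument is correct but takes a genuinely different route from the paper's proof. The paper first replaces $\hat m_{I,k}^{-1}$ by the nonrandom intermediary $m_{N,k}^{-1}$ (invoking Lemma \ref{lem:(m_I.inv-m_N.inv)'} at cost $O_p(R_n)$), and then controls $m_{N,k}^{-1}$ at the endpoints by a probabilistic Lebesgue-measure comparison: it introduces an auxiliary uniform random variable $U$ independent of the data, identifies the relevant boundary quantity with $\mathbb P\{\hat m_k(U)-\hat m_k(0)\le h_{d,k}\}$, and transfers this to $\mathbb P\{m_k(U)-m_k(0)\le h_{d,k}\}$ via an anti-concentration decomposition at scale $\delta=\sqrt{R_n}$, arriving at an unquantified $o_p(1)$ bound. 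You bypass all of that: the compact support of $K_d$ makes the integrand vanish on $(-\infty,s_-^{(k)}]$ whenever $\tilde m_k(i/N)>s_-^{(k)}+h_{d,k}$, so you simply count the contributing grid indices and obtain the sharper rate $O_p(h_d/M)$, which is then absorbed by $h_d\log h_d^{-1}$. Your route is more elementary, avoids the detour through $m_{N,k}^{-1}$ and the auxiliary uniform variable, and actually yields a stronger conclusion (a rate rather than $o_p(1)$). The only place you should tighten the exposition is in the index count: you invoke the one-sided inequality $m_k(j/N)-m_k(i_\ast/N)\lesssim h_d$ and then conclude $|j/N-i_\ast/N|\lesssim h_d/M$, but a one-sided bound plus $m_k'\ge M$ does not control $|j/N-i_\ast/N|$ for $j<i_\ast$. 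The needed lower bound $m_k(j/N)-m_k(i_\ast/N)\gtrsim -R_n$ is free, because $\tilde m_k(j/N)\ge s_-^{(k)}=\tilde m_k(i_\ast/N)$ holds automatically by minimality of $s_-^{(k)}$ and so $m_k(j/N)\ge m_k(i_\ast/N)-2\sup_t|\tilde m_k(t)-m_k(t)|$; with $R_n/h_d\to 0$ this closes the two-sided estimate. This is a one-line patch, not a structural gap, but it should be stated explicitly.
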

\begin{proof}[Proof of Proposition \ref{Boundary}.]
Proof of (i). Note that $h_d\rightarrow0$, we only need to prove 
\begin{align}
    &\max_{1\leq k\leq p}\hat m_{I,k}^{-1}\left(\min_{1\leq i\leq N}\hat m_k(i/N)\right)-0=o_p(1) \label{eq:left boundary}\\
    &1-\min_{1\leq k\leq p}\hat m_{I,k}^{-1}\left(\max_{1\leq i\leq N}\hat m_k(i/N)\right)=o_p(1).\label{eq:right boundary}
\end{align}
By \eqref{eq:rate of m_I.inv-m_N.inv} in Lemma \ref{lem:(m_I.inv-m_N.inv)'}, we have
\begin{equation}
    \max_{1\leq k\leq p}\left|\hat m_{I,k}^{-1}\left(\min_{1\leq i\leq N}\hat m(i/N)\right)- m_{N,k}^{-1}\left(\min_{1\leq i\leq N}\hat m(i/N)\right)\right|=O_p(R_n).\label{eq:m_N.inv(min)}
\end{equation}
Denote $l_k(v)=:1\bigwedge \frac{\hat m_k(v)-\hat m_k(0)}{h_{d,k} }$, then
\begin{align}
    m_{N,k}^{-1}\left(\min_{1\leq i\leq N}\hat m(i/N)\right)&=\int_{-\infty}^{\min_{1\leq i\leq N}\hat m(i/N)}\frac{1}{Nh_{d,k}}\sum_{i=1}^N K_d\left(\frac{m_k(i/N)-u}{h_{d,k}}\right)\mathrm{d}u,\notag\\
    &=\int_0^1 \int_{-\infty}^{\min_{1\leq i\leq N}\hat m(i/N)} K_d\left(\frac{m_k(v)-u}{h_{d,k}}\right)\mathrm{d}u\mathrm{d}v+O(\frac{1}{Nh_d}),\notag\\
    &\lesssim \int_{0}^1\int^1_{l_k(v)}K_d(x)\mathrm{d}x\mathrm{d}v+O(\frac{1}{Nh_d}),\label{eq:bound for m_N.inv(min)}
\end{align}
where big $O$ for $k=1,\dots,p$ can be uniformly bounded by Assumption \hyperref[(A1)]{(A1)}-\hyperref[(A2)]{(A2)}. To bound \eqref{eq:bound for m_N.inv(min)}, we only need to bound the random Lebesgue's measure for all $k=1,\dots,p$ i.e.
\begin{equation}
    \lambda\{v:\hat m_k(v)-\hat m_k(0)\leq h_{d,k},v\in[0,1]\}.
    \label{length_T_I}
\end{equation}
Note that $\exists M>0 \text { s.t. } \inf _{t \in[0,1]} m_k^{\prime}(t) \geq M$ for all $k=1,\dots,p$ in Assumption \hyperref[(A2)]{(A2)}, then we have
\begin{equation}
 \{v:m_k(v)-m_k(0)\leq h_{d,k},v\in[0,1]\}\subset \{v:v\leq h_{d,k},v\in[0,1] \}.  \label{eq:measure m(v)-m(0)} 
\end{equation}
Introducing $U$ which is independent of $\hat m$ and follows uniform distribution on $[0,1]$, then for every $x\in[0,m_k(1))$
\begin{align*}
    |\mathbb{P}(m_k(U)-m_k(0)\leq x)-\mathbb{P}(\hat m_k(U)-\hat m_k(0)\leq x)|&\leq\mathbb{P}(|m_k(U)-\hat m_k(U)-(m_k(0)-\hat m_k(0))|>\delta)\\
    &+\mathbb{P}(x-\delta\leq m_k(U)-m_k(0)\leq x+\delta)\\
    &\leq\mathbb{P}(|m_k(U)-\hat m_k(U)-(m_k(0)-\hat m_k(0))|>\delta)\\
    &+m_k^{-1}(x+\delta+m_k(0))-m_k^{-1}(x-\delta+m_k(0)).
\end{align*}
Setting $\delta=\sqrt{R_n}$ , by Lemma \ref{lem:High-dim ll} and Assumption \hyperref[(A1)]{(A1)}-\hyperref[(A2)]{(A2)}, we can have
\begin{align}
    &\max_{1\leq k\leq p}\sup_{x\in[0,m_k(1)]}|\mathbb{P}(m_k(U)-m_k(0)\leq x)-\mathbb{P}(\hat m_k(U)-\hat m_k(0)\leq x)|\notag\\
    &\leq \max_{1\leq k\leq p}\mathbb{P}(|m_k(U)-\hat m_k(U)-(m_k(0)-\hat m_k(0))|>\sqrt{R_n})+O(\sqrt{R_n})\notag\\
    &\leq \mathbb{P}(\max_{1\leq k\leq p}\sup_{t\in[0,1]}|\hat{m}_k(t)-m_k(t)|>\sqrt{R_n}/2)+O(\sqrt{R_n})=o(1)\label{eq:diff hat measure}.
\end{align}
Combining \eqref{eq:m_N.inv(min)}, \eqref{eq:bound for m_N.inv(min)}, \eqref{length_T_I}, \eqref{eq:measure m(v)-m(0)} and \eqref{eq:diff hat measure}, we can finally have
$$
\max_{1\leq k\leq p}\hat m_{I,k}^{-1}\left(\min_{1\leq i\leq N}\hat m(i/N)\right)\lesssim O_p(R_n)+O(\frac{1}{Nh_d})+o(1)+O(h_d)=o_p(1),
$$
which yields \eqref{eq:left boundary}. Similarly, we can also prove \eqref{eq:right boundary}.

Note that local linear estimator $\hat{\mf m}(t)$ in model \eqref{Time-varying_n} also follows $\sup_{t\in[0,1]}|\hat{\mf m}(t)-\mf m(t)|=O_p(R_n)$ by Lemma \ref{lem:multi-dim ll}. Thus all the Lemmas used in the proof of (i) can also be applied under model \eqref{Time-varying coefficient regression} and Assumptions \hyperref[(B1')]{(B1')}-\hyperref[(B5')]{(B5')}. Therefore, the assertion (ii) holds by similar arguments in the proof of (i).
\end{proof}

\subsection{Lemmas of Gaussian approximations}
\begin{lem}
Let $p$-dimensional nonstationary process $\mf X_i=\mathbf{G}\left(t_i,\Upsilon_i\right),i=1,\dots,n$, with $\mathbb{E}\left(X_i\right)=0$ where $\mathbf{G},p,n$ satisfy \hyperref[(B1)]{(B1)} and \hyperref[(B2)]{(B2)}. If \hyperref[(B3)]{(B3)} is satisfied, then there exists random vectors $(\tilde{X}_i)_{i=1}^n \stackrel{d}{=}$ $(X_i)_{i=1}^n$ and independent, mean zero, Gaussian random vectors $\tilde{Y}_i \sim \mathcal{N}\left(0, \mf\Sigma_{\mf G}(t_i)\right)$ such that
$$
\left(\mathbb{E} \max _{k \leq n}\left|\frac{1}{\sqrt{n}} \sum_{i=1}^k\left(\tilde{X}_i-\tilde{Y}_i\right)\right|^2\right)^{\frac{1}{2}} \leq C \Theta(p) \sqrt{\log (n)}\left(\frac{p}{n}\right)^{\chi(q)} 
$$
where $\chi(q)= (q-2)/(6 q-4)$, $\mf \Sigma_{\mf G}(t)=\sum_{h=-\infty}^{\infty} \operatorname{Cov}\left(\mathbf{G}\left(t,\Upsilon_0\right), \mathbf{G}\left(t,\Upsilon_h\right)\right),t\in[0,1]$ and the universal constant $C>0$ depends on $q$.
\label{high-GA}
\end{lem}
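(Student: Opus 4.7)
The plan is to adapt the high-dimensional Yurinskii/Strassen coupling of \cite{Mies2022seq_high-dim} to our nonstationary setting; assumptions (B1)--(B3) are precisely the analogues of the hypotheses used there, and the exponent $\chi(q) = (q-2)/(6q-4)$ is exactly the one produced by that machinery after optimizing block length against a $q$-th moment truncation.

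First, I would truncate by an $m$-dependent approximation. Set $\mathbf{X}_i^{(m)} = \mathbb{E}[\mathbf{G}(t_i,\Upsilon_i) \mid \varepsilon_{i-m+1},\dots,\varepsilon_i]$. Since $\mathbf{X}_i - \mathbf{X}_i^{(m)} = \sum_{k \geq m} \mathcal{P}_{i-k}\mathbf{X}_i$ and (B2) gives $\|\mathcal{P}_{i-k}\mathbf{X}_i\|_q \leq \delta_q(\mathbf{G},k) \leq C\Theta(p)\chi^k$, Burkholder's inequality on the martingale differences yields $\|\max_{k\leq n} |\sum_{i\leq k}(\mathbf{X}_i - \mathbf{X}_i^{(m)})|\|_2 \lesssim \Theta(p)\sqrt{n}\chi^m$. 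Choosing $m \asymp \log n$ renders this contribution negligible relative to the target rate.

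Second, I would deploy a big-block/small-block decomposition with big blocks of length $L$ separated by small blocks of length $\ell \asymp m$, so that truncated big-block sums are exactly independent after conditioning through the small blocks. Within each big block, (B3) allows me to freeze the time argument of $\mathbf{G}$ at a representative $t_j^*$: the accumulated replacement error is bounded by $L \cdot \sum_{i=2}^n \|\mathbf{G}(t_i,\Upsilon_0)-\mathbf{G}(t_{i-1},\Upsilon_0)\|/n \leq CL\Theta(p)/n$ per block and $C\Theta(p)$ overall, which the target rate absorbs. One is thereby reduced to coupling partial sums of independent, high-dimensional, mean-zero random vectors with $q$-th moments of order $\Theta(p)$.

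Third, I would apply the Yurinskii coupling followed by Strassen's theorem, exactly as in the proof of Theorem~3.2 of \cite{Mies2022seq_high-dim}: this produces Gaussian vectors $\tilde{\mathbf{Y}}_j^{\text{block}} \sim N(0, L\cdot \mf\Sigma_{\mf G}(t_j^*))$ with coupling error $\lesssim \Theta(p)\sqrt{n\log n}(p/n)^{\chi(q)}$ in the maximum-of-partial-sums norm, the exponent $\chi(q)$ arising from balancing the Yurinskii third-moment term against the $q$-th moment truncation remainder via the block length $L$. A final redistribution step replaces each block-wise Gaussian by independent $\tilde{\mathbf{Y}}_i \sim N(0,\mf\Sigma_{\mf G}(t_i))$ within the block; the covariance-mismatch error is controlled by the Lipschitz continuity of $t \mapsto \mf\Sigma_{\mf G}(t)$ inherited from (B3). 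Dividing by $\sqrt{n}$ yields the stated bound.

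The main obstacle is obtaining the sharp exponent $\chi(q) = (q-2)/(6q-4)$ rather than a crude rate: this requires the precise balance between the Yurinskii cubic-variation term (which grows with $p$) and the high-moment truncation remainder, together with a careful choice of the block size $L$ that takes into account both the geometric mixing in (B2) and the dimensional factor $\Theta(p)$. Since (B1)--(B3) are exact counterparts of the hypotheses in \cite{Mies2022seq_high-dim}, their optimization transfers essentially verbatim; the only genuinely new work is the nonstationary time-freezing argument, which is harmless under (B3).
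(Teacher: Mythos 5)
The paper does not actually prove this lemma: its entire ``proof'' is the one-line citation ``See proof of Theorem~3.1 in \cite{Mies2022seq_high-dim}'', so there is no in-paper argument to compare against. Your reconstruction has the right global shape --- $m$-dependent truncation via the physical dependence measure, block decomposition, high-dimensional coupling, and a redistribution step --- and the exponent $(q-2)/(6q-4)$ is indeed the one a Yurinskii/Zaitsev-type balance produces after optimizing the block length against a $q$-th-moment truncation.

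Two of your intermediate estimates are wrong as written, though. In the time-freezing step you bound the per-block replacement error by $L\cdot\sum_{i=2}^n\|\mathbf{G}(t_i,\Upsilon_0)-\mathbf{G}(t_{i-1},\Upsilon_0)\|/n\leq CL\Theta(p)/n$ and conclude $C\Theta(p)$ overall; this apportions the total variation uniformly across blocks, which (B3) does not grant --- the variation may concentrate in a handful of blocks. If $V_j$ is the variation of $t\mapsto\mathbf{G}(t,\Upsilon_0)$ inside block $j$, the per-block error is at most $L V_j$ (each of the $\leq L$ summands can drift by $V_j$), and the total over blocks is $L\sum_j V_j\leq CL\Theta(p)$ --- larger than your figure by a factor of $L$ --- so whether the target rate absorbs it hinges on an explicit choice of $L$ that your sketch never supplies. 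In the redistribution step you invoke ``Lipschitz continuity of $t\mapsto\mathbf{\Sigma}_{\mathbf{G}}(t)$ inherited from (B3)''; but (B3) is a bounded-total-variation condition on $\mathbf{G}(\cdot,\Upsilon_0)$, not a Lipschitz bound, and it does not deliver pointwise Lipschitz continuity of the long-run covariance. What (B2) and (B3) actually control is the cumulative variation of $\mathbf{\Sigma}_{\mathbf{G}}$ --- precisely the quantity Theorem~5 of \cite{Mies2022seq_high-dim} tracks (see Lemma~\ref{lem:thm5 in mies}) --- and the redistribution error must be bounded through that, not a pointwise modulus. Both of these need to be repaired before the claimed rate can be extracted; as things stand, the surest route remains the one the paper takes, namely to verify that (B1)--(B3) subsume the hypotheses of Theorem~3.1 of \cite{Mies2022seq_high-dim} and cite it.
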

\begin{proof}[Proof of Lemma \ref{high-GA}.]
    See proof of Theorem 3.1 in \cite{Mies2022seq_high-dim}.
\end{proof}

\begin{lem}
    Let $\mathbf{\Sigma}_1, \mathbf{\Sigma}_2 \in \mathbb{R}^{p \times p}$ be symmetric, positive semidefinite matrices, and let $v_i \in \mathbb{R}^p, \lambda_i \in \mathbb{R}, i=1, \ldots, p$, be the eigenvectors and eigenvalues of $\Delta=\mathbf{\Sigma}_2-\mathbf{\Sigma}_1$. Define the matrix $|\Delta|=\sum_{i=1}^d v_i v_i^T\left|\lambda_i\right|$. Consider a random vector $Y \sim \mathcal{N}\left(0, \mathbf{\Sigma}_1\right)$ defined on a sufficiently rich probability space. Then there exists a random vector $\eta \sim \mathcal{N}(0,|\Delta|)$ such that $Y+\eta \sim \mathcal{N}\left(0, \mathbf{\Sigma}_2\right)$.\label{lem:2-Wasserstein bound}
\end{lem}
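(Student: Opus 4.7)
The plan is to construct $\eta$ as an explicit linear combination of $Y$ and an independent auxiliary standard Gaussian vector $Z \sim \mathcal{N}(0, I_p)$ available on the enriched probability space, thereby making $(Y, \eta)$ jointly Gaussian. Decompose $\Delta$ via its spectral expansion as $\Delta = \Delta^+ - \Delta^-$ where $\Delta^+ := \sum_{\lambda_i > 0} \lambda_i v_i v_i^\top \succeq 0$ and $\Delta^- := \sum_{\lambda_i < 0} |\lambda_i| v_i v_i^\top \succeq 0$ have orthogonal ranges, so that $|\Delta| = \Delta^+ + \Delta^-$ and $\Sigma_2 = \Sigma_1 + \Delta^+ - \Delta^-$.

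Seeking $\eta = A Y + B Z$ with deterministic $A, B \in \mathbb{R}^{p \times p}$ and matching the two prescribed marginals, $A \Sigma_1 A^\top + B B^\top = |\Delta|$ and $(I+A)\Sigma_1(I+A)^\top + B B^\top = \Sigma_2$, subtraction yields the Lyapunov-type equation
\[
  A \Sigma_1 + \Sigma_1 A^\top \;=\; \Delta - |\Delta| \;=\; -2\Delta^-.
\]
In the nondegenerate case $\Sigma_1 \succ 0$ the symmetric candidate $A = -\Delta^- \Sigma_1^{-1}$ solves this by direct substitution (using the symmetry of $\Delta^-$), so the remaining task is to realise $B B^\top = |\Delta| - \Delta^- \Sigma_1^{-1} \Delta^-$ as a positive semidefinite matrix. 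The degenerate case $\Sigma_1 \not\succ 0$ is then handled by a standard perturbation argument: apply the nondegenerate construction to $\Sigma_1 + \varepsilon I$ for $\varepsilon > 0$ to produce couplings $(\eta_\varepsilon)_{\varepsilon > 0}$ of uniformly bounded joint Gaussian covariance with $Y$, then extract a weak subsequential limit via tightness in the Gaussian family.

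The main obstacle is the positive-semidefinite verification $|\Delta| - \Delta^- \Sigma_1^{-1} \Delta^- \succeq 0$. Switching to the whitened coordinates $M := \Sigma_1^{-1/2} \Delta^- \Sigma_1^{-1/2}$ and $N := \Sigma_1^{-1/2} \Delta^+ \Sigma_1^{-1/2}$ (both PSD), the desired inequality becomes $M + N \succeq M^2$, while the hypothesis $\Sigma_2 \succeq 0$ reads $I + N \succeq M$. The difficulty is that $\Sigma_1$ and $\Delta$ generally do not commute, so the spectral structure of $M$ is coupled to that of $\Sigma_1$ and the clean one-dimensional intuition (where the claim reduces to $|a| \geq a^2/c$ with $c = \Sigma_1$, $a = \Delta^-$) does not transfer directly; the plan is to combine the whitened constraint $M \preceq I + N$ with operator monotonicity on the PSD cone to extract the quadratic inequality. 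If that route does not close the gap, the fallback is to enlarge the ansatz for $A$ by an antisymmetric summand $A_0$ satisfying $A_0 \Sigma_1 + \Sigma_1 A_0^\top = 0$, which leaves the Lyapunov equation satisfied while freeing additional degrees of freedom that can be tuned to push $|\Delta| - A\Sigma_1 A^\top$ into the PSD cone, or equivalently to scan the affine family of admissible off-diagonal blocks $K$ in the joint covariance whose symmetric part is fixed at $-\Delta^-$.
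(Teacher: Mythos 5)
Your reduction is correct as far as it goes: the ansatz $\eta = AY + BZ$ with fresh $Z$ forces $A\Sigma_1 + \Sigma_1 A^\top = -2\Delta^-$, and the symmetric solution $A = -\Delta^-\Sigma_1^{-1}$ reduces everything to the PSD inequality $|\Delta| \succeq \Delta^-\Sigma_1^{-1}\Delta^-$. The gap is that this inequality is genuinely false, not merely hard to verify, and — worse — your fallback cannot close it because \emph{the lemma itself is false as stated}. Take
\[
\Sigma_1 = \begin{pmatrix} 2 & 0 \\ 0 & 1/2 \end{pmatrix}, \qquad
\Sigma_2 = \begin{pmatrix} 2 & 1 \\ 1 & 1/2 \end{pmatrix}, \qquad
\Delta = \Sigma_2 - \Sigma_1 = \begin{pmatrix} 0 & 1 \\ 1 & 0 \end{pmatrix},
\]
both PSD, with $\Delta$ having eigenvalues $\pm1$ so $|\Delta| = I_2$. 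Set $x = (1,-2)^\top$. Since $\Sigma_2 x = 0$, any coupling with $Y + \eta \sim \mathcal{N}(0,\Sigma_2)$ forces $x^\top(Y+\eta) = 0$ almost surely, hence $x^\top Y = -x^\top\eta$ a.s. But the marginals give $\operatorname{Var}(x^\top Y) = x^\top\Sigma_1 x = 4$ and $\operatorname{Var}(x^\top\eta) = x^\top|\Delta|x = 5$, so no such $\eta$ can exist — regardless of whether $(Y,\eta)$ is jointly Gaussian, and regardless of which skew-symmetric $A_0$ you add. (Equivalently, the unavoidable identity $\operatorname{Cov}(x^\top Y, x^\top\eta) = -x^\top\Delta^- x$ combined with Cauchy–Schwarz demands $(x^\top\Delta^- x)^2 \le (x^\top\Sigma_1 x)(x^\top|\Delta|x)$, but here $4.5^2 = 20.25 > 20$.) This also gives you a concrete witness that $|\Delta| - \Delta^-\Sigma_1^{-1}\Delta^- \not\succeq 0$: in the whitened coordinates your inequality $M + N \succeq M^2$ fails even though $M \preceq I + N$ holds with equality along $x$, and no choice of the free skew block can rescue a PSD constraint whose necessary Cauchy–Schwarz consequence is already violated.

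For comparison, the paper's ``proof'' of this statement is a bare citation to Lemma~6.2 of Mies and Steland, so there is no argument to match yours against; but the statement reproduced here is an over-strengthening of what that reference (and what the downstream Lemma~\ref{lem:gaussian cov structure}) actually uses. What the argument needs is only the existence of a Gaussian $\eta$ jointly defined with $Y$ such that $Y+\eta \sim \mathcal{N}(0,\Sigma_2)$ and $\mathbb{E}|\eta|^2 \le \operatorname{tr}(|\Delta|) = \|\Sigma_1-\Sigma_2\|_{\mathrm{tr}}$; it does \emph{not} need $\operatorname{Cov}(\eta) = |\Delta|$. That weaker claim is true and has a clean proof: let $\eta = T(Y) - Y$ for the $W_2$-optimal transport map $T$ between $\mathcal{N}(0,\Sigma_1)$ and $\mathcal{N}(0,\Sigma_2)$, so $\mathbb{E}|\eta|^2 = W_2^2(\mathcal{N}(0,\Sigma_1),\mathcal{N}(0,\Sigma_2)) = \operatorname{tr}(\Sigma_1)+\operatorname{tr}(\Sigma_2) - 2\operatorname{tr}((\Sigma_1^{1/2}\Sigma_2\Sigma_1^{1/2})^{1/2}) \le \|\Sigma_1^{1/2}-\Sigma_2^{1/2}\|_F^2 \le \|\Sigma_1-\Sigma_2\|_{\mathrm{tr}}$, the last step being the Powers--Størmer inequality. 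In the counterexample above this yields $\mathbb{E}|\eta|^2 = 5 - 2\sqrt{4.25} \approx 0.877 \le 2 = \operatorname{tr}(|\Delta|)$, consistent with a valid coupling whose covariance is \emph{not} $|\Delta|$. If you want a constructive alternative in the spirit of your ansatz, you should be aiming to prove the weaker bound rather than the exact covariance identity.
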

\begin{proof}[Proof of Lemma \ref{lem:2-Wasserstein bound}.]
    See proof of Lemma 6.2 in \cite{Mies2022seq_high-dim}. 
\end{proof}

\begin{lem}
\label{lem:gaussian cov structure}
Let $\mathbf{\Sigma}_t, \tilde{\mathbf{\Sigma}}_t \in \mathbb{R}^{p \times p}$ be symmetric, positive semidefinite matrices, for $t=$ $1, \ldots, n$, and consider independent random vectors $Y_t \sim \mathcal{N}\left(0, \mathbf{\Sigma}_t\right)$. Denote
$$
\varphi=:\max _{k=1, \ldots, n}\left\|\sum_{t=1}^k \mathbf{\Sigma}_t-\sum_{t=1}^k \tilde{\mathbf{\Sigma}}_t\right\|_{t r},\quad \Phi=:\max _{t=1, \ldots, n}\left\|\mathbf{\Sigma}_t\right\|_{t r}.
$$
On a potentially larger probability space, there exist independent random vectors $\tilde{Y}_t \sim \mathcal{N}\left(0, \tilde{\mathbf{\Sigma}}_t\right)$ , such that
$$
\mathbb{E} \max _{k=1, \ldots, n}\left|\sum_{t=1}^k Y_t-\sum_{t=1}^k \tilde{Y}_t\right|^2 \leq C \log (n)(\sqrt{n\varphi\Phi}+\varphi+\Phi)
$$
for universal constant $C>0$.
\end{lem}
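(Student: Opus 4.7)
The plan is to construct the coupling via a recursive, block-level application of Lemma~\ref{lem:2-Wasserstein bound}, and then control $\max_k|S_k-\tilde S_k|^2$ through a within-block versus between-block decomposition. Fix a block size $L$ (to be optimized), partition $\{1,\ldots,n\}$ into consecutive blocks $B_1,\ldots,B_{\lceil n/L\rceil}$ of length at most $L$, and let $T_l=\sum_{t\in B_l}Y_t\sim N(0,\Sigma^{(l)})$ with $\Sigma^{(l)}=\sum_{t\in B_l}\Sigma_t$, and analogously $\tilde T_l\sim N(0,\tilde\Sigma^{(l)})$. On an enlarged probability space I couple the dyadic tree of cumulative block sums by recursively applying Lemma~\ref{lem:2-Wasserstein bound} to pairs of the form $(\sum_{l'\le l}T_{l'},\sum_{l'\le l}\tilde T_{l'})$ rather than to individual block increments. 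Because $\|\,|\Delta|\,\|_{\mathrm{tr}}=\|\Delta\|_{\mathrm{tr}}$ for symmetric $\Delta$, and because every such partial sum equals $S_{lL}$ on one side and $\tilde S_{lL}$ on the other, this forces the coupling variance at every dyadic node to be at most $\|\sum_{t\le lL}(\Sigma_t-\tilde\Sigma_t)\|_{\mathrm{tr}}\le\varphi$, genuinely exploiting the cancellations captured by $\varphi$.

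Conditional on the coupled block sums $(T_l,\tilde T_l)$, the within-block vectors $(Y_t)_{t\in B_l}$ and $(\tilde Y_t)_{t\in B_l}$ are each a Gaussian bridge pinned at their respective block sum; I couple them via a second application of Lemma~\ref{lem:2-Wasserstein bound} at the conditional level, independently across blocks using fresh auxiliary randomness, producing the required marginals $\tilde Y_t\sim N(0,\tilde\Sigma_t)$ that are mutually independent in $t$. For any $k=lL+r$ I then decompose $S_k-\tilde S_k=R_l^{\mathrm b}+\Delta_k^{\mathrm w}$, where $R_l^{\mathrm b}:=\sum_{l'\le l}(T_{l'}-\tilde T_{l'})$ has trace-norm covariance $\le\varphi$ by the first step and $\Delta_k^{\mathrm w}$ is the within-block residual, a Gaussian of trace-norm covariance $O(L\Phi)$. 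The standard Gaussian maximum inequality then gives $\mathbb{E}\max_l|R_l^{\mathrm b}|^2\lesssim\log(n/L)\,\varphi$ and $\mathbb{E}\max_k|\Delta_k^{\mathrm w}|^2\lesssim\log(n)\,L\Phi$; adding these and balancing with $L\asymp\sqrt{n\varphi/\Phi}$ in the interior (and $L=n$ at the extreme, which produces the $\Phi$ term) yields the target bound $C\log(n)(\sqrt{n\varphi\Phi}+\varphi+\Phi)$.

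The main obstacle is the first step: executing the recursive Wasserstein construction so that the cumulative block discrepancy has trace-norm covariance bounded by $\varphi$ at every dyadic node, rather than the much cruder $\sum_l\|\Sigma^{(l)}-\tilde\Sigma^{(l)}\|_{\mathrm{tr}}$ that a naive block-by-block application of Lemma~\ref{lem:2-Wasserstein bound} would produce. Achieving this requires coupling partial sums (the quantities $\varphi$ actually controls) rather than block increments, and then checking at each level of the dyadic refinement that the conditional Gaussian laws of the two partial-sum processes remain jointly consistent with the coupling already fixed at coarser scales. A secondary technical point is that the within-block conditional Gaussian coupling must draw on fresh, independent auxiliary randomness across blocks so as to preserve the independence of the $\tilde Y_t$ across $t$ while simultaneously matching the previously constructed inter-block coupling.
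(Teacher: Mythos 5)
Your proposal deviates from the paper's proof at exactly the point you flag as ``the main obstacle,'' and that obstacle is not resolved; as written, the argument has a genuine gap. The paper's proof does \emph{not} attempt to couple cumulative block partial sums across a dyadic tree. Instead it applies Lemma~\ref{lem:2-Wasserstein bound} once to each block sum $\xi_l$ independently, giving $\tilde\xi_l=\xi_l+\zeta_l$ with $\zeta_l\sim N(0,|\Delta_l|)$, then uses Doob's maximal inequality for the martingale $\sum_{l\le k}\zeta_l$ to bound the inter-block term by $\sum_l\|\Delta_l\|_{\mathrm{tr}}$. The cancellations encoded by $\varphi$ are then recovered \emph{deterministically}: telescoping the cumulative sums gives $\|\Delta_l\|_{\mathrm{tr}}\le\|\sum_{t\le lB}(\mathbf\Sigma_t-\tilde{\mathbf\Sigma}_t)\|_{\mathrm{tr}}+\|\sum_{t\le(l-1)B}(\mathbf\Sigma_t-\tilde{\mathbf\Sigma}_t)\|_{\mathrm{tr}}\le2\varphi$, so $\sum_l\|\Delta_l\|_{\mathrm{tr}}\le 2M\varphi$. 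Contrary to your characterization, the ``naive block-by-block'' coupling together with this telescoping bound \emph{does} exploit the cancellations captured by $\varphi$; what it cannot avoid is the factor $M=n/B$, and that is precisely what produces the $\sqrt{n\varphi\Phi}$ term after balancing $B$.

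The gap in your version is the unsupported claim that the recursive Wasserstein construction ``forces the coupling variance at every dyadic node to be at most $\varphi$.'' Lemma~\ref{lem:2-Wasserstein bound} produces one pairwise coupling of two Gaussian vectors; applying it to $(S_{lL},\tilde S_{lL})$ for each $l$ does not yield a single coherent joint law, because those pairs overlap and because the target process $(\tilde S_{lL})_l$ must have the specific covariance structure of a random walk with independent increments $\tilde T_l\sim N(0,\tilde{\mathbf\Sigma}^{(l)})$. The standard dyadic/KMT route you allude to --- couple the terminal sums, then recursively couple midpoints conditionally --- is a genuinely delicate construction: when you condition at each level, the discrepancy in the two conditional means involves not only the already-controlled difference $S_n-\tilde S_n$ but also the term $(\mathbf P-\tilde{\mathbf P})S_n$ coming from the \emph{different} projection matrices of the two walks, and showing that all these contributions over $\log_2 n$ levels stay of order $\varphi$ (and that the within-block randomness can still be filled in independently with the correct marginals) is the heart of the matter. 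You explicitly identify this as the obstacle but do not address it, so the claimed bound $\mathrm{tr}\,\mathrm{Cov}(S_{lL}-\tilde S_{lL})\le\varphi$ is not established. There is also an internal inconsistency that signals the trouble: if that claim held, $\mathbb{E}\max_l|R^{\mathrm b}_l|^2\lesssim\log(n)\varphi$ and $\mathbb{E}\max_k|\Delta^{\mathrm w}_k|^2\lesssim\log(n)L\Phi$ would be minimized at $L=1$, yielding the much stronger bound $C\log(n)(\varphi+\Phi)$ with no $\sqrt{n\varphi\Phi}$ term at all --- so ``balancing $L\asymp\sqrt{n\varphi/\Phi}$'' is not needed by your own logic, which suggests the inter-block estimate you posited is not actually what your construction delivers.

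The within-block residual decomposition and the Gaussian maximal inequality step in your write-up are fine. To repair the argument, either prove the recursive coupling rigorously (which would be a stronger result than the lemma states and is a nontrivial project), or follow the paper: apply Lemma~\ref{lem:2-Wasserstein bound} block-by-block, bound $\mathbb{E}\max_k|\sum_{l\le k}(\xi_l-\tilde\xi_l)|^2$ by Doob plus the telescoping estimate $\|\Delta_l\|_{\mathrm{tr}}\le 2\varphi$, bound the within-block maxima using the sub-exponential tail of $|\zeta_s|^2$ with $\max_l\|\tilde S_l\|_{\mathrm{tr}}\le B\Phi+\varphi$, and then choose $B\asymp 1\vee\lceil\sqrt{n\varphi/\Phi}\rceil$.
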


\begin{proof}[Proof of Lemma \ref{lem:gaussian cov structure}.]
The proof of Lemma \ref{lem:gaussian cov structure} is similar to the proof of Proposition 5.2 in \cite{Mies2022seq_high-dim} which only considers positive definite matrices. We find their proof can also be applied to positive semidefinite matrices in this paper. Therefore, we give a similar proof from \cite{Mies2022seq_high-dim} with slight modification for positive semidefinite matrices as follows.

Let $B \in\{1, \ldots, n\}$, and $M=\lceil n / B\rceil$, to be specified later. Denote $\xi_l=\sum_{t=(l-1) B+1}^{(l B) \wedge n} Y_t$, and $S_l=\sum_{t=(l-1) B+1}^{(l B) \wedge n} \mathbf{\Sigma}_t$, and $\tilde S_l=\sum_{t=(l-1) B+1}^{l B \wedge n} \tilde{\mathbf{\Sigma}}_t$, for $l=1, \ldots, M$. Then $\xi_l$ are independent Gaussian random vectors, $\xi_l \sim \mathcal{N}\left(0, S_l\right)$. Denoting $\Delta_l=S_l-\tilde{S_l}$, and $\left|\Delta_l\right|$ as in Lemma \ref{lem:2-Wasserstein bound}, we find Gaussian random vectors $\zeta_l \sim \mathcal{N}\left(0,\left|\Delta_l\right|\right)$ such that $\tilde \xi_l=\xi_l+\zeta_l \sim \mathcal{N}\left(0, \tilde S_l\right)$. We may also split $\tilde \zeta_l$ into independent terms, i.e. we find independent Gaussian random vectors $\tilde Y_t \sim \mathcal{N}\left(0, \tilde{\mathbf{\Sigma}}_t\right)$ such that $\tilde\xi_l=\sum_{t=(l-1) B+1}^{(l B) \wedge n} \tilde Y_t$. This construction yields that the $(\tilde Y_t)_{t=1}^n$ and $\left(Y_t\right)_{t=1}^n$ are sequences of independent random vectors, while $\tilde Y_t$ and $Y_{t+1}$ are not necessarily independent. We also introduce the notation $\zeta_s=\sum_{t=(l-1) B+1}^s Y_t$ for $s=(l-1) B+1, \ldots,(l B) \wedge n$, and $\tilde \zeta_s$ analogously. Then
\begin{align*}
& \mathbb{E} \max _{k=1, \ldots, n}\left|\sum_{t=1}^k\left(Y_t-\tilde Y_t\right)\right|^2 \\
& \leq \mathbb{E} \max _{k=1, \ldots, M}\left|\sum_{l=1}^k\left(\xi_l-\tilde \xi_l\right)\right|^2+\mathbb{E} \max _{l=1, \ldots, M} \max _{s=(l-1) B+1, \ldots,(l B) \wedge n}\left|\sum_{t=(l-1) B+1}^s\left(Y_t-\tilde Y_t\right)\right|^2 \\
& \leq \sum_{l=1}^M\left\|\Delta_l\right\|_{\mathrm{tr}}+2 \mathbb{E} \max _{s=1, \ldots, n}\left|\zeta_s\right|^2+2 \mathbb{E} \max _{s=1, \ldots, n}\left|\tilde \zeta_s\right|^2 \\
& \leq \sum_{l=1}^M\left[\left\|\sum_{t=1}^{(l B) \wedge n} \mathbf{\Sigma}_t-\tilde{\mathbf{\Sigma}}_t\right\|_{\operatorname{tr}}+\left\|\sum_{t=1}^{(l-1) B} \mathbf{\Sigma}_t-\tilde{\mathbf{\Sigma}}_t\right\|_{\operatorname{tr}}\right]+2 \mathbb{E} \max _{s=1, \ldots, n}\left|\zeta_s\right|^2+2 \mathbb{E} \max _{s=1, \ldots, n}\left|\tilde \zeta_s\right|^2 \\
& \leq 2 M \max _{k=1, \ldots, n}\left\|\sum_{t=1}^k\left(\mathbf{\Sigma}_t-\tilde{\mathbf{\Sigma}}_t\right)\right\|_{\operatorname{tr}}+2 \mathbb{E} \max _{s=1, \ldots, n}\left|\zeta_s\right|^2+2 \mathbb{E} \max _{s=1, \ldots, n}\left|\tilde \zeta_s\right|^2 .
\end{align*}
Since the random vectors $\zeta_s$ are Gaussian, the random variable $\left\|\zeta_s\right\|^2$ is sub-exponential with sub-exponential norm bounded by $C \operatorname{tr}\left(\operatorname{Cov}\left(\zeta_s\right)\right) \leq C \operatorname{tr}\left(S_l\right)$, for some universal factor $C$, and for $s=(l-1) B+1, \ldots,(l B) \wedge n$. To see this, denote the sub-exponential norm by $\|\cdot\|_{\psi_1}$. Then
$$
\|| \zeta_s\left|^2\right\|_{\psi_1}=\left\|\sum_{j=1}^d \sigma_j^2 \delta_j^2\right\|_{\psi_1} \leq \sum_{j=1}^d \sigma_j^2\left\|\delta_j^2\right\|_{\psi_1},
$$
where $\sigma_j^2$ are the eigenvalues of $\operatorname{Cov}\left(\zeta_s\right)$, and $\delta_j \sim \mathcal{N}(0,1)$. A consequence of this subexponential bound is that, for a potentially larger $C$,
$$
\mathbb{E} \max _{s=1 \ldots, n}\left|\zeta_s\right|^2 \leq C \log (n) \max _l\left\|S_l\right\|_{\mathrm{tr}} \lesssim \log (n) B \Phi .
$$
Analogously,
$$
\begin{aligned}
\mathbb{E} \max _{s=1, \ldots, n}\left|\tilde\zeta_s\right|^2 \leq C \log (n) \max _l\left\|\tilde S_l\right\|_{\operatorname{tr}} & \leq C \log (n) \max _l\left[\left\|S_l\right\|_{\mathrm{tr}}+\varphi\right] \\
& \lesssim \log (n)(B \Phi+\varphi),
\end{aligned}
$$
such that
$$
\begin{aligned}
\mathbb{E} \max _{k=1, \ldots, n}\left|\sum_{t=1}^k\left(Y_t-\tilde Y_t\right)\right|^2 & \lesssim 2 M \varphi+ \log (n)[B \Phi+\varphi] \\
& \lesssim \log (n)\left(\frac{n}{B} \varphi+B \Phi\right) .
\end{aligned}
$$
If $\varphi\leq n\Phi$, we have $\sqrt{n \varphi / \Phi}\leq n$ so we can choose $B=1 \vee\lceil\sqrt{n \varphi / \Phi}\rceil$, which yields
$$
\mathbb{E} \max _{k=1, \ldots, n}\left|\sum_{t=1}^k\left(Y_t-\tilde Y_t\right)\right|^2\lesssim\log(n)(\sqrt{n\varphi\Phi}+\Phi).
$$
If $\varphi> n\Phi$, $n\varphi/B+B \Phi$ is minimized by setting $B=n$, which yields
$$
\mathbb{E} \max _{k=1, \ldots, n}\left|\sum_{t=1}^k\left(Y_t-\tilde Y_t\right)\right|^2\lesssim\log(n)\varphi.
$$
To sum up, for some universal constant $C>0$,
\begin{equation*}
    \mathbb{E} \max _{k=1, \ldots, n}\left|\sum_{t=1}^k\left(Y_t-\tilde Y_t\right)\right|^2\leq C\log(n)(\sqrt{n\varphi\Phi}+\varphi+\Phi).
\end{equation*}
\end{proof}

\subsection{Lemmas of monotone rearrangement}
\begin{lem}
\label{lem:(m_I.inv-m_N.inv)'} Assume assumptions in Lemma \ref{m_I} hold, then
    \begin{equation}
        \label{eq:rate of m_I.inv-m_N.inv}
        \max_{1\leq k\leq p}\sup_{t\in\mathbb{R}}|(\hat{m}_{I,k}^{-1}-m_{N,k}^{-1})(t)|=O_p\left(\TPR{R_n/M}\right)
    \end{equation}
    \begin{equation}
    \label{eq:rate of (m_I.inv-m_N.inv)'}
        \max_{1\leq k\leq p}\sup_{t\in\mathbb{R}}|(\hat{m}_{I,k}^{-1}-m_{N,k}^{-1})^{\prime}(t)|=O_p\left(\TPR{\frac{R_n}{Mh_d}}\right)
    \end{equation}
    \begin{equation}
    \label{eq:rate of (m_I.inv-m_N.inv)''}
        \max_{1\leq k\leq p}\sup_{t\in\mathbb{R}}|(\hat{m}_{I,k}^{-1}-m_{N,k}^{-1})^{\prime\prime}(t)|=O_p\left(\TPR{\frac{R_n}{Mh_d^2}}\right)
    \end{equation}
\end{lem}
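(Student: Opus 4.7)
My plan is to obtain all three bounds from a single Taylor expansion of $K_d$ around $m_k(i/N)$, and then to control the resulting sums using (i) the uniform local-linear rate $\max_{k}\sup_{t\in[0,1]}|\tilde m_k(t)-m_k(t)|=O_p(R_n)$ from Lemma~\ref{lem:High-dim ll}, (ii) the condition $R_n/h_d=o(1)$ from \hyperref[(C2)]{(C2)}, and (iii) the monotone lower bound $m_k'\ge M$ from \hyperref[(A2)]{(A2)}.

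For \eqref{eq:rate of (m_I.inv-m_N.inv)'}, differentiating \eqref{high-dim monotone estimator} gives
$(\hat m_{I,k}^{-1})'(s)-(m_{N,k}^{-1})'(s)=\tfrac{1}{Nh_{d,k}}\sum_{i=1}^N\bigl[K_d(\tfrac{\tilde m_k(i/N)-s}{h_{d,k}})-K_d(\tfrac{m_k(i/N)-s}{h_{d,k}})\bigr]$, and a first-order Taylor expansion with intermediate points $\eta_{i,k}$ yields $\tfrac{1}{Nh_{d,k}^2}\sum_i K_d'(\tfrac{\eta_{i,k}-s}{h_{d,k}})(\tilde m_k(i/N)-m_k(i/N))$. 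The key counting lemma is that the set $\{i:|\eta_{i,k}-s|\le h_{d,k}\}$ has cardinality $O(Nh_{d,k}/M)$, because $|\eta_{i,k}-m_k(i/N)|\le R_n=o(h_d)$ and $m_k$ has slope at least $M$, so this set is contained in the preimage of an interval of length $2h_{d,k}+O(R_n)$ under $m_k$. Combining with $K_d'$ bounded and the uniform rate on $\tilde m_k-m_k$ produces the $O_p(R_n/(Mh_d))$ bound.

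For \eqref{eq:rate of (m_I.inv-m_N.inv)''}, I would differentiate once more and Taylor expand $K_d'$, leading to a sum $\tfrac{1}{Nh_{d,k}^3}\sum_i K_d''(\cdot)(\tilde m_k(i/N)-m_k(i/N))$, and then apply the same counting bound (now with $K_d''$ bounded by \hyperref[(C1)]{(C1)}) to obtain $O_p(R_n/(Mh_d^2))$. For \eqref{eq:rate of m_I.inv-m_N.inv}, I would integrate the first-order Taylor expansion from \eqref{e24} and use the fact that $\int_{-\infty}^s \tfrac{1}{h_{d,k}}K_d'(\tfrac{m_k(i/N)-u}{h_{d,k}})\,du=-K_d(\tfrac{m_k(i/N)-s}{h_{d,k}})$, reducing the leading term to $\tfrac{1}{Nh_{d,k}}\sum_i K_d(\tfrac{m_k(i/N)-s}{h_{d,k}})(\tilde m_k(i/N)-m_k(i/N))$; the sum is an approximating Riemann sum to the density of $m_k(U)$ for $U\sim\mathrm{Unif}[0,1]$, which is bounded by $1/M$. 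The remainder term of order $R_n^2/h_d$ is negligible under $R_n/h_d=o(1)$, giving $O_p(R_n/M)$.

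The main obstacle I anticipate is ensuring uniformity in both $k\in\{1,\dots,p\}$ and $s\in\mathbb R$: the counting argument for $\#\{i:|\eta_{i,k}-s|\le h_{d,k}\}$ must be uniform, and for $s$ outside the range of $m_k$ the sums simply vanish because $K_d$ has compact support, so uniformity over all $s\in\mathbb R$ reduces to uniformity over $s\in[m_k(0)-h_{d,k},m_k(1)+h_{d,k}]$. The uniformity in $k$ is then automatic from the uniform bound on $\tilde m_k-m_k$ and the assumed universal constants $M$ in \hyperref[(A2)]{(A2)} and the bandwidths satisfying $\min_k h_{d,k}\asymp h_d\asymp\max_k h_{d,k}$.
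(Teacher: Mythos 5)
Your proposal is correct and takes essentially the same approach as the paper: a Taylor expansion of $K_d$ around $m_k(i/N)$ reduces each quantity to a kernel-weighted sum of $\tilde m_k(i/N)-m_k(i/N)$, and the cardinality bound $\#\{i:|\eta_{i,k}-s|\le h_{d,k}\}=O_p(Nh_d/M)$ (from the compact support of $K_d$, $m_k'\ge M$, and $R_n/h_d=o(1)$) together with the boundedness of $K_d$, $K_d'$, $K_d''$ gives the three rates. This is exactly the counting argument the paper uses (its set $\mathfrak{K}(\{\eta_{i,N,k}\},t)$ and Cases 1--4 handle the same range considerations you mention), and your observation that uniformity in $s\in\mathbb R$ reduces to a compact interval because the sums vanish outside is the content of the paper's Case 2.
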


\begin{proof}[Proof of Lemma \ref{lem:(m_I.inv-m_N.inv)'}]
By Lemma \ref{lem:High-dim ll}, the jackknife corrected local linear estimator follows 
$$
\max_{1\leq k\leq p}\sup_{t\in[0,1]}|\tilde{m}_k(t)-m_k(t)|=O_p(R_n),
$$
where $R_n=h_r^2+\log^{3}n/\sqrt{nh_r}$. 

Recall the definitions of $\hat m_I^{-1}$, $m_N^{-1}$ in \eqref{eq:m_I} and \eqref{eq:m_N} respectively, for any $t\in\mathbb{R}$, by Taylor's expansion
\begin{align}
    \hat m_{I,k}^{-1}(t)-m_{N,k}^{-1}(t) &=\frac{1}{N h_{d,k}^2} \sum_{i=1}^{N}(\tilde{m}_k(i / N)-m_k(i / N)) \int_{-\infty}^t K_{d}'\left(\frac{m_k(i/N)-u}{h_{d,k}}\right)\mathrm{d}u\notag\\
   &\quad+\frac{1}{2Nh_{d,k}^3} \sum_{i=1}^{N}(\tilde{m}_k(i / N)-m_k(i / N))^2 \int_{-\infty}^tK_{d}''\left(\frac{\eta_{i,N,k}-u}{h_{d,k}}\right)\mathrm{d}u, \notag\\
   &=\frac{1}{N h_{d,k}} \sum_{i=1}^{N}(\tilde{m}_k(i / N)-m_k(i / N)) K_{d}\left(\frac{m_k(i/N)-t}{h_{d,k}}\right)\mathrm{d}u\notag\\
   &\quad+\frac{1}{2Nh_{d,k}^2} \sum_{i=1}^{N}(\tilde{m}_k(i / N)-m_k(i / N))^2 K_{d}'\left(\frac{\eta_{i,N,k}-t}{h_{d,k}}\right)\mathrm{d}u, \notag
\end{align}
for some $\eta_{i,N,k}$ between $\tilde m_k(i/N)$ and $m_k(i/N)$.
For any sequence 
$$
\{\eta_{i,N,k}: \eta_{i,N,k}\text{ between $\tilde m_k(i/N)$ and $m_k(i/N)$},i=1,\dots,N\},
$$
denote index set
\begin{equation}
    \mathfrak{K}(\{\eta_{i,N,k}\}_{i=1}^N,t)=:\{i:|\eta_{i,N,k}-t|\leq h_{d,k}\}.
    \label{eq:number of Kd'}
\end{equation}
Note that $|\eta_{i,N,k}-m_k(i/N)|\leq |\hat m_k(i/N)-m_k(i/N)|$, we have
$$
|\eta_{i,N,k}-t|\geq |m_k(i/N)-t|-|\eta_{i,N,k}-m_k(i/N)|\geq |m_k(i/N)-t|-\sup_{t\in[0,1]}|\tilde{m}_k(t)-m_k(t)|,
$$
which means
$$
\#\mathfrak{K}(\{\eta_{i,N,k}\}_{i=1}^N,t)\leq \#\left\{i:|m_k(i/N)-t|\leq h_{d,k}+\sup_{t\in[0,1]}|\tilde{m}_k(t)-m_k(t)|\right\}.
$$
\begin{description}
    \item[Case 1.] If $t\in[m_k(0),m_k(1)]$, then there exists unique $t^*\in[0,1]$ s.t. $m_k(t^*)=t$ and
\begin{align}
    \sup_{t\in[m_k(0),m_k(1)]}\#\mathfrak{K}(\{\eta_{i,N,k}\}_{i=1}^N,t)&\leq \sup_{t^*[0,1]}\#\left\{i:|m_k(i/N)-m_k(t^*)|\leq h_{d,k}+\sup_{t\in[0,1]}|\tilde{m}_k(t)-m_k(t)|\right\},\notag\\
    &\leq \sup_{t^*\in[0,1]}\#\left\{i:|i/N-t^*|\leq \frac{h_{d,k}+\sup_{t\in[0,1]}|\tilde{m}_k(t)-m_k(t)|}{\inf_{t\in[0,1]}m_k'(t) } \right\},\notag\\
    &=O_p(\TPR{Nh_d/M})\label{eq:number bound},
\end{align}
where the least line of \eqref{eq:number bound} is obtained by \eqref{eq:m_N-m}, $\min_{1\leq k\leq p}h_{d,k}\asymp h_d\asymp \min_{1\leq k\leq p}h_{d,k}$, $\inf_{1\leq k\leq p}\inf_{t\in[0,1]} m_k'(t)\geq M$ from Assumption \hyperref[(A2)]{(A2)} and $R_n/h_d=o(1)$ from Assumption \hyperref[(C2)]{(C2)}.
    \item[Case 2.] If $t\notin[m_k(0)-h_d,m_k(1)+h_{d,k}]$, $\#\mathfrak{K}(\{\eta_{i,N,k}\}_{i=1}^N,t)=0$ since $|m_k(i/N)-t|>h_{d,k}$ for any $i=1,\dots,N$.
    \item[Case 3.] If $t\in[m_k(0)-h_{d,k},m_k(0))$, then $|t-m_k(0)|\leq h_{d,k}$. Thus
    $$
    \begin{aligned}
        |\eta_{i,N,k}-t|&\geq |m_k(i/N)-t|-\sup_{t\in[0,1]}|\tilde{m}_k(t)-m_k(t)|\\
        &\geq |m_k(i/N)-m_k(0)|-|t-m_k(0)|-\sup_{t\in[0,1]}|\tilde{m}_k(t)-m_k(t)|\\
        &\geq |m_k(i/N)-m_k(0)|-h_d-\sup_{t\in[0,1]}|\tilde{m}_k(t)-m_k(t)|
    \end{aligned}
    $$
    then
    $$
    \sup_{t\in[m_k(0)-h_{d,k},m_k(0))}\#\mathfrak{K}(\{\eta_{i,N,k}\}_{i=1}^N,t)\leq \#\left\{i:|m_k(i/N)-m_k(0)|\leq 2h_{d,k}+\sup_{t\in[0,1]}|\tilde{m}_k(t)-m_k(t)|\right\},
    $$
    which yields $\sup_{t\in[m_k(0)-h_{d,k},m_k(0))}\#\mathfrak{K}(\{\eta_{i,N,k}\}_{i=1}^N,t)=O_p(Nh_d/M)$ by similar arguments in \eqref{eq:number bound}.
    \item[Case 4.] If $t\in(m_k(1),m_k(1)+h_{d,k}]$, then $|m_k(1)-t|\leq h_{d,k}$. By similar arguments in $t\in[m_k(0)-h_{d,k},m_k(0))$, it can also yield $\sup_{t\in(m_k(1),m_k(1)+h_{d,k}]}\#\mathfrak{K}(\{\eta_{i,N,k}\}_{i=1}^N,t)=O_p(\TPR{Nh_d/M})$.
\end{description}

Summarizing Case 1-4, note that the big $O$ for $k$ can be uniformly bounded under the uniformly bounded Lipschitz condition \hyperref[(A1)]{(A1)} and Assumption \hyperref[(A2)]{(A2)}, we have
\begin{equation}
    \label{eq:number rate of Kd'}
    \max_{1\leq k\leq p}\sup_{t\in\mathbb{R}}\#\mathfrak{K}(\{\eta_{i,N,k}\}_{i=1}^N,t)=O_p(\TPR{Nh_d/M}).
\end{equation}
Note that $K_d$ and $K_d'$ are bounded, by Lemma \ref{lem:High-dim ll} and \eqref{eq:number rate of Kd'}
\begin{align}
    &\max_{1\leq k\leq p}\sup_{t\in\mathbb{R}}|(\hat{m}_{I,k}^{-1}-m_{N,k}^{-1})(t)|\notag\\
    &=\max_{1\leq k\leq p}\sup_{t\in[0,1]}|\tilde{m}_k(t)-m_k(t)|\cdot\max_{1\leq k\leq p}\sup_{t\in\mathbb{R}}\left|\frac{1}{Nh_{d,k}}\sum_{i=1}^N K_d\left(\frac{m_k(i/N)-t }{h_{d,k}}\right)\right|\notag\\
    &+ \max_{1\leq k\leq p}\sup_{t\in[0,1]}|\tilde{m}_k(t)-m_k(t)|^2\cdot\max_{1\leq k\leq p}\sup_{t\in\mathbb{R}}\left|\frac{1}{2Nh_{d,k}^2}\sum_{i\in\mathfrak{K}(\{\eta_{i,N,k}\}_{i=1}^N,t)} K_d'\left(\frac{\eta_{i,N,k}-t }{h_{d,k}}\right)\right|,\notag\\
    &=O_p\left(\TPR{R_n/M}\right), \label{eq:deduction of m_I.inv-m_N.inv}
\end{align}
then \eqref{eq:rate of m_I.inv-m_N.inv} holds.
Moreover, by Lagrange's mean value again, we have for some $\eta_{i,N,k}$ between $\tilde{m}_k(i/N)$ and $m_k(i/N)$,
\begin{align*}
    (\hat m_{I,k}^{-1}-m_{N,k}^{-1})^\prime(t)&=\frac{1}{Nh_{d,k}}\sum_{i=1}^N\left\{K_d\left(\frac{\hat m_k(i/N)-t}{h_{d,k}}\right)-K_d\left(\frac{ m_k(i/N)-t}{h_{d,k}}\right)\right\},\\
    &=\frac{1}{Nh_{d,k}^2}\sum_{i=1}^N K_d'\left(\frac{\eta_{i,N,k}-t }{h_{d,k}}\right)\left\{\hat m_k(i/N)-m_k(i/N)\right\}.
\end{align*}
By similar arguments in \eqref{eq:deduction of m_I.inv-m_N.inv},
\begin{align}
    &\max_{1\leq k\leq p}\sup_{t\in\mathbb{R}}|(\hat{m}_{I,k}^{-1}-m_{N,k}^{-1})^{\prime}(t)|\notag\\
    &\leq\max_{1\leq k\leq p}\sup_{t\in[0,1]}|\tilde{m}_k(t)-m_k(t)|\cdot\max_{1\leq k\leq p}\sup_{t\in\mathbb{R}}\left|\frac{1}{Nh_{d,k}^2}\sum_{i\in\mathfrak{K}(\{\eta_{i,N,k}\}_{i=1}^N,t)} K_d'\left(\frac{\eta_{i,N,k}-t }{h_{d,k}}\right)\right|,\notag\\
    &=O_p\left(\TPR{\frac{R_n}{Mh_d}}\right),\notag
\end{align}
thus \eqref{eq:rate of (m_I.inv-m_N.inv)'} holds.
Similarly, for some $\eta_{i,N,k}$ between $\tilde{m}_k(i/N)$ and $m_k(i/N)$ 
\begin{align*}
    (\hat m_{I,k}^{-1}-m_{N,k}^{-1})^{\prime\prime}(t)&=\frac{1}{Nh_{d,k}^2}\sum_{i=1}^N\left\{K_d'\left(\frac{\hat m_k(i/N)-t}{h_{d,k}}\right)-K_d'\left(\frac{ m_k(i/N)-t}{h_{d,k}}\right)\right\},\\
    &=\frac{1}{Nh_{d,k}^3}\sum_{i=1}^N K_d''\left(\frac{\eta_{i,N,k}-t }{h_{d,k}}\right)\left\{\hat m_k(i/N)-m_k(i/N)\right\},\\
    &=\frac{1}{Nh_{d,k}^3}\sum_{i\in\mathfrak{K}(\{\eta_{i,N,k}\}_{i=1}^N,t)} K_d''\left(\frac{\eta_{i,N,k}-t }{h_d}\right)\left\{\hat m_k(i/N)-m_k(i/N)\right\}.
\end{align*} 
Then
\begin{align}
    &\max_{1\leq k\leq p}\sup_{t\in\mathbb{R}}|(\hat{m}_{I,k}^{-1}-m_{N,k}^{-1})^{\prime\prime}(t)|\notag\\
    &\leq\max_{1\leq k\leq p}\sup_{t\in[0,1]}|\tilde{m}_k(t)-m_k(t)|\cdot\max_{1\leq k\leq p}\sup_{t\in\mathbb{R}}\left|\frac{1}{Nh_{d,k}^3}\sum_{i\in\mathfrak{K}(\{\eta_{i,N,k}\}_{i=1}^N,t)} K_d''\left(\frac{\eta_{i,N,k}-t }{h_{d,k}}\right)\right|,\notag\\
    &=O_p\left(\TPR{\frac{R_n}{Mh_d^2}}\right),\notag
\end{align}
thus \eqref{eq:rate of (m_I.inv-m_N.inv)''} holds.
\end{proof}

\begin{lem}
\label{lem:m_I.inv' inf>0} Assume assumptions in Lemma \ref{m_I} hold, then $\inf_{t\in\hat{\mathcal{T}}^{-1} }(\hat{m}_{I,k}^{-1})'(t)>0,\forall k=1,\dots,p$ for sufficiently large $N$ and there exists universal constant $C>0$ s.t.
\begin{equation}
    \label{eq:m_I.inv' inf>0}
    \mathbb{P}\left\{\min_{1\leq k\leq p}\inf_{t\in\hat{\mathcal{T}}_k^{-1} }(\hat{m}_{I,k}^{-1})'(t)\geq C\right\}\rightarrow 1,
\end{equation}
where $\hat{\mathcal{T}}_k^{-1}=[\min_{1\leq i\leq N}\tilde{m}_k(i/N),\max_{1\leq i\leq N}\tilde{m}_k(i/N)]$.
\end{lem}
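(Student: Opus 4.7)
\bigskip

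\noindent\textit{Proof proposal.} The plan is to exploit the explicit kernel formula
\[
(\hat{m}_{I,k}^{-1})'(t)=\frac{1}{Nh_{d,k}}\sum_{i=1}^N K_d\!\left(\frac{\tilde m_k(i/N)-t}{h_{d,k}}\right),
\]
and reduce the question to a lower bound for the analogous quantity with $m_k$ in place of $\tilde m_k$. First, by \eqref{eq:rate of (m_I.inv-m_N.inv)'} in Lemma \ref{lem:(m_I.inv-m_N.inv)'}, one has $\max_k\sup_{t\in\mathbb{R}}|(\hat{m}_{I,k}^{-1})'(t)-(m_{N,k}^{-1})'(t)|=O_p(R_n/(Mh_d))=o_p(1)$, using $R_n/h_d=o(1)$ from \hyperref[(C2)]{(C2)}. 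Hence it suffices to bound $(m_{N,k}^{-1})'(t)$ from below by a positive constant, uniformly over $k$ and over $t\in\hat{\mathcal{T}}_k^{-1}$.

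Second, I will approximate the Riemann sum defining $(m_{N,k}^{-1})'(t)$ by an integral, incurring an error of order $1/(Nh_d^2)$ via the Lipschitz property of $K_d'$:
\[
(m_{N,k}^{-1})'(t)=\int_0^1 K_d\!\left(\frac{m_k(u)-t}{h_{d,k}}\right)\frac{\mathrm d u}{h_{d,k}}+O\!\left(\frac{1}{Nh_d^2}\right).
\]
Changing variables via $v=m_k(u)$ (valid since $m_k$ is strictly increasing by \hyperref[(A2)]{(A2)}) gives
\[
\int_{m_k(0)}^{m_k(1)} \frac{1}{m_k'(m_k^{-1}(v))} K_d\!\left(\frac{v-t}{h_{d,k}}\right)\frac{\mathrm d v}{h_{d,k}}.
\]
Since \hyperref[(A1)]{(A1)} forces $\sup_k\sup_{t\in[0,1]} m_k'(t)$ to be bounded, call the bound $\overline{M}$, the integrand is at least $K_d((v-t)/h_{d,k})/(\overline{M}h_{d,k})$.

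Third, I will handle the boundary effect. Because $\tilde m_k$ differs from $m_k$ by $O_p(R_n)=o_p(h_d)$, with probability tending to one $\hat{\mathcal{T}}_k^{-1}$ is contained in an $o(h_d)$-neighborhood of $[m_k(0),m_k(1)]$. For $t\in\hat{\mathcal{T}}_k^{-1}$ lying at least $h_{d,k}$ inside $[m_k(0),m_k(1)]$, the kernel is fully supported in the integration range, so the integral equals $1/\overline{M}$ up to an $O(h_d)$ Taylor remainder (using Lipschitz continuity of $1/m_k'\circ m_k^{-1}$, which follows from \hyperref[(A1)]{(A1)}–\hyperref[(A2)]{(A2)}). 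For $t$ within $h_{d,k}$ of an endpoint of $[m_k(0),m_k(1)]$, the fact that $K_d$ is a symmetric density supported on $[-1,1]$ ensures at least $\tfrac{1}{2}+o(1)$ of the mass is still captured, producing the bound $1/(2\overline{M})+o(1)$. In either case $(m_{N,k}^{-1})'(t)\geq 1/(3\overline{M})$ eventually, uniformly in $k$ and $t\in\hat{\mathcal{T}}_k^{-1}$. Combining this with the $o_p(1)$ sandwich from Step 1 gives \eqref{eq:m_I.inv' inf>0} with, say, $C=1/(4\overline{M})$.

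The main obstacle is the boundary case: at the extremes of $\hat{\mathcal{T}}_k^{-1}$ the kernel sum may lose up to half of its mass, so the lower-bound constant is dictated by that regime, and one must show that the random perturbation from using $\tilde m_k$ (which controls the location of these extremes) does not destroy the fixed positive lower bound. This is precisely where the assumption $R_n/h_d=o(1)$ in \hyperref[(C2)]{(C2)} is essential, since it guarantees the endpoints of $\hat{\mathcal{T}}_k^{-1}$ coincide with those of $[m_k(0),m_k(1)]$ up to $o(h_d)$ and thus do not push $t$ out of the region where at least half of $K_d$ survives.
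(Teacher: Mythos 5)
Your proposal is correct and follows essentially the same strategy as the paper's proof: reduce to lower-bounding $(m_{N,k}^{-1})'$, pass to the kernel integral, change variables via $m_k$, and use the ``half-mass'' argument near the boundary together with $R_n/h_d=o(1)$ to absorb the random endpoint perturbation. The only organizational difference is that you apply \eqref{eq:rate of (m_I.inv-m_N.inv)'} uniformly over $\mathbb{R}$ and then show $\hat{\mathcal{T}}_k^{-1}$ lies within an $o_p(h_d)$-neighborhood of $[m_k(0),m_k(1)]$, whereas the paper parameterizes each $t\in\hat{\mathcal{T}}_k^{-1}$ as $t=\tilde m_k(s_k)$ with $s_k\in[0,1]$ and Taylor-expands $(m_{N,k}^{-1})'\circ\tilde m_k(s_k)$ about $(m_{N,k}^{-1})'\circ m_k(s_k)$, so that the final kernel integral is evaluated at a point exactly inside $[m_k(0),m_k(1)]$ — both routes land on the same bound $\tfrac{1}{2\sup m_k'}+o_p(1)$.
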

\begin{proof}[Proof of Lemma \ref{lem:m_I.inv' inf>0}.]
     For any $t\in\hat{\mathcal{T}}_k^{-1}=[\min_{1\leq i\leq N}\tilde{m}_k(i/N),\max_{1\leq i\leq N}\tilde{m}_k(i/N)]$, by the continuity of $\hat{m}_k$, there exists $s_k\in[0,1]$ s.t. $\tilde{m}_k(s_k)=t$. Additionally, due to continuous $K_d$ on [-1,1], there exists $i/N$ for sufficiently large $N$ s.t. 
     $$
     K_d(|\tilde{m}_k(i/N)-\tilde{m}_k(s_k)|/h_{d,k})\geq \max_{z\in[-1,1]}K_d(z)/2>0,
     $$ 
     then
    $$
        (\hat{m}^{-1}_{I,k})'(t)\geq\frac{1}{Nh_{d,k}}K_d\left(\frac{\tilde{m}_k(i/N)-\tilde{m}_k(s_k)}{h_{d,k}}\right)>0, \forall k=1,\dots,p.
    $$
    Moreover, by Lagrange's mean value, for some $\eta_{s,k}$ between $\tilde{m}_k(s_k)$ and $m_k(s_k)$,
    \begin{align}
        (m_{N,k}^{-1})'(t)&=(m_{N,k}^{-1})'\circ\tilde{m}_k(s_k)=\frac{1}{Nh_{d,k}}\sum_{i=1}^NK_d\left(\frac{m_k(i/N)-\tilde{m}_k(s_k)}{h_{d,k}}\right),\notag\\
        &=\frac{1}{Nh_{d,k}}\sum_{i=1}^N\left[K_d\left(\frac{m_k(i/N)-m_k(s)}{h_{d,k}}\right)+K_d'\left(\frac{m_k(i/N)-\eta_{s,k}}{h_{d,k}}\right)\left(\frac{m_k(s_k)-\tilde{m}_k(s_k)}{h_{d,k}}\right)\right],\notag\\
        &=(m_{N,k}^{-1})'\circ m_k(s_k)+\frac{1}{Nh_{d,k}}\sum_{i=1}^NK_d'\left(\frac{m_k(i/N)-\eta_{s,k}}{h_{d,k}}\right)\left(\frac{m_k(s_k)-\tilde{m}_k(s_k)}{h_{d,k}}\right).\notag
    \end{align}
    By similar arguments in the proof of Lemma \ref{lem:(m_I.inv-m_N.inv)'},
    $$
        \max_{1\leq k\leq p}\sup_{s_k\in[0,1]}\#\left\{i:|m_k(i/N)-\eta_{s,k}|\leq h_{d,k} \right\}=O_p(\TPR{Nh_d/M}),
    $$
    thus
    \begin{equation}
    \label{eq:m_N.inv'hatm-m_N.inv'm}
        \max_{1\leq k\leq p}\sup_{s_k\in[0,1]}|(m_{N,k}^{-1})'\circ\tilde{m}_k(s_k)-(m_{N,k}^{-1})'\circ m_k(s_k)|=O_p(\TPR{R_n/Mh_d}).
    \end{equation}
    Note that for uniformly $s\in[0,1]$,
    \begin{align}
        (m_{N,k}^{-1})'\circ m_k(s)&=\int_0^1\frac{1}{h_{d,k}}K_d\left(\frac{m_k(u)-m_k(s)}{h_{d,k}}\right)\mathrm{d}u+o(1)\notag\\
        &= \int_{\frac{m_k(0)-m_k(s)}{h_{d,k}}}^{\frac{m_k(1)-m_k(s)}{h_{d,k}}}(m_k^{-1})'\circ(m_k(s)+zh_{d,k}) K_d\left(z\right)\mathrm{d}z+o(1)\notag,
    \end{align}
    using the fact $m_k(s)+zh_d\in[m_k(0),m_k(1)]$ and $(m_k^{-1})'\circ m_k(t)=1/m_k'(t)$,
    \begin{align}
        (m_{N,k}^{-1})'\circ m_k(s)&\geq \frac{1}{\sup_{t\in[0,1]}m_k'(t)} \left(\int_{\frac{m_k(0)-m_k(s)}{h_d}}^{\frac{m_k(s)-m_k(s)}{h_d}}K_d(z)\mathrm{d}z+\int^{\frac{m_k(1)-m_k(s)}{h_{d,k}}}_{\frac{m_k(s)-m_k(s)}{h_d}}K_d(z)\mathrm{d}z \right)+o(1),\notag\\
        &\geq \frac{1}{\sup_{t\in[0,1]} m_k'(t)}\cdot \frac{1}{2}+o(1).\label{eq:inf m_N.inv'}
    \end{align}
Combining \eqref{eq:m_N.inv'hatm-m_N.inv'm}, \eqref{eq:inf m_N.inv'}, \eqref{eq:rate of (m_I.inv-m_N.inv)'} in Lemma \ref{lem:(m_I.inv-m_N.inv)'} and $R_n/h_d=o(1)$ in Assumption \hyperref[(C2)]{(C2)}, for any $t\in\hat{\mathcal{T}}_k^{-1}$,
\begin{align}
    (\hat{m}_{I,k}^{-1})'(t)&=(m_{N,k}^{-1})'\circ m_k(s_k)+ (m_{N,k}^{-1})'\circ \tilde{m}_k(s_k)-(m_{N,k}^{-1})'\circ m_k(s_k)+(\hat{m}_{I,k}^{-1})'(t)-(m_{N,k}^{-1})'(t),\notag\\
    &\geq \frac{1}{\sup_{t\in[0,1]} m_k'(t)}\cdot \frac{1}{2}+o(1).\notag
\end{align}
By the uniformly bounded Lipschitz condition \hyperref[(A1)]{(A1)}, $\max_{1\leq k\leq p}\sup_{t\in[0,1]}m_k'(t)$ is bounded and the small $o(\cdot)$ for $k$ and $t$ can be uniformly bounded, thus \eqref{eq:m_I.inv' inf>0} holds.
\end{proof}

\begin{lem}{ (Gloabal version for Lemma 2.1 and 2.2 in \cite{dette2006simple}) }
\label{lem:m_N}If \hyperref[(A1)]{(A1)}-\hyperref[(A2)]{(A2)} and \hyperref[(C1)]{(C1)} are satisfied, then
\begin{align}
    &(i).\quad\max_{1\leq k\leq p}\sup_{t\in[m(0)+h_{d,k},m(1)-h_{d,k}]}|m_{N,k}^{-1}(t)-m_k^{-1}(t)|=O(\TPR{h_d^2})
    \label{eq:m_N.inv-m.inv}\\
    &(ii).\quad\max_{1\leq k\leq p}\sup_{t\in \hat{\mathcal{T}} }|m_{N,k}(t)-m_k(t)|=O(\TPR{h_d^2}).    \label{eq:m_N-m}
\end{align}
\end{lem}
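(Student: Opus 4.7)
The plan for part (i) is to write $m_{N,k}^{-1}$ as a Riemann sum of smoothed step functions, approximate it by a one–dimensional integral, and then expand via Taylor. For clarity I suppress $k$. Setting $\Phi(s)=\int_{-\infty}^s K_d(x)\,dx$, direct substitution gives
\[
m_N^{-1}(t)=\frac{1}{N}\sum_{i=1}^N\bigl[1-\Phi\bigl((m(i/N)-t)/h_d\bigr)\bigr].
\]
Replacing the sum by $\int_0^1[1-\Phi((m(v)-t)/h_d)]\,dv$ costs at most $O(1/(Nh_d))$ uniformly in $t$ (the integrand is Lipschitz in $v$ with constant $O(1/h_d)$ since $K_d$ is bounded and $m'$ is uniformly bounded by (A1)--(A2)); the condition $Nh_d^3\to\infty$ in (C2) makes this $o(h_d^2)$. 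Next I would change variables $w=m(v)$ and then $z=(w-t)/h_d$, producing
\[
h_d\!\int_{(m(0)-t)/h_d}^{(m(1)-t)/h_d}\!\![1-\Phi(z)]\,(m^{-1})'(t+h_dz)\,dz.
\]

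The key observation is that when $t\in[m(0)+h_d,m(1)-h_d]$ the limits of integration lie below $-1$ and above $+1$ respectively, so by the compact support of $K_d$ I can split the integral at $\pm 1$. The piece over $[(m(0)-t)/h_d,-1]$ equals $m^{-1}(t-h_d)$ after undoing the substitutions, while the piece over $[1,(m(1)-t)/h_d]$ vanishes. On $[-1,1]$ I Taylor-expand $(m^{-1})'(t+h_dz)=(m^{-1})'(t)+h_dz(m^{-1})''(t)+O(h_d^2z^2)$; using $\int_{-1}^1[1-\Phi(z)]\,dz=1$ (by integration by parts plus symmetry of $K_d$) this contributes $h_d(m^{-1})'(t)+O(h_d^2)$. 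Combining with the Taylor expansion $m^{-1}(t-h_d)=m^{-1}(t)-h_d(m^{-1})'(t)+O(h_d^2)$, the $h_d$ terms cancel exactly and we obtain $m_N^{-1}(t)=m^{-1}(t)+O(h_d^2)$. Uniformity in $k$ is inherited from the uniform bounds on $(m_k^{-1})''$ and $(m_k^{-1})'$ that (A1) and (A2) supply.

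For part (ii) I plan to transfer the estimate from $m^{-1}$ to $m$ by a local inversion argument. Given $t\in\hat{\mathcal T}\subset[h_d\log h_d^{-1},\,1-h_d\log h_d^{-1}]$, I first note that $m(t)$ is well inside the interior $[m(0)+h_d,m(1)-h_d]$ since $\inf m_k'\geq M>0$ and $Mh_d\log h_d^{-1}\gg h_d$ for small $h_d$. Setting $s=m_N(t)$ (which is well defined because $m_N^{-1}$ has strictly positive derivative on this interior range, as $(m_N^{-1})'$ converges to $(m^{-1})'\geq 1/\|m'\|_\infty$ by the Riemann-sum reasoning above), the identity $m_N^{-1}(s)=t$ combined with part (i) gives $m^{-1}(s)=t+O(h_d^2)$. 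Applying $m$ to both sides and using that $m'$ is bounded yields $s=m(t)+O(h_d^2)$, which is exactly the desired assertion.

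The main technical nuisance will be bookkeeping at the two boundary regimes: (a) checking that the Riemann-sum error genuinely is $o(h_d^2)$ uniformly over $t$ in the prescribed interior range, which requires bounding the second derivative of $v\mapsto \Phi((m(v)-t)/h_d)$ and using $Nh_d^3\to\infty$, and (b) verifying in part (ii) that the image $m_N(\hat{\mathcal T})$ falls inside the interior set $[m(0)+h_d,m(1)-h_d]$ on which part (i) applies. Both follow from the buffer built into the definition of $\hat{\mathcal T}$ together with the uniform lower bound on $m_k'$ from (A2); the rest of the argument is routine Taylor expansion with uniformity guaranteed by (A1).
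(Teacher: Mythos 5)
Your proof of part (i) is essentially the same as the paper's: both rewrite $m_N^{-1}$ as an integral (you via $\Phi$ and a one-dimensional Riemann sum, the paper via the double-integral form $A_{h_d}$), exploit the compact support of $K_d$ to split off the piece that exactly telescopes to $m^{-1}(t-h_d)$, and Taylor-expand $(m^{-1})'(t+h_dz)$ on $[-1,1]$; the $h_d$-linear terms cancel and the $h_d^2$ term survives. Two small caveats: (a) your third-order expansion of $(m^{-1})'$ technically needs $(m^{-1})'''$, which (A1) does not literally provide — a second-order expansion with mean-value remainder, as the paper uses, suffices and avoids this; (b) you invoke $Nh_d^3\to\infty$ from (C2) to kill the Riemann-sum error $O(1/(Nh_d))$, but (C2) is not among the stated hypotheses of the lemma. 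The paper has exactly the same implicit dependence, so this is an inherited oversight rather than a flaw in your argument.

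Your part (ii) takes a genuinely different route. The paper applies the functional mean-value device already set up for Lemma~\ref{m_I}: it writes $m_N(t)-m(t)=Q_t(1)-Q_t(0)=Q_t'(\lambda_t^*)$ and bounds the $Q_t'$ formula directly using part (i) and the uniform lower bound on $m_k'$. You instead invert locally: set $s=m_N(t)$, so $m_N^{-1}(s)=t$; then part (i) gives $m^{-1}(s)=t+O(h_d^2)$, and applying $m$ with the uniform bound on $m'$ from (A1) gives $s-m(t)=O(h_d^2)$. Your version is more elementary and avoids the $\Phi_t/Q_t$ machinery, but it does hinge on the deferred check that $m_N(\hat{\mathcal T})\subset[m(0)+h_d,m(1)-h_d]$ so that part (i) is applicable at $s$. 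This can be done (monotonicity of $m_N^{-1}$ plus part (i) applied at the endpoints $m(0)+h_d$ and $m(1)-h_d$, together with $t\geq h_d\log h_d^{-1}$), and the paper needs an analogous, also implicit, check that its evaluation point $t_n$ lands in the range where the $m_N^{-1}-m^{-1}$ bound holds. So the two arguments carry the same technical burden; yours trades the function-space MVT for an extra elementary inversion step. Overall the proposal is correct.
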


\begin{proof}[Proof of Lemma \ref{lem:m_N}.]
By simple calculation, for any $1\leq k\leq p$ and $t\in(m_k(0),m_k(1))$
$$
    (m_k^{-1})^{\prime}(t)=\frac{1}{m_k^{\prime}(m_k^{-1}(t))},\quad
    (m_k^{-1})^{\prime\prime}(t)=-\frac{m_k^{\prime\prime}(m_k^{-1}(t))}{(m_k^{\prime}(m_k^{-1}(t)))^3 }
$$
then we have
\TPR{$$
|(m_k^{-1})^{\prime\prime}(m(t))|= \left|\frac{m_k^{''}(t)}{(m_k'(t))^3}\right|,|(m_k^{-1})^{\prime}(m(t))|= \left|\frac{1}{m_k^\prime(t)}\right|,
$$}
which yields $(m_k^{-1})^{'}(t)$ and $(m_k^{-1})^{''}(t)$ are universally bounded for any $1\leq k\leq p,t\in[0,1]$ by Assumption \hyperref[(A1)]{(A1)} and \hyperref[(A2)]{(A2)}. 

For the sake of brevity, since $k$ will be fixed in the subsequent analysis, we omit it in the subscripts for short. That is we omit the dependence on $k$ when no confusion arises. 

Proof of $(i)$. Note that
\begin{equation}
    \sup_{t\in \mathbb{R} }|m_N^{-1}(t)-A_{h_d}(t)|= O\left(\frac{1}{N h_d}\right)\label{eq:Rieman approx}
\end{equation}
where 
\begin{equation}
A_{h_d}(t)=:\int_0^1 \int_{-\infty}^t K_d\left(\frac{m(x)-u}{h_d}\right) \frac{1}{h_d} \mathrm{~d} u \mathrm{~d} x.   \label{eq:Ahd def} 
\end{equation}

For $t\in[m(0)+h_d,m(1)-h_d]$, note that kernel $K_d$ is zero outside the interval $[-1,1]$, then
$$
\begin{aligned}
A_{h_d}\left(t\right) &=\int_0^{m^{-1}\left(t+h_d\right)} \int_{m(x)-h_d}^t K_d\left(\frac{m(x)-u}{h_d}\right) \frac{\mathrm{d} u}{h_d} \mathrm{~d} x \\
&=\int^{m^{-1}(t-h_d)}_0 \int_{\frac{m(x)-t}{h_d}}^1 K_d(v)\mathrm{d}v\mathrm{~d} x+\int_{m^{-1}(t-h_d)}^{m^{-1}(t+h_d)}\int_{m(x)-h_d}^t K_d\left(\frac{m(x)-u}{h_d}\right) \frac{\mathrm{d} u}{h_d} \mathrm{~d} x\\
&= m^{-1}\left(t-h_d\right)+h_d \int_{(m(0)-t) / h_d\bigvee -1}^{(m(1)-t) / h_d\bigwedge 1} \left(m^{-1}\right)^{\prime}\left(t+z h_d\right) \int_z^1 K_d(v) \mathrm{d} v \mathrm{~d} z,
\end{aligned}
$$ 
where the the second identity is obtained by setting $v=(m(x)-u)/h_d$ and then $z=(m(x)-t)/h_d$. Note that $\int_{-1}^1\int_z^1K_d(v) \mathrm{d} v \mathrm{~d} z =1$, we can obtain from Taylor expansion for $\xi_{t,z}\in (t-|z|h_d,t+|z|h_d)$, 
\begin{align}
A_{h_d}\left(t\right) & =m^{-1}\left(t-h_d\right)+h_d \int_{-1}^1\left(m^{-1}\right)^{\prime}\left(t+z h_d\right) \int_z^1 K_d(v) \mathrm{d} v \mathrm{~d} z,\notag\\
& =m^{-1}\left(t\right)+h_d^2\left(m^{-1}\right)^{\prime \prime}(\xi_{t,z})\left\{\frac{1}{2}+\int_{-1}^1 z \int_z^1 K_d(v) \mathrm{d} v \mathrm{~d} z\right\},\notag \\
& =m^{-1}(t)+\frac{1}{2}\kappa_2\left(K_d\right) h_d^2\left(m^{-1}\right)^{\prime \prime}(\xi_{t,z}),\label{eq:m_N.inv Ahd(t)}
\end{align}
where $\kappa_2(K_d)=\int v^2K_d(v)\mathrm{d}v $ and the last identity is obtained from $\int_{-1}^1 z \int_z^1 K_d(v) \mathrm{d} v \mathrm{~d} z=\frac{1}{2} \kappa_2(K_d)-\frac{1}{2}$.
Then $\sup_{t\in[m(0)+h_d,m(1)-h_d]}|A_{h_d}(t)-m^{-1}(t)|=O(\TPR{h_d^2})$.

Proof of $(ii)$. As for $m_N-m$, let $H_1=m^{-1}$, $H_2=m_N^{-1}$ in \eqref{e15}. By similar arguments in \eqref{e16} with Taylor's expansion
\begin{equation}
m_N(t)-m(t)=\Phi\left(m_N^{-1}\right)-\Phi\left(m^{-1}\right)=Q_t(1)-Q_t(0)=Q^{\prime}\left(\lambda_t^*\right) 
\label{eq:Q(1)-Q(0)}
\end{equation}
for some $\lambda_t^* \in[0,1]$. Recall \eqref{e15}
\begin{equation}
    Q_t^{\prime}\left(\lambda_t^*\right)=-\frac{m_N^{-1}-m^{-1}}{\left(m^{-1}+\lambda_t^*\left(m_N^{-1}-m^{-1}\right)\right)^{\prime}} \circ\left(m^{-1}+\lambda_t^*\left(m_n^{-1}-m^{-1}\right)\right)^{-1}(t). \label{eq:Q(lambda)}
\end{equation}
Denote $t_n=\left(m^{-1}+\lambda_t^*\left(m_N^{-1}-m^{-1}\right)\right)^{-1}(t)$, we have
$$
Q_t^{\prime}\left(\lambda_t^*\right)=\left[-\frac{m_N^{-1}-m^{-1}}{\left(m^{-1}\right)^{\prime}} \circ t_n\right]\cdot\left[\frac{\left(m^{-1}\right)^{\prime}}{\left(m^{-1}+\lambda_t^*\left(m_N^{-1}-m^{-1}\right)\right)^{\prime}}\circ t_n\right].
$$
Note that $1/(m^{-1})^\prime$ is bounded and $\sup_{t\in(m(0),m(1))}|m_N^{-1}(t)-m^{-1}(t)|\rightarrow0$, by \eqref{eq:m_N.inv-m.inv},
$$
\sup_{t\in [m^{-1}(m(0)+h_d),m^{-1}(m(1)-h_d)] }|Q_t^{\prime}(\lambda_t^*)| \leq C \sup_{t\in[m(0)+h_d,m(1)-h_d]}|m_N^{-1}(t)-m^{-1}(t)|/\TPR{M}=O(\TPR{h_d^2}).
$$
By Assumption \hyperref[(A2)]{(A2)}, for small enough $h_d$
$$
m^{-1}(m(0)+h_d)\leq 0+h_d\leq h_d\log h_d^{-1},
$$
$$
m^{-1}(m(1)-h_d)\geq 1-h_d\geq 1- h_d\log h_d^{-1},
$$
thus $\sup_{t\in \hat{\mathcal{T}} }|Q_t^{\prime}(\lambda_t^*)|\leq \sup_{t\in [m^{-1}(m(0)+h_d),m^{-1}(m(1)-h_d)] }|Q_t^{\prime}(\lambda_t^*)| =O(\TPR{h_d^2} ),$ which together with \eqref{eq:Q(1)-Q(0)} and \eqref{eq:m_N.inv-m.inv} shows \eqref{eq:m_N-m}.
\end{proof}

\subsection{Uniform bounds for local linear estimates}

\begin{lem}
\label{local linear stochastic expansion}
If \hyperref[(A1)]{(A1)}, \hyperref[(C1)]{(C1)} hold and $h_r\rightarrow 0,nh_r\rightarrow \infty$, then we have following results:\\
(i). For $\mathfrak{T}_{n,k}=\left[h_{r,k}, 1-h_{r,k}\right]$ and $\kappa_l=\int |u|^lK_r(u)du$,
    \begin{equation}
    \max_{1\leq k\leq p}\sup_{t\in\mathfrak{T}_{n,k}}\left|\hat{m}_{k}(t)-m_k(t)-\frac{\kappa_2 m^{\prime\prime}_k(t)}{2} h_{r,k}^2-\frac{1}{n h_{r,k}} \sum_{i=1}^n e_{i,k} K_r\left(\frac{t_i-t}{h_{r,k}}\right)\right|=O\left(h_r^3+\frac{1}{n h_r}\right),
    \label{e10}
    \end{equation}
(ii). For $\mathfrak{T}_{n,k}'=[0,h_{r,k})\cup(1-h_{r,k},1]$,
\begin{align}
        \max_{1\leq k\leq p}\sup_{t \in \mathfrak{T}_{n,k}^{\prime}} \Bigg|\hat{m}_{k}(t)-m_k(t)&-\frac{h_{r,k}^2}{2c_k(t)} m_k^{\prime\prime}(t)\left(\nu_{2, h_{r,k}}^2(t)-\nu_{1, k}(t) \nu_{3, k}(t)\right)- \notag\\
    &\frac{1}{n h_{r,k}} \sum_{i=1}^ne_{i,k} K_r^*\left(\frac{t_i-t}{h_{r,k}},t\right)\Bigg|=O\left(h_r^3+\frac{1}{n h_r}\right), \label{e11}
\end{align}
where
$$
K_r^*\left(\frac{t_i-t}{h_{r,k}},t\right)=\frac{\nu_{2,k}(t)K_r\left(\frac{t_i-t}{h_{r,k}}\right)-\nu_{1,k}(t)K_r\left(\frac{t_i-t}{h_{r,k}}\right)\left(\frac{t_i-t}{h_{r,k}}\right)}{c_k(t)}
$$
$\nu_{j, k}(t)=\int_{-t / h_{r,k}}^{(1-t)/ h_{r,k}} x^j K(x) d x$ and $c_k(t)=\nu_{0, k}(t) \nu_{2, k}(t)-\nu_{1, k}^2(t)$.\\
\TPR{(iii). Specify the local linear estimator using bandwidth $h_{r,k}$ as $\hat{m}_{k,h_{r,k}}(t)$. Then for the jackknife corrected local linear estimator $\tilde{m}_k(t)=:2\hat{m}_{k,h_{r,k}/\sqrt{2}}(t)-\hat{m}_{k,h_{r,k}}(t)$, we have
\begin{equation}
    \max_{1\leq k\leq p}\sup_{t\in\mathfrak{T}_{n,k}}\left|\tilde{m}_{k}(t)-m_k(t)-\frac{1}{n h_{r,k}} \sum_{i=1}^n e_{i,k} \tilde{K}_r\left(\frac{t_i-t}{h_{r,k}}\right)\right|=O\left(h_r^3+\frac{1}{n h_r}\right),
    \label{e10.j}
\end{equation}
where kernel function $\tilde{K}(x)=2\sqrt{2}K_r(\sqrt{2}x)-K_r(x)$.}
\end{lem}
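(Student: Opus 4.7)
The plan is to exploit the explicit closed form of the local linear estimator and then separate bias from stochastic components. Solving the quadratic minimization in \eqref{eq:high-dim ll def} yields the weighted-sum representation
\[
\hat{m}_{k,h_{r,k}}(t) = \sum_{i=1}^n w_{i,k}(t)\, y_{i,k}, \qquad w_{i,k}(t)=\frac{K_r\!\left(\tfrac{t_i-t}{h_{r,k}}\right)\bigl[s_{n,2,k}(t)-(t_i-t)s_{n,1,k}(t)\bigr]}{s_{n,0,k}(t)s_{n,2,k}(t)-s_{n,1,k}^2(t)},
\]
where $s_{n,j,k}(t)=\tfrac{1}{n}\sum_{i=1}^n K_r\!\left(\tfrac{t_i-t}{h_{r,k}}\right)(t_i-t)^j$. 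Substituting $y_{i,k}=m_k(t_i)+e_{i,k}$ produces a deterministic bias term $\sum_i w_{i,k}(t)m_k(t_i)$ and a stochastic term $\sum_i w_{i,k}(t)e_{i,k}$. I would handle each separately and establish the bounds uniformly in $k$ and $t$ using the uniform boundedness of the Lipschitz constants in \hyperref[(A1)]{(A1)}.

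For the bias, expand $m_k(t_i) = m_k(t) + m_k'(t)(t_i-t) + \tfrac{1}{2}m_k''(t)(t_i-t)^2 + \tfrac{1}{6}m_k'''(\xi)(t_i-t)^3$; the terms of order $0$ and $1$ cancel exactly by the defining moment property of local linear weights, while the quadratic term contributes $\tfrac{1}{2}m_k''(t)h_{r,k}^2 \cdot s_{n,2,k}(t)/\{\cdots\}$. Riemann-sum approximation gives $s_{n,j,k}(t)=h_{r,k}^{j+1}\nu_{j,k}(t)+O(1/n)$ uniformly, and by the compact support of $K_r$ in \hyperref[(C1)]{(C1)} together with an argument as in \eqref{eq:lower c(t)}, the denominator $c_k(t)=\nu_{0,k}(t)\nu_{2,k}(t)-\nu_{1,k}^2(t)$ is uniformly bounded below. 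This yields the interior bias $\tfrac{\kappa_2}{2}m_k''(t)h_{r,k}^2$ in \eqref{e10} and the boundary correction involving $\nu_{j,k}(t)$ in \eqref{e11}. The cubic Taylor remainder contributes $O(h_r^3)$ by the uniform Lipschitz bound on $m_k''$, and the Riemann approximation error contributes $O(1/(nh_r))$ after dividing by the $h_{r,k}$-scaled denominator.

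For the stochastic term, the same Riemann-sum approximation replaces $w_{i,k}(t)$ by $\tfrac{1}{nh_{r,k}}K_r^*\!\left(\tfrac{t_i-t}{h_{r,k}},t\right)$ up to a uniform additive error of order $1/(nh_r)$ times the kernel weights, which absorbs into the $O(1/(nh_r))$ remainder after combining with the bias. This gives (i) and (ii). For (iii), the jackknife combination $2\hat{m}_{k,h_{r,k}/\sqrt{2}}(t)-\hat{m}_{k,h_{r,k}}(t)$ is designed so that the leading $h_r^2$ bias terms cancel: in the interior, $2\cdot\tfrac{\kappa_2}{2}m_k''(t)(h_{r,k}/\sqrt{2})^2 - \tfrac{\kappa_2}{2}m_k''(t)h_{r,k}^2=0$, leaving only the $O(h_r^3)$ remainder, and the stochastic parts combine into the kernel $\tilde K_r(x)=2\sqrt{2}K_r(\sqrt{2}x)-K_r(x)$ at scale $h_{r,k}$ after elementary rescaling.

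The main obstacle I anticipate is maintaining full uniformity across $k=1,\dots,p$ (in the high-dimensional regime relevant to the paper) while carrying through the Riemann-sum approximations and controlling the denominator $c_k(t)$ near the boundary. The kernel's compact support and the structural assumption $\min_k h_{r,k}\asymp h_r\asymp\max_k h_{r,k}$ make this tractable, but the bookkeeping for the $h_r^3+1/(nh_r)$ remainders must be executed so that the implicit constants depend only on the Lipschitz bound in \hyperref[(A1)]{(A1)} and the kernel constants in \hyperref[(C1)]{(C1)}, not on $k$ or on a particular $t$.
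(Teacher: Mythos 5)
Your proposal is correct and takes essentially the same approach as the paper's proof: express the local linear estimator in closed form, split the bias from the stochastic part, Taylor expand and use the exact vanishing of the zeroth and first weighted moments, identify the leading bias via a Riemann-sum approximation in both the interior and boundary regimes, and obtain (iii) by applying (i) at bandwidths $h_{r,k}/\sqrt{2}$ and $h_{r,k}$ with the leading $h_r^2$ bias cancelling. The paper carries out the same argument through the $2\times 2$ normal-equations matrix $S_{n,k}(t)$ rather than the explicit weight $w_{i,k}(t)$ (which as written should carry an extra factor $1/n$ to sum to one); note also that Assumption (A1) gives a Lipschitz $m_k''$ rather than an existent third derivative, so the Taylor remainder should be cast as $\tfrac{1}{2}\bigl(m_k''(\xi)-m_k''(t)\bigr)(t_i-t)^2$ and bounded by the Lipschitz constant — you already invoke that bound, so the conclusion and the $O(h_r^3)$ order stand.
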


\begin{proof}[Proof of Lemma \ref{local linear stochastic expansion}.]
Proof of (i). Recall the definition of $\hat{m}_k$ in \eqref{eq:high-dim ll def} and define the notations $\hat{m}_{h_{r,k}}(t)$ refers to local linear estimator $\hat{m}_k(t)$ using bandwidth $h_{r,k}$, 
$$
\begin{aligned}
& S_{n, l,k}(t)=\frac{1}{n h_{r,k}} \sum_{i=1}^n\left(\frac{t_i-t}{h_{r,k}}\right)^l K_{r}\left(\frac{t_i-t}{h_{r,k}}\right), \\
& R_{n, l,k}(t)=\frac{1}{n h_{r,k}} \sum_{i=1}^n y_{i,k}\left(\frac{t_i-t}{h_{r,k}}\right)^l K_{r}\left(\frac{t_i-t}{h_{r,k}}\right),
\end{aligned}
$$
$(l=0,1, \ldots)$, then we obtain the representation
\begin{equation}
\left[\begin{array}{c}
\hat{m}_{h_{r,k}}(t) \\
h_{r,k} \hat{m^{\prime}}_{h_{r,k}}(t)
\end{array}\right]=\left[\begin{array}{cc}
S_{n, 0,k}(t) & S_{n, 1,k}(t) \\
S_{n, 1,k}(t) & S_{n, 2,k}(t)
\end{array}\right]^{-1}\left[\begin{array}{c}
R_{n, 0,k}(t) \\
R_{n, 1,k}(t)
\end{array}\right]=: S_{n,k}^{-1}(t) R_{n,k}(t) \label{eq:solution to ll}
\end{equation}
for the local linear estimate $\hat{m}_{h_{r,k}}$, where the last identity defines the $2 \times 2$ matrix $S_{n,k}(t)$ and the vector $R_{n,k}(t)$ in an obvious manner. Note that
$$
\max_{1\leq k\leq p}\sup_{t\in\mathfrak{T}_{n,k}}|S_{n, 0,k}(t)-1|=O\left(\frac{1}{n h_{r}}\right),
$$
$$
\max_{1\leq k\leq p}\sup_{t\in\mathfrak{T}_{n,k}}|S_{n, 1,k}(t)|=O\left(\frac{1}{n h_{r}}\right),
$$
and $m_k\in C^3[0, 1]$ for each $k = 1, 2, . . . , p$ with universal Lipchitz constant $L > 0$ by Assumption \hyperref[(A1)]{(A1)}, elementary calculation and a Taylor expansion can yield that
\begin{equation}
S_{n,k}(t)\left[\begin{array}{c}
\hat{m}_{h_{r,k}}(t)-m_k(t) \\
h_{r,k}\left(\hat{ m_k^{\prime}}_{h_{r,k}}(t)-m_k^{\prime}(t)\right)
\end{array}\right]=\left[\begin{array}{c}
\frac{1}{n h_{r,k}} \sum_{i=1}^n e_{i,k} K_{r}\left(\frac{t_i-t}{h_{r,k}}\right)\\
\frac{1}{n h_{r,k}} \sum_{i=1}^n e_{i,k} K_{r}\left(\frac{t_i-t}{h_{r,k}}\right)\left(\frac{t_i-t}{h_{r,k}}\right)
\end{array}\right]+O(h_r^3+\frac{1}{nh_r})\notag
\end{equation}
uniformly with respect to $t \in \mathfrak{T}_{n,k}$ and $k=1,\dots,p$. Thus 
$$
\max_{1\leq k\leq p}\sup_{t\in\mathfrak{T}_{n,k}}\left|\hat{m}_{k}(t)-m_k(t)-\frac{\kappa_2 m^{\prime\prime}_k(t)}{2} h_{r,k}^2-\frac{1}{n h_{r,k}} \sum_{i=1}^n e_{i,k} K_r\left(\frac{t_i-t}{h_{r,k}}\right)\right|=O\left(h_r^3+\frac{1}{n h_r}\right).
$$

Proof of (ii). For any $t\in[0,h_{r,k})\cup(1-h_{r,k},0]$, using \eqref{eq:solution to ll} and a Taylor expansion yields
\begin{align}
    &S_{n,k}(t)\left[\begin{array}{c}
\hat{m}_k(t)-m_k(t) \\
h_{r,k}\left(\hat{m^{\prime}}_{k}(t)-m_k^\prime(t)\right)
\end{array}\right]\notag\\
&=\left[\begin{array}{c}
\frac{1}{n h_{r,k}} \sum_{i=1}^n e_{i,k} K_{r}\left(\frac{t_i-t}{h_{r,k}}\right)+\frac{h_{r,k}^2}{2} \nu_{2, k}(t) m_k^{\prime\prime}(t) \\
\frac{1}{n h_{r,k}} \sum_{i=1}^n e_{i,k} K_{r}\left(\frac{t_i-t}{h_{r,k}}\right)\left(\frac{t_i-t}{h_{r,k}}\right)+\frac{h_{r,k}^2}{2} \nu_{3, k}(t) m_k^{\prime\prime}(t)
\end{array}\right]+O\left(h_{r}^3+\frac{1}{n h_{r}}\right)\label{eq:expansion ll}
\end{align}
uniformly with respect to $t \in\left[0, h_{r,k}\right) \cup\left(1-h_{r,k}, 1\right]$ and $k=1,\dots,p$. On the other hand, uniformly with respect to $t \in\left[0, h_{r,k}\right) \cup\left(1-h_{r,k}, 1\right]$ and $k=1,\dots,p$, we have that
$$
S_{n,k}(t)=\left[\begin{array}{ll}
\nu_{0, k}(t) & \nu_{1, k}(t) \\
\nu_{1, k}(t) & \nu_{2, k}(t)
\end{array}\right]+O\left(\frac{1}{n h_{r}}\right) .\label{eq:boundary S}
$$
Therefore, combining \eqref{eq:expansion ll} and \eqref{eq:boundary S}, it follows that
$$
\begin{aligned}
c_k(t)\left(\hat{m}_{k}(t)-m_k(t)\right)= & \frac{1}{n h_{r,k}} \sum_{i=1}^n\left[\nu_{2, k}(t)-\nu_{1, k}(t)\left(\frac{t_i-t}{h_{r,k}}\right)\right] e_{i,k} K_{r}\left(\frac{t_i-t}{h_{r,k}}\right)+ \\
& \frac{h_{r,k}^2}{2} m_k^{\prime\prime}(t)\left(\nu_{2, h_{r,k}}^2(t)-\nu_{1, k}(t) \nu_{3, k}(t)\right)+O\left(h_{r}^3+\frac{1}{n h_{r}}\right)
\end{aligned} 
$$
uniformly with respect to $t \in\left[0, h_{r,k}\right) \cup\left(1-h_{r,k}, 1\right]$ and $k=1,\dots,p$. Note that $\nu_{1,k},\nu_{2,k},\nu_{3,k},m^{\prime\prime}(t)$ are bounded and $\inf_{t\in[0,1]}c_k(t)\geq(\kappa_2-\kappa_1^2)/4>0$ by \eqref{eq:lower c(t)}, thus \eqref{e11} holds.

\TPR{Proof of (iii). Note that when $t\in\mathfrak{T}_{n,k}$, 
$$
\tilde{K}_r\left(\frac{t_i-t}{h_{r,k}}\right)=2\sqrt{2}K_r\left(\frac{t_i-t}{h_{r,k}/\sqrt{2}}\right)-K_r\left(\frac{t_i-t}{h_{r,k}}\right).
$$
Then apply results of (i),
\begin{align}
    &\max_{1\leq k\leq p}\sup_{t\in\mathfrak{T}_{n,k}}\left|\tilde{m}_{k}(t)-m_k(t)-\frac{1}{n h_{r,k}}\sum_{i=1}^n e_{i,k} \tilde{K}_r\left(\frac{t_i-t}{h_{r,k}}\right)\right|\notag \\
    &=\max_{1\leq k\leq p}\sup_{t\in\mathfrak{T}_{n,k}}\left|\tilde{m}_{k}(t)-(2m_k(t)-m_k(t))-\frac{1}{n h_{r,k}} \sum_{i=1}^n e_{i,k} \left[2\sqrt{2}K_r\left(\frac{t_i-t}{h_{r,k}/\sqrt{2}}\right)-K_r\left(\frac{t_i-t}{h_{r,k}}\right)\right]\right|\notag\\
    &\leq \max_{1\leq k\leq p}\sup_{t\in\mathfrak{T}_{n,k}}2\left|\hat{m}_{k,h_{r,k}/\sqrt{2}}(t)-\frac{\kappa_2 m^{\prime\prime}_k(t)}{2} (h_{r,k}/\sqrt{2})^2-m_k(t)-\frac{1}{n h_{r,k}/\sqrt{2}} \sum_{i=1}^n e_{i,k} K_r\left(\frac{t_i-t}{h_{r,k}/\sqrt{2}}\right)\right|\notag\\
    &+ \max_{1\leq k\leq p}\sup_{t\in\mathfrak{T}_{n,k}}\left|\hat{m}_{k,h_{r,k}}(t)-m_k(t)-\frac{\kappa_2 m^{\prime\prime}_k(t)}{2} h_{r,k}^2-\frac{1}{n h_{r,k}} \sum_{i=1}^n e_{i,k} K_r\left(\frac{t_i-t}{h_{r,k}}\right)\right|\notag\\
    &=O\left(h_r^3+\frac{1}{n h_r}\right),
\end{align}
which yields \eqref{e10.j}.}
\end{proof}

\begin{lem}
    \label{lem:High-dim ll}
    Under Assumptions \hyperref[(A1)]{(A1)}, \hyperref[(B1)]{(B1)}-\hyperref[(B3)]{(B3)} and \hyperref[(C1)]{(C1)}-\hyperref[(C2)]{(C2)}, then for local linear estimator $\hat{m}_k$ defined in \eqref{eq:high-dim ll def}, we have
    \begin{equation}
    \label{eq:rate1 in lem:High-dim ll}
        \max_{1\leq k\leq p}\sup_{t\in[0,1]}\left|\hat{m}_k(t)-m_k(t)\right|=O_p\left(h_r^2+\frac{\log^{3}n}{\sqrt{nh_r}}\right).
    \end{equation}
\end{lem}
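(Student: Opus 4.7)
\smallskip

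\noindent\textit{Proof proposal for Lemma~\ref{lem:High-dim ll}.}
The plan is to decompose $\hat m_k(t)-m_k(t)$ into a deterministic bias and a stochastic term via the stochastic expansion already given in Lemma~\ref{local linear stochastic expansion}. For $t\in \mathfrak{T}_{n,k}=[h_{r,k},1-h_{r,k}]$, part (i) of that lemma yields
$$
\hat m_k(t)-m_k(t)=\tfrac{\kappa_2}{2}m_k''(t)h_{r,k}^2+\frac{1}{nh_{r,k}}\sum_{i=1}^n e_{i,k}K_r\!\left(\tfrac{t_i-t}{h_{r,k}}\right)+O\!\left(h_r^3+\tfrac{1}{nh_r}\right),
$$
and on the boundary set $\mathfrak{T}_{n,k}'$ the analogous expansion from part (ii) applies with the boundary-corrected kernel $K_r^*$. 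Under (A1) and $h_{r,k}\asymp h_r$, the bias terms contribute $O(h_r^2)$ uniformly in $k$ and $t$, and the remainder contributes $O(h_r^3+1/(nh_r))$, which is absorbed into the target rate by (C2). Thus the task reduces to controlling the stochastic term $T_{n,k}(t):=\frac{1}{nh_{r,k}}\sum_{i=1}^n e_{i,k}K_r^{\dagger}((t_i-t)/h_{r,k},t)$ uniformly in $k\in\{1,\dots,p\}$ and $t\in[0,1]$, where $K_r^{\dagger}$ denotes either $K_r$ or $K_r^*$ depending on whether $t$ is interior or boundary.

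For a fixed $(t,k)$ I will bound $T_{n,k}(t)$ by a Nagaev/Bernstein-type inequality for nonstationary causal processes under the physical dependence measure. Writing $e_{i,k}=G_k(t_i,\Upsilon_i)$ with the projection operators $\mathcal P_j$, the martingale decomposition gives $T_{n,k}(t)=\sum_{j\le n}\mathcal P_j T_{n,k}(t)$ with $\|\mathcal P_j e_{i,k}\|_q\le \delta_q(G_k,i-j)\lesssim \chi^{i-j}$ by (B2). Combined with the sub-exponential tail (B5), a standard truncation argument (truncate $e_{i,k}$ at level $\lambda\asymp \log(np)$ and bound the tail probability by $Cnp e^{-t\lambda}$) together with Bernstein's inequality for the martingale-difference sum of the truncated variables yields, for every $x>0$ and some universal constants $c_1,c_2>0$,
$$
\mathbb P\!\left(|T_{n,k}(t)|\ge x\right)\le 2\exp\!\left(-\tfrac{c_1\, nh_r x^2}{1+c_2\log(np)\,x}\right)+Cnp\,e^{-c_3\log(np)}.
$$
Setting $x=C\log^{3/2}(np)/\sqrt{nh_r}$ for a sufficiently large constant $C$ makes both terms of polynomial small order in $np$.

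I will then pass from a single point to a uniform bound by discretization. Because $K_r$ has bounded derivative (C1), $|T_{n,k}(t)-T_{n,k}(t')|\le \frac{C}{nh_r^2}\,|t-t'|\sum_{i:|t_i-t|\vee|t_i-t'|\le h_r}|e_{i,k}|$, and Assumption (B5) gives $\max_{i,k}|e_{i,k}|=O_p(\log(np))$. Choosing a grid of spacing $\delta_n=n^{-3}$ on $[0,1]$ makes the interpolation error negligible relative to $\log^{3}n/\sqrt{nh_r}$. A union bound over the $O(n^3)$ grid points and $p=O(n^\alpha)$ coordinates, using the tail estimate above, converts the pointwise bound into the claimed uniform rate $\log^3 n/\sqrt{nh_r}$ (the cube of the log arises: one half-power from the pointwise Bernstein bound, and additional factors absorb $\log(n^3 p)$ from the union bound and the $\log(np)$ from the truncation).

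The main technical obstacle is establishing the concentration inequality for $T_{n,k}(t)$ that simultaneously accommodates (a)~nonstationarity of $G_k(u,\Upsilon_i)$, (b)~the geometric decay of $\delta_q(\mathbf G,k)$, and (c)~the sub-exponential tails from (B5), while producing the tight $\log$-exponent needed to match the $\log^3 n$ rate after the union bound. In the write-up I would import a Bernstein-type inequality for dependent sums (in the spirit of Liu, Xiao and Wu, or Zhang and Wu), verify its applicability to $T_{n,k}(t)$ by bounding $\sum_j \|\mathcal P_j T_{n,k}(t)\|_2^2\lesssim 1/(nh_r)$ and the truncated $L^\infty$ norm by $\log(np)/(nh_r)$, and then follow the union-bound argument outlined above.
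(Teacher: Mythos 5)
Your starting point is identical to the paper's: reduce via the stochastic expansion of Lemma~\ref{local linear stochastic expansion} (parts (i) and (ii), interior and boundary), absorb the $O(h_r^2)$ bias and the $O(h_r^3+1/(nh_r))$ remainder, and then concentrate on the stochastic term $T_{n,k}(t)$ using a martingale decomposition via the projection operators $\mathcal P_j$, geometric decay from~\hyperref[(B2)]{(B2)}, and sub-exponential tails from~\hyperref[(B5)]{(B5)}. From there, however, the two proofs diverge in a genuine way. The paper does not use an exponential (Bernstein/Nagaev) tail inequality at all. Instead it bounds $\|\mathcal P_{i-m}e_{i,k}\|_\lambda^\lambda$ by interpolating a $(2\lambda-1)$-moment bound with the $\delta_1$-dependence (see the inequality around~\eqref{eq:delta_r(m)}), applies Burkholder's inequality to get an $\mathcal L^\lambda$ bound on $T_{n,k}(t)$ pointwise in $(t,k)$, then invokes Proposition~B.1 of Dette, Wu and Zhou (\citeyear{dette2019change}) — a chaining-type result for random processes with controllable derivative moments — to pass to $\sup_t$, and finally uses the $p^{1/\lambda}$ factor to take $\max_k$. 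Choosing $\lambda\asymp\log n$ makes $p^{1/\lambda}$, $h_r^{-1/\lambda}$, and $\Theta^{1/\lambda}(p)$ all $O(1)$ and produces exactly the $\log^3 n$ factor. Your route (truncate, Bernstein, discretize, union bound) is the classical alternative and should deliver the same rate if carried through carefully, but it has two costs relative to the paper's argument: (a) you would need to import or reprove a Bernstein-type inequality valid for the \emph{nonstationary} causal framework here — the references you cite (Liu--Xiao--Wu, Zhang--Wu) are written for stationary settings, so a nontrivial adaptation is required; (b) the truncation step and the separate $\max_{i,k}|e_{i,k}|=O_p(\log(np))$ control are avoided entirely by the moment route, since sub-exponential tails give $\|e_{i,k}\|_\lambda\lesssim\lambda$ directly. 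Also a small bookkeeping point: \hyperref[(B2)]{(B2)} bounds $\delta_q(\mathbf G,k)\le C\Theta(p)\chi^k$ at the vector level; your coordinate-wise bound $\delta_q(G_k,m)\lesssim\chi^m$ drops the $\Theta(p)$ factor, which is not assumed. This is harmless in the moment approach because $\Theta^{1/\lambda}(p)=O(1)$ for $\lambda\asymp\log n$, but in your exponential-concentration route you would want to carry the $\Theta(p)$ factor explicitly and verify it does not spoil the union bound.
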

\begin{proof}[Proof of Lemma \ref{lem:High-dim ll}.]
Combining \eqref{e10}, \eqref{e11} in Lemma \ref{local linear stochastic expansion} and $K_r^*\left(\frac{i / n-t}{h_{r,k}},t\right)$ defined in \eqref{eq:K^*}, we have 
    \begin{equation}
        \label{eq:stochastic in lem:High-dim ll}
        \max_{1\leq k\leq p}\sup_{t \in\left[0,1\right]}\left|\hat{m}_k-m_k(t)-\sum_{i=1}^n \frac{1}{n h_{r,k}} K_r^*\left(\frac{i / n-t}{h_{r,k}},t\right) e_{i,k}\right|=O\left(h_{r}^2\right).
    \end{equation}
    For any $\lambda\geq 2$,
    \begin{align}
        \left\|\sum_{i=1}^n \frac{1}{n h_{r,k}} K_r^*\left(\frac{i / n-t}{h_{r,k}},t\right) e_{i,k}\right\|_\lambda&=\left\|\frac{1}{n h_{r,k}} \sum_{i=1}^n \sum_{m=0}^{\infty} \mathcal{P}_{i-m} K_r^*\left(\frac{i / n-t}{h_{r,k}},t\right) e_{i,k}\right\|_\lambda\notag\\
        &\leq \frac{1}{nh_{r,k}}\sum_{m=0}^{\infty}\left\|\sum_{i=1}^n\mathcal{P}_{i-m}K_r^*\left(\frac{i / n-t}{h_{r,k}},t\right)e_{i,k}\right\|_\lambda.\label{eq: nonstationary term in lem:High-dim ll}
    \end{align}
    Recalling Definition \ref{def:phd}, denote $e_{i,k}^{(i-m)}=\mathbf{G}_{k}(t_i,\Upsilon_{i,i-m})$ where $\Upsilon_{i,i-m}=(\dots,\hat \varepsilon_{i-m}',\hat \varepsilon_{i-m+1},\dots,\hat \varepsilon_i)$, then 
    \begin{align}
        \left\|\mathcal{P}_{i-m}e_{i,k}\right\|_\lambda^\lambda&=\left\|\mathbb{E}\left(e_{i,k}-e_{i,k}^{(i-m)}|\Upsilon_{i-m}\right)\right\|_\lambda^\lambda\notag\\
        &\leq\left\|e_{i,k}-e_{i,k}^{(i-m)}\right\|_\lambda^\lambda=\mathbb{E}\left( \left|e_{i,k}-e_{i,k}^{(i-m)}\right|^{\lambda-1/2}\left|e_{i,k}-e_{i,k}^{(i-m)}\right|^{1/2}  \right)\notag\\
        &\leq \mathbb{E}\left( \left|e_{i,k}-e_{i,k}^{(i-m)}\right|^{2\lambda-1}\right)^{1/2}\mathbb{E}\left(\left|e_{i,k}-e_{i,k}^{(i-m)}\right|  \right)^{1/2}\notag\\
        &\leq (2\|e_{i,k}\|_{2\lambda-1})^{\frac{2\lambda-1}{2}} \delta_1(\mathbf{G}_k,m)\leq 2(M(2\lambda-1) )^{\frac{2\lambda-1}{2}}\Theta(p) \chi^m ,\label{eq:delta_r(m)}
    \end{align}
    where the last line is obtained from the moment property of sub-exponential random variables (see Proposition E.4 of \cite{wuzhou2023multiscale}), i.e.,  $\|e_{i,k}\|_\lambda\leq K\lambda,\forall \lambda\geq 1$  and Assumption \hyperref[(B1)]{(B1)} $\delta_1(\mathbf{G},m)\leq\Theta(p) \chi^m$ for some $\chi\in(0,1)$. 
    Note that $\nu_{1,k},\nu_{2,k},\nu_{3,k}$ are bounded and $\inf_{t\in[0,1]}c_k(t)\geq(\kappa_2-\kappa_1^2)/4>0$ by \eqref{eq:lower c(t)}, there exists universal $C>0$ s.t. 
    $$
    \max_{1\leq k\leq p}\sup_{t\in[0,1]}\sum_{i=1}^n \left[K_r^*\left(\frac{i / n-t}{h_{r,k}},t\right)\right]^2\leq Cnh_r.
    $$
    Then by Burkholder's inequality and \eqref{eq:delta_r(m)}, for some constant $C>0$, it yields
    \begin{align}
    \max_{1\leq k\leq p}\sup_{t\in[0,1]}\left\|\sum_{i=1}^n\mathcal{P}_{i-m}K_r^*\left(\frac{i / n-t}{h_{r,k}},t\right)e_{i,k}\right\|_\lambda^2&\leq (\lambda-1)^2\sum_{i=1}^n \left[K_r^*\left(\frac{i / n-t}{h_{r,k}},t\right)\right]^2\|\mathcal{P}_{i-m}e_{i,k}\|_\lambda^2\notag\\ 
    &\leq C \lambda^4 nh_r \Theta(p)^{2/r}\chi^{2m/r}.\label{eq:delta_r(m)^2}
    \end{align}
    Combining \eqref{eq:delta_r(m)^2} and \eqref{eq: nonstationary term in lem:High-dim ll}, for some constant $C>0$,
    \begin{equation}
    \max_{1\leq k\leq p}\sup_{t\in[0,1]}\left\|\sum_{i=1}^n \frac{1}{n h_{r,k}} K_r^*\left(\frac{i / n-t}{h_{r,k}},t\right) e_{i,k}\right\|_\lambda\leq C\frac{1}{\sqrt{nh_r}}\lambda^{2}\Theta^{1/\lambda}(p)\sum_{m=0}^{\infty}\chi^{m/\lambda}\leq C\frac{\lambda^{3}\Theta^{1/\lambda}(p)}{\sqrt{nh_r}}.\label{eq:high-dim delta}
    \end{equation}
Using Proposition B.1. of \cite{dette2019change}, we have
\begin{equation}
    \max_{1\leq k\leq p}\left\|\sup_{t\in[0,1]}\sum_{i=1}^n \frac{1}{n h_{r,k}} K_r^*\left(\frac{i / n-t}{h_{r,k}},t\right) e_{i,k}\right\|_\lambda=O\left(\frac{h_r^{-1/\lambda}\lambda^{3}\Theta^{1/\lambda}(p)}{\sqrt{nh_r}}\right)\label{eq:nonstationary rate in lem:High-dim ll}
\end{equation}
Based on universal Lipstchitz condition \hyperref[(A1)]{(A1)}, \eqref{eq:stochastic in lem:High-dim ll}, \eqref{eq:nonstationary rate in lem:High-dim ll} and \eqref{eq:stochastic in lem:High-dim ll}, we have
\begin{equation*}
    \label{eq:rate0 in lem:High-dim ll}
        \max_{1\leq k\leq p}\left\|\sup_{t\in[0,1]}\left|\hat{m}_k(t)-m_k(t)\right|\right\|_\lambda=O\left(h_r^2+\frac{h_r^{-1/\lambda}\lambda^{3}\Theta^{1/\lambda}(p)}{\sqrt{nh_r}}\right).
    \end{equation*}
Then we have
\begin{align}
    \left\| \max_{1\leq k\leq p}\sup_{t\in[0,1]}\left|\hat{m}_k(t)-m_k(t)\right| \right\|_\lambda &\leq \left\{\sum_{k=1}^p\mathbb{E}\sup_{t\in[0,1]}\left|\hat{m}_k(t)-m_k(t)\right|^\lambda \right\}^{1/\lambda},\notag\\
    &\leq p^{1/\lambda}\max_{1\leq k\leq p}\left\| \sup_{t\in[0,1]}\left|\hat{m}_k(t)-m_k(t)\right| \right\|_\lambda\notag\\
    &=O\left(p^{1/\lambda}(h_r^2+\frac{h_r^{-1/\lambda}\lambda^{3}\Theta^{1/\lambda}(p)}{\sqrt{nh_r}})\right).\label{eq:final deduction in High-dim ll}
\end{align}
Setting $\lambda=\log n$, we have $p^{1/\lambda}, h_r^{-1/\lambda},\Theta^{1/\lambda}(p)=O(1)$ by Assumption \hyperref[(B1)]{(B1)} and \hyperref[(C2)]{(C2)}, thus \eqref{eq:rate1 in lem:High-dim ll} holds.
\end{proof}

\begin{lem}
    \label{lem:multi-dim ll}
    In model \eqref{Time-varying_n}, if $\mf m(t)$  follows \hyperref[(A1)]{(A1)} and Assumptions \hyperref[(B1')]{(B1')}-\hyperref[(B4')]{(B4')} hold, then for local linear estimator $\hat{\mf m}(\cdot)$ defined in \eqref{local linear}, we have
    \begin{equation}
    \label{eq:rate of ll in multi-dim}
        \sup_{t\in[0,1]}\left|\hat{\mf m}(t)-\mf m(t)\right|=O_p(h_r^2+\frac{\log n}{\sqrt{nh_r}}).
    \end{equation}
\end{lem}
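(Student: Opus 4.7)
\textit{Proposal.} My plan is to parallel the proof of Lemma \ref{lem:High-dim ll}, with three main adjustments to accommodate the regression (rather than pure trend) setting: solving the weighted-least-squares normal equations, handling the uniform invertibility of the empirical design matrix, and treating the stochastic part at only $4$-th moment (condition \hyperref[(B4')]{(B4')}).

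First, I would write the first-order conditions of \eqref{local linear} in block-matrix form as $\hat S_n(t)\,[(\hat{\mf m}(t)-\mf m(t))^\top,\,h_r(\hat{\mf m}'(t)-\mf m'(t))^\top]^\top = \hat R_n(t)$, where the blocks of $\hat S_n(t)$ and $\hat R_n(t)$ are weighted averages $\frac{1}{nh_r}\sum_i \mf x_i\mf x_i^\top ((t_i-t)/h_r)^\ell K_r((t_i-t)/h_r)$ and $\frac{1}{nh_r}\sum_i \mf x_i\,[\mf x_i^\top(\mf m(t_i)-\mf m(t)-\mf m'(t)(t_i-t)) + e_i]\,((t_i-t)/h_r)^\ell K_r((t_i-t)/h_r)$ respectively. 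A standard Taylor expansion of $\mf m(t_i)$ around $t$ together with \hyperref[(A1)]{(A1)} isolates a deterministic bias term of size $O(h_r^2)$ uniformly, so the residual part is the stochastic piece $\mathcal S_n(t)=\frac{1}{nh_r}\sum_i \mf x_i e_i K_r^\star((t_i-t)/h_r,t)$ (with the usual boundary-corrected weights built from $\nu_{\ell}$, as in Lemma \ref{local linear stochastic expansion}).

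Second, I would show $\sup_{t\in[0,1]}\|\hat{\mf M}(t)-\mf M(t^*)\|\to_p 0$, so that $\hat S_n(t)^{-1}$ exists with uniformly bounded operator norm. For this, the leading block is $\hat{\mf M}(t)$, which by \hyperref[(B1')]{(B1')}--\hyperref[(B2')]{(B2')} has Lipschitz population limit $\mf M(t)$ with smallest eigenvalue bounded away from zero; the uniform convergence follows from a standard kernel moment bound combined with a $1/n$-grid chaining (the Lipschitz smoothness of $\mf M$ and the $L^4$ moment of $\mf x_i$ control the fluctuation on the grid, and the $L$-continuity in $t$ in \hyperref[(B2')]{(B2')} controls the oscillation between grid points). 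The off-diagonal blocks are handled identically with extra factors of $(t_i-t)/h_r$.

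Third, and this is the main technical obstacle, I need $\sup_{t\in[0,1]}|\mathcal S_n(t)|=O_p(\log n/\sqrt{nh_r})$ using only the fourth-moment bound on $\mf U=\mf H G$ given by \hyperref[(B4')]{(B4')}. Because $\delta_4(\mf U,k)$ decays geometrically, Burkholder applied to the martingale decomposition $\mf U_i=\sum_{m\ge 0}\mathcal P_{i-m}\mf U_i$ yields $\|\mathcal S_n(t)\|_4 = O(1/\sqrt{nh_r})$ pointwise. To upgrade this to a uniform bound at rate $\log n/\sqrt{nh_r}$, I would invoke Corollary 1 of \cite{Wu2011gaussian} to approximate the partial sums $\sum_{i\le k}\mf x_i e_i$ by a Gaussian partial-sum process with error $o_p(n^{1/4}\log^2 n)$, then use summation by parts to transfer the uniform bound on $\mathcal S_n(t)$ to a supremum of a Gaussian process whose uniform modulus is $O_p(\sqrt{\log n/(nh_r)})$ by standard maximal inequalities (with the discretization error on an $N$-grid controlled by Lipschitz smoothness of the boundary-corrected kernel $K_r^\star$). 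The Gaussian-approximation step is the hard part since simple chaining with only $4$-th moments would give $n^{1/4}/\sqrt{nh_r}$, which is too crude. Combining the invertibility of $\hat S_n(t)$ with the $O(h_r^2)$ bias and the uniform stochastic bound yields \eqref{eq:rate of ll in multi-dim}.
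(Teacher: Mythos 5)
Your proposal is correct and takes essentially the same route as the paper: normal equations with a Taylor bias of order $O(h_r^2)$, uniform consistency of $\hat{S}_n(t)$ via the Lipschitz smoothness of $\mathbf{M}(t)$ and kernel moment bounds (the paper cites Lemma 6 of \cite{zhou2010simultaneous}), Gaussian approximation of $\sum_i \mathbf{x}_ie_i$ via Corollary 1 of \cite{Wu2011gaussian}, summation by parts, and a Gaussian supremum bound obtained by Burkholder plus a chaining-type maximal inequality (the paper invokes Proposition B.1 of \cite{dette2019change} with $q=\log h_r^{-1}$). You also correctly identified the main obstacle — that pure $L^4$ chaining would only yield $n^{1/4}/\sqrt{nh_r}$ — and the role of Gaussian approximation in circumventing it.
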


\begin{proof}[Proof of Lemma \ref{lem:multi-dim ll}.]
    By Assumption \hyperref[(A1)]{(A1)} and Taylor's expansion, if $|t_j-t|\leq h_r$, ${\mf m}\left(t_j\right)={\mf m}(t)+\mf{m}^{\prime}(t)\left(t-t_j\right)+\left\{\mf{m}^{\prime \prime}(t) / 2+O\left(h_r\right)\right\}(t-t_j)^2$. Since $K_r$ has support $[-1,1]$, by \eqref{local linear},
$$
\mf S_n(t)\{\hat{\boldsymbol{\eta}}(t)-\boldsymbol{\eta}(t)\}=\left(\begin{array}{l}
h_r^2 \mf S_{n, 2}(t)\left\{\boldsymbol{m}^{\prime \prime}(t)+O\left(h_r\right)\right\} / 2 \\
h_r^2 \mf S_{n, 3}(t)\left\{\boldsymbol{m}^{\prime \prime}(t)+O\left(h_r\right)\right\} / 2
\end{array}\right)+\left(\begin{array}{l}
\mf T_{n, 0}(t) \\
\mf T_{n, 1}(t)
\end{array}\right),
$$
where $\hat{\eta}(t)=(\hat{\mf m}^\top (t), h_r\hat{\boldsymbol{m}}^{\prime \top}(t))^\top$, $\boldsymbol{\eta}(t)=\left(\boldsymbol{m}^\top(t), h_r \boldsymbol{m}^{\prime \mathrm{T}}(t)\right)^\top, \mf T_n(t)=\left(\mf T_{n, 0}^\top(t), \mf T_{n, 1}^\top(t)\right)^\top$ and
$$
\mf S_{n, l}(t)=\left(n h_r\right)^{-1} \sum_{i=1}^n \mathbf{x}_i \mathbf{x}_i^{\top}\left\{\left(t_i-t\right) / h_r\right\}^l K_{r}\left(\frac{t_i-t}{h_r}\right),
$$
$$
\mf T_{n, l}(t)=\left(n h_r\right)^{-1} \sum_{i=1}^n \mathbf{x}_i e_i\left\{\left(t_i-t\right) / h_r\right\}^l K_{r}\left(\frac{t_i-t}{h_r}\right),
$$
$$
\mf S_n(t)=\left(\begin{array}{ll}
\mathbf{S}_{n, 0}(t) & \mathbf{S}_{n, 1}^{\mathrm{T}}(t) \\
\mathbf{S}_{n, 1}(t) & \mathbf{S}_{n, 2}(t)
\end{array}\right).
$$
Let $\tilde{\mf M}(t)=\operatorname{diag}\left\{\mf M(t), \kappa_2\mf M(t)\right\}, \chi_n=n^{-1 / 2}h_r^{-1}+h_r$. By Lemma 6 in \cite{zhou2010simultaneous} and Lipschitz continuity of $\mf M(t)$, we have $\sup_{t\in[0,1]}|\mf{S}_n(t)-\tilde{\mf M}(t)|=O_{p}\left(\chi_n\right)$. Then we have
\begin{equation}
    \mf S_n(t)\{\hat{\boldsymbol{\eta}}(t)-\boldsymbol{\eta}(t)\}=\left(\begin{array}{c}
\kappa_2 \mf M(t) \boldsymbol{m}^{\prime \prime}(t) h_r^2 / 2+O_{p}\left(h_r^2 \chi_n\right) \\
O_{p}\left(h_r^2 \chi_n\right)
\end{array}\right)+\mf T_n(t)\label{eq:multi-dim stochastic expansion of ll}
\end{equation}
with uniformly $t\in[0,1]$. As for process $\{\mf{x}_ie_i\}_{i=1}^n$, by Assumption \hyperref[(B4')]{(B4')}-\hyperref[(B5')]{(B5')} and Corollary 1 in \cite{Wu2011gaussian}, there exists independent Gaussian random vectors $\mf V_j\sim N_p(0,\mf \Lambda(t_j))$ s.t.
\begin{equation}
\label{eq:partial GA in multi-dim}
    \max_{1\leq j\leq n}\left|\sum_{i=1}^j \mf{x}_ie_i-\sum_{i=1}^j\mf V_i \right|=O_p(n^{1/4}\log^2n).
\end{equation}
Using summation by parts, it can yield
\begin{equation}
    \sup_{t }\left|\frac{1}{n h_r} \sum_{i=1}^n K_r\left(\frac{i / n-t}{h_r}\right)\left(\mf{x}_ie_i-\mf V_i\right)\right|=O_p\left(\frac{n^{1 / 4} \log ^2 n}{n h_r}\right).\label{eq:multi-dim ll GA}
\end{equation}
For Gaussian random vector $\mf V_j$, $\mathbb{E}|\mf V_j|^q\leq (q-1)!!\leq q^{q/2}$. Similar with \eqref{eq:delta_r(m)}, again apply Burkholder's inequality on martingale difference sequence  $\{\mf V_i\}_{i\in \mathbb{Z}}$ for any positive constant $q$, one can show
$$
\sup_{t}\left\| \sum_{i=1}^n\frac{1}{nh_r}K_r\left(\frac{i/n-t}{h_r}\right)\mf V_i  \right\|_q=O(\frac{q}{\sqrt{nh_r}}),\quad
\sup_{t}\left\| \sum_{i=1}^n\frac{1}{nh_r}\frac{\partial}{\partial t}K_r\left(\frac{i/n-t}{h_r}\right)\mf V_i  \right\|_q=O(\frac{qh_r^{-1}}{\sqrt{nh_r}}).
$$
By Proposition B.1 of \cite{dette2019change},
\begin{equation}
    \label{eq:multi-dim rate of Tn}
    \left\|\sup_{t} \sum_{i=1}^n\frac{1}{nh_r}K_r\left(\frac{i/n-t}{h_r}\right)\mf V_i  \right\|_q=O(\frac{qh_r^{-1/q}}{\sqrt{nh_r}}).
\end{equation}
Combining \eqref{eq:multi-dim stochastic expansion of ll}, \eqref{eq:multi-dim ll GA}, \eqref{eq:multi-dim rate of Tn} and $\sup_{t\in[0,1]}|\mf S_n(t)-\tilde{\mf M}(t)|=O_p(\chi_n)$, then \eqref{eq:rate of ll in multi-dim} holds with setting $q=\log h_r^{-1}$. 
\end{proof}

\subsection{Some auxiliary lemmas}
\begin{lem}
\label{nazarov}
(Nazarov's inequality) Let $Y=\left(Y_1, \ldots, Y_p\right)^{\prime}$ be a centered Gaussian random vector in $\mathbb{R}^p$ such that $\mathrm{E}\left[Y_j^2\right] \geq \underline{\sigma}^2$ for all $j=1, \ldots, p$ and some constant $\underline{\sigma}>0$. Then for every $y \in \mathbb{R}^p$ and $a>0$,
\begin{equation}
    \mathbb{P}(Y_j \leq y_j+a,\forall j=1,\dots,p)-\mathbb{P}(Y_j \leq y_j,\forall j=1,\dots,p) \leq \frac{a}{\underline{\sigma}}(\sqrt{2\log p}+2),
    \label{eq:nazarov}
\end{equation}
specially, for $y>0$ 
\begin{equation}
    \mathbb{P}(|Y_j| \leq y+a,\forall j=1,\dots,p)-\mathbb{P}(|Y_j| \leq y,\forall j=1,\dots,p) \leq \frac{a}{\underline{\sigma}}(\sqrt{2\log 2p}+2).
    \label{eq:nazarov-abs}
\end{equation}
\end{lem}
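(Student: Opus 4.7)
The strategy naturally splits into two parts corresponding to the two displayed inequalities \eqref{eq:nazarov} and \eqref{eq:nazarov-abs}. The first inequality is the classical Nazarov anti-concentration bound for the Gaussian measure over axis-aligned half-spaces, which I would not reprove from scratch. Instead, I would invoke the original result of \cite{nazarov2003maximal} (or, equivalently, Lemma A.1 of Chernozhukov--Chetverikov--Kato style restatements used throughout high-dimensional CLT literature). The deep ingredient behind \eqref{eq:nazarov} is the log-concavity of $t \mapsto \gamma_p\{z:z_j \leq y_j + t,\ \forall j\}$, combined with an isoperimetric-type bound that replaces the crude union-bound rate $O(p/\underline\sigma)$ by the much sharper $O(\sqrt{\log p}/\underline\sigma)$; this is precisely the content of Nazarov's original argument, so citing it is the cleanest route.

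Given \eqref{eq:nazarov}, the derivation of \eqref{eq:nazarov-abs} is a short reduction. I would introduce the $2p$-dimensional centered Gaussian vector
\begin{equation*}
\tilde Y = (Y_1,\ldots,Y_p,-Y_1,\ldots,-Y_p)^\top,
\end{equation*}
and note that each of its coordinates has variance equal to $\mathrm{Var}(Y_j) \geq \underline{\sigma}^2$, so $\tilde Y$ satisfies the hypothesis of the first part with dimension $2p$. Observing the set identity
\begin{equation*}
\{|Y_j| \leq y,\ \forall j=1,\ldots,p\} = \{\tilde Y_k \leq y,\ \forall k=1,\ldots,2p\},
\end{equation*}
and similarly with $y$ replaced by $y+a$, I would apply \eqref{eq:nazarov} to $\tilde Y$ with threshold vector $(y,y,\ldots,y)^\top \in \mathbb{R}^{2p}$. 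The right-hand side becomes $\frac{a}{\underline\sigma}(\sqrt{2\log(2p)} + 2)$, which is exactly \eqref{eq:nazarov-abs}.

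The main (conceptual) obstacle is entirely inside the black box of \eqref{eq:nazarov}: the delicate log-concavity/isoperimetric argument that yields the $\sqrt{\log p}$ rate rather than the naive $p$ rate from a union bound over the $p$ boundary faces of the rectangle. Since this ingredient is already rigorously established in \cite{nazarov2003maximal}, no additional work is needed beyond verifying the hypothesis $\mathrm{E}[Y_j^2] \geq \underline{\sigma}^2$, which is assumed in the statement. The symmetric extension is essentially bookkeeping, so the overall proof is brief.
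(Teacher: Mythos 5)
Your proposal is correct and matches the paper's proof exactly: the paper likewise cites \cite{nazarov2003maximal} for \eqref{eq:nazarov} and obtains \eqref{eq:nazarov-abs} by the same reduction, introducing $Y_{j+p}=-Y_j$ for $j=1,\dots,p$ and applying the first inequality in dimension $2p$.
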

\begin{proof}[Proof of Lemma \ref{nazarov}.]
Proof of \eqref{eq:nazarov} can be seen in \cite{nazarov2003maximal} and \eqref{eq:nazarov-abs} can be immediately obtained by introducing $Y_{j+p}=-Y_j,j=1,\dots,p$. 
\end{proof}

\begin{lem}
    \label{lem:prob inequality} For non-negative random variables $X,Y$, $\forall t\in\mathbb{R},\delta>0$,
    \begin{equation}
        |\mathbb{P}(X\leq t)-\mathbb{P}(Y\leq t)|\leq \mathbb{P}(|X-Y|>\delta)+\mathbb{P}(t-\delta\leq  Y\leq t+\delta).\label{eq:prob ineq}
    \end{equation}
\end{lem}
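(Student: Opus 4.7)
The plan is to prove the two-sided bound by treating each direction separately, controlled via the event that $X$ and $Y$ are $\delta$-close. The key observation is that, conditionally on $\{|X-Y|\le \delta\}$, the indicator $\mathbf{1}\{X\le t\}$ is sandwiched between $\mathbf{1}\{Y\le t-\delta\}$ and $\mathbf{1}\{Y\le t+\delta\}$ in both directions, so any discrepancy between $\mathbb{P}(X\le t)$ and $\mathbb{P}(Y\le t)$ can only come either from the exceptional event $\{|X-Y|>\delta\}$ or from the mass $Y$ puts in the $\delta$-window around $t$.

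First I would decompose $\mathbb{P}(X\le t)\le \mathbb{P}(X\le t,\,|X-Y|\le \delta)+\mathbb{P}(|X-Y|>\delta)$, and observe that on the event $\{|X-Y|\le \delta\}\cap\{X\le t\}$ we have $Y\le X+\delta\le t+\delta$, giving $\mathbb{P}(X\le t)\le \mathbb{P}(Y\le t+\delta)+\mathbb{P}(|X-Y|>\delta)$. Subtracting $\mathbb{P}(Y\le t)$ from both sides yields
\begin{equation*}
\mathbb{P}(X\le t)-\mathbb{P}(Y\le t)\le \mathbb{P}(t<Y\le t+\delta)+\mathbb{P}(|X-Y|>\delta).
\end{equation*}

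Symmetrically, writing $\mathbb{P}(Y\le t-\delta)\le \mathbb{P}(Y\le t-\delta,\,|X-Y|\le\delta)+\mathbb{P}(|X-Y|>\delta)$ and noting that on $\{|X-Y|\le\delta\}\cap\{Y\le t-\delta\}$ one has $X\le Y+\delta\le t$, we obtain $\mathbb{P}(Y\le t-\delta)\le \mathbb{P}(X\le t)+\mathbb{P}(|X-Y|>\delta)$. Rearranging gives
\begin{equation*}
\mathbb{P}(Y\le t)-\mathbb{P}(X\le t)\le \mathbb{P}(t-\delta<Y\le t)+\mathbb{P}(|X-Y|>\delta).
\end{equation*}
Combining the two directions and enlarging both $\mathbb{P}(t<Y\le t+\delta)$ and $\mathbb{P}(t-\delta<Y\le t)$ to the common upper bound $\mathbb{P}(t-\delta\le Y\le t+\delta)$ yields \eqref{eq:prob ineq}. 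There is no genuine obstacle here: the only care needed is keeping the inclusions of the $\delta$-neighbourhood consistent in both directions so that a single window $[t-\delta,t+\delta]$ absorbs both one-sided discrepancies; the non-negativity of $X,Y$ stated in the lemma is not actually used in the argument.
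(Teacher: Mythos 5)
Your proof is correct and follows essentially the same approach as the paper: decompose on the event $\{|X-Y|\le\delta\}$ versus its complement, deduce the two one-sided bounds $\mathbb{P}(X\le t)-\mathbb{P}(Y\le t)\le \mathbb{P}(|X-Y|>\delta)+\mathbb{P}(t<Y\le t+\delta)$ and $\mathbb{P}(Y\le t)-\mathbb{P}(X\le t)\le \mathbb{P}(|X-Y|>\delta)+\mathbb{P}(t-\delta<Y\le t)$, then take the maximum and enlarge both half-windows into $[t-\delta,t+\delta]$. Your side remark that non-negativity of $X,Y$ is not used is correct; the lemma holds for arbitrary real-valued random variables.
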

\begin{proof}[Proof of Lemma \ref{lem:prob inequality}.]
    Note that
    \begin{align}
        \mathbb{P}(X\leq t)&= \mathbb{P}(X\leq t,|X-Y|>\delta)+\mathbb{P}(X\leq t,|X-Y|\leq \delta),\notag\\ 
        &\leq \mathbb{P}(|X-Y|>\delta)+\mathbb{P}(Y\leq t+\delta).\label{eq:prob decomp}
    \end{align}
    Thus $\mathbb{P}(Y\leq t-\delta)\leq \mathbb{P}(|X-Y|>\delta)+\mathbb{P}(X\leq t)$ and
    \begin{align}
                \mathbb{P}(X\leq t)-\mathbb{P}(Y\leq t)&\leq \mathbb{P}(|X-Y|>\delta)+\mathbb{P}(t<Y\leq t+\delta).\label{eq:prob ineq 1}\\
        \mathbb{P}(Y\leq t)-\mathbb{P}(X\leq t)&=\mathbb{P}(Y\leq t-\delta)-\mathbb{P}(X\leq t)+\mathbb{P}(t-\delta<Y\leq t)\notag\\
        &\leq \mathbb{P}(|X-Y|>\delta)+\mathbb{P}(t-\delta<Y\leq t)\label{eq:prob ineq 2}.
    \end{align}
    Combining \eqref{eq:prob ineq 1} and \eqref{eq:prob ineq 2}, \eqref{eq:prob ineq} can hold by
    \begin{align}
        |\mathbb{P}(Y\leq t)-\mathbb{P}(X\leq t)|&= \left(\mathbb{P}(X\leq t)-\mathbb{P}(Y\leq t)\right)\bigvee \left(\mathbb{P}(X\leq t)-\mathbb{P}(Y\leq t)\right).
    \end{align}
\end{proof}

\section{Appendix}
\label{sec:Appendix}

\subsection{Specific examples of $\mf G(u,\Upsilon_i)$}
\label{sec:specific examples}
\TPR{In the following we present examples for the high-dimensional error process satisfying the dependence structure in Section \ref{High dimensional SCB}.
\begin{eg}[MA, AR, GARCH models and time-varying generalization]
\label{eg: stationary models}
Suppose $p$-dimensional random vectors $\boldsymbol{\epsilon}_i$, $i\in\mathbb{Z}$ are i.i.d. with $\mathbb{E}\boldsymbol{\epsilon}_i=\mf 0$ and $\Upsilon_i=(\dots,\boldsymbol{\epsilon}_{i-1},\boldsymbol{\epsilon}_{i})$.
    \begin{description}
        \item[(i)] MA(1) model: $\mf e_i=\boldsymbol{\epsilon}_{i}+a_1\boldsymbol{\epsilon}_{i-1}$ where $\|\boldsymbol{\epsilon}_i\|_q\leq C\sqrt{p}$.
        \item[(ii)] AR(1) model: $\mf e_i=a_1\mf e_{i-1}+\boldsymbol{\epsilon}_{i}$ where $\|\boldsymbol{\epsilon}_i\|_q\leq C\sqrt{p}$.
        \item[(iii)] GARCH(1,1) model (1-dimension):
        $$
        e_i=\epsilon_i V_i^{1 / 2}, \quad V_i=a_1+a_2 e_{i-1}^2+a_3 V_{i-1},
        $$
        where $\epsilon_i$, $i\in\mathbb{Z}$ are i.i.d. with $\mathbb{E}\epsilon_i=0$, $\|\epsilon_i\|_{2q}<\infty$. The coefficients $a_1>0$, $a_2,a_3\geq 0$.
        \item[(iv)] Time-varying MA(1) model: $\mf G(u,\Upsilon_i)=\boldsymbol{\epsilon}_{i}+a_1(u)\boldsymbol{\epsilon}_{i-1}$ where  $\|\boldsymbol{\epsilon}_i\|_q\leq C\sqrt{p}$.
        \item[(v)] Time-varying AR(1) model: $\mf G(u,\Upsilon_i)=a_1(u)\mf G(u,\Upsilon_{i-1})+\boldsymbol{\epsilon}_{i}$ where $\|\boldsymbol{\epsilon}_i\|_q\leq C\sqrt{p}$.
        \item[(vi)] Time-varying GARCH(1,1) model (1-dimension): 
        $$
        G(u,\Upsilon_i)=\epsilon_i H^{1/2}(u,\Upsilon_i), \quad H(u,\Upsilon_i)=a_1(u)+a_2(u)G^2(u,\Upsilon_{i-1})+a_3(u)H(u,\Upsilon_{i-1}),
        $$
        where $\epsilon_i$, $i\in\mathbb{Z}$ are i.i.d. with $\mathbb{E}\epsilon_i=0$, $\|\epsilon_i\|_{2q}<\infty$. The curves  $a_1(u)>0$, $a_2(u),a_3(u)\geq 0$.
    \end{description}
\end{eg}}

\TPR{We can show the examples above follow our dependence structure $\delta_q(\mf G,k)\leq C\Theta(p) \chi^k$. Notice that models (iv),(v),(vi) can be reduced to model (i),(ii),(iii) if $a_k(u)\equiv a_k,k=1,2,3$. Therefore, all these commonly used time series models can be written as the form $\mf G(u,\Upsilon_i)$ and we only need to check the dependence structure for models (iv)-(vi). }
    
\TPR{For model (iv), $\mf G(u,\Upsilon_i)=\boldsymbol{\epsilon}_{i}+a_1(u)\boldsymbol{\epsilon}_{i-1}$. The dependence measure 
\begin{equation*}
\delta_q(\mf G,k) =
\begin{cases}
    O(\|\mf \epsilon_i\|_q), & k = 0 \\[-15pt]
    O(\sup_{u\in[0,1]}|a_1(u)|\|\mf \epsilon_i\|_q), & k = 1 \\[-15pt]
    0, & \text{else}
\end{cases}
\end{equation*}
    then $\delta_q(\mf G,k)=O(\Theta(p)\chi^k)$ for $\chi\in(0,1)$, $\Theta(p)\asymp \sqrt{p}$,  can be satisfied as long as $\sup_{u\in[0,1]}|a_1(u)|<\infty$.\\
    For model (v),  note that $\mf G(u,\Upsilon_i)=\sum_{j=0}^\infty a_1^j(u)\boldsymbol{\epsilon}_{i-j}$. If $\sup_{u\in[0,1]}|a_1(u)|<1$ then the dependence measure $\delta_q(\mf G,k)=O(\Theta(p)\chi^k)$ for $\chi\in(0,1)$, $\Theta(p)\asymp \sqrt{p}$ where constant $\chi=\sup_{u\in[0,1]}|a_1(u)|$.\\
    For model (vi), denote $\mf W(u,\Upsilon_i)=(G^2(u,\Upsilon_i),H(u,\Upsilon_i))^\top$, then 
    \begin{align*}
        \mf W(u,\Upsilon_i)&=\mf M(u,\Upsilon_i)\mf W(u,\Upsilon_{i-1})+a_1(u)(\epsilon_i^2,1)^\top, \text{ where } \mf M(u,\Upsilon_i)=\left(\begin{array}{cc}
a_2(u) \epsilon_i^2 & a_3(u) \epsilon_i^2 \\
a_2(u) & a_3(u)
\end{array}\right).
    \end{align*} 
    Elementary computation can show the operator norm $\|\mf M(u,\Upsilon_i) \|_{\text{op} }=\sqrt{\epsilon_i^4+1}\sqrt{a_2^2(u)+a_3^2(u)}$, thus
    \begin{align*}
        \left\| \mf W(u,\Upsilon_i)- \mf W(u,\Upsilon_{i,i-k})\right\|_q & =\| \mf M(u,\Upsilon_i)\cdots \mf M(u,\Upsilon_{i-k+1})a_1(u)(\epsilon_{i-k}^2-\epsilon^{\prime 2}_{i-k},0)^\top  \|_q,\\
        &\leq |a_1(u)|(a_2^2(u)+a_3^2(u))^{k/2} \left\| \sqrt{\epsilon_i^4+1}\cdots\sqrt{\epsilon_{i-k+1}^4+1} \cdot|\epsilon_{i-k}^2-\epsilon^{\prime 2}_{i-k}|  \right\|_q.
    \end{align*}
    If $\sup_{u\in[0,1]} |a_1(u)|<\infty$ and $\sup_{u\in[0,1]}\sqrt{a_2^2(u)+a_3^2(u)}<1/\|\sqrt{\epsilon_i^4+1}\|_q$, since $\epsilon_i,\dots,\epsilon_{i-k},\epsilon_{i-k}^\prime$ i.i.d. and $\|\epsilon_i\|_{2q}<\infty$, we have $\sup_{u\in[0,1]}\|\mf W(u,\Upsilon_i)\|_q<\infty$ and
    $$
        \sup_{u\in[0,1]}\left\| \mf W(u,\Upsilon_i)- \mf W(u,\Upsilon_{i,i-k})\right\|_q = O(\chi^k), 
    $$
    where $\chi=\sup_{u\in[0,1]}\sqrt{a_2^2(u)+a_3^2(u)}\|\sqrt{\epsilon_i^4+1}\|_q$.
    Note that $H(u,\Upsilon_i)\geq 0$ and
    \begin{align}
        |H^{1/2}(u,\Upsilon_i)-H^{1/2}(u,\Upsilon_{i,i-k})|^2&\leq |H(u,\Upsilon_i)-H(u,\Upsilon_{i,i-k})|,\notag\\
        &\leq |\mf W(u,\Upsilon_i)- \mf W(u,\Upsilon_{i,i-k})|,
    \end{align}
    we further have
        $$
        \sup_{u\in[0,1]}\left\| H^{1/2}(u,\Upsilon_i)- H^{1/2}(u,\Upsilon_{i,i-k})\right\|_{2q} = O(\chi^{k/2}).
    $$
    Then for $k\geq 1$, by Holder's inequality
    $$
    \sup_{u\in[0,1]}\|G(u,\Upsilon_i)-G(u,\Upsilon_{i,i-k})\|_q\leq \|\epsilon_i\|_{2q} \sup_{u\in[0,1]}\|H^{1/2}(u,\Upsilon_i)-H^{1/2}(u,\Upsilon_{i,i-k})\|_{2q}=O(\chi^{k/2}).
    $$
    Together with the boundedness of $\|\mf W(u,\Upsilon_i)\|_q$, we finally have $\delta_{q}(G,k) = O(\chi^{k/2}).$
    For the general GARCH model, we refer to Example 9 in \cite{wu2011asymptotic} with Proposition 3 in \cite{wu2005linear}.}

\TPR{The process $\mf G(i/n,\Upsilon_i),i=1,\dots,n$ in model (iv)-(vi) is not stationary when the time-varying curve $a_k(i/n),k=1,2,3$ changes with $i$. When $a_k(u)$ is Lipschitz's continuous, then the nonstationarity can be regarded as \textbf{locally stationary}. We also give an example for discontinuous $a_k(u)$ as follows.
\begin{eg}[Piecewise locally stationary model] 
Let $\left\{\eta_i\right\}_{i \in \mathbb{Z}}$ be i.i.d. p-dimensional random vector and $\mf e_{i}=\mf G(t_i,\Upsilon_i),t_i=i/n$ where $\mf G(u,\Upsilon_i)= \mf G_0\left(u, \Upsilon_i\right)$ for $u\in[0,2/5]$, $\mf G(u,\Upsilon_{i})=\mf G_1\left(u, \Upsilon_i\right)$ for $u\in(2/5, 1]$, and for any fixed $u\in[0,1]$,
$$
\begin{gathered}
\mf G_0\left(u, \Upsilon_i\right)=0.5 \sin (\pi u) \mf G_0\left(u, \Upsilon_{i-1}\right)+\eta_i+(0.2-0.5 u) \eta_{i-1} \\
\mf G_1\left(u, \Upsilon_i\right)=(0.5-u) \mf G_1\left(u, \Upsilon_{i-1}\right)+\eta_i+\frac{(u-0.2)^2}{2} \eta_{i-1}
\end{gathered}
$$
\end{eg}}

\TPR{If $\|\eta_i\|_q\leq C\sqrt{p}$, we have that for any fixed $u\in[0,1]$, the dependence measure $\delta_q(\mf G,k)\leq \delta_q(\mf G,k-1)/2$ when $k\geq 2$ and $\delta_q(\mf G,k)\leq C\sqrt{p}$ for $k=0,1$. Then we have $\delta_q(\mf G,k)=O(\sqrt{p}\chi^k)$ where constant $\chi=1/2$.}

\begin{eg}[High-dimensional linear process]\label{ex4.1}
    Consider for $t\in [0,1]$, $\mf B_j(t)$, $j\in \mathbb Z$, are time-varying $p\times p'$ matrices ($p'$ is fixed) and
	\begin{align}
		\mf G(t_i,\Upsilon_h)=:\sum_{j=0}^{\infty} \mf B_{j}(t_i) \boldsymbol{\epsilon}_{h-j}=(G_{1}(t_i,\Upsilon_h),...,G_{p}(t_i,\Upsilon_h))^\top\notag
	\end{align}
    where $\boldsymbol{\epsilon}_i=(\epsilon_{i,1},\dots,\epsilon_{i,p'})^{\top}$, $\left(\epsilon_{i,s}\right)_{i\in\mathbb{Z},1\leq s\leq p'}$ are i.i.d. $\sigma^2$-sub-Gaussian variables. 
	Then for  $1\leq v\leq p$,
	\begin{align}	
    G_v(t_{i},\Upsilon_h)=\sum_{j=0}^\infty \sum_{s=1}^{p'} b_{j,v,s}(t_i)\epsilon_{h-j,s}=\sum_{j=-\infty}^{h}\sum_{s=1}^{p'} b_{h-j,v,s}(t_i) \epsilon_{j,s}\notag
	\end{align}
	where $\mf b_{j,v}(t)=(b_{j,v,1}(t),....,b_{j,v,p'}(t))$ is the $v_{th}$ row vector of the matrix $\mf B_j(t)$.
\end{eg}
Then in proposition \ref{PfEx4.1} we gives mild conditions under which the model in Example \eqref{ex4.1} satisfies conditions \hyperref[(B1)]{(B1)}, \hyperref[(B2)]{(B2)}, \hyperref[(B3)]{(B3)} with $\Theta(p)\asymp p^{1/2}$, and fulfills \hyperref[(B5)]{(B5)}.

\begin{prop}\label{PfEx4.1}
    Consider the Example \ref{ex4.1}. Then if 
    $\max_{1\leq v\leq p}\sup_{t\in[0,1]}\sum_{j=0}^\infty\sum_{s=1}^{p'}b^2_{j,v,s}(t)<\infty,\max_{1\leq v\leq p}\sup_{t\in [0,1]}(\sum_{s=1}^{p'} b^2_{k,v,s}(t))^{1/2}=O(\chi^k),
    $ and
    \begin{equation}
            \max_{1\leq v\leq p}\sum_{i=1}^n\sum_{j=0}^\infty\sum_{s=1}^{p'} \left|  b_{j,v,s}(t_i)-b_{j,v,s}(t_{i-1})\right|<\infty,\label{eq:bdd variation}
    \end{equation}
    then Example \eqref{ex4.1} satisfies conditions \hyperref[(B1)]{(B1)}, \hyperref[(B2)]{(B2)}, \hyperref[(B3)]{(B3)} with $\Theta(p)\asymp p^{1/2}$, and fulfills \hyperref[(B5)]{(B5)}. Moreover, if $\sup_{t\in[0,1]}|b'_{j,v,s}(t)|\leq B_{j,v,s}$ and replacing \eqref{eq:bdd variation} with $\max_{1\leq v\leq p}\sum_{j=0}^\infty\sum_{s=1}^{p'} B_{j,v,s}<\infty$, then Assumptions \hyperref[(B1)]{(B1)}, \hyperref[(B2)]{(B2)}, \hyperref[(B3)]{(B3)} and \hyperref[(B5)]{(B5)} also hold and Example \ref{ex4.1} can be viewed as high-dimensional locally stationary process; see also \cite{wu2020adaptive}.
\end{prop}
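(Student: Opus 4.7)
The plan is to verify each of \hyperref[(B1)]{(B1)}, \hyperref[(B2)]{(B2)}, \hyperref[(B3)]{(B3)}, and \hyperref[(B5)]{(B5)} in turn by exploiting the linearity of $\mf G(u,\Upsilon_i)$ in the i.i.d.\ sub-Gaussian innovations $\boldsymbol\epsilon_{h-j}$, then close the ``Moreover'' clause by showing the Lipschitz hypothesis implies the bounded-variation hypothesis up to a constant. Throughout, I would use two elementary facts: (a) any finite linear combination of $\sigma^2$-sub-Gaussian coordinates is sub-Gaussian with variance proxy equal to $\sigma^2$ times the squared $\ell^2$-norm of the coefficient vector, so all $\mathcal{L}^q$ moments are controlled by the $\mathcal{L}^2$ moment up to a constant depending only on $q$; and (b) orthogonality of $\boldsymbol\epsilon_{-j}$ across $j$ yields clean Frobenius-norm identities for variances.

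For \hyperref[(B1)]{(B1)}, I would compute $\mathbb E|\mf G(u,\Upsilon_0)|^2=\sigma^2\sum_{v=1}^p\sum_{j=0}^{\infty}\sum_{s=1}^{p'}b_{j,v,s}^2(u)$ and bound it by $\sigma^2 p\cdot\max_v\sup_t\sum_{j,s}b_{j,v,s}^2(t)=O(p)$, so $\|\mf G(u,\Upsilon_0)\|_q=O(\sqrt p)$, matching $\Theta(p)\asymp p^{1/2}$. For \hyperref[(B2)]{(B2)}, observe $\mf G(u,\Upsilon_i)-\mf G(u,\Upsilon_{i,i-k})=\mf B_k(u)(\boldsymbol\epsilon_{i-k}-\boldsymbol\epsilon'_{i-k})$, whose $\mathcal{L}^2$ norm equals $\sqrt{2}\sigma\,\|\mf B_k(u)\|_F\leq\sqrt{2}\sigma\sqrt{p}\max_v(\sum_s b_{k,v,s}^2(u))^{1/2}=O(\sqrt p\,\chi^k)$; again the sub-Gaussian bootstrap upgrades the $\mathcal L^2$ bound to $\mathcal L^q$. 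For \hyperref[(B5)]{(B5)}, note $e_{i,k}=G_k(t_i,\Upsilon_i)$ is a linear combination of sub-Gaussian variables with variance proxy bounded uniformly in $(i,k)$ by $\sigma^2\sup_t\sum_{j,s}b_{j,k,s}^2(t)$, which is finite by the first hypothesis; the standard sub-Gaussian MGF estimate then yields $\mathbb E[\exp(t|e_{i,k}|)]\leq C$ for all small $t$.

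The load-bearing step is \hyperref[(B3)]{(B3)}. Writing $\Delta_{j,v,s,i}=|b_{j,v,s}(t_i)-b_{j,v,s}(t_{i-1})|$, the $\mathcal L^2$ identity gives
\begin{equation*}
\|\mf G(t_i,\Upsilon_0)-\mf G(t_{i-1},\Upsilon_0)\|=\sigma\Bigl(\sum_{v,j,s}\Delta_{j,v,s,i}^2\Bigr)^{1/2}.
\end{equation*}
My plan is to combine the elementary inequality $(\sum_{j,s}a^2)^{1/2}\leq\sum_{j,s}a$ on the inner indices with the $\ell^2$-to-$\ell^1$ bound $(\sum_v x_v^2)^{1/2}\leq\sqrt p\max_v x_v$ on $x_v=\sum_{j,s}\Delta_{j,v,s,i}$, then interchange $\sum_i\max_v$ and invoke the bounded-variation hypothesis \eqref{eq:bdd variation} to cap the resulting double sum by a constant. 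This is the delicate place: the naive bound $\sum_i\max_v(\cdot)\leq\sum_v\sum_i(\cdot)$ would produce an unwanted factor of $p$, so I would need to exploit that the total variation is controlled \emph{uniformly} in $v$ (condition \eqref{eq:bdd variation}), treating the outer $\sum_i$ as variation along a single piecewise-smooth path and using summation-by-parts style manipulation to keep the dimensional factor at $\sqrt p$ rather than $p$.

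For the ``Moreover'' clause I would simply observe that $|t_i-t_{i-1}|=1/n$ combined with $\sup_t|b'_{j,v,s}(t)|\leq B_{j,v,s}$ gives $\Delta_{j,v,s,i}\leq B_{j,v,s}/n$, hence $\sum_{i=1}^n\sum_{j,s}\Delta_{j,v,s,i}\leq\sum_{j,s}B_{j,v,s}$; taking $\max_v$ reduces the new hypothesis to the bounded-variation hypothesis, and the four assertions follow from the argument above. The main obstacle, and the only nontrivial part, is the bookkeeping in \hyperref[(B3)]{(B3)} needed to preserve the $\sqrt p$ scaling; I anticipate that a more careful decomposition of the sum over $i$ into regimes where a fixed coordinate $v$ dominates (or equivalently an application of Minkowski in $(v,i)$) will be required to obtain the clean $\Theta(p)\asymp p^{1/2}$ conclusion claimed in the statement.
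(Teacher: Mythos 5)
Your verifications of (B1), (B2), and (B5) essentially coincide with the paper's: the paper runs Burkholder's inequality to bound $\|\mathbf G(u,\Upsilon_0)\|_q$, $\delta_q(\mathbf G,k)$, and $\|\mathbf G(t_i,\Upsilon_0)-\mathbf G(t_{i-1},\Upsilon_0)\|_q$ by Frobenius-type norms of the coefficient arrays, whereas you invoke the $\mathcal L^2$-to-$\mathcal L^q$ equivalence for sub-Gaussian linear forms; for this example the two tools are interchangeable and give identical $\Theta(p)\asymp p^{1/2}$ rates.

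The concern you flag about (B3) is genuine, but the repair you propose cannot close it. Minkowski's inequality on $(v,i)$ reads $\bigl(\sum_v(\sum_i x_{v,i})^2\bigr)^{1/2}\leq\sum_i\bigl(\sum_v x_{v,i}^2\bigr)^{1/2}$ for $x_{v,i}\geq 0$, which bounds the \emph{wrong} side: what (B3) requires you to control is precisely the right-hand quantity $\sum_i(\sum_v x_{v,i}^2)^{1/2}$, while \eqref{eq:bdd variation} controls only $\max_v\sum_i x_{v,i}$. In fact the $p^{1/2}$ scaling does not follow from \eqref{eq:bdd variation} alone. Take $p'=1$, $\mathbf B_j\equiv 0$ for $j\geq 1$, and $b_{0,v,1}(t)=\mathbb 1[t>v/p]$: every hypothesis of the proposition holds (each coordinate has unit total variation, $\sup_t b_{0,v,1}^2\leq 1$, decay in $k$ is trivial), yet the $p$ coordinate jumps occur at $p$ distinct time indices $i$ and each contributes $\asymp 1$ to $\|\mathbf G(t_i,\Upsilon_0)-\mathbf G(t_{i-1},\Upsilon_0)\|$, so the left side of (B3) is $\asymp p$, not $O(\sqrt p)$. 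The paper's own argument does not supply the missing step here; after stating the Burkholder bound it simply asserts that (B3) is satisfied, and this assertion requires strictly more than \eqref{eq:bdd variation}.

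For the ``Moreover'' clause, do not reduce the Lipschitz hypothesis back to \eqref{eq:bdd variation}, since that discards exactly the uniform $1/n$ control on each increment that makes $\sqrt p$ attainable. Use $|b_{j,v,s}(t_i)-b_{j,v,s}(t_{i-1})|\leq B_{j,v,s}/n$ directly:
\begin{equation*}
\sum_{i=2}^n\Bigl(\sum_{v,j,s}\bigl(b_{j,v,s}(t_i)-b_{j,v,s}(t_{i-1})\bigr)^2\Bigr)^{1/2}\leq\Bigl(\sum_v\Bigl(\sum_{j,s}B_{j,v,s}\Bigr)^2\Bigr)^{1/2}\leq\sqrt p\,\max_{v}\sum_{j,s}B_{j,v,s}=O(\sqrt p),
\end{equation*}
which establishes (B3) with $\Theta(p)\asymp\sqrt p$ cleanly in the Lipschitz regime. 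Your reduction would re-inherit the gap above, so it should be replaced by this direct Cauchy--Schwarz computation.
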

\begin{proof}[Proof of Proposition \ref{PfEx4.1}.]
Observe that $(\sum_{s=1}^{p'} b_{i-j,v,s}(t) \epsilon_{j,s})_{j\leq i}$ are mean $0$ random variables and are independent of each other. Therefore by Burkholder's inequality, we have 
	\begin{align}
		\|\mf G(t,\Upsilon_0)\|_q^2&= \|\sum_{v=1}^p G_{v}^2(t,\Upsilon_0)\|_{q/2}
 \leq \sum_{v=1}^p\| G_v(t,\Upsilon_0)\|_{q}^2,\notag\\
 &\lesssim \sum_{v=1}^p \left\|\sqrt{\sum_{j=0}^\infty\sum_{s=1}^{p'}\left(  b_{j,v,s}(t)\epsilon_{0-j,s} \right)^2 }\right\|_q^2\lesssim\sum_{v=1}^p\sum_{j=0}^\infty\sum_{s=1}^{p'} \left\|\left(  b_{j,v,s}(t)\epsilon_{0-j,s} \right)^2 \right\|_{q/2} ,\notag\\
 &\lesssim \sum_{v=1}^p \sum_{j=0}^\infty\sum_{s=1}^{p'} b^2_{j,v,s}(t)\notag
	\end{align}
	As a consequence, Assumption \hyperref[(B1)]{(B1)} will be satisfied with $\Theta(p)\asymp\sqrt{p}$ if for any $1\leq v\leq p$, 
    $$
    \sup_{t\in[0,1]}\sum_{j=0}^\infty\sum_{s=1}^{p'}b^2_{j,v,s}(t)<\infty.
    $$ 
    For Assumption \hyperref[(B1)]{(B1)}, by definition \eqref{phd} and Burkholder's inequality, for $q\geq 2$,
	\begin{align}
		\delta_q^2(\mathbf{G},k)&\leq \sup_{t\in[0,1]}\sum_{v=1}^p\left\| \sqrt{\sum_{s=1}^{p'}b^2_{k,v,s}(t)(\epsilon_{0,s}-\epsilon_{0,s}')^2  } \right\|_{q}^2\notag\\
    &\lesssim \sup_{t\in[0,1]}\sum_{v=1}^p\sum_{s=1}^{p'} b_{k,v,s}^2(t).\notag
	\end{align}
	Therefore, Assumption \hyperref[(B1)]{(B1)} will hold with $\Theta(p)\asymp\sqrt{p}$ if
	$\sup_{t\in [0,1]}(\sum_{s=1}^{p'} b^2_{k,v,s}(t))^{1/2}$ is $O(\chi^k)$ for any $1\leq v\leq p$. For Assumption \hyperref[(B3)]{(B3)}, by Burkholder's inequality, for $q\geq 2$,
    \begin{align}
		\|\mf G(t_i,\Upsilon_0)-\mf G(t_{i-1},\Upsilon_0)\|_q^2&\leq \sum_{v=1}^p\| G_v(t_{i},\Upsilon_0)-G_{v}(t_{i-1},\Upsilon_0)\|_{q}^2,\notag\\
 &\lesssim\sum_{v=1}^p \left\|\sqrt{\sum_{j=0}^\infty\sum_{s=1}^{p'}\left(  b_{j,v,s}(t_i)-b_{j,v,s}(t_{i-1})\right)^2 \epsilon^2_{0-j,s}  }\right\|_q^2\notag\\
 &\lesssim\sum_{v=1}^p \sum_{j=0}^\infty\sum_{s=1}^{p'} \left(  b_{j,v,s}(t_i)-b_{j,v,s}(t_{i-1})\right)^2\notag
	\end{align}
    Thus \hyperref[(B3)]{(B3)} is satisfied with $\Theta(p)\asymp \sqrt{p}$ and 
    $
    \max_{1\leq v\leq p}\sum_{i=1}^n\sum_{j=0}^\infty\sum_{s=1}^{p'} \left|  b_{j,v,s}(t_i)-b_{j,v,s}(t_{i-1})\right|<\infty.$
    For Assumption \hyperref[(B5)]{(B5)}, note that $\boldsymbol{\epsilon}_i=(\epsilon_{i,1},\dots,\epsilon_{i,p'})^{\top}$, $\left(\epsilon_{i,s}\right)_{i\in\mathbb{Z},1\leq s\leq p'}$ are i.i.d. $\sigma^2$-sub-Gaussian variables and 
    $$
    \max_{1\leq v\leq p}\sup_{t\in[0,1]}\sum_{j=0}^\infty\sum_{s=1}^{p'}b^2_{j,v,s}(t)<\infty,
    $$
    then we have $G_v(t_{i},\Upsilon_i)=\sum_{j=0}^\infty \sum_{s=1}^{p'} b_{j,v,s}(t_i)\epsilon_{i-j,s}$ is $\bar{\sigma}^2$-sub-Gaussian variable for any $1\leq v\leq p$ and $1\leq i\leq n$ with universal proxy variance
    $$
    \bar{\sigma}^2=:\max_{1\leq v\leq p}\sup_{t\in[0,1]}\sum_{j=0}^\infty\sum_{s=1}^{p'}b^2_{j,v,s}(t)\sigma^2,
    $$
    which means \hyperref[(B5)]{(B5)} holds.
\end{proof}  

\subsection{Discussion}
\label{sec:discussion supp}
\subsubsection{Discussion on reconciling bounded time interval with $n\rightarrow\infty$}
\label{sec:reconcile discussion}
\TPR{The formulation $\mf e_{i,n}=\mf G(i/n,\Upsilon_i)$ is different from `infill' framework. Our theorem framework provides a different way of reconciling the theoretical asymptotic where $n\rightarrow\infty$ to the scenario when time interval remains bounded. The key points are the smoothness Assumption \hyperref[(A1)]{(A1)} on trend function $m(i/n)$ and Assumption \hyperref[(C1)]{(C1)} on kernel function.By solving the least square \eqref{eq:high-dim ll def} and applying Taylor's expansion (detailed proof can be seen in Lemma \ref{local linear stochastic expansion}), we can obtain the following stochastic expansion
$$
    \max_{1\leq k\leq p}\sup_{t\in\mathfrak{T}_{n,k}}\left|\hat{m}_{k}(t)-m_k(t)-\frac{\kappa_2 m^{\prime\prime}_k(t)}{2} h_{r,k}^2-\frac{1}{n h_{r,k}} \sum_{i=1}^n e_{i,k} K_r\left(\frac{t_i-t}{h_{r,k}}\right)\right|=O\left(h_r^3+\frac{1}{n h_r}\right).
$$
Via the above stochastic expansion, we can approximate all $\hat m_k(t)-m_k(t)$ uniformly over a {\it bounded time interval} by the partial sum of process $\mf e_{i,n},i=1,\dots,n$, i.e.,  $\frac{1}{nh_{r,k}}\sum_{i=1}^n e_{i,k}K_r(\frac{t_i-t}{h_{r,k}})$.
To construct a Gaussian process mimicking the stochastic behavior of $\frac{1}{nh_{r,k}}\sum_{i=1}^n e_{i,k}K_r(\frac{t_i-t}{h_{r,k}})$, we apply the high-dimensional Gaussian approximation theory in \cite{Mies2022seq_high-dim}. In other words, we establish a non-asymptotic Gaussian approximation result instead of asymptotic normality in Proposition \ref{prop:GA in high-dim} utilizing the smoothness assumptions on trend and kernel functions, as well as the assumptions \hyperref[(B1)]{(B1)}-\hyperref[(B5)]{(B5)} on $\mf G(u,\Upsilon_i),u\in[0,1]$.}

\TPR{Based on the non-asymptotic Gaussian approximation in Proposition \ref{prop:GA in high-dim}, we proposed bootstrap algorithms to mimic the finite sample behavior of the maximal deviation of the proposed monotone estimator with diminishing approximation error as $n\rightarrow \infty.$ We show that the yielding SCBs are asymptotically correct with shrinking width in simulation. 
We thus reconcile the theoretical asymptotic where $n\rightarrow \infty$ yet the time interval remains bounded. }

\subsubsection{Discussion on the context of our monotone estimation}
\label{sec:context discussion}
\TPR{The context of our results is significantly different from the monotone estimation in \cite{dette2006simple}. The properties of jackkinfe estimator $\tilde m_k(t)$ are not known from \cite{dette2006simple} since it is not used by \cite{dette2006simple}. We study the bias of $\tilde m_k(t)$ and its stochastic variation in Lemma \ref{local linear stochastic expansion}.}

\TPR{We consider nonstationary high-dimensional time series and time series regression, while \cite{dette2006simple} considers an independent sequence. Moreover, \cite{dette2006simple} focuses on the inference on a fixed point while we study the simultaneous inference. Therefore, we have to propose new bootstrap methods for inference, which are also theoretically justified. Due to the different settings and different statistical approaches for different purposes, our assumptions are significantly different from \cite{dette2006simple}. }
\textcolor{black}{
\begin{description}
        \item[(1)] For assumptions on the monotone trend function, we assume $p$-dimensional strictly monotone function with twice continuously differentiable in \hyperref[(A1)]{(A1)}-\hyperref[(A2)]{(A2)} while \cite{dette2006simple} assumes such trend function in 1 dimension. Furthermore, our theorem framework covers the time-varying linear regression model in which the time-varying coefficient follows the monotone condition while \cite{dette2006simple} considered fitting the mean trend.   
        \item[(2)] For assumptions on dependence, we make Assumptions \hyperref[(B1)]{(B1)}-\hyperref[(B5)]{(B5)} to allow for a nonstationary high-dimensional process or \hyperref[(B1')]{(B1')}-\hyperref[(B5')]{(B5')} to accommodate a regression model with nonstationary covariates and errors. In contrast, \cite{dette2006simple} imposes independence assumptions in a one-dimensional setting.
    \end{description}
    }

\subsubsection{Discussion on minimal volume}
\label{sec:width dicussion}
As mentioned before, now we have two approaches to construct SCB for the monotone estimator $\hat{{m}}_I(t)$. The former conservative approach is to apply the monotone rearrangement on local linear estimator $\hat{ m}(t)$, upper and lower bounds of local linear estimator's SCB. This improving procedure admits a narrower SCB that contains the monotone estimator $\hat{ m}_I(t)$ with a conservative significance level. The new way is proposed in this paper, which directly generates SCB for $\hat{  m}_I(t)- m(t)$ by its Gaussian approximation. Although both simulation and empirical study indicate our new SCB is narrower, deriving SCB with minimal length under the monotone condition is still an open question.  In \cite{zhou2010simultaneous}, a Lagranger's multiplier is applied for deriving the minimal length of SCB, however, the improving procedure such as the simple example $\tilde m_L,\tilde m_U$ in Proposition \ref{id-improving} enlarges the range of candidates for the optimization problem from real value space to random functional space. More precisely, the optimization problem that allows monotone improving is formulated as 
\begin{align*}
    &\min_{m_U\in\mathcal{U},m_L\in\mathcal{L}}\int_0^1 m_U(t)-m_L(t)dt\\
    & P(m_L(t) \leq m\left(t\right) \leq m_U(t), \forall t\in[0,1]) \rightarrow 1-\alpha,
\end{align*}
In most nonparametric problems, $U=:m_U-\hat m$ and $L=:m_L-\hat m$ are expected to be nonrandom functions determined by limiting distributions such that the length of SCB is not random. If one wants to simply apply monotone rearrangement on SCB solved from the above optimization problem, then narrower SCB $[\tilde m_U,\tilde m_L]$ is produced if $m_U$ or $m_L$ obtained from the optimization breaks the monotone constraint. This causes the length is again a random variable, which makes it hard to further illustrate whether the improved SCB attains the minimal width. However, if one only wants a constant SCB, which means $U=:m_U-\hat m$ and $L=:m_L-\hat m$ are expected to be critical constant values in $\mathbb{R}$, then our framework can solve this optimization problem and find the SCB with minimal length while the former improving approach in \cite{chernozhukov2009improving} still maintains the random length problem because the origin band deduced from local linear estimator still may violate monotone condition even though only constant band is required. 

\begin{prop}
\label{id-improving}
Suppose $m_L,m_U$ are continuous functions on [0,1]. If \hyperref[(A1)]{(A1)} and \hyperref[(A2)]{(A2)} hold on $m(\cdot)$, then there exists monotone and continuous functions $\tilde m_U(\cdot),\tilde m_L(\cdot)$ s.t. 
\begin{equation}\label{new11.3}
m_L(t)\leq m(t)\leq m_U(t),\forall t\in[a,b]\end{equation} if and only if 
\begin{equation}\label{11.3}
\tilde m_L(t)\leq m(t)\leq \tilde m_U(t), \forall t\in[a,b],
\end{equation}
where $[a,b]\subset [0,1]$. 
\end{prop}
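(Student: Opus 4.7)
The plan is to treat the biconditional directionally: the implication (\ref{11.3}) $\Rightarrow$ (\ref{new11.3}) is trivial since every pair of monotone continuous functions $(\tilde m_L,\tilde m_U)$ serves as an admissible pair of continuous functions $(m_L,m_U)$; simply set $m_L=\tilde m_L$ and $m_U=\tilde m_U$. So all the work lies in the forward direction (\ref{new11.3}) $\Rightarrow$ (\ref{11.3}), where we must upgrade arbitrary continuous bands that happen to bracket $m$ into monotone continuous bands that still bracket $m$.

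For the forward direction, I would construct the monotone envelopes explicitly by running extrema. Define
\begin{equation*}
\tilde m_L(t) = \sup_{s\in[a,t]} m_L(s), \qquad \tilde m_U(t) = \inf_{s\in[t,b]} m_U(s), \qquad t\in[a,b].
\end{equation*}
Monotonicity is immediate from the construction: as $t$ increases the supremum in $\tilde m_L$ is taken over a larger set, and the infimum in $\tilde m_U$ is taken over a smaller set, so both $\tilde m_L$ and $\tilde m_U$ are nondecreasing. Continuity follows from the fact that the running maximum (respectively minimum) of a continuous function on a compact interval is continuous; this is a standard consequence of the uniform continuity of $m_L$ and $m_U$ on $[a,b]$.

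The crucial step is to use the strict monotonicity of $m$ from Assumption (A2) to verify that the enlarged lower envelope and shrunken upper envelope still bracket $m$. For any $s\in[a,t]$, monotonicity of $m$ gives $m_L(s)\le m(s)\le m(t)$, whence taking the supremum over $s\in[a,t]$ yields $\tilde m_L(t)\le m(t)$. Symmetrically, for $s\in[t,b]$ we have $m_U(s)\ge m(s)\ge m(t)$, so $\tilde m_U(t)\ge m(t)$. Thus (\ref{11.3}) holds on $[a,b]$. The bracketing $\tilde m_L\le \tilde m_U$ is then automatic since both sandwich $m$.

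No step is really an obstacle here; the argument is elementary. The only mild point worth verifying carefully is the continuity of the running extrema on the compact interval $[a,b]$, which uses uniform continuity of $m_L,m_U$. The essential reason the proposition works is that strict monotonicity of $m$ (guaranteed by (A1)--(A2)) allows us to propagate the pointwise bounds from nearby points into monotone ``envelopes'' without losing the inclusion relation to $m$.
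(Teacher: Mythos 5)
Your construction of the running envelopes $\tilde m_L(t)=\sup_{s\in[a,t]} m_L(s)$ and $\tilde m_U(t)=\inf_{s\in[t,b]} m_U(s)$ is identical to the paper's, and your direct proof of $(\ref{new11.3})\Rightarrow(\ref{11.3})$ (using monotonicity of $m$ to propagate $m_L(s)\le m(s)\le m(t)$ across $s\in[a,t]$ and take the supremum) is essentially the same as the paper's contrapositive argument — you just prove it directly while the paper assumes $\tilde m_L(t_0)>m(t_0)$ and derives a violation of $(\ref{new11.3})$. Your explicit remark about continuity of running extrema via uniform continuity is slightly more careful than the paper, which simply asserts it.

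However, your handling of the other direction $(\ref{11.3})\Rightarrow(\ref{new11.3})$ is not a valid argument as written. You claim it is ``trivial since every pair of monotone continuous functions $(\tilde m_L,\tilde m_U)$ serves as an admissible pair of continuous functions $(m_L,m_U)$; simply set $m_L=\tilde m_L$ and $m_U=\tilde m_U$.'' But $m_L$ and $m_U$ are fixed, given functions in the hypothesis of the proposition — you are not free to redefine them. What you actually need, and what the paper observes, is that by construction $\tilde m_L(t)\geq m_L(t)$ (take $s=t$ in the supremum) and $\tilde m_U(t)\leq m_U(t)$ (take $s=t$ in the infimum), so the band $[\tilde m_L,\tilde m_U]$ sits inside $[m_L,m_U]$ pointwise; hence if the tighter band covers $m$, the looser one does too. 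The fix is a single line, but your stated reasoning for this direction, as written, is a non sequitur and does not establish the implication.
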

\begin{proof}[Proof of Proposition \ref{id-improving}.]
Consider $\tilde m_L(t)=:\max_{u\in[a,t]}m_L(u)$ and $\tilde m_U(t)=:\min_{u\in[t,b]}m_L(u)$ which are  monotone and continous functions.\\
\textbf{Proof of \eqref{11.3} $\Rightarrow$ \eqref{new11.3}:} By definition,
    $$ 
    \tilde m_L(t)=\max_{u\in[a,t]}m_L(u)\geq m_L(t),\quad
    \tilde m_U(t)=\min_{u\in[t,b]}m_U(u)\leq m_U(t),
    $$
    thus $[m_L,m_U]$ covers $m$ if $[\tilde m_L,\tilde m_U]$ covers $m$.\\
\textbf{Proof of \eqref{new11.3} $\Rightarrow$ \eqref{11.3}:} (Contrapositive) Suppose \eqref{11.3} is not true. If there exists $t_0\in [a,b]$ s.t. $\tilde m_L(t_0)>m(t_0)$ then by the definition, there exists $t^*\in [a,t_0]$ s.t. $m_L(t^*)=\tilde m_L(t_0)$. Note that $m(\cdot)$ is increasing, thus  
    $m_L(t^*)=\tilde m_L(t_0)>m(t_0)\geq m(t^*),$
    which implies \eqref{new11.3} is also not true. Similar arguments yield that if there exists $t_0\in [a,b]$ s.t. $\tilde m_U(t_0)<m(t_0)$, then $\exists t^*\in[t_0,b]$ s.t. $m_U(t^*)<m(t^*)$, which means \eqref{new11.3} is not true.\end{proof} 

\subsubsection{Discussion on interpolation for missing values}
\label{sec:interpolation}
\TPM{
There are 54 missing data points out of the 14580 total observations (across 27 stations from 1979 to 2023). We believe that the missing values have a minimal effect, as the missing points represent a very small fraction of the overall dataset. We apply several interpolation methods and compare the associated empirical study results. In specific, the first interpolation is the basic linear interpolation, which is the method we used in the main paper (implemented by R function `approx()'). The second is performing cubic spline interpolation which has been discussed in \cite{hall1976spline} (implemented by R function `spline()'). The third method is applying the Kalman filtering to interpolate missing values; see \cite{harvey1984estimating}, \cite{gomez1994estimation}. We use the function `na\_kalman()' in R package `imputeTS' to achieve this.}

\TPM{Table \ref{tab:different interpolations supp} shows the widths of SCB in Figure \ref{fig:joint SCBs} and p-values for \eqref{eq:fundement test} and \eqref{eq:null delta} using different interpolation methods. These results remained robust across different interpolations, demonstrating that the statistical findings are not sensitive to the choice of methods for preprocessing missing values. The results for the time-varying regression model \eqref{tmax-SD} in Section \ref{regressionsection} of the supplement remain unchanged, as neither the Heathrow nor Lerwick station has missing values in their historical data from 1979 to 2023.}
\begin{table}[H]
    \centering
    {\renewcommand{\arraystretch}{0.5}
    \begin{tabular}{cccc}
    \hline
       Interpolations & linear & spline & kalman\\
    \hline
       90\% SCBs in Figure \ref{fig:joint SCBs} & 0.7963 & 0.7977 & 0.8013 \\
       95\% SCBs in Figure \ref{fig:joint SCBs} & 0.9261 & 0.9296 & 0.9255\\
       p-value for \eqref{eq:fundement test}  & 0.005 & 0.006 & 0.006\\
       p-value for \eqref{eq:null delta} & 0.04 & 0.038 & 0.039\\
    \hline
    \end{tabular}}
    \setlength{\belowcaptionskip}{-0.5cm}
  \captionsetup{font={small, stretch=1},skip=0cm} 
    \caption{\TPM{Widths of SCB and $p$-values w.r.t. different interpolation methods. Interpolation method `linear' is our original method implemented by function `approx()' in R; Interpolation `spline' stands for performing cubic spline interpolation implemented by function `spline()' in R; Interpolation `kalman' represents using Kalman filtering for interpolation, which is implemented by R package `imputeTS'.}}
    \label{tab:different interpolations supp}
\end{table}

\subsection{Additional simulation}
\label{sec: additional simulation}

\subsubsection{Widths of SCB in Section \ref{Simulation study}}
\label{sec: widths of SCB in simulation}
\TPR{To justify the validity of our SCB method for monotone regression, Table \ref{tab:SCB width high supp} and Table \ref{tab:SCB width high zhou supp} respectively report the lengths of our joint monotone SCBs in the Section \ref{Simulation study} of main paper and the corresponding joint SCBs of local linear estimator without monotone constraint.
We also note that those conservative monotone SCBs obtained from the so-called ``improving procedure" proposed in \cite{chernozhukov2009improving} have the same width as the unconstrained SCB in high dimension. 
Therefore, we only additionally display the widths of the unconstrained SCB 
in Table \ref{tab:SCB width high zhou supp} for comparison with our monotone SCBs. }

\TPR{The comparison between Table \ref{tab:SCB width high supp} and Table \ref{tab:SCB width high zhou supp} shows that the width of our joint SCBs is narrower than the unconstrained SCBs. This is as expected since the monotonicity under high dimensionality has not been utilized by unconstrained estimator to further narrow the width. }
\begin{table}[htpb]
  \centering
  \setlength{\belowcaptionskip}{-0.5cm}
  \captionsetup{font={small}, skip=-5pt}
  \caption{\TPR{Simulated widths of joint SCBs for our monotone estimator $\hat{\mathbf{m}}_I(t)$}}
    \scalebox{0.725}{
    {\renewcommand{\arraystretch}{0.5}
    \begin{tabular}{ccccccc c cccccc}
    \hline
    $(n=300)$ & \multicolumn{6}{c}{\textit{Model (a)}}& & \multicolumn{6}{c}{\textit{Model (b)}} \\
    \cline{2-7}\cline{9-14}
          & \multicolumn{2}{c}{$p=9$} & \multicolumn{2}{c}{$p=18$} & \multicolumn{2}{c}{$p=27$} & & \multicolumn{2}{c}{$p=9$} & \multicolumn{2}{c}{$p=18$} & \multicolumn{2}{c}{$p=27$} \\
    $h_r$ & 90\%  & 95\%  & 90\%  & 95\%  & 90\%  & 95\% & & 90\%  & 95\%  & 90\%  & 95\%  & 90\%  & 95\% \\
    \hline
    GCV   & 0.3078 & 0.3449 & 0.3168 & 0.3539 & 0.3336 & 0.3708 &       & 0.3513 & 0.3943 & 0.3581 & 0.3993 & 0.3776 & 0.4204 \\
    0.05  & 0.5064 & 0.5675 & 0.5742 & 0.6422 & 0.6213 & 0.6945 &       & 0.5474 & 0.6134 & 0.6367 & 0.7137 & 0.6675 & 0.7450 \\
    0.1   & 0.4032 & 0.4489 & 0.4483 & 0.4969 & 0.4677 & 0.5160 &       & 0.4448 & 0.4958 & 0.4938 & 0.5482 & 0.5208 & 0.5759 \\
    0.15  & 0.3820 & 0.4257 & 0.4234 & 0.4689 & 0.4443 & 0.4902 &       & 0.4294 & 0.4789 & 0.4724 & 0.5230 & 0.4992 & 0.5516 \\
    0.2   & 0.3451 & 0.3846 & 0.3815 & 0.4234 & 0.4011 & 0.4431 &       & 0.3904 & 0.4358 & 0.4294 & 0.4760 & 0.4537 & 0.5023 \\
    0.25  & 0.3122 & 0.3477 & 0.3439 & 0.3815 & 0.3622 & 0.4002 &       & 0.3547 & 0.3958 & 0.3896 & 0.4320 & 0.4111 & 0.4552 \\
    0.3   & 0.3020 & 0.3357 & 0.3309 & 0.3654 & 0.3481 & 0.3833 &       & 0.3445 & 0.3832 & 0.3760 & 0.4159 & 0.3954 & 0.4359 \\
    0.35  & 0.2785 & 0.3095 & 0.3045 & 0.3364 & 0.3206 & 0.3526 &       & 0.3180 & 0.3537 & 0.3470 & 0.3833 & 0.3652 & 0.4021 \\
    \hline
     $(n=500)$ & \multicolumn{6}{c}{\textit{Model (a)}}& & \multicolumn{6}{c}{\textit{Model (b)}} \\
    \cline{2-7}\cline{9-14}
          & \multicolumn{2}{c}{$p=9$} & \multicolumn{2}{c}{$p=18$} & \multicolumn{2}{c}{$p=27$} & & \multicolumn{2}{c}{$p=9$} & \multicolumn{2}{c}{$p=18$} & \multicolumn{2}{c}{$p=27$} \\
    $h_r$ & 90\%  & 95\%  & 90\%  & 95\%  & 90\%  & 95\% & & 90\%  & 95\%  & 90\%  & 95\%  & 90\%  & 95\% \\
    \hline
    GCV   & 0.2541 & 0.2840 & 0.2798 & 0.3110 & 0.2957 & 0.3278 &       & 0.2878 & 0.3216 & 0.3184 & 0.3544 & 0.3377 & 0.3741 \\
    0.05  & 0.2123 & 0.2342 & 0.2336 & 0.2567 & 0.2481 & 0.2723 &       & 0.2340 & 0.2589 & 0.2561 & 0.2816 & 0.2690 & 0.2950 \\
    0.1   & 0.2897 & 0.3208 & 0.3201 & 0.3528 & 0.3375 & 0.3704 &       & 0.3218 & 0.3565 & 0.3574 & 0.3945 & 0.3761 & 0.4136 \\
    0.15  & 0.3002 & 0.3342 & 0.3296 & 0.3653 & 0.3460 & 0.3818 &       & 0.3358 & 0.3735 & 0.3708 & 0.4118 & 0.3912 & 0.4325 \\
    0.2   & 0.2719 & 0.3027 & 0.2946 & 0.3268 & 0.3109 & 0.3436 &       & 0.3058 & 0.3412 & 0.3350 & 0.3718 & 0.3554 & 0.3928 \\
    0.25  & 0.2570 & 0.2861 & 0.2775 & 0.3066 & 0.2924 & 0.3221 &       & 0.2917 & 0.3247 & 0.3161 & 0.3496 & 0.3360 & 0.3704 \\
    0.3   & 0.2414 & 0.2684 & 0.2604 & 0.2872 & 0.2744 & 0.3014 &       & 0.2746 & 0.3051 & 0.2970 & 0.3277 & 0.3157 & 0.3470 \\
    0.35  & 0.2166 & 0.2408 & 0.2344 & 0.2586 & 0.2468 & 0.2712 &       & 0.2465 & 0.2740 & 0.2677 & 0.2954 & 0.2837 & 0.3121 \\
    \hline
    $(n=1000)$ & \multicolumn{6}{c}{\textit{Model (a)}}& & \multicolumn{6}{c}{\textit{Model (b)}} \\
    \cline{2-7}\cline{9-14}
          & \multicolumn{2}{c}{$p=9$} & \multicolumn{2}{c}{$p=18$} & \multicolumn{2}{c}{$p=27$} & & \multicolumn{2}{c}{$p=9$} & \multicolumn{2}{c}{$p=18$} & \multicolumn{2}{c}{$p=27$} \\
    $h_r$ & 90\%  & 95\%  & 90\%  & 95\%  & 90\%  & 95\% & & 90\%  & 95\%  & 90\%  & 95\%  & 90\%  & 95\% \\
    \hline
    GCV   & 0.2159 & 0.2397 & 0.2305 & 0.2550 & 0.2366 & 0.2612 &       & 0.2319 & 0.2582 & 0.2554 & 0.2830 & 0.2714 & 0.2994 \\
    0.05  & 0.2005 & 0.2205 & 0.1992 & 0.2187 & 0.1986 & 0.2172 &       & 0.1896 & 0.2090 & 0.2066 & 0.2265 & 0.2188 & 0.2399 \\
    0.1   & 0.2757 & 0.3049 & 0.2791 & 0.3074 & 0.2725 & 0.2990 &       & 0.2683 & 0.2970 & 0.2900 & 0.3194 & 0.3078 & 0.3380 \\
    0.15  & 0.2399 & 0.2657 & 0.2544 & 0.2808 & 0.2602 & 0.2868 &       & 0.2555 & 0.2841 & 0.2807 & 0.3102 & 0.2975 & 0.3278 \\
    0.2   & 0.2123 & 0.2349 & 0.2275 & 0.2508 & 0.2351 & 0.2586 &       & 0.2332 & 0.2588 & 0.2558 & 0.2825 & 0.2704 & 0.2976 \\
    0.25  & 0.1995 & 0.2198 & 0.2133 & 0.2343 & 0.2224 & 0.2437 &       & 0.2219 & 0.2452 & 0.2415 & 0.2659 & 0.2552 & 0.2799 \\
    0.3   & 0.1756 & 0.1938 & 0.1880 & 0.2067 & 0.1970 & 0.2159 &       & 0.1963 & 0.2173 & 0.2136 & 0.2352 & 0.2253 & 0.2472 \\
    0.35  & 0.1568 & 0.1733 & 0.1685 & 0.1853 & 0.1768 & 0.1940 &       & 0.1760 & 0.1952 & 0.1917 & 0.2112 & 0.2022 & 0.2220 \\
        \hline
    \end{tabular}%
  }}
      \label{tab:SCB width high supp}%
\end{table}%

\begin{table}[htpb]
  \centering
  \setlength{\belowcaptionskip}{-0.5cm}
  \captionsetup{font={small}, skip=-5pt}
  \caption{\TPR{Simulated widths of joint SCBs for unconstrained estimator $\hat{\mf m}(t)$}}
  \scalebox{0.725}{
  {\renewcommand{\arraystretch}{0.5}
    \begin{tabular}{ccccccc c cccccc}
    \hline
    $(n=300)$ & \multicolumn{6}{c}{\textit{Model (a)}}& & \multicolumn{6}{c}{\textit{Model (b)}} \\
    \cline{2-7}\cline{9-14}
          & \multicolumn{2}{c}{$p=9$} & \multicolumn{2}{c}{$p=18$} & \multicolumn{2}{c}{$p=27$} & & \multicolumn{2}{c}{$p=9$} & \multicolumn{2}{c}{$p=18$} & \multicolumn{2}{c}{$p=27$} \\
    $h_r$ & 90\%  & 95\%  & 90\%  & 95\%  & 90\%  & 95\% & & 90\%  & 95\%  & 90\%  & 95\%  & 90\%  & 95\% \\
    \hline
    0.05  & 0.4939 & 0.5342 & 0.5296 & 0.5710 & 0.5501 & 0.5915 &       & 0.5675 & 0.6140 & 0.6058 & 0.6532 & 0.6292 & 0.6772 \\
    0.1   & 0.5376 & 0.5861 & 0.5771 & 0.6268 & 0.6022 & 0.6520 &       & 0.6137 & 0.6688 & 0.6579 & 0.7143 & 0.6882 & 0.7454 \\
    0.15  & 0.5197 & 0.5702 & 0.5565 & 0.6079 & 0.5863 & 0.6389 &       & 0.5938 & 0.6508 & 0.6370 & 0.6962 & 0.6719 & 0.7332 \\
    0.2   & 0.4534 & 0.4991 & 0.4892 & 0.5357 & 0.5158 & 0.5636 &       & 0.5209 & 0.5731 & 0.5604 & 0.6136 & 0.5907 & 0.6456 \\
    0.25  & 0.3970 & 0.4373 & 0.4290 & 0.4706 & 0.4522 & 0.4944 &       & 0.4563 & 0.5023 & 0.4913 & 0.5387 & 0.5167 & 0.5655 \\
    0.3   & 0.3502 & 0.3863 & 0.3788 & 0.4159 & 0.3994 & 0.4370 &       & 0.4029 & 0.4449 & 0.4339 & 0.4763 & 0.4564 & 0.5001 \\
    0.35  & 0.3114 & 0.3446 & 0.3376 & 0.3715 & 0.3559 & 0.3900 &       & 0.3579 & 0.3962 & 0.3869 & 0.4260 & 0.4071 & 0.4463 \\
    \hline
     $(n=500)$ & \multicolumn{6}{c}{\textit{Model (a)}}& & \multicolumn{6}{c}{\textit{Model (b)}} \\
    \cline{2-7}\cline{9-14}
          & \multicolumn{2}{c}{$p=9$} & \multicolumn{2}{c}{$p=18$} & \multicolumn{2}{c}{$p=27$} & & \multicolumn{2}{c}{$p=9$} & \multicolumn{2}{c}{$p=18$} & \multicolumn{2}{c}{$p=27$} \\
    $h_r$ & 90\%  & 95\%  & 90\%  & 95\%  & 90\%  & 95\% & & 90\%  & 95\%  & 90\%  & 95\%  & 90\%  & 95\% \\
    \hline
    0.05  & 0.3829 & 0.4136 & 0.4099 & 0.4414 & 0.4245 & 0.4561 &       & 0.4385 & 0.4732 & 0.4680 & 0.5042 & 0.4869 & 0.5234 \\
    0.1   & 0.4165 & 0.4535 & 0.4467 & 0.4850 & 0.4687 & 0.5074 &       & 0.4756 & 0.5181 & 0.5110 & 0.5546 & 0.5378 & 0.5829 \\
    0.15  & 0.4022 & 0.4408 & 0.4335 & 0.4740 & 0.4553 & 0.4963 &       & 0.4576 & 0.5020 & 0.4947 & 0.5408 & 0.5212 & 0.5686 \\
    0.2   & 0.3535 & 0.3890 & 0.3788 & 0.4148 & 0.3981 & 0.4347 &       & 0.4023 & 0.4421 & 0.4325 & 0.4737 & 0.4571 & 0.4993 \\
    0.25  & 0.3095 & 0.3411 & 0.3316 & 0.3636 & 0.3485 & 0.3807 &       & 0.3529 & 0.3890 & 0.3786 & 0.4150 & 0.4011 & 0.4386 \\
    0.3   & 0.2732 & 0.3017 & 0.2935 & 0.3221 & 0.3087 & 0.3375 &       & 0.3116 & 0.3439 & 0.3351 & 0.3681 & 0.3556 & 0.3890 \\
    0.35  & 0.2427 & 0.2684 & 0.2618 & 0.2876 & 0.2754 & 0.3015 &       & 0.2767 & 0.3059 & 0.2988 & 0.3286 & 0.3169 & 0.3476 \\
    \hline
    $(n=1000)$ & \multicolumn{6}{c}{\textit{Model (a)}}& & \multicolumn{6}{c}{\textit{Model (b)}} \\
    \cline{2-7}\cline{9-14}
          & \multicolumn{2}{c}{$p=9$} & \multicolumn{2}{c}{$p=18$} & \multicolumn{2}{c}{$p=27$} & & \multicolumn{2}{c}{$p=9$} & \multicolumn{2}{c}{$p=18$} & \multicolumn{2}{c}{$p=27$} \\
    $h_r$ & 90\%  & 95\%  & 90\%  & 95\%  & 90\%  & 95\% & & 90\%  & 95\%  & 90\%  & 95\%  & 90\%  & 95\% \\
    \hline
    0.05  & 0.3148 & 0.3389 & 0.2974 & 0.3193 & 0.3008 & 0.3235 &       & 0.3108 & 0.3356 & 0.3288 & 0.3536 & 0.3442 & 0.3698 \\
    0.1   & 0.3685 & 0.4021 & 0.3695 & 0.4016 & 0.3591 & 0.3892 &       & 0.3664 & 0.3995 & 0.3918 & 0.4259 & 0.4134 & 0.4484 \\
    0.15  & 0.3085 & 0.3376 & 0.3249 & 0.3543 & 0.3317 & 0.3613 &       & 0.3347 & 0.3672 & 0.3614 & 0.3950 & 0.3820 & 0.4161 \\
    0.2   & 0.2601 & 0.2848 & 0.2770 & 0.3024 & 0.2860 & 0.3118 &       & 0.2884 & 0.3165 & 0.3128 & 0.3424 & 0.3298 & 0.3596 \\
    0.25  & 0.2249 & 0.2462 & 0.2395 & 0.2618 & 0.2494 & 0.2719 &       & 0.2512 & 0.2758 & 0.2720 & 0.2978 & 0.2863 & 0.3124 \\
    0.3   & 0.1977 & 0.2171 & 0.2107 & 0.2307 & 0.2203 & 0.2407 &       & 0.2216 & 0.2440 & 0.2398 & 0.2629 & 0.2521 & 0.2754 \\
    0.35  & 0.1754 & 0.1931 & 0.1874 & 0.2056 & 0.1963 & 0.2149 &       & 0.1969 & 0.2174 & 0.2134 & 0.2344 & 0.2247 & 0.2457 \\
    \hline
    \end{tabular}%
  }}
      \label{tab:SCB width high zhou supp}%
\end{table}%

\subsubsection{Simulation for skewed innovation}
\label{sec:skew simulation}
\TPR{Our theoretical framework for {\it asymptotically correct} joint SCBs allows asymmetric innovations and error processes. To verify this, we add an extra simulation scenario with asymmetric error process under moderate sample size $n=500$ with high dimension $p=27$. We use exponential and log-normal distribution to replace the normal random vectors in model (a) with the same $b(i/n)$ and $\mf \Sigma_e$ in the Section \ref{Simulation study} of main paper:
\begin{description}
    \item[(Exponential)] $\mf G(i/n.\Upsilon_i)=b(i/n)\mf G(i/n,\Upsilon_{i-1})+\mf \xi_i$, where $\mf \xi_i$ i.i.d. and $\mathbb{E}(\xi_i)=0,\operatorname{cov}(\xi_i)=\mf\Sigma_e$. Each entry of $\mf\xi_i=(\xi_{i,1},\dotsm\xi_{i,p})$ follows standardized exponential distribution $\xi_{i,k}\overset{d}{=} \frac{e-\mathbb{E}(e)}{\sqrt{\operatorname{var}(e)}}$, where $e\sim \text{Exp(1)}$.
    \item[(Log-normal)] $\mf G(i/n,\Upsilon_i)=b(i/n)\mf G(i/n,\Upsilon_{i-1})+\mf \xi_i$, where $\mf \xi_i$ i.i.d. and $\mathbb{E}(\xi_i)=0,\operatorname{cov}(\xi_i)=\mf\Sigma_e$. Each entry of $\mf\xi_i=(\xi_{i,1},\dotsm\xi_{i,p})$ follows standardized log-normal distribution $\xi_{i,k}\overset{d}{=} \frac{e-\mathbb{E}(e)}{\sqrt{\operatorname{var}(e)}}$ where $\log(e)\sim N(0,1)$.
\end{description}
By the 400 simulation times, 2000 bootstrap sample size, and same GCV procedure in the simulation set-ups for model (a) in the Section \ref{Simulation study} of main paper, the simulated coverage probabilities and widths for our joint SCBs in Table \ref{tab:skwe SCB supp} show our joint SCBs are asymptotically correct under asymmetric innovations.}
\begin{table}[H]
    \centering
      \setlength{\belowcaptionskip}{-0.5cm}
  \captionsetup{font={small}, skip=5pt}
    \caption{\TPR{Simulated coverage probabilities and widths in skewed innovation.}}
{\renewcommand{\arraystretch}{0.5}
    \begin{tabular}{ccccc}
     \hline
    $(n=500)$ & \multicolumn{2}{c}{coverage} & \multicolumn{2}{c}{width}\\
    $(p=27)$             & 90\% & 95\% & 90\% & 95\% \\
    \hline
    Exponential  & 0.895 & 0.950 & 0.2999 & 0.3325  \\
    Log-normal   & 0.900  & 0.958 & 0.3136 & 0.3504 \\
    \hline
    \end{tabular}}
    \label{tab:skwe SCB supp}
\end{table}

\TPR{We additionally perform a simulation study checking the empirical performance of Gaussian approximation under skewed set-ups. Namely, we compare Gaussian process $\mathcal{V}(t)=(\mathcal{V}_1(t),\dots,\mathcal{V}_p(t))^\top$ in Proposition \ref{prop:GA in high-dim} with the process $\mathcal{E}(t)=(\mathcal{E}_1(t),\dots,\mathcal{E}_p(t))^\top$ where
$$
    \mathcal{V}_k(t)=\frac{m_k'(t)}{Nh_{d,k}nh_{r,k}}\sum_{j=1}^n\sum_{i=1}^N K_{d}\left(\frac{m_k(i/N)-m_k(t)}{h_{d,k}}\right)\tilde K_r^*\left(\frac{j/n-i/N}{h_{r,k}},\frac{i}{N}\right)V_{j,k},
$$
$$
\mathcal{E}_k(t)=:\frac{m_k^{\prime}(t)}{N h_{d, k} n h_{r, k}} \sum_{j=1}^n \sum_{i=1}^N K_d\left(\frac{m_k(i / N)-m_k(t)}{h_{d, k}}\right) \tilde{K}_r^*\left(\frac{j / n-i / N}{h_{r, k}}, \frac{i}{N}\right) e_{j, k}.
$$
Here the Gaussian process $\mf V_i=(V_{i,1},\dots,V_{i,p})^\top,i=1,\dots,n$ shares same long-run covariance structure as $\mf e_{i,n}=(e_{i,1},\dots,e_{i,p})^\top$. We examine how $\max_{1\leq k\leq p}\sup_{t\in\hat{\mathcal{T}}}|\mathcal{V}_k(t)|$ approximates $\max_{1\leq k\leq p}\sup_{t\in\hat{\mathcal{T}}}|\mathcal{E}_k(t)|$ by investigating simulation scenario under the formal skewed exponential and log-normal set-ups at moderate sample size $n=500$ and dimension $p=27$, which is comparable to our real data analysis scenario. The Q-Q plot below shows two distributions are sufficiently close, 
 concluding the adequacy of the Gaussian approximation at a moderate sample size.}
\begin{figure}[H]
    \centering
    \includegraphics[width=0.6\linewidth]{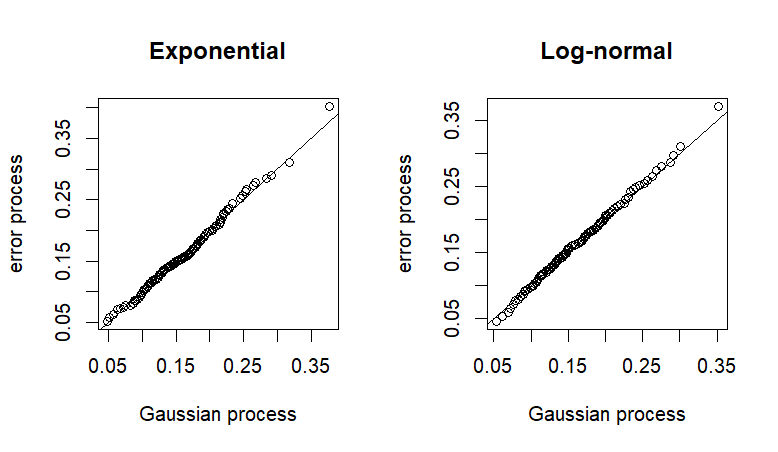}
    \captionsetup{font={small, stretch=1},skip=0cm}    
    \caption{\TPR{Q-Q plot between $\max_{1\leq k\leq p}\sup_{t\in\hat{\mathcal{T}}}|\mathcal{V}_k(t)|$ and $\max_{1\leq k\leq p}\sup_{t\in\hat{\mathcal{T}}}|\mathcal{E}_k(t)|$}}
    \label{fig: QQ plot}
\end{figure}

\subsubsection{GCV selection}
\label{sec:GCV selection}
\TPR{Our simulation results in Table \ref{tab:SCB coverage high main} show that the empirical coverage probabilities of the joint SCBs are close to the nominal levels when bandwidth $h_r$ is larger than $0.1$. Extremely small $h_r = 0.05$ is excluded by the range of the GCV selection criterion in \cite{bai2024difference} that we utilized and the $h_r$ chosen from GCV is larger than $0.1$, as displayed in Figure \ref{fig:GCV hist supp}. 
Together with Table \ref{tab:SCB coverage high main}, our results show that the bandwidth selected by the GCV criterion yields joint SCBs with stable and reasonably accurate coverage probabilities empirically.}
\begin{figure}[htpb]
    \centering
    \includegraphics[width=0.75\linewidth]{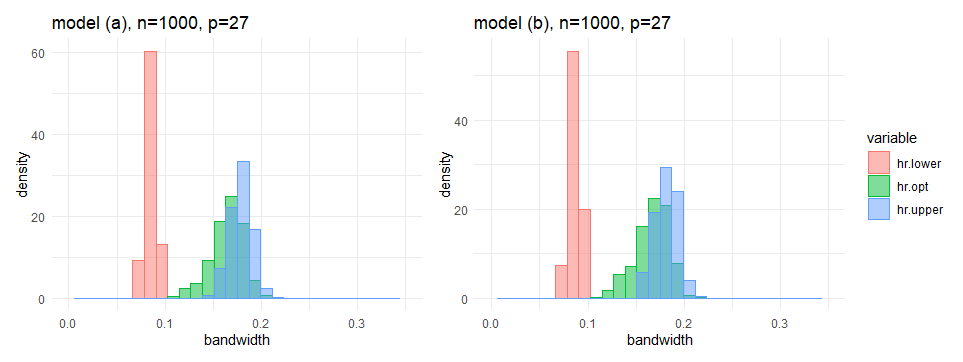}
    \captionsetup{font={small, stretch=1},skip=0cm} 
    \caption{\TPR{$h_r$(green) selected from GCV method and the upper(blue) and lower(red) bounds stand for the candidate range guided by the rule of thumb strategy proposed in \cite{bai2024difference}.}}
    \label{fig:GCV hist supp}
\end{figure}

\subsubsection{Simulation for penalized SCB}
\label{sec:penalization simulation}
\TPM{To verify the penalized SCB, we additionally consider two simulation scenarios by replacing the original strictly monotone functions in Section \ref{Simulation study} with general monotone functions:
\begin{itemize}
    \item $m_1^*(t) = \mathbb{I}_{\{t < \frac{1}{3}\}}(t) \cdot 2e^{1/3}+ \mathbb{I}_{\{t \geq \frac{1}{3}\}}(t) \cdot \left(t^2 - \frac{8t}{3} + \frac{25}{9}\right) \cdot e^{t}$,
    \item $m_2^*(t) = \mathbb{I}_{\{t < \frac{1}{3}\}}(t) \cdot \left(1 + \left(3\left(t - \frac{1}{3}\right)\right)^3\right) + \mathbb{I}_{\{\frac{1}{3} \leq t < \frac{2}{3}\}}(t) \cdot 1 + \mathbb{I}_{\{t \geq \frac{2}{3}\}}(t) \cdot \left(1 + \left(3\left(t - \frac{2}{3}\right)\right)^3\right)$,
    \item $m_3^*(t) = \mathbb{I}_{\{t < \frac{2}{3}\}}(t) \cdot 2 \cdot \cos\left(\frac{3\pi\left(t - \frac{2}{3}\right)}{4}\right) + \mathbb{I}_{\{t \geq \frac{2}{3}\}}(t) \cdot 2$.
\end{itemize}
For simulation, we generate $\mathbf{y}_i=\mathbf{m}^*(i/n)+\mathbf{e}_i$ where $\mathbf{m}^*(t)=\left(\mathbf{A}_1 ^\top m_1^*(t),\mathbf{A}_2^\top m_2^*(t),\mathbf{A}_3^\top m_3^*(t)\right)^\top$. The other settings such as $\mathbf{A}$ and $\mathbf{e}_i$ are the same as models (a) and (b) in Section \ref{Simulation study} of the main paper with bandwidths selected from GCV. Tabel \ref{tab:penal SCB supp} presents the simulated coverage probabilities and widths of our penalized monotone SCBs for no penalization and $C_1=0.1,0.2,0.3$ with $C_2=C_3=C_1^{19/8}$. The penalized SCBs achieve the nominal 90\% and 95\% levels while the original SCB without penalization ($C_1=0$) exhibits poor coverage probabilities when the trend functions contain flat segments. Our penalized SCB can ensure the monotone shape with high probability in theory. In our simulation study, the $\hat{\mathbf{m}}_{correct}$ remains monotone in $100\%$ of 400 simulations for no penalization and $C_1=0.3,0.2,0.1$. According to our proposed criterion for $C_1$, all simulation scenarios would select $C_1=0.3$.}

\begin{table}[H]
    \centering
      \setlength{\belowcaptionskip}{-0.5cm}
  \captionsetup{font={small}, skip=5pt}
    \caption{\TPM{Simulated coverage probabilities and widths of SCB penalized by different $C_1$}}
{\renewcommand{\arraystretch}{0.5}
    \begin{tabular}{cccccccccc}
    \hline
     $(n=500)$&  \multicolumn{4}{c}{Coverage} &  & \multicolumn{4}{c}{Width} \\
     \cline{2-5}\cline{7-10} $(p=27)$ & \multicolumn{2}{c}{Model(a)} & \multicolumn{2}{c}{Model(b)} & & \multicolumn{2}{c}{Model(a)} & \multicolumn{2}{c}{Model(b)} \\
    $C_1$  & 90\% & 95\% & 90\% & 95\% & & 90\% & 95\% & 90\% & 95\%  \\
    \hline
    0.3   & 0.865 & 0.950 & 0.870 & 0.955 &       & 0.3340 & 0.3700 & 0.3780 & 0.4185 \\
    0.2   & 0.838 & 0.933 & 0.845 & 0.955 &       & 0.3249 & 0.3607 & 0.3675 & 0.4074 \\
    0.1   & 0.810 & 0.928 & 0.820 & 0.943 &       & 0.3166 & 0.3522 & 0.3579 & 0.3975 \\
    no penalization     & 0.778 & 0.920 & 0.795 & 0.933 &       & 0.3090 & 0.3441 & 0.3495 & 0.3887 \\
    \hline
    \end{tabular}}
    \label{tab:penal SCB supp}
\end{table}

\subsubsection{Simulation for time-varying linear regression}
\label{sec: simulation for time-varying linear regression}
To examine our approach for time-varying coefficient linear model in Section \ref{Time-varying coefficient regression}, we consider the following model $y_i=m_1(i/n)+m_2(i/n)x_i+e_i,$ where $x_i=\sum_{j=0}^\infty c(t_i)^j\epsilon_{i-j}$ and $c(t)=1/4+t/2$. As for error process $e_i$, we also consider two scenarios:
\begin{description}
[itemsep=0pt,parsep=0pt,topsep=0pt,partopsep=0pt]
    \item[(c)] Locally stationary: $e_i=G(t_i,\Upsilon_i)=\sum_{j=0}^\infty a^j(t_i)\xi_{i-j}$ where $a(t)=0.5-(t-0.5)^2$;
    \item[(d)] Piecewise stationary: $e_i=G^{(0)}(t_i,\Upsilon_i)$ for $i/n\leq 1/3$ and $e_i=G^{(1)}(t_i,\Upsilon_i)$ for $i/n>1/3$ where $G^{(0)}(t_i,\Upsilon_i)=0.5G^{(0)}(t_i,\Upsilon_{i-1})+\xi_i$, $G^{(1)}(t_i,\Upsilon_i)=-0.5G^{(1)}(t_i,\Upsilon_{i-1})+\xi_i,$
\end{description}
where $\xi_l,\epsilon_l,l\in\mathbb{Z}$ are all i.i.d. standard normal variables. Table \ref{tab:SCB coverage regression supp} displays the simulated coverage probabilities for joint SCBs of time-varying coefficients $(m_1(t),m_2(t))^\top$ obtained by Algorithm \ref{alg:multi-dim} using various bandwidths. Similarly to Table \ref{tab:SCB coverage high main}, when using the GCV selected bandwidths, all the coverage probabilities of the induced SCBs are close to the nominal level. The results also show the performance of joint SCBs generated by Algorithm \ref{alg:multi-dim} is reasonably close to the nominal level in most $h_r$ for moderately large sample size $n=500,1000$ and becomes more robust with respect to $h_r$ as the sample size grows.  

\begin{table}[htpb]
  \centering
  \captionsetup{font={small}, skip=5pt}
  \caption{\TPR{Simulated coverage probabilities of SCB for regression coefficients $(m_1(t),m_2(t))^\top$}}
  \scalebox{0.85}{
    {\renewcommand{\arraystretch}{0.5}
    \begin{tabular}{ccccccc c cccccc}
    \hline
     & \multicolumn{6}{c}{\textit{Model (c)}}& & \multicolumn{6}{c}{\textit{Model (d)}} \\
    \cline{2-7}\cline{9-14}
          & \multicolumn{2}{c}{$n=300$} & \multicolumn{2}{c}{$n=500$} & \multicolumn{2}{c}{$n=1000$} & & \multicolumn{2}{c}{$n=300$} & \multicolumn{2}{c}{$n=500$} & \multicolumn{2}{c}{$n=1000$} \\
    $h_r$ & 90\%  & 95\%  & 90\%  & 95\%  & 90\%  & 95\% & & 90\%  & 95\%  & 90\%  & 95\%  & 90\%  & 95\% \\
    \hline
    GCV   & 0.910 & 0.960 & 0.880 & 0.935 & 0.898 & 0.940 &       & 0.905 & 0.958 & 0.863 & 0.938 & 0.900 & 0.953 \\
    0.05  & 0.955 & 0.978 & 0.558 & 0.678 & 0.928 & 0.965 &       & 0.993 & 1.000 & 0.535 & 0.660 & 0.828 & 0.908 \\
    0.1   & 0.938 & 0.960 & 0.930 & 0.958 & 0.943 & 0.978 &       & 0.958 & 0.983 & 0.885 & 0.930 & 0.935 & 0.970 \\
    0.15  & 0.963 & 0.980 & 0.915 & 0.963 & 0.915 & 0.950 &       & 0.925 & 0.965 & 0.883 & 0.935 & 0.910 & 0.958 \\
    0.2   & 0.935 & 0.970 & 0.893 & 0.940 & 0.895 & 0.945 &       & 0.870 & 0.945 & 0.878 & 0.940 & 0.893 & 0.955 \\
    0.25  & 0.910 & 0.960 & 0.880 & 0.930 & 0.878 & 0.933 &       & 0.860 & 0.915 & 0.865 & 0.940 & 0.875 & 0.935 \\
    0.3   & 0.843 & 0.918 & 0.863 & 0.923 & 0.855 & 0.925 &       & 0.848 & 0.920 & 0.860 & 0.935 & 0.880 & 0.925 \\
    0.35  & 0.823 & 0.893 & 0.840 & 0.913 & 0.868 & 0.925 &       & 0.850 & 0.908 & 0.873 & 0.930 & 0.888 & 0.930 \\
    \hline
    \end{tabular}%
      }}
    \label{tab:SCB coverage regression supp}
\end{table}%

\subsection{Temperature-sunshine analysis}\label{regressionsection}
In this section we investigate the relationship between sunshine duration and atmospheric temperature, especially how the former impacts the latter. We consider the following time-varying coefficient linear model for the maximum temperature and sunshine duration:
\begin{equation}
    T_{max,i} = m_0(t_i)+m_1(t_i)SD_i+e_i,\quad i=1,\dots,n,
    \label{tmax-SD}
\end{equation}
where $(T_{max,i})_{i=1}^n$ is the series of monthly maximum temperature and $(SD_i)_{i=1}^n$ represents the series of monthly sunshine duration. In model \eqref{tmax-SD}, we assume $m_1(t)$ is strictly increasing, as illustrated in Section \ref{Time-varying coefficient regression}. 




We investigate \eqref{tmax-SD} using the historical data from Heathrow and Lerwick, separately. By methodology proposed in Section \ref{Time-varying coefficient regression}, we obtain the monotone estimates as well as the corresponding SCB for the coefficient $m_1(t)$, which can be used to test whether the sunshine duration always has a positive effect on temperature. For this purpose, we consider the null hypothesis $H_0:\inf_{t\in \hat{\mathcal{T}}}m_1(t)\leq 0$, and test it using the SCB constructed by Algorithm \ref{alg:multi-dim}.
Alternative SCBs that could be used for testing $H_0$ include  \cite{zhou2010simultaneous} which imposes no monotone constraints, and the conservative monotone SCB obtained by modifying  \cite{zhou2010simultaneous} using order-preserving improving procedure, see for example \cite{chernozhukov2009improving}.

\TPM{In Figure \ref{fig:reg_heathrow}, we apply our Algorithm \ref{alg:multi-dim} for constructing SCBs on $m_1(t)$ in \eqref{tmax-SD} based on the data collected from Heathrow and Lerwick stations. 
In Figure \ref{fig:reg_heathrow}, SCBs in (a) are derived by \cite{zhou2010simultaneous} which does not consider the monotone constraint. SCBs in (b) and (c) are the order-preserving improvings of (a) via \cite{chernozhukov2009improving}, which are conservative. In specific, (b) uses isotonic regression, and (c) uses the monotone spline regression in \cite{tantiyaswasdikul1994isotonic} to improve unconstrained SCBs in (a). SCBs in (d) are generated by Algorithm \ref{alg:multi-dim}. The SCBs obtained by monotone spline in (c) are similar to isotonic regression in (b). The results demonstrate that unconstrained SCBs \cite{zhou2010simultaneous} and conservative SCBs obtained by isotonic regression \cite{brunk1969estimation} and monotone spline \cite{tantiyaswasdikul1994isotonic} fail to reject the null hypothesis $H_0$ at 90\% significance level for Heathrow station. The $p$-values are 0.965, 0.865, 0.864, respectively; see Table \ref{tab:p-value supp}.
Notice that \cite{zhou2010simultaneous} does not guarantee monotone constraints while \cite{chernozhukov2009improving} yields conservative coverage, as a consequence, testing procedures using the two SCBs for $H_0$ might lack power when the underlying time-varying coefficient is monotone as in our scenario. In contrast, our SCB based on the monotone rearranged estimator generated by Algorithm \ref{alg:multi-dim}, suggests a significant positive relationship between sunshine duration and temperature at 95\% confidence level in Heathrow (at $p$-value 0.046). All four considered SCBs fail to reject the null hypothesis in Lerwick.} 


The difference in the testing results between Heathrow and Lerwick in Figure \ref{fig:reg_heathrow} may be due to geographic reasons. This difference shows the temperature mechanism can vary on regional scale, even though on a global scale the sun is the fundamental source of heat on Earth. Specifically, Heathrow is situated adjacent to the city of London, where human activities and urbanization may have diminished the environment's capacity to adjust. As a result, sunshine duration has a significant impact on the weather patterns in Heathrow. On the other hand, Lerwick is located approximately 123 miles off the north coast of the Scottish mainland. Its climate is mainly influenced by maritime factors, which means the effect of sunshine duration on temperature is less pronounced compared with Heathrow. In addition, 
Table \ref{tab:addlabel} shows the detailed information for stations investigated in Section \ref{Empirical study}.
\begin{figure}[htb]
    \centering
    \begin{subfigure}{0.22\textwidth}
        \centering
        \includegraphics[width=\linewidth]{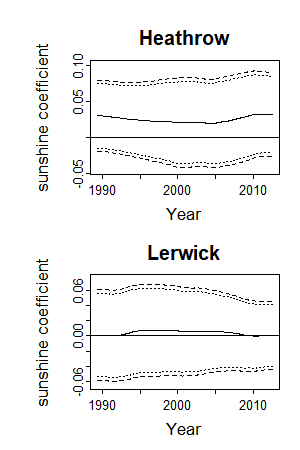}
        \caption{}
        \label{subfig:plot4}
    \end{subfigure}%
    \begin{subfigure}{0.22\textwidth}
        \centering
        \includegraphics[width=\linewidth]{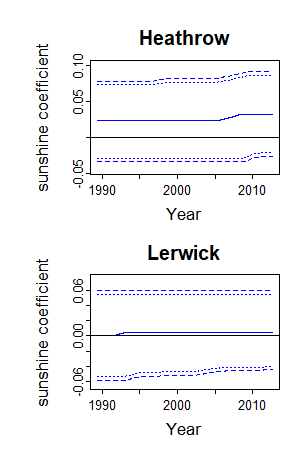}
        \caption{}
        \label{subfig:plot5}
    \end{subfigure}%
    \begin{subfigure}{0.22\textwidth}
        \centering
        \includegraphics[width=\linewidth]{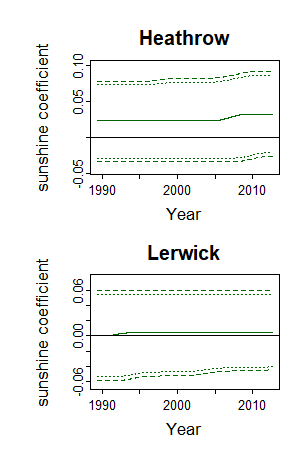}
        \caption{}
        \label{subfig:plot6}
    \end{subfigure}
    \begin{subfigure}{0.22\textwidth}
        \centering
        \includegraphics[width=\linewidth]{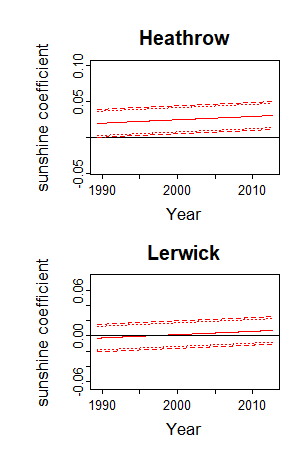}
        \caption{}
        \label{subfig:plot7}
    \end{subfigure}
    \captionsetup{font={small, stretch=1},skip=0cm}
     \caption{\TPM{90\% SCBs (dotted), 95\% SCBs (dashed) and estimations (solid) on $m_1(t)$ in model \eqref{tmax-SD} for Heathrow and Lerwick.}}
         \label{fig:reg_heathrow}
\end{figure}

\begin{table}[htbp]
  \centering
  \captionsetup{font={small, stretch=1},skip=0cm}
    \caption{\TPM{$p$-values for test associated with Figure \ref{fig:reg_heathrow}. Note: The `$\leq$' in (b) is due to its conservativeness.}} 
    \label{tab:p-value supp}
  {\renewcommand{\arraystretch}{0.5}
    \begin{tabular}{lcccc}
    \hline
    Model \eqref{tmax-SD} & \multicolumn{1}{c}{(a)} & \multicolumn{1}{c}{(b)} & \multicolumn{1}{c}{(c)} & \multicolumn{1}{c}{(d)} \\
    \hline
    Heathrow & 0.965 & $\leq$0.865 & $\leq$0.864 & 0.046 \\
    Lerwick & 1     & 1     & 1 & 1 \\
    \hline
    \end{tabular}}%
\end{table}%

\TPM{
The time-varying linear regression model \eqref{Time-varying_n} can include non-instantaneous covariate, which does not violate Assumptions \hyperref[(B1')]{(B1')}-\hyperref[(B5')]{(B5')}. Theorem \ref{thm:multi-SCB} is still valid if the covariate process $\mf x_i$ contains lagged terms. In specific, we consider the non-instantaneous model
\begin{equation}
    T_{max,i} = m_0(t_i)+m_1(t_i)SD_i+m_2(t_i)SD_{i-1}+e_i,\label{eq:tmax-SD lag supp}
\end{equation}
where $(T_{max,i})_{i=1}^n$ is the series of monthly maximum temperature and $(SD_i)_{i=1}^n$ represents the series of monthly sunshine duration. By the similar arguments in \eqref{tmax-SD}, we can obtain four kinds of SCBs to test $H_0: m_1(t)\leq 0$, and we present the associated p-values for (a) unconstrained SCBs \cite{zhou2010simultaneous}, (b) improved SCBs with isotonic regression, (c) improved SCBs with monotone spline, and (d) our monotone SCBs in Table \ref{tab:p-values lag supp}.
\begin{table}[H]
  \centering
  \captionsetup{font={small, stretch=1},skip=0cm}
    \caption{\TPM{$p$-values for testing $\inf_{t\in\hat{\mathcal{T}}}m_1(t)\leq 0$ in Model \eqref{eq:tmax-SD lag supp}. Note: The `$\leq$' in (b) and (c) is due to its conservativeness.}}
        \label{tab:p-values lag supp}
    {\renewcommand{\arraystretch}{0.5}
    \begin{tabular}{lcccc}
    \hline
    Model \eqref{eq:tmax-SD lag supp} & \multicolumn{1}{c}{(a)} & \multicolumn{1}{c}{(b)} & \multicolumn{1}{c}{(c)}  & \multicolumn{1}{c}{(d)} \\
    \hline
    Heathrow & 0.833& $\leq$0.555 & $\leq$0.551 & 0.005 \\
    Lerwick & 1     & 1     & 1 & 1\\
    \hline
    \end{tabular}}%
\end{table}%
Our monotone SCBs generated by Algorithm \ref{alg:multi-dim} suggest a significant positive relationship between sunshine duration and temperature in Heathrow for both Model \eqref{tmax-SD} ($p$-value 0.046) and Model \eqref{eq:tmax-SD lag supp} ($p$-value 0.005).
Interestingly, the $p$-values based on all four kinds of SCB become smaller after imposing the lagged term $SD_{i-1}$ in Heathrow. }

\TPM{To assess which model is better for analyzing the time-varying relationship between temperature and sunshine duration, we introduce the VIC information criterion proposed by \cite{zhang2012inference} which can consistently identify the true set of relevant predictors. Suppose that $D^*=\{1,\dots,p\}$ is the whole set of potential predictors. For a candidate subset $D\subset D^*$, we write $\mf x_{D,i}$ as the selected predictor and $\hat{\mf m}_D(t)$ is the estimated time-varying coefficient obtained by
\begin{equation}
    \left(\hat{\mathbf{m}}_{D}(t), \hat{\mathbf{m}}^{\prime}_{D}(t)\right)=:\underset{\eta_0, \eta_1 \in \mathbb{R}^{|D|}}{\arg \min }\left[\sum_{i=1}^n\left\{y_i-\mathbf{x}_{D,i}^{\top} \eta_0-\mathbf{x}_{D,i}^{\top} \eta_1\left(t_i-t\right)\right\}^2 K_{r}\left(\frac{t_i-t}{h_r}\right)\right],
    \label{local linear subset supp}
\end{equation} 
where $|D|$ is the cardinality of $D$. We utilize the VIC criterion in \cite{zhang2012inference} to select the model that minimizes
\begin{equation}
\label{eq:VIC supp}
    VIC(D)=\log\left\{\sum_{i=1}^n (y_i-\mf x_{D,i}^\top \hat{\mf m}_D(t))^2\right\}+\chi_n|D|,
\end{equation}
where $\chi_n$ is a tuning parameter and is recommended to be $\chi_n=n^{-2/5}$ in \cite{zhang2012inference}. The computation turns out that Model \eqref{tmax-SD} ($VIC=7.042$) is better than Model \eqref{eq:tmax-SD lag supp} ($VIC=7.113$) in Heathrow. In Lerwick, Model \eqref{tmax-SD} ($VIC=6.239$) is still better than Model \eqref{eq:tmax-SD lag supp} ($VIC=6.316$).}

\begin{table}[H]
  \centering
\setlength{\belowcaptionskip}{-0.5cm}
  \captionsetup{font={small}, skip=5pt}
  \caption{Detail information for stations investigated in Section \ref{Empirical study}}
  \scalebox{0.7}{
    {\renewcommand{\arraystretch}{0.5}
    \begin{tabular}{llllllll}
    \hline
    \textbf{Name}  & \textbf{Location} &       & \textbf{Name}  & \textbf{Location} &       & \textbf{Name}  & \textbf{Location} \\
    \hline    
        Aberporth  &  -4.56999, 52.13914  &       &     Durham  &  -1.58455, 54.76786  &       &     Sheffield  &  -1.48986, 53.38101  \\
        Armagh  &  -6.64866, 54.35234  &       &     Eskdalemuir  &  -3.206, 55.311  &       &     Stornoway Airport  &  -6.31772, 58.21382  \\
        Ballypatrick Forest  &  -6.15336, 55.18062  &       &     Heathrow  &  -0.44904, 51.47872  &       &     Sutton Bonington  &  -1.25, 52.8331  \\
        Bradford  &  -1.77234, 53.81341  &       &     Hurn   &  -1.83483, 50.7789  &       &     Tiree  &  -6.8796, 56.49999  \\
        Braemar  &  -3.39635, 57.00612  &       &     Lerwick  &  -1.18299, 60.13946  &       &     Valley  &  -4.53524, 53.25238  \\
        Camborne  &  -5.32656, 50.21782  &       &     Leuchars  &  -2.86051, 56.37745  &       &     Waddington  &  -0.52173, 53.17509  \\
        Cambridge NIAB  &  0.10196, 52.24501  &       &     Newton Rigg  &  -2.78644, 54.6699  &       &     Whitby  &  -0.62411, 54.48073  \\
        Cardiff Bute Park  &  -3.18728, 51.48783  &       &     Oxford  &  -1.2625, 51.76073  &       &     Wick Airport  &  -3.0884, 58.45406  \\
        Dunstaffnage  &  -5.43859, 56.45054  &       &     Shawbury  &  -2.66329, 52.79433  &       &     Yeovilton  &  -2.64148, 51.00586  \\
        \hline
    \end{tabular}}}%
  \label{tab:addlabel}%
\end{table}%

\bibliographystyle{agsm}
\begin{footnotesize}
\spacingset{0.5}
\bibliography{main}
\end{footnotesize}

\end{document}